\DeclareMathOperator{\sign}{sign}
\DeclareMathOperator{\Ran}{Ran}
\DeclareMathOperator{\Ker}{Ker}
\DeclareMathOperator{\spec}{spec}
\DeclareMathOperator{\supp}{supp}
\DeclareMathOperator{\dist}{dist}
\DeclareMathOperator*{\slim}{s-lim}
\DeclareMathOperator{\diag}{diag}
\renewcommand\Im{\hbox{{\rm Im}}\,}
\renewcommand\Re{\hbox{{\rm Re}}\,}
\newcommand{\abs}[1]{\lvert#1\rvert}
\newcommand{\norm}[1]{\lVert#1\rVert}
\newcommand{\bbR}{{\mathbb R}}
\newcommand{\bbC}{{\mathbb C}}
\newcommand{\bbS}{{\mathbb S}}
\newcommand{\bbT}{{\mathbb T}}
\newcommand{\bbY}{{\mathbb Y}}
\newcommand{\bbP}{{\mathbf P}}
\newcommand{\bM}{{\mathbf M}}
\newcommand{\wh}{\widehat}
\newcommand{\calH}{{\mathcal H}}
\newcommand{\calK}{{\mathcal K}}
\newcommand{\calF}{\mathcal{F}}
\newcommand{\calN}{\mathcal{N}}
\newcommand{\calZ}{\mathcal{Z}}
\newcommand{\calY}{\mathcal{Y}}
\newcommand{\calB}{\mathcal{B}}
\newcommand{\calU}{\mathcal{U}}
\newcommand{\frakh}{\mathfrak{h}}
\newcommand{\DD}{\mathcal{D}}
\numberwithin{equation}{section}
\theoremstyle{plain}
\newtheorem{theorem}{\bf Theorem}[section]
\newtheorem{lemma}[theorem]{\bf Lemma}
\newtheorem{proposition}[theorem]{\bf Proposition}
\newtheorem{assumption}[theorem]{\bf Assumption}
\newtheorem{corollary}[theorem]{\bf Corollary}
\theoremstyle{definition}
\newtheorem{definition}[theorem]{\bf Definition}
\theoremstyle{remark}
\newtheorem*{remark*}{\bf Remark}
\newtheorem{remark}[theorem]{\bf Remark}
\newtheorem{example}[theorem]{\bf Example}
\newcommand{\wt}{\widetilde}
\newcommand{\eps}{\varepsilon}
\newcommand{\X}{q}
\newcommand{\loc}{\text{loc}}
\newcommand{\ac}{\text{\rm ac}}
\begin{document}

\title[Piecewise continuous functions of operators]{Spectral theory of piecewise 
continuous functions of self-adjoint operators}

\author{Alexander Pushnitski}
\address{Department of Mathematics,
King's College London, 
Strand, London, WC2R~2LS, U.K.}
\email{alexander.pushnitski@kcl.ac.uk}

\author{Dmitri Yafaev}
\address{Department of Mathematics, University of Rennes-1, 
Campus Beaulieu, 35042, Rennes, France}
\email{yafaev@univ-rennes1.fr}

\subjclass[2000]{Primary 47A40; Secondary 47B25}

\keywords{Functions of self-adjoint operators,  piecewise continuous functions, scattering 
theory, wave operators, scattering matrix, spectral properties, symmetrised Hankel operators, explicit diagonalization}

\begin{abstract}
Let $H_0$, $H$ be a pair of self-adjoint operators for which
the standard assumptions of the smooth version of scattering 
theory hold true. We give an explicit description of the absolutely 
continuous spectrum of the operator $\DD_\theta=\theta(H)-\theta(H_0)$ for piecewise
continuous functions $\theta$. This description involves the 
scattering matrix for the pair $H_0$, $H$, evaluated at the 
discontinuities of $\theta$. 
We also prove that the singular continuous spectrum of 
$\DD_\theta$ is empty and that  the   
eigenvalues of this operator have finite multiplicities and may accumulate 
only to the ``thresholds"  of  the absolutely 
continuous spectrum of $\DD_\theta$. 
Our approach relies on the construction of ``model" operators 
for each jump of the function $\theta$. 
These model operators are defined as certain symmetrised Hankel operators 
which admit explicit spectral analysis. 
We develop the multichannel scattering theory for the set of model operators 
and the operator $\theta(H)-\theta(H_0)$. 
As a by-product of our approach, we also construct
the  scattering theory for general symmetrised Hankel operators with  piecewise
continuous symbols.
\end{abstract}

\maketitle

%%%%%%%%%%%%%%%%%%%%%%%%%%%%%%%%%%%%%%%%%%%%%%%%%
%%%%%%%%%%%%%%%%%%%%%%%%%%%%%%%%%%%%%%%%%%%%%%%%%
\section{Introduction}\label{sec.a}
%%%%%%%%%%%%%%%%%%%%%%%%%%%%%%%%%%%%%%%%%%%%%%%%%
%%%%%%%%%%%%%%%%%%%%%%%%%%%%%%%%%%%%%%%%%%%%%%%%%

%%%%%%%%%%%%%%%%%%%%%%%%%%%%%%%%
\subsection{Overview}\label{a1}
%%%%%%%%%%%%%%%%%%%%%%%%%%%%%%%%

Let $H_0$ and $H$ be self-adjoint operators    and suppose that the difference 
$V=H-H_0$ is a compact operator.  
If $\theta$ is a continuous function which tends to zero at infinity then the difference
\begin{equation}
\DD_\theta=\theta(H)-\theta(H_0)
\label{a0}
\end{equation}
is also compact. On the contrary, if $\theta$ has discontinuities, then the operator
 $\DD_\theta$ may acquire  the (absolutely) continuous   spectrum. This phenomenon was observed in \cite{KM} in a concrete example and established in \cite{Push} under fairly general assumptions.

Our goal here is to study the structure of the operator $\DD_\theta$ for piecewise continuous functions under assumptions on 
$H_0$, $H$ typical for smooth scattering theory (see, e.g., \cite{Kuroda,Yafaev}). Roughly speaking, these assumptions mean that the perturbation $V=H-H_{0}$ is an integral operator with a sufficiently smooth kernel in the spectral representation of the ``unperturbed" operator $ H_{0}$. Under our
assumptions  the scattering matrix $S(\lambda)$ for the pair $H_0$,  $H$ is well defined  for $\lambda$ in the absolutely continuous (a.c.) 
spectrum of the operator $H_{0}$. The scattering matrix is a unitary operator in some auxiliary Hilbert space $\calN$ which is the fiber space in the spectral representation of $H_{0}$. We denote by $N=\dim\calN $ the multiplicity of the spectrum of the operator $H_{0}$ in a neighbourhood of the point $\lambda$ (the number $N =N(\lambda) $ can be finite or infinite).    
Moreover, the operator $S(\lambda)-I$ is  compact,  
so that the spectrum of $S(\lambda)$ consists of  eigenvalues $\{\sigma_n(\lambda)\}_{n=1}^N$  lying
on the unit circle in $\bbC$.
Eigenvalues of $S(\lambda)$ distinct from $1$ have finite multiplicities and  can accumulate only to the point $1$.

  We suppose that $\theta(\lambda)$ is a continuous function except at points $\lambda_{1},\ldots, \lambda_{L}$, $L<\infty$,  where it has {\it jump discontinuities}. That is, at each of these 
  points $\lambda_\ell$, the limits $\theta(\lambda_\ell \pm 0)$ exist and are finite, but
  $\theta(\lambda_\ell+0) \neq \theta(\lambda_\ell-0)$. We denote the jumps of $\theta$ by
  \begin{equation}
\varkappa_\ell
=
\theta(\lambda_\ell+0)-\theta(\lambda_\ell-0)\neq 0.
\label{b8b}
\end{equation} 
We prove that the  a.c.\   
spectrum of the operator $\DD_\theta$   consists of the union of the intervals:
\begin{equation}
\spec_\ac \DD_\theta =  \bigcup_{\ell=1}^L \bigcup_{n=1}^{N_{\ell}} [-a_{n\ell},a_{n\ell}],
\quad
a_{n\ell}=\tfrac12\abs{\varkappa_\ell}\abs{\sigma_n(\lambda_\ell)-1}, 
\quad 
N_\ell=N(\lambda_\ell). 
\label{eq:a0}
\end{equation}
Here and in similar formulas below describing the a.c.\  spectrum, we use two conventions:
\begin{enumerate}[(i)]
\item
the union is taken over all non-trivial intervals, i.e.\  if $a_{n\ell}=0$, then the corresponding
interval is dropped from the union; 
\item
each interval contributes multiplicity one to the a.c.\  spectrum of $\DD_\theta$. 
That is, denoting by $\mathsf{A}(\Lambda)$ the operator of multiplication 
by $\lambda$ in $L^2(\Lambda,d\lambda)$, one can state \eqref{eq:a0} more precisely 
as follows: 
the  a.c.\  part of $\DD_\theta$ is unitarily equivalent 
to the orthogonal sum
$$
\DD_\theta^{(\ac)} \simeq\bigoplus_{\ell=1}^L  \bigoplus_{n=1}^{N_{\ell}}  \mathsf{A}([-a_{n\ell}, a_{n\ell}]).
$$
\end{enumerate}

 We also prove that the singular continuous spectrum 
of $\DD_\theta$ is empty, the eigenvalues of $\DD_\theta$ can accumulate
only to $0$ and to the points $\pm a_{n\ell}$, and 
all eigenvalues of $\DD_\theta$ distinct from $0$ and
$\pm a_{n\ell}$ have finite multiplicity.

It follows from \eqref{eq:a0} that $\spec_\ac \DD_\theta=\varnothing$ if and only   if $S(\lambda_{\ell})=I$ for all  $ \ell=1,\ldots, L$. In the latter case the operator $\DD_\theta$ is compact. We emphasize that only the jumps $\varkappa_\ell$ of $\theta(\lambda)$ 
at the points $\lambda_{\ell}$  of   discontinuity and the spectrum of  the scattering matrix $S(\lambda_{\ell})$ at these points are essential for our construction.

All of our results apply to the case where $H_{0}$ and $H$ are the free and perturbed Schr\"odinger operators -- see Example~\ref{Schr}.

We study the operator $\DD_\theta$ in the spectral representation  of   $ H_{0}$.    It turns out that the structure of  $\DD_\theta$ is naturally described in terms of the operators defined in the space 
$L^2(\bbR;  \calN )$ by the formula
\begin{equation}
M_\Xi=  P_- \Xi P_+ + P_+ \Xi^*P_- .
\label{b9a}
\end{equation}
Here $\calN$ is an auxiliary space of the dimension $N=N_{1}+\cdots +N_{L}$, $P_{\pm}$ are the orthogonal projections onto the Hardy classes $H^2_{\pm} (\bbR;  \calN )\subset L^2(\bbR;\calN)$ 
and   $\Xi$ is the operator of multiplication by the symbol $\Xi (\lambda)$ which, for each $\lambda$, is a bounded operator in the space $\calN$. For obvious reasons, we call operators \eqref{b9a} \emph{symmetrised Hankel operators}, SHOs for short. It is important that SHOs are automatically self-adjoint and, for a particular choice of $\Xi$,  admit an explicit diagonalization.   

We show that the  operators $\DD_\theta$ and $M_\Xi$ are, in some sense, close to each other if  the symbol   is constructed     by the formula
\begin{equation}
\Xi(\lambda)=  (2\pi i)^{-1} (S (\lambda)-I) \theta(\lambda).
\label{eq:MS}
\end{equation}
Thus the SHO $M_\Xi$ with such symbol plays the role of a model operator for $\DD_\theta$.    We  analyse the operator $M_\Xi$ and, as a consequence, establish the spectral results for $\DD_\theta$ mentioned above.
  
We emphasize that our assumptions on $H_{0}$ are local, i.e.\  its diagonalization is required only in neighbourhoods of the discontinuity points $\lambda_{1},\ldots, \lambda_{L}$.

%%%%%%%%%%%%%%%%%%%%%%%%%%%%%%%%
\subsection{Main ideas of the approach}\label{sec.a1t}
%%%%%%%%%%%%%%%%%%%%%%%%%%%%%%%%

Roughly speaking, our approach relies on the construction of scattering theory for the pair $M_\Xi, \DD_{\theta}$, that is, on the comparison of asymptotic behaviour as $t\to\pm\infty$ of 
  functions $\exp(-iM_\Xi t)f_{0}$ and $\exp(-i\DD_{\theta} t)f$  for elements $f_{0}$ and $f$ in the a.c.\  subspaces of the operator $M_\Xi$ and $\DD_{\theta}$, respectively. It turns out that   the functions $\exp(-iM_\Xi t)f_{0}$
    are asymptotically concentrated as $t\to\pm\infty$ in neighbourhoods of the points
$\lambda_{1},\ldots, \lambda_{L}$. This means that every discontinuity of $\theta$ yields its own band of the a.c.\  spectrum. 

To handle this situation, we introduce the model operators
$M_{\Xi_{\ell}} $  for all discontinuity points $\lambda_{\ell}$, $\ell =1, \ldots,  L$. The model operators
$M_{\Xi_{\ell}} $ are   defined as SHO in the space $L^2(\bbR;  \calN )$ with the symbols $\Xi_{\ell}$,
each of which containins only one jump. 
We choose these symbols in such a way that, up to smooth terms, 
the sum $\Xi_{1}+\cdots+ \Xi_{L} $ equals the function $\Xi$ defined by \eqref{eq:MS}.
It is important that each operator $M_{\Xi_{\ell}} $  can be explicitly diagonalized. Then we develop the scattering theory for the set of model operators $M_{\Xi_{1}}, \ldots,  M_{\Xi_L}$ and the operator $\DD_{\theta}$. To be more precise, we prove the existence of wave operators for all pairs $M_{\Xi_{\ell}},  \DD_{\theta}$, $\ell=1,\ldots, L$. The ranges of these wave operators for different $\ell$ are orthogonal to each other, and their orthogonal sum exhausts the a.c.\  subspace of the operator $\DD_{\theta}$. The results of this type are known as the asymptotic completeness of wave operators. Our proofs of these results   require a version of multichannel scattering theory  constructed in our earlier publication \cite{PY2}.  In the important particular case $L=1$ the multichannel scheme is not necessary, and it suffices to apply the usual results of smooth   scattering theory to the pair $M_{\Xi } $, $\DD_{\theta}$.

The construction of the model operators $M_{\Xi_{\ell}} $ relies on the existence of some bounded Hankel operator with simple a.c.\  spectrum which can be explicitly diagonalized.
The choice of such Hankel operator is not unique. We proceed from the Mehler operator, that is, the Hankel operator in the space $L^2(\bbR_{+} )$ with the integral kernel $\pi^{-1} (t+s)^{-1}$. Alternatively, we could have used the Hankel operator   with integral kernel $\pi^{-1} (t+s)^{-1} e^{-t-s}$  diagonalized by W.~Magnus and M.~Rosenblum.

We use the smooth method of scattering theory and work in the spectral representation of the operator $H_{0}$. Thus we ``transplant" the operator $\DD_{\theta}$ by an isometric (but not necessary unitary) transformation into the space  $L^2(\bbR;  \calN )$.  An important and technically difficult step is to localize the problem onto  a neighbourhood of the set $\{\lambda_{1},\ldots, \lambda_{L}\}$. It turns out that, after such a localization, the transplanted operator $\DD_{\theta}$ is close to the SHO $M_{\Xi}$ with symbol \eqref{eq:MS}. 

As a by-product of our approach, we develop the scattering theory for general SHOs $M_{\Xi } $ with piecewise continuous symbols $\Xi$.
 This   theory   is one of the key ingredients of our analysis 
and is perhaps of  interest  in its own sake. 

 We note that the SHOs introduced here are very well adapted to the study of discontinuous
 functions of self-adjoint operators and their spectral theory is simpler than that of standard Hankel operators. We plan to apply   the approach of the present paper to the usual   Hankel operators with discontinuous symbols 
 in a separate publication.

  %%%%%%%%%%%%%%%%%%%%%%%%%%%%%%%%
\subsection{History}\label{a1x}
%%%%%%%%%%%%%%%%%%%%%%%%%%%%%%%%

The link between operators $\DD_\theta $ (for smooth functions $\theta$) and Hankel operators with the symbol $\theta$ was discovered in \cite{Peller1} and has been applied by V.~V.~Peller and his collaborators to the estimates of various norms of $\DD_\theta $.

The analysis of $\DD_\theta $ for discontinuous $\theta$ was initiated in \cite{KM, Push}.
Formula \eqref{eq:a0} first appeared in \cite{Push} under relatively stringent assumptions on the perturbation $V$ and for $\theta$ being the characteristic function of a half-line. In \cite{Push} a mixture of trace class and smooth methods of   scattering theory has been used.

 To a certain extent, this paper can be considered as a continuation of \cite{PY1} where   the purely smooth approach  has been applied to the study of the operator $\DD_\theta^2$. The main difference between \cite{PY1} and this work is   that here we analyse    the operator  $\DD_\theta$ directly whereas in \cite{PY1} only  the  spectral properties of the operator $\DD_\theta^2$ were considered. Thus the approach of \cite{PY1} does not capture information about the structure of the operator 
$\DD_\theta$ and, in particular, about its eigenfunctions. Another important difference is that here we treat  arbitrary piecewise continuous functions $\theta$ with finite limits at $\pm\infty$ whereas in \cite{PY1} 
only the case of $\theta$ being the characteristic function of a half-line 
was considered.  

Under somewhat less restrictive assumptions than here, it was shown in \cite{Push2}
that   the essential spectrum of $\DD_\theta$  coincides with the union of the intervals in the r.h.s.\  of \eqref{eq:a0}. This  result is of course   consistent with formula  \eqref{eq:a0} for the a.c.\  spectrum of $\DD_\theta$.  The operators 
$\DD_{\theta_{\varepsilon}}$ for smooth functions $\theta_{\varepsilon}$ with supports shrinking as $\varepsilon \to 0$ to some point $\lambda_{0}$   were studied in \cite{Push1}.

As far as the spectral theory of Hankel operators with piecewise continuous symbols is concerned, we first note 
S.~Power's characterisation \cite{Power} of the essential spectrum. In the self-adoint case, the absolutely continuous spectrum was described      in \cite{Howland1} by
J.~Howland  who used the trace class method. Moreover, he applied  in \cite{Howland2}    the Mourre method  to perturbations  of the Carleman operator and  proved   the absence of singular continuous spectrum in this case. 
 To a certain extent, the paper \cite{Howland1} can be considered as a precursor of our results on SHOs.

  %%%%%%%%%%%%%%%%%%%%%%%%%%%%%%%%
\subsection{The structure of the paper}\label{a1y}
%%%%%%%%%%%%%%%%%%%%%%%%%%%%%%%%

The basic objects of   scattering theory are introduced in   Section~\ref{sec.b}, 
where we also  state the precise assumptions on the operators $H_{0}$ and $H$ 
specific for the smooth  scattering theory. 
In particular, in Subsection~\ref{sec.c} we summarize  
the results of \cite{PY2} concerning the  multichannel version of the scattering theory. 
This theory is used  in the study of both SHO
 \eqref{b9a}  and  the operators $\DD_{\theta}$.

In Sections~\ref{sec.e} and \ref{sand} we collect  diverse analytic results which are used in Section~\ref{sec.ee} for the study of SHOs and  in Section~\ref{sec.b5} for the study of the operators $\DD_{\theta}$.  In Section~\ref{sec.e} we diagonalize explicitly some special SHO that will be used as a model operator.  In Section~\ref{sand} we prove the compactness of Hankel operators sandwiched by some singular weights.

Spectral and scattering theory of   SHOs with piecewise continuous symbols is developed in Section~\ref{sec.ee}. Here the main results are    Theorems~\ref{thm.f8} and \ref{thm.f10}. 
 
 In Section~\ref{sec.b6}, we obtain  convenient representations for operator \eqref{a0}  sandwiched by appropriate functions of 
the operator $H_{0}$. These representations play an important role in our analysis and are perhaps of an independent interest.

     Our main results (Theorems~\ref{th.b1} and \ref{th.c3})  concerning the operators $\DD_{\theta}$ are stated and proven in Section~\ref{sec.b5}.

% {\tt To get back at the end}

%%%%%%%%%%%%%%%%%%%%%%%%%%%%%%%%
\subsection{Notation}\label{sec.a1a}
%%%%%%%%%%%%%%%%%%%%%%%%%%%%%%%%
Let $\calH$ be a Hilbert space.  
We denote by $\calB=\calB(\calH)$ (resp.  by
$\mathfrak S_\infty=\mathfrak S_\infty(\calH)$) the class of all bounded (resp. compact) operators on $\calH$.
For a self-adjoint operator $A$, we denote by $E_A(\cdot)$   the projection-valued spectral measure of $A$; $\spec A $ is the spectrum of $A$  and $\spec_p A $ is its  point spectrum. We denote by $\calH^{(\ac )}_{A}$   the a.c.\  subspace of $A$, $P^{(\ac )}_{A}$ is the orthogonal projection onto $\calH^{(\ac)}_{A}$, $E^{(\ac)}_A(\cdot)= E_A(\cdot)P^{(\ac)}_{A} $ and  $\spec_\ac A $ is the a.c.\  spectrum of $A$.
For $K\in {\mathfrak S}_\infty(\calH)$ we denote by $s_n(K)$, $1\leq n\leq\dim\calH$,
the sequence of singular values  of $A$ (which may include zeros)
enumerated in the decreasing order with multiplicities taken into account. We denote by $C_0(\bbR; \calH)$ the 
space of all continuous functions $f:\bbR\to \calH$ such that 
$\norm{f(x)}_\calH\to0$ as $\abs{x}\to0$. 
%We also consistently use notation \eqref{a0}.
We denote by $\chi_\Lambda$ the characteristic function of a set 
$\Lambda\subset \bbR$ and write 
$\bbR_\pm=\{x\in\bbR: \pm x>0\}$. 
We often use the same notation for a bounded function and 
the operator of multiplication by this function in the Hilbert space $L^2(\bbR)$. 
The same convention applies to operator valued functions.

%{\tt to collect  ``all" notation here}
  
%\begin{equation}\Lambda_{n\ell}=[-a_{n\ell},a_{n\ell}],\qquad a_{n\ell}=\tfrac12\abs{\varkappa_\theta(\lambda_\ell)}\abs{\sigma_n(\lambda_\ell)-1}.\label{b9}\end{equation}

%%%%%%%%%%%%%%%%%%%%%%%%%%%%%%%%%%%%%%%%%%%%%%%%%
%%%%%%%%%%%%%%%%%%%%%%%%%%%%%%%%%%%%%%%%%%%%%%%%%
\section{Spectral and scattering theory. Generalities}\label{sec.b}
%%%%%%%%%%%%%%%%%%%%%%%%%%%%%%%%%%%%%%%%%%%%%%%%%
%%%%%%%%%%%%%%%%%%%%%%%%%%%%%%%%%%%%%%%%%%%%%%%%%

%%%%%%%%%%%%%%%%%%%%%%%%%%%
\subsection{The strong smoothness}\label{sec.b2}
%%%%%%%%%%%%%%%%%%%%%%%%%%%

Let $A$ be a self-adjoint operator in a Hilbert space $\calH$. Suppose that $\delta$
is an open   interval where  the spectrum of $A$    is purely absolutely continuous with a constant multiplicity $N \leq\infty$. 
More explicitly, we assume that for some auxiliary Hilbert space $\calN $, 
 there exists a 
unitary operator $\calF $  
from $\Ran E_{A}(\delta )$ onto 
$L^2(\delta ;\calN )$, $\dim\calN=N$,   such that $\calF $
diagonalizes $A$: if $f\in\Ran E_{A}(\delta )$ then 
\begin{equation}
( \calF A f)(\mu)
=
\mu (\calF  f)(\mu), \quad \mu\in\delta .
\label{b2}\end{equation}

% {\tt $\calF_0^{(\ell)}$ or $\calF_{0,\ell}$ -- what is worse?}

Let us formulate an assumption typical for smooth scattering theory. Let $Q$ be a bounded operator in   $\calH$.
Suppose that the operators $ Z (\mu):\calH\to\calN $  defined by the relation
\begin{equation}
 Z (\mu)f= (\calF    Q^* f)(\mu),\quad  \mu\in\delta ,
\label{eq:Z1}
\end{equation}
are compact and satisfy the estimates
\begin{equation}
\norm{Z (\mu)}\leq C,
\quad
\norm{Z (\mu)-Z (\mu')}\leq C\abs{\mu-\mu'}^\gamma ,
\quad
\mu, \mu'\in\delta , \quad \gamma\in (0, 1],
\label{b4}
\end{equation}
where the constant $C$ is independent of $\mu$, $\mu'$
in compact subintervals of $\delta $ and $\gamma\in (0, 1]$. Thus we accept the following

\begin{definition}\label{stsm}
We say that $Q$ {\it is strongly $A$-smooth on} 
$\delta $ with the  exponent $\gamma $ 
if,  for some diagonalization  $\calF $ of the operator $E_{A}(\delta ) A$ and 
for the operator $Z(\lambda)$ defined by \eqref{eq:Z1},   condition  \eqref{b4} is satisfied.  
\end{definition}
 
 It follows from \eqref{b2}, \eqref{eq:Z1} that for any bounded function $\varphi$ which is
compactly supported on $\delta $ and for all $f\in\calH$, we have
\begin{equation}
Q\varphi(A)f
=
\int_{-\infty}^\infty
Z ^* (\mu)  ( \calF   f)(\mu) \varphi(\mu) d\mu .
\label{b5}
\end{equation}

We emphasize that      in applications  the map $\calF $ emerges naturally. 

%%%%%%%%%%%%%%%%%%%%%%%%%%%
\subsection{Operators $H_{0}$ and $H$}\label{sec.b1}
%%%%%%%%%%%%%%%%%%%%%%%%
Let $H_0$ and $V$ be  self-adjoint   operators in a Hilbert space $\calH$. For simplicity, we assume that the ``perturbation" $V$ is a bounded operator  so that the sum $H=H_0+V$ is self-adjoint on the domain of the operator $H_{0}$. Similarly to \cite{PY1}, all our constructions can easily be extended to a class of unbounded operators $V$, but we will not dwell upon this here.

Suppose that $V$ is factorized as $V=G^*V_0G$ 
where  
\begin{equation}
 V_0=V_0^* \in \calB(\calH),
\quad
G(\abs{H_0}+I)^{-1/2}\in \mathfrak S_\infty (\calH).
\label{a3}
\end{equation}
It is also convenient to assume that $\Ker G=\{0\}$.  Of course the resolvents $R_0(z)=(H_0-zI)^{-1}$ and $R(z)=(H-zI)^{-1}$ of the operators $H_{0}$ and $H$ are related by the usual identity
$$
R(z)-R_0(z)=-(GR_0(\overline z))^*{V_0}GR(z).
$$

Let   
\begin{equation}
T(z)=GR_0(z)G^* 
\label{a6a}
\end{equation}
be the sandwiched resolvent of the operator $H_{0}$.
It follows from the assumption \eqref{a3} that $ T(z)\in \mathfrak S_\infty$,   the operator $I+T(z){V_0}$ has a bounded inverse for all $z\in\bbC\setminus\bbR$ and  
\begin{equation}
R(z)=R_0(z)-(G R_0(\overline z))^* {V_0} (I+T(z){V_0})^{-1} GR_0(z).
\label{a7}
\end{equation}
 Using the notation   \eqref{a6a} we set
\begin{equation}
Y(z)=-2\pi i {V_0}(I+T(z){V_0})^{-1}, 
\quad \Im z\neq 0.
\label{b7}
\end{equation}
Then the resolvent identity 
\eqref{a7} can be rewritten in the  more concise form that we use below:
\begin{equation}
R(z)-R_0(z)
=
\frac{1}{2\pi i} (GR_0(\overline z))^* Y(z)GR_0(z), 
\quad \Im z\neq 0.
\label{d2}
\end{equation}
Note that
\begin{equation}
Y(\overline{z})=-Y^* (z)  .
\label{d4a}
\end{equation}

Let $ \Delta_\ell$,  $\ell=1,\ldots,L$,   be pairwise disjoint open   intervals and
$$\Delta=\Delta_{1}\cup\cdots \cup \Delta_{L}.$$
 We suppose that  the spectrum of $H_0$ in $\Delta_{\ell}$, $\ell=1,\ldots,L$,  is purely a.c.\  with a constant multiplicity $N_{\ell}\leq\infty$. 
More explicitly, we assume that for some auxiliary Hilbert space $\calN_{\ell}$, 
$\dim\calN_{\ell}=N_{\ell}$, there exists a 
unitary operator 
\begin{equation}
\calF_{\ell}: \Ran E_{H_0}(\Delta_{\ell})\
\to L^2(\Delta_{\ell};\calN_{\ell})
\label{eq:Zf}
\end{equation} 
 which diagonalizes $E_{H_0}(\Delta_{\ell}) H_0$. The corresponding operator \eqref{eq:Z1} will be denoted by $Z_{\ell}(\lambda)$:
\begin{equation}
 Z_{\ell} (\lambda)f= (\calF_{\ell}    G^* f)(\lambda),\quad  \lambda\in\Delta_{\ell} .
\label{eq:Z}\end{equation}

Let us summarize our assumptions:

\begin{assumption}\label{as1}

\begin{enumerate}[{\rm(A)}]
\item
$H_0$ has a purely a.c.\  spectrum with multiplicity $N_{\ell}$ on each interval $\Delta_{\ell}$, $\ell=1,\ldots, L$.
\item
$V$ admits a factorization $V=G^*V_0G$ satisfying \eqref{a3}.
\item
$G$ is strongly $H_0$-smooth on all intervals $ \Delta_{\ell}$.
\end{enumerate}
\end{assumption}

%The notion of strong smoothness is not unitary invariant 
%as it depends on the choice of the map $\calF_0$, but  in applications  this map emerges naturally. 

\begin{example}\label{Schr}
Let     $H_0=-{\pmb \Delta}$
be the  Laplace operator in the space $L^2(\bbR^d)$, $d\geq1$. 
Applying the Fourier transform $\Phi$, we see that the operator $\Phi H_{0}\Phi^*$ acts as the multiplication by $|\xi|^2$ in $L^2(\bbR^d; d\xi)$ (in the momentum representation). To diagonalize $H_{0}$, it remains to pass to the spherical coordinates in $\bbR^d$ and to make the change of variables $\lambda =|\xi|^2$. Now  $L=1$,   $\Delta_{1} =\bbR_{+}$, $\calN =L^2(\bbS^{d-1})$ (here $L^2(\bbS^{d-1})=\bbC^2$ if $d=1$) and the operator 
  $\calF_1 : L^2(\bbR^d)\to L^2(\bbR_{+}; \calN)$ diagonalizing $H_{0}$  is defined by the formula
$$
(\calF_1 f)(\lambda; \omega)
= 
2^{-1/2} \lambda^{(d-2)/4} (\Phi f)( \lambda^{1/2}\omega),\quad \lambda>0,\quad \omega\in \bbS^{d-1}.
$$
Of course, the operator $H_0$ has the purely a.c.\  spectrum $[0,\infty)$. It has  infinite multiplicity for $d\geq 2$ and multiplicity $2$ for $d=1$.

Suppose that $V$ acts as the multiplication by    a real  short-range   function $V(x)$, that is,  
\begin{equation}
\abs{V(x)}\leq C(1+\abs{x})^{-r}, 
\qquad 
r>1.
\label{eq:Schr}\end{equation}
Then Assumption~\ref{as1}(B) is satisfied with the operator $G$ acting as the multiplication by the function $(1+|x|)^{-r/2}$ and the operator $V_{0}$ acting as the multiplication by the bounded function $(1+|x|)^{r}V(x)$.  For this operator $G$,  Assumption~\ref{as1}(C)   is satisfied with an arbitrary $\gamma<(r-1)/2$ according to the Sobolev trace theorem.
\end{example}

It would have been too restrictive to assume that all discontinuities of $\theta$ lie in the same interval where the spectrum of $H_{0}$ has a constant multiplicity. This would exclude applications at least to two classes of operators $H_{0}$: (a)~operators with   spectral gaps (such as periodic operators, or Dirac operators) (b)~multichannel systems such as wave guides.

%%%%%%%%%%%%%%%%%%%%%%%%%%%
\subsection{The limiting absorption principle and spectral results}\label{sec.b3}
%%%%%%%%%%%%%%%%%%%%%%%%%%%
The following well-known result  (see, e.g., \cite[Theorems~4.7.2 and 4.7.3]{Yafaev}) is called the
 limiting absorption principle.

\begin{proposition}\label{pr1}
Let Assumption~$\ref{as1}$ hold true. 
Then  
the operator-valued function $T(z)$  defined by \eqref{a6a}
is   H\"older continuous for $\Re z\in \Delta$, $ \Im z \geq 0$.
In particular, the limits $T(\lambda+i0)$ exist in the operator norm and 
are H\"older continuous in $\lambda\in\Delta$.
Let $\mathfrak{N}  \subset \Delta$ be the set of $\lambda$ such that
the equation 
$$
f+T(\lambda+ i0){V_0}f=0
$$ 
has a non-zero solution $f\in\calH$. 
Then $\mathfrak{N}$
is closed and  
has the Lebesgue measure zero.  For all $\lambda\in\Omega:= \Delta\setminus \mathfrak{N}$, the inverse operator $(I+T(\lambda+i0)V_0)^{-1}$ 
exists, is bounded and is   H\"older continuous in
$\lambda\in\Omega$.
\end{proposition}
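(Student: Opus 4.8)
The plan is to deduce the proposition from the strong $H_0$-smoothness of $G$ together with standard facts about boundary values of the sandwiched resolvent $T(z)=GR_0(z)G^*$. First I would record the basic input: by Assumption~\ref{as1}(A) the operator $H_0$ restricted to each $\Delta_\ell$ is unitarily equivalent, via $\calF_\ell$, to multiplication by $\lambda$ in $L^2(\Delta_\ell;\calN_\ell)$, and by Assumption~\ref{as1}(C) the operators $Z_\ell(\lambda)$ of \eqref{eq:Z} are compact and satisfy the H\"older bound \eqref{b4} with exponent $\gamma$. Using the spectral representation, one writes, for $\Re z\in\Delta_\ell$ and $\Im z\neq0$,
$$
GR_0(z)G^*=\int_{\Delta_\ell}\frac{Z_\ell(\mu)^*Z_\ell(\mu)}{\mu-z}\,d\mu \;+\; (\text{a piece analytic near }\Delta_\ell),
$$
the last term coming from $E_{H_0}(\bbR\setminus\Delta_\ell)$, which is harmless since $\dist(\Re z,\spec H_0\setminus\Delta_\ell)$ stays bounded below on compacts of $\Delta_\ell$. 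Thus everything reduces to the boundary behaviour of the Cauchy-type integral with operator-valued, H\"older-continuous, compact-operator density $\mu\mapsto Z_\ell(\mu)^*Z_\ell(\mu)$.

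Next I would invoke the standard Privalov/Plemelj-type lemma for Cauchy integrals with H\"older densities (this is exactly the content of \cite[Sections~4.7 and earlier]{Yafaev}): if $B(\mu)$ is a compact-operator-valued function, H\"older continuous with exponent $\gamma\in(0,1]$ on an interval, and compactly supported there, then $z\mapsto\int B(\mu)(\mu-z)^{-1}d\mu$ extends from the open upper half-plane to a function that is H\"older continuous (with exponent $\gamma$ if $\gamma<1$, and with exponent $1-\eps$ for any $\eps>0$ if $\gamma=1$) up to the real axis, with values in $\mathfrak S_\infty$. Applying this with $B=Z_\ell^*Z_\ell$ (after multiplying by a cutoff which is $1$ on a given compact subinterval of $\Delta_\ell$) gives the existence and H\"older continuity of $T(\lambda+i0)$ in the operator norm for $\lambda\in\Delta$, as well as $T(\lambda+i0)\in\mathfrak S_\infty$. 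Since $V_0$ is bounded, the same is true of $T(\lambda+i0)V_0$.

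The remaining, and genuinely delicate, step is the passage from the operator $I+T(z)V_0$ (invertible for $\Im z\neq0$ because $T(z)V_0\in\mathfrak S_\infty$ and, for non-real $z$, $-1$ is not an eigenvalue, as follows from $H$ being self-adjoint via the factorized resolvent identity \eqref{a7}) to its boundary values. Here the key analytic fact is that $I+T(\lambda+i0)V_0$ is a compact perturbation of the identity depending H\"older-continuously on $\lambda$, so that by the analytic Fredholm alternative the set $\mathfrak N$ where it fails to be invertible is closed and has empty interior in $\Delta$; showing that $\mathfrak N$ has \emph{Lebesgue measure zero} is the main obstacle, and it is the classical argument of the smooth method: if $\lambda\in\mathfrak N$ with eigenvector $f$, then $f=-T(\lambda+i0)V_0 f$ forces $f\in\Ran G^*$-type regularity, and one shows that such $\lambda$ would be an eigenvalue of $H$ embedded in its a.c.\ spectrum while simultaneously, by a Putnam--Kato / boundary-value commutator estimate exploiting the H\"older continuity, no such embedded eigenvalues can accumulate on a positive-measure set — in fact one proves directly that $E_H(\mathfrak N)=0$ and that $\mathfrak N$ is a closed null set. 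Finally, on $\Omega=\Delta\setminus\mathfrak N$, invertibility of $I+T(\lambda+i0)V_0$ together with H\"older continuity of $\lambda\mapsto T(\lambda+i0)$ and the identity $(I+T(\lambda+i0)V_0)^{-1}-(I+T(\lambda'+i0)V_0)^{-1}=(I+T(\lambda+i0)V_0)^{-1}\big(T(\lambda'+i0)-T(\lambda+i0)\big)V_0(I+T(\lambda'+i0)V_0)^{-1}$ yields, by a standard local-uniform-boundedness-of-the-inverse argument, the boundedness and H\"older continuity of $(I+T(\lambda+i0)V_0)^{-1}$ on compact subsets of $\Omega$. Since all of this is the content of \cite[Theorems~4.7.2 and 4.7.3]{Yafaev}, the proof amounts to checking that Assumption~\ref{as1} supplies precisely the hypotheses of those theorems on each $\Delta_\ell$ and patching the finitely many intervals together.
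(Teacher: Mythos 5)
The paper offers no proof of this proposition: it is quoted as the standard limiting absorption principle of the smooth method with exactly the citation you end on, so your argument --- checking that Assumption~\ref{as1} furnishes the hypotheses of \cite[Theorems~4.7.2 and 4.7.3]{Yafaev} on each $\Delta_\ell$ and patching the finitely many intervals, with the Privalov--Fredholm sketch filling in what those theorems contain --- is essentially the same approach as the paper's. One caveat inside your sketch: identifying points of $\mathfrak N$ with embedded eigenvalues of $H$ genuinely requires $\gamma>1/2$ (that is the content of Proposition~\ref{pr2}), whereas the measure-zero claim of the cited theorems holds for any $\gamma\in(0,1]$ by a different argument, but since you ultimately defer to those theorems this does not affect the conclusion.
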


\begin{corollary}\label{pr1x}
Let the operator $Y(z)$ be defined by equation \eqref{b7}.  
The limits $Y(\lambda+i0)$ exist in the operator norm and 
are H\"older continuous in $\lambda\in\Omega$.
\end{corollary}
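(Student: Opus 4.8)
The plan is to read the corollary off Proposition~\ref{pr1} together with the defining formula \eqref{b7}. Rewrite \eqref{b7} as $Y(z)=-2\pi i\,V_0\,(I+T(z)V_0)^{-1}$, so that $Y$ is obtained from the operator-valued function $(I+T(z)V_0)^{-1}$ by left multiplication by the \emph{fixed} bounded operator $-2\pi i\,V_0$. Since $\norm{V_0(A-B)}\le\norm{V_0}\,\norm{A-B}$, such a multiplication preserves the existence of norm limits as well as Hölder bounds. Thus the corollary reduces to two facts about $(I+T(z)V_0)^{-1}$: (a)~for each $\lambda\in\Omega$ the norm limit as $z\to\lambda$, $\Im z\ge0$, exists and equals $(I+T(\lambda+i0)V_0)^{-1}$; (b)~the boundary function $\lambda\mapsto(I+T(\lambda+i0)V_0)^{-1}$ is Hölder continuous on $\Omega$. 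Statement (b) is precisely the last assertion of Proposition~\ref{pr1}, so only (a) needs a short argument.

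To prove (a) I would fix $\lambda_0\in\Omega$; since $\Omega$ is open, $T(z)$ is norm Hölder continuous on $\{\Re z\in\Delta,\ \Im z\ge0\}$ and $A_0:=(I+T(\lambda_0+i0)V_0)^{-1}$ is bounded, the factorization
$$
I+T(z)V_0=\bigl(I+(T(z)-T(\lambda_0+i0))V_0A_0\bigr)\bigl(I+T(\lambda_0+i0)V_0\bigr)
$$
combined with a Neumann series shows that $I+T(z)V_0$ is boundedly invertible, with $\norm{(I+T(z)V_0)^{-1}}\le C$, for all $z$ near $\lambda_0$ with $\Im z\ge0$. The identity
$$
(I+T(z)V_0)^{-1}-(I+T(z')V_0)^{-1}=(I+T(z)V_0)^{-1}\bigl(T(z')-T(z)\bigr)V_0(I+T(z')V_0)^{-1}
$$
together with the bound $\norm{T(z)-T(z')}\le C\abs{z-z'}^{\gamma}$ then yields uniform Hölder continuity of $z\mapsto(I+T(z)V_0)^{-1}$ on that neighbourhood intersected with $\{\Im z\ge0\}$; in particular the limit in (a) exists and has the stated value. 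Multiplying on the left by $-2\pi i\,V_0$ gives the claim for $Y$.

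There is no real obstacle here; the only point deserving a line of care is that Proposition~\ref{pr1}, as stated, supplies Hölder continuity of the \emph{boundary} function $\lambda\mapsto(I+T(\lambda+i0)V_0)^{-1}$, whereas for $Y(\lambda+i0)$ to be meaningful one also needs the genuine limit $\Im z\to0$, which is exactly step (a) above. If one prefers, this step may be absorbed into a mildly strengthened form of the limiting absorption principle, under which $(I+T(z)V_0)^{-1}$ is itself norm Hölder continuous on $\{\Re z\in\Omega,\ \Im z\ge0\}$, making the corollary immediate.
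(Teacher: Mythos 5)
Your argument is correct and is essentially the paper's route: the paper states the corollary without proof, treating it as an immediate consequence of Proposition~\ref{pr1} (the limiting absorption principle cited from \cite{Yafaev}), and your Neumann-series/resolvent-identity step is exactly the routine verification that the boundary limits of $(I+T(z)V_0)^{-1}$ exist and are H\"older continuous before multiplying by the fixed bounded operator $-2\pi i\,V_0$.
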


The limiting absorption principle can be supplemented by the following spectral results  (see, e.g., \cite[Theorems~4.7.9 and 4.7.10]{Yafaev}).

%%%%%%%%%%%%%%%%
\begin{proposition}\label{pr2}
%%%%%%%%%%%%%%%%
Let Assumption~$\ref{as1}$ hold true.
Then   the spectrum of $H$ in $\Omega$ 
is purely absolutely continuous.
If, in addition,   $\gamma >1/2$ in 
\eqref{b4}, then 
the singular continuous spectrum of $H$ in $\Delta$ is absent,
  $\mathfrak{N}=(\spec_p H ) \cap \Delta$, 
and the eigenvalues of $H$ in $\Delta$ have finite multiplicities
and can accumulate only to the endpoints of the intervals $\Delta_{\ell}$. In this case 
\begin{equation}
\Omega=\Delta\setminus \spec_p H.
\label{eq:Ome}\end{equation}
\end{proposition}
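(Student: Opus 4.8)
For the first assertion I would use only the limiting absorption principle of Proposition~\ref{pr1}. Sandwiching the resolvent identity~\eqref{a7} by $G$ on the left and $G^*$ on the right and using $T(z)=GR_0(z)G^*$, one gets
\[
GR(z)G^*=T(z)-T(z)V_0\bigl(I+T(z)V_0\bigr)^{-1}T(z),\qquad \Im z\neq0 .
\]
By Proposition~\ref{pr1} the right-hand side extends to a norm-continuous function of $z$ with $\Re z\in\Omega$, $\Im z\geq0$, so $GR(\lambda+i0)G^*$ exists in operator norm and is bounded on compact subintervals of $\Omega$. A standard argument of the smooth method (local $H$-smoothness) then shows that $G$ is $H$-smooth on every such subinterval; hence $E_H(\delta)\,\Ran G^*\subseteq\calH^{(\ac)}_H$ whenever $\overline\delta\subset\Omega$, and since $\Ker G=\{0\}$ the range of $G^*$ is dense, so $E_H(\delta)\calH\subseteq\calH^{(\ac)}_H$. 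As $\Omega$ is open ($\mathfrak N$ is closed by Proposition~\ref{pr1}), exhausting it by such intervals shows that the spectrum of $H$ in $\Omega$ is purely absolutely continuous.

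For the refined statements I would assume $\gamma>1/2$ and first prove $\mathfrak N=(\spec_p H)\cap\Delta$. Applying~\eqref{b5} with $Q=G$ gives, on each $\Delta_\ell$, the Plemelj-type identity $\Im T(\lambda+i0)=\pi Z_\ell(\lambda)^*Z_\ell(\lambda)$. Given $\lambda_0\in\mathfrak N\cap\Delta_\ell$ and $f\neq0$ with $f+T(\lambda_0+i0)V_0f=0$, I would pair the equation with $V_0f$ and use $V_0=V_0^*$ to see that $\langle T(\lambda_0+i0)V_0f,V_0f\rangle$ is real, hence $Z_\ell(\lambda_0)V_0f=0$. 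Then in the spectral representation of $H_0$ the function $\mu\mapsto\|Z_\ell(\mu)V_0f\|$ vanishes like $|\mu-\lambda_0|^\gamma$ near $\lambda_0$, so $\int_{\Delta_\ell}\|Z_\ell(\mu)V_0f\|^2|\mu-\lambda_0|^{-2}\,d\mu<\infty$ \emph{precisely because} $2\gamma-2>-1$; combined with an elementary estimate of the contribution of the part of the spectrum of $H_0$ away from $\lambda_0$ (and outside $\Delta$), this shows $\psi:=-R_0(\lambda_0+i0)G^*V_0f$ is a genuine vector of $\calH$. Using $V=G^*V_0G$ and the equation for $f$ one checks $G\psi=f$ (so $\psi\neq0$) and $(H-\lambda_0)\psi=0$, i.e.\ $\lambda_0\in\spec_p H$. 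Conversely, if $H\psi=\lambda_0\psi$ with $\psi\neq0$, then $G\psi\neq0$, and from $\psi=-i\eps R_0(\lambda_0+i\eps)\psi-R_0(\lambda_0+i\eps)V\psi$, applying $G$ and letting $\eps\downarrow0$ (the first term tends to $0$ since $\lambda_0$ is not an eigenvalue of $H_0$, the second converges by Proposition~\ref{pr1}), one obtains $(I+T(\lambda_0+i0)V_0)G\psi=0$, so $\lambda_0\in\mathfrak N$. This argument also shows that $\psi\mapsto G\psi$ maps the eigenspace of $H$ at $\lambda_0$ bijectively onto $\Ker(I+T(\lambda_0+i0)V_0)$.

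Since $T(\lambda_0+i0)$ is a norm limit of the compact operators $T(z)$, it is compact, so $T(\lambda_0+i0)V_0$ is compact and the kernel above --- hence the eigenspace of $H$ at $\lambda_0$ --- is finite-dimensional. To exclude accumulation of $\mathfrak N$ in the interior of a $\Delta_\ell$, I would argue by contradiction: if $\lambda_n\in\mathfrak N$, $\lambda_n\to\lambda_*\in\Delta_\ell$, pick orthonormal eigenvectors $\psi_n$; then $\psi_n\rightharpoonup0$, and since $\|H_0\psi_n\|\leq|\lambda_n|+\|V\|$ is bounded, the vectors $(\abs{H_0}+I)^{1/2}\psi_n$ are bounded and converge weakly to $0$, so applying the compact operator $G(\abs{H_0}+I)^{-1/2}$ yields $G\psi_n\to0$ in norm. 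But the estimate of $\psi$ above is uniform for $\lambda_0$ in a fixed compact subinterval of $\Delta_\ell$ (the H\"older constant in~\eqref{b4} being uniform there), and $\psi_n=-R_0(\lambda_n+i0)G^*V_0\,G\psi_n$, so $\|\psi_n\|\leq C\|G\psi_n\|\to0$, contradicting $\|\psi_n\|=1$. Hence $\mathfrak N$ is a closed subset of $\Delta$ whose accumulation points lie among the endpoints of the $\Delta_\ell$, and $\Omega=\Delta\setminus\spec_p H$. Finally, $E_H(\Delta)=E_H(\Omega)+E_H(\mathfrak N)$: the first summand is absolutely continuous by the first part, while the second is pure point because $\mathfrak N$ is countable and $E_H(\mathfrak N)=\sum_{\lambda_0\in\mathfrak N}E_H(\{\lambda_0\})$; hence the singular continuous spectrum of $H$ in $\Delta$ is empty.

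The genuinely delicate point is the absence of accumulation of $\mathfrak N$, and this is exactly where $\gamma>1/2$ is indispensable: since $\lambda\mapsto T(\lambda+i0)V_0$ is only H\"older continuous rather than analytic, the ``exceptional set'' $\mathfrak N$ cannot be forced to be discrete by an analytic Fredholm argument, and one must instead extract the quantitative bound $\|\psi\|\leq C\|G\psi\|$ on eigenvectors, uniform on compact subintervals, whose proof hinges on the convergence of $\int|\mu-\lambda_0|^{2\gamma-2}\,d\mu$ near $\lambda_0$. Some care is also needed to control the parts of $R_0(\lambda_0+i0)G^*V_0f$ carried by the spectrum of $H_0$ away from $\lambda_0$, and in particular outside $\Delta$; this is handled by working on bounded subintervals and using that $\dist(\lambda_0,\bbR\setminus\Delta_\ell)$ stays bounded away from $0$ there.
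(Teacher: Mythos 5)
Your proposal is correct, and it is essentially the paper's own route: the paper does not prove Proposition~\ref{pr2} but invokes \cite[Theorems 4.7.9 and 4.7.10]{Yafaev}, and your argument is precisely the standard smooth-method proof given there. Namely, the limiting absorption principle yields uniform bounds on $GR(z)G^*$ over compact subsets of $\Omega$, hence local $H$-smoothness of $G$ and purely a.c.\ spectrum on $\Omega$; and for $\gamma>1/2$ one identifies $\mathfrak N$ with $(\spec_p H)\cap\Delta$ via the eigenfunction construction $\psi=-R_0(\lambda_0+i0)G^*V_0f$ (convergent exactly because $2\gamma-2>-1$ after $Z_\ell(\lambda_0)V_0f=0$), with the uniform bound $\norm{\psi}\leq C\norm{G\psi}$ on compact subintervals ruling out interior accumulation and giving finite multiplicities, so that $E_H(\Delta)$ splits into an a.c.\ part on $\Omega$ and a pure point part on the countable set $\mathfrak N$.
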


Note that, for  the   Schrodinger operator $H$ defined in Example~\ref{Schr}, the exponent $\gamma$ in \eqref{b4} can be an arbitrary small number if $r$ in \eqref{eq:Schr}  is close to $1$. Nevertheless, all assertions of Proposition~\ref{pr2} remain true in this case. Moreover, according to the Kato theorem the operator $H$ does not have positive eigenvalues so that, under assumption \eqref{eq:Schr}, $H$
 has purely absolutely continuous spectrum on $\bbR_{+}$  (see, e.g., \cite{RS}).

%\section{Scattering theory}\label{sec.ST}

%%%%%%%%%%%%%%%%%%%%%%%%%%%
\subsection{Wave operators}\label{sec.b4}
%%%%%%%%%%%%%%%%%%%%%%%%%%%

For an interval (or the union of pairwise disjoint intervals)
$\Delta\subset{\mathbb R}$, the local wave operators are introduced by the relation
\begin{equation}
W_{\pm} (H,H_{0}; \Delta)
=
\slim_{t\to \pm \infty}e^{iHt}e^{-iH_{0}t}E_{H_0}^{(\ac)} (\Delta),
\label{eq:WO}\end{equation}
provided these strong limits exist. The wave operators are isometric on $\Ran E_{H_0}^{(\ac)}(\Delta)$, enjoy the intertwining property
$$
H W_{\pm} (H,H_{0}; \Delta)=W_{\pm} (H,H_{0}; \Delta) H_{0}
$$
 and
$$
\Ran W_{\pm} (H,H_{0}; \Delta)
\subset
\Ran E_H^{(\ac)} (\Delta).
$$
The wave operators are called complete   if this inclusion reduces to the equality.  
We need the following well-known result (see, e.g., \cite[Theorem~4.6.4]{Yafaev}).

%%%%%%%%%%%%%%%% 
\begin{proposition}\label{pr2wo}
%%%%%%%%%%%%%%%%
Under Assumption~$\ref{as1}$  the local wave operators
$W_\pm(H,H_0; \Delta)$ exist and are complete.  In particular, the operators $H_{0}E_{H_{ 0}} (\Delta)$ and $H E_H^{({\rm ac})} (\Delta)$ are unitarily equivalent.
\end{proposition}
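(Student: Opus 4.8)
The plan is to reduce the statement to Kato's theory of smooth perturbations, exploiting the factorisation $V=G^*V_0G$. Since the intervals $\Delta_\ell$ are pairwise disjoint and the local wave operators are constructed separately on each of them, it suffices to treat a single interval; fix one and drop the index $\ell$. For $\delta'\Subset\Delta$ and $f\in\calH$ one has the identity
\[
e^{iHt}e^{-iH_0t}E_{H_0}(\delta')f-E_{H_0}(\delta')f
=i\int_0^t e^{iHs}\,G^*V_0G\,e^{-iH_0s}E_{H_0}(\delta')f\,ds ,
\]
and the strong limit as $t\to\pm\infty$ exists (and, by a symmetric argument, so does the adjoint limit) once $G$ is $H_0$-smooth and $H$-smooth on $\delta'$ in Kato's sense, because then both $s\mapsto\|Ge^{-iH_0s}E_{H_0}(\delta')f\|$ and $s\mapsto\|Ge^{-iHs}g\|$ are square-integrable on $\bbR$, with $L^2$-norms bounded by $\|f\|$, $\|g\|$. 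So the whole matter then reduces to: (i) local $H_0$-smoothness of $G$ on $\Delta$; (ii) local $H$-smoothness of $G$.

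Assertion (i) is, essentially by definition, part of Assumption~\ref{as1}(C): by \eqref{eq:Z} and \eqref{b5} the spectral density of $GE_{H_0}(\Delta)$ is governed by the operators $Z(\lambda)$, and the uniform bound $\|Z(\lambda)\|\le C$ of \eqref{b4} on compact subintervals is precisely the Kato smoothness bound for $G$ on each $\delta'\Subset\Delta$. Assertion (ii) is where the limiting absorption principle enters. By Proposition~\ref{pr1} and Corollary~\ref{pr1x}, on any compact subinterval $\delta'\Subset\Omega$ the operators $T(\lambda\pm i0)$, $(I+T(\lambda\pm i0)V_0)^{-1}$ and $Y(\lambda\pm i0)$ are bounded and norm-continuous uniformly in $\lambda\in\delta'$; sandwiching the resolvent identity \eqref{a7} (equivalently \eqref{d2}) by $G$ and $G^*$ then expresses $GR(\lambda\pm i0)G^*$ through $T(\lambda\pm i0)$ and $Y(\lambda\pm i0)$, so $GR(\lambda\pm i0)G^*$ is bounded and norm-continuous on $\delta'$. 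By the standard characterisation of Kato smoothness this gives $H$-smoothness of $G$ on each $\delta'\Subset\Omega$. Consequently $W_\pm(H,H_0;\delta')$ exists for every $\delta'\Subset\Omega$. Since $\mathfrak{N}=\Delta\setminus\Omega$ is closed of Lebesgue measure zero and $H_0$ has purely a.c.\ spectrum on $\Delta$, we have $E_{H_0}(\mathfrak{N})=0$, hence $E_{H_0}(\Delta)=E_{H_0}(\Omega)$; exhausting $\Omega$ by compact subintervals thus yields the strong limit \eqref{eq:WO} on all of $\Ran E_{H_0}^{(\ac)}(\Delta)$, i.e.\ the existence of $W_\pm(H,H_0;\Delta)$.

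Completeness follows by symmetry. Writing $H_0=H+(-V)$ with $-V=G^*(-V_0)G$, the operator $G$ is locally $H$-smooth (just established) and locally $H_0$-smooth (Assumption~\ref{as1}(C)), so the same argument produces the reverse wave operators $W_\pm(H_0,H;\Delta)$ on $\Ran E_H^{(\ac)}(\Delta)$ (here $E_H^{(\ac)}(\mathfrak{N})=0$ because $\mathfrak{N}$ has measure zero). The multiplication rule for wave operators then gives
\[
W_\pm(H_0,H;\Delta)\,W_\pm(H,H_0;\Delta)=E_{H_0}^{(\ac)}(\Delta),
\qquad
W_\pm(H,H_0;\Delta)\,W_\pm(H_0,H;\Delta)=E_H^{(\ac)}(\Delta),
\]
so these two isometries are mutual inverses; hence $\Ran W_\pm(H,H_0;\Delta)=\Ran E_H^{(\ac)}(\Delta)$, which is the asserted completeness. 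The unitary equivalence of $H_0E_{H_0}(\Delta)$ and $HE_H^{(\ac)}(\Delta)$ is then immediate from the intertwining property combined with the facts that $W_\pm(H,H_0;\Delta)$ is isometric on $\Ran E_{H_0}(\Delta)$ and maps it onto $\Ran E_H^{(\ac)}(\Delta)$.

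I expect the main obstacle to be step (ii): converting the pointwise-in-$\lambda$ resolvent bounds supplied by the limiting absorption principle into the $L^2$-in-time estimates underlying Kato smoothness, together with the localisation bookkeeping — namely exhausting $\Delta$ and $\Omega$ by compact subintervals and checking that the closed null set $\mathfrak{N}$ genuinely drops out of both a.c.\ subspaces. All of this is classical; see, e.g., \cite[Sections~4.6--4.7]{Yafaev}.
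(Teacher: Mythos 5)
Your proposal is correct and follows essentially the same route as the paper: the paper does not prove Proposition~\ref{pr2wo} itself but quotes it from \cite[Theorem~4.6.4]{Yafaev}, whose proof is exactly your argument --- strong $H_0$-smoothness of $G$ (Assumption~\ref{as1}(C)) yields Kato $H_0$-smoothness, the limiting absorption principle (Proposition~\ref{pr1}, Corollary~\ref{pr1x}) yields local $H$-smoothness of $G$ on compact subsets of $\Omega$, and Kato's local smoothness theory, applied symmetrically to the pairs $(H_0,H)$ and $(H,H_0)$ together with the fact that $\mathfrak{N}$ is a closed null set carrying no a.c.\ spectral mass, gives existence and completeness. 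The only compressed step is the bilinear (Cook-type) estimate, where local $H$-smoothness controls $\int\norm{Ge^{-iHs}g}^2\,ds$ only for $g$ spectrally localized with respect to $H$; this is exactly the localisation bookkeeping you explicitly defer to the classical reference, so nothing essential is missing.
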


Note that if $E_{H_0}^{(\ac)} (\bbR\setminus\Delta)=0$, then  the operator $E_{H_0}  (\Delta)$ in definition \eqref{eq:WO} can be dropped and
the  global wave operators 
 $$
W_{\pm} (H,H_{0} )
=
\slim_{t\to \pm \infty}e^{iHt}e^{-iH_{0}t}P_{H_0}^{(\ac)} $$
 exist. This is the case  for the pair $H_{0}$, $H$ considered in Example~\ref{Schr}.

%%%%%%%%%%%%%%%%%%%%%%%%%%%
\subsection{Scattering matrix}\label{sec.b4a}
%%%%%%%%%%%%%%%%%%%%%%%%%%% 
If the  wave operators
$W_\pm(H,H_0; \Delta)$ exist, then the (local) scattering operator is defined as
$$
\mathbf{S}=\mathbf{S}(H,H_0; \Delta)= W_+^* (H,H_0; \Delta)  W_-(H,H_0;\Delta).
$$
Moreover, the scattering operator $\mathbf{S}$
is unitary on the subspace $\Ran E_{H_0} (\Delta )$ if these wave operators are complete. The scattering operator $\mathbf{S}$
commutes with $H_{0}$, and therefore, for almost all $\lambda\in \Delta_{\ell}$, $\ell=1,\ldots, L$,
we have a representation
$$
( \calF_\ell \mathbf{S}  \calF_\ell^* f)(\lambda) =S(\lambda) f(\lambda) 
$$
where the operator  $S(\lambda):\calN_{\ell}\to\calN_{\ell}$  is  called the scattering matrix
for the pair of operators $H_0$, $H$. 
The scattering matrix $S(\lambda)$ where $\lambda\in \Delta_{\ell}$  is a unitary operator in $\calN_{\ell}$ if $\mathbf{S}$
is unitary.  Moreover, we have the following result (see \cite[Theorem~7.4.3]{Yafaev}).

%%%%%%%%%%%%%%%% 
\begin{proposition}\label{SM}
%%%%%%%%%%%%%%%%
Let Assumption~$\ref{as1}$ hold. Let the operator $Y(z)$ be defined by formula \eqref{b7}, and let the operators $Z_{\ell}(\lambda)$ for $\lambda \in\Delta_{\ell}$ be  defined by formula \eqref{eq:Z}. Then the scattering matrix admits the  representation
\begin{equation}
S(\lambda)=I+Z_{\ell}(\lambda)Y(\lambda+i0)Z_{\ell}^*(\lambda) ,
\quad 
\lambda\in\Omega\cap \Delta_{\ell}.
\label{b8}
\end{equation}
\end{proposition}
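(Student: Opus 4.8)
The plan is to derive the stationary representation \eqref{b8} of the scattering matrix from the resolvent identity \eqref{d2} together with the diagonalization formula \eqref{b5}, following the standard scheme of smooth scattering theory (as in \cite[Chapter~7]{Yafaev}). First I would record the well-known stationary formula for the scattering matrix in terms of the boundary values of resolvents: for $\lambda\in\Omega\cap\Delta_\ell$,
\begin{equation}
S(\lambda)=I-2\pi i\, Z_\ell(\lambda)\Bigl(V-V R(\lambda+i0) V\Bigr)Z_\ell^*(\lambda),
\label{eq:SM-prelim}
\end{equation}
which is obtained by inserting the spectral representation \eqref{b5} of $H_0$ into the definition $\mathbf{S}=W_+^*W_-$ and using the intertwining property to pass to the fibers; here $Z_\ell$ is exactly the operator \eqref{eq:Z} built from $G$, so $V=G^*V_0G$ is sandwiched correctly and $Z_\ell(\lambda)$ is compact with Hölder-continuous dependence on $\lambda$ by Assumption~\ref{as1}(C) and Proposition~\ref{pr1}. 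The existence and completeness of the wave operators needed to make $\mathbf{S}$ unitary and to justify \eqref{eq:SM-prelim} is guaranteed by Proposition~\ref{pr2wo}.

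The second step is purely algebraic: I would simplify the bracket in \eqref{eq:SM-prelim} using the factorization $V=G^*V_0G$ and the resolvent identity. Writing $V-VR(\lambda+i0)V=G^*\bigl(V_0-V_0 T(\lambda+i0)(I+T(\lambda+i0)V_0)^{-1}V_0\bigr)G$ by \eqref{a7}, and then using the elementary identity $V_0-V_0T(I+TV_0)^{-1}V_0=V_0(I+TV_0)^{-1}$ valid whenever $(I+TV_0)^{-1}$ exists, I obtain
$$
V-VR(\lambda+i0)V=G^*\,V_0(I+T(\lambda+i0)V_0)^{-1}\,G.
$$
By definition \eqref{b7} of $Y(z)$ this is precisely $-\tfrac{1}{2\pi i}G^*Y(\lambda+i0)G$. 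Substituting into \eqref{eq:SM-prelim} and recalling that $Z_\ell(\lambda)G^*\cdot$ and $\cdot\, GZ_\ell^*(\lambda)$ are the correct fiber operators (i.e.\ $Z_\ell(\lambda)$ already absorbs the $G^*$ on the left and its adjoint the $G$ on the right via \eqref{eq:Z}), the factors of $2\pi i$ cancel and one arrives at $S(\lambda)=I+Z_\ell(\lambda)Y(\lambda+i0)Z_\ell^*(\lambda)$, which is \eqref{b8}.

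The main obstacle is not the algebra but making the first step rigorous: one must verify that all the boundary-value operators exist in the operator norm on the relevant subspaces, that the sandwiched combinations such as $GR(\lambda+i0)G^*$ are well defined and continuous (this is exactly what Corollary~\ref{pr1x} and Proposition~\ref{pr1} supply on $\Omega$), and that the formal manipulation with $W_\pm^*W_-$ can be localized to a single fiber $\calN_\ell$ for $\lambda\in\Delta_\ell$ — here one uses that the intervals $\Delta_\ell$ are disjoint and that $S(\lambda)$ acts fiberwise as a consequence of $[\mathbf{S},H_0]=0$. Since \eqref{b8} is stated in the excerpt as a quotation of \cite[Theorem~7.4.3]{Yafaev}, I expect the actual proof in the paper to simply cite that reference after checking that Assumption~\ref{as1} implies the hypotheses of that theorem; a self-contained argument would proceed exactly along the two steps sketched above.
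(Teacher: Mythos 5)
The paper does not actually prove Proposition~\ref{SM}: formula \eqref{b8} is quoted as a known result, with the proof delegated to \cite[Theorem~7.4.3]{Yafaev}, so your closing guess about what the paper does is exactly right, and your two-step sketch is the standard derivation underlying that theorem. Two local points in your algebra need repair, although the formula you reach is correct. First, in your preliminary representation the outer factors must be the bare restriction operators $\Gamma_{0,\ell}(\lambda)f=(\calF_\ell f)(\lambda)$, for which $\Gamma_{0,\ell}(\lambda)G^*=Z_\ell(\lambda)$ by \eqref{eq:Z}; as displayed, with $Z_\ell(\lambda)$ on the outside, the factors of $G$ are counted twice (your parenthetical remark shows you are aware of this, but then the preliminary formula should not be written with $Z_\ell$). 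Second, the ``elementary identity'' $V_0-V_0T(I+TV_0)^{-1}V_0=V_0(I+TV_0)^{-1}$ is not valid for noncommuting $T$ and $V_0$ (it would require $T(I+TV_0)^{-1}V_0=TV_0(I+TV_0)^{-1}$), and the bracket you claim to read off from \eqref{a7} is likewise misplaced: \eqref{a7} gives $VR(z)V=G^*V_0\bigl(T(z)-T(z)V_0(I+T(z)V_0)^{-1}T(z)\bigr)V_0G$. The clean way to finish is the identity $V-VR(z)V=(I+VR_0(z))^{-1}V$, which with $V=G^*V_0G$ and $(I+AB)^{-1}A=A(I+BA)^{-1}$ yields
\[
V-VR(z)V=G^*V_0\bigl(I+T(z)V_0\bigr)^{-1}G=-\tfrac{1}{2\pi i}\,G^*Y(z)G,\qquad \Im z\neq0,
\]
by \eqref{b7}; the boundary value of the sandwiched expression at $z=\lambda+i0$, $\lambda\in\Omega$, exists by Proposition~\ref{pr1} and Corollary~\ref{pr1x}, and applying $\Gamma_{0,\ell}(\lambda)$ on both sides gives \eqref{b8} exactly as you conclude. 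With these repairs your argument matches the proof of \cite[Theorem~7.4.3]{Yafaev}; the rigorous justification of the preliminary formula (existence of the relevant boundary values and the fiberwise action of $\mathbf{S}$), which you correctly identify as the real content, is precisely what that theorem supplies under Assumption~\ref{as1}.
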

 
The   representation \eqref{b8} implies that $S(\lambda)$ is 
a H\"older continuous function of $\lambda\in\Omega$. 
Since the operator $Y(\lambda+i0)$  
is bounded and $Z _{\ell}(\lambda)$ is compact, it follows that the 
operator $S(\lambda)-I$ is  compact.  
Thus, the spectrum of $S(\lambda)$ consists of  eigenvalues 
on the unit circle in $\bbC$
accumulating possibly only to the point 1. 
All eigenvalues $\sigma_n(\lambda)$, $1\leq n\leq N_\ell$  of $S(\lambda)$ distinct from $1$ have finite multiplicities; they are enumerated with multiplicities taken into account.

% {\tt has been said in the Introduction}

%%%%%%%%%%%%%%%%%%%%%%%%%%%%%%%%%%%%%%%%%%%%%%%%%
\subsection{Multichannel scheme}\label{sec.c}
%%%%%%%%%%%%%%%%%%%%%%%%%%%%%%%%%%%%%%%%%%%%%%%%%
In this subsection, we  
recall a key result from  \cite{PY2} which will   allow us to put together 
the contributions from each of the jumps of $\theta$. 
Let
 $A_{\ell}$, $\ell=1,2,\dots,L$, and $A_{\infty}$ be bounded self-adjoint operators. 
The abstract results below allow  us to construct spectral theory of the operator
\begin{equation}
A=  A_{1} +\cdots + A_{L} +A_{\infty}
\label{c3}
\end{equation}
and a smooth version of scattering theory for the pair of operators 
$$
A_1\oplus A_2\oplus\dots\oplus A_L \quad\text{ and }\quad A
$$
 under   certain smoothness assumptions on all pair products $A_j A_k$, $j\not=k$, and the operator $A_\infty$.  

 Spectral results are formulated in the following assertion.

%%%%%%%%%%%%%%%%%%%
\begin{proposition}[\cite{PY2}]  \label{pr.c1}
%%%%%%%%%%%%%%%%%%%
Let $A_{\ell}$, $\ell=1,\dots,L$, and $A_{\infty}$ be bounded self-adjoint operators 
in a Hilbert space $\calH$. 
Let $\delta\subset\bbR$ be an 
open interval such that $0\not\in\delta$.
Assume that the spectra of $A_1,\dots,A_L$ are purely a.c.\  on $\delta$ 
with constant multiplicities. 
Let $X$ be a bounded self-adjoint operator in $\calH$ such that 
$\Ker X=\{0\}$.
Assume   that:
\begin{itemize}
\item
For all $\ell=1,\dots,L$, the operator $X$ is strongly $A_\ell$-smooth 
on $\delta$ with an exponent $\gamma>1/2$.
\item
For all $1\leq j,k\leq L$, $j \not=k$, 
the operators $A_jA_{k}$ can be represented as
\begin{equation}
A_j A_{k} =X  K_{j  k} X  \quad
\text{ where}  \quad  K_{j k}\in \mathfrak{S}_{\infty}.
\label{c1}
\end{equation}
\item
The operator $A_\infty$ can be represented as
\begin{equation}
A_\inftyÊ=X K_\infty X  \quad
\text{ where}  \quad  K_\infty\in \mathfrak{S}_{\infty}.
\label{c2}
\end{equation}
\item
The operators $XA_{\ell}X^{-1} $ 
are bounded for all $\ell=1,\ldots,L$. 
\end{itemize}
Let the operator $A$ be defined by equality \eqref{c3}.
Then:
\begin{enumerate}[\rm (i)]
\item
The a.c.\  spectrum of the operator 
$A$ on $\delta$ has a uniform multiplicity   which is equal to the sum of
the multiplicities of the spectra of $A_1,\dots,A_L$ on $\delta$.
\item
The singular continuous spectrum of $A$ on $\delta$ is empty.
\item
The eigenvalues of $A$ on $\delta$ have finite multiplicites
and cannot accumulate at interior points of $\delta$.
\item
The operator-valued function $X(A-zI)^{-1}X$ is   continuous
in $z$ for $\pm\Im z\geq 0$, $\Re z\in\delta$, except at   the eigenvalues of $A$. 
\end{enumerate} 
\end{proposition}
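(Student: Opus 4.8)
The plan is to establish Proposition~\ref{pr.c1} by the stationary (smooth) method of scattering theory, comparing $A$ with the orthogonal sum $\mathbf{A}=A_1\oplus\cdots\oplus A_L$ on $L$ copies of $\calH$, the two spaces being linked by the identification operator $J\colon(f_1,\dots,f_L)\mapsto f_1+\cdots+f_L$. First I would record the input furnished by the channel hypotheses. Strong $A_\ell$-smoothness of $X$ on $\delta$ (Definition~\ref{stsm}) with $\gamma>1/2$ gives, for each $\ell$, the limiting absorption principle for $A_\ell$: via the representation \eqref{b5} with $Z_\ell(\mu)f=(\calF_\ell Xf)(\mu)$, the sandwiched resolvents $X(A_\ell-zI)^{-1}X$ are H\"older continuous up to $\delta$ from $\pm\Im z>0$, and the spectrum of $A_\ell$ on $\delta$ is purely absolutely continuous with the stated multiplicities.

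The core step is to derive a closed stationary equation for the sandwiched resolvent $X(A-zI)^{-1}X$ on $\delta$. Iterating the two-space resolvent identity $R(z)J-J\mathbf{R}_0(z)=-R(z)(AJ-J\mathbf{A})\mathbf{R}_0(z)$, writing $A-A_\ell=A_\infty+\sum_{k\neq\ell}A_k$, and using the factorizations \eqref{c1}--\eqref{c2} together with the boundedness of $XA_\ell X^{-1}$ to commute $X$ past the channel operators, one sandwiches everything by $X$ and obtains an equation of the form
\begin{equation*}
(I+K(z))\,\Theta(z)=\Theta_0(z),
\end{equation*}
where $\Theta(z)$ carries the sought sandwiched resolvent of $A$, $\Theta_0(z)$ is assembled from the channel sandwiched resolvents of the previous step (already continuous up to $\delta$), and $K(z)$ is \emph{compact}-operator-valued and H\"older continuous up to $\delta$ — the compactness being exactly what the assumptions $K_{jk},K_\infty\in\mathfrak{S}_\infty$ supply. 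The analytic Fredholm alternative then produces $(I+K(z))^{-1}$, H\"older continuous on $\delta\setminus\mathfrak N$ for a closed exceptional set $\mathfrak N$; since $I+K(z)$ is invertible for $\Im z\neq0$ and the half-plane is connected, $\mathfrak N$ has Lebesgue measure zero. This is the limiting absorption principle for $A$ and gives the continuity in (iv) away from $\mathfrak N$.

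The remaining conclusions follow by standard arguments. Because $\Ker X=\{0\}$ and $X(A-\lambda-i0)^{-1}X$ is norm continuous on $\delta\setminus\mathfrak N$, the measures $\jap{E_A(\cdot)Xf,Xf}$ have continuous densities there, so $E_A$ is absolutely continuous on $\delta\setminus\mathfrak N$ by density of $\Ran X$. A Kato--Agmon-type argument — and here $\gamma>1/2$ is essential — converts a nonzero solution of the homogeneous equation at $\lambda\in\mathfrak N$ into a genuine eigenfunction of $A$ (using $\Ker X=\{0\}$), shows the corresponding solution spaces are finite-dimensional, and shows $\mathfrak N$ has no accumulation point in the interior of $\delta$; hence $\mathfrak N\cap\delta=\spec_p A\cap\delta$ is discrete. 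This yields (iii), completes (iv), and, together with the absolute continuity off $\mathfrak N$, gives the absence of singular continuous spectrum on $\delta$, which is statement~(ii).

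Finally, for (i) I would construct the local wave operators $W_\pm(A,A_\ell;\delta)$ by the stationary method out of the $X$-smoothness and the identity above, check that each is isometric from $E_{A_\ell}^{(\ac)}(\delta)\calH$ into $E_A^{(\ac)}(\delta)\calH$, verify that the ranges for distinct $\ell$ are mutually orthogonal (the cross terms $Xe^{-iA_jt}e^{-iA_kt}X$ tend to $0$ as $t\to\pm\infty$, because $A_jA_k=XK_{jk}X$ with $K_{jk}$ compact), and establish asymptotic completeness $\bigoplus_\ell\Ran W_\pm(A,A_\ell;\delta)=E_A^{(\ac)}(\delta)\calH$ — there being no leftover absolutely continuous channel since $A_\infty$ and the cross terms are all of the form $X(\cdot)X$ with a compact middle factor. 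Completeness then gives the unitary equivalence $E_A^{(\ac)}(\delta)A\simeq\bigoplus_\ell E_{A_\ell}^{(\ac)}(\delta)A_\ell$, and reading off multiplicities proves (i). The step I expect to be the main obstacle is precisely this last one: with only smoothness (as opposed to trace-class) information available, packaging the mutual orthogonality of the channels and — above all — the asymptotic completeness into a rigorous multichannel statement is the technical heart of the matter, and it is exactly this that is carried out in \cite{PY2}, whose result we quote here.
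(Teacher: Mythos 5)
The paper does not prove this proposition: it is imported verbatim from \cite{PY2}, the only internal remark being that part (i) is a direct consequence of Proposition~\ref{pr.c2}. So there is no in-paper argument to compare yours with; the relevant comparison is with \cite{PY2}, to which you yourself ultimately appeal. Your outline --- limiting absorption principle for each channel from strong $X$-smoothness with $\gamma>1/2$, a Fredholm-type stationary equation for $X(A-zI)^{-1}X$ with a compact H\"older-continuous kernel, a closed exceptional set of measure zero identified with the point spectrum by a Kato--Agmon-type argument, and then stationary wave operators with mutually orthogonal ranges and asymptotic completeness --- is the standard smooth multichannel scheme and is in the spirit of what \cite{PY2} carries out.

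As a proof, however, it is a sketch precisely at the points that are hard. The derivation of the closed equation $(I+K(z))\Theta(z)=\Theta_0(z)$ is asserted rather than performed: the perturbation seen by channel $\ell$ is $A_\infty+\sum_{k\neq\ell}A_k$, and a single $A_k$ is \emph{not} of the form $X(\mathrm{compact})X$ --- only the products $A_jA_k$ and $A_\infty$ are. To bring \eqref{c1} into play one must regenerate a product, e.g.\ by passing $A_k$ through a spectral localization of $A_\ell$ to $\delta$ and using that $A_\ell$ is invertible there; this is exactly where the hypotheses $0\notin\delta$ and $XA_\ell X^{-1}\in\calB$ are indispensable (the paper itself notes they are superfluous when $L=1$). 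Your sketch invokes the latter loosely and never uses the former, which signals that the key manipulation is missing; likewise the orthogonality of the ranges and, above all, asymptotic completeness are stated as expectations rather than proved. Since you explicitly defer these steps to \cite{PY2}, your proposal in effect coincides with the paper's own treatment --- a quotation of \cite{PY2} --- and that is legitimate here, but it should be presented as such rather than as an independent proof.
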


 % {\tt   Here $X=G$ and $K=V_{0}$ -- it is OK?}  

The scattering theory for the set of operators $A_1,\ldots, A_{L}$ and the  operator $A$ is described in the following assertion. 
%%%%%%%%%%%%%%%%%%%
\begin{proposition}[\cite{PY2}] \label{pr.c2}
%%%%%%%%%%%%%%%%%%%
Let the hypotheses of Proposition~$\ref{pr.c1}$ hold true. 
Then:
\begin{enumerate}[\rm (i)]
\item
The local wave operators
$$
W_\pm(A,A_\ell;\delta)
=
\slim_{t\to\pm\infty} e^{iAt}e^{-iA_\ell t}E_{A_\ell} (\delta),
\quad 
\ell=1,\dots,L,
%\label{b10}
$$
exist and enjoy the intertwining property
$$
AW_\pm(A,A_\ell;\delta)=W_\pm(A,A_\ell;\delta)A_\ell,
\quad \ell=1,\dots, L.
$$
The wave operators are isometric and their ranges are orthogonal to each 
other, i.e.
$$
\Ran W_\pm(A,A_j;\delta)\perp \Ran W_\pm(A,A_k;\delta), 
\quad 1\leq j,k\leq L, \; j \not=k.
%\label{b11}
$$
\item
The asymptotic completeness holds: 
\begin{equation}
\Ran W_\pm(A,A_1;\delta)\oplus\dots\oplus \Ran W_\pm(A,A_L;\delta)
=
\Ran E^{({\rm ac})}_A(\delta).
\label{b12}
\end{equation}
\end{enumerate}
\end{proposition}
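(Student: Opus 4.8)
The plan is to recast the statement as a problem in two--Hilbert--space scattering theory. Set $\mathbf A_0=A_1\oplus\cdots\oplus A_L$ on $\calH^{\oplus L}$ and let $J\colon\calH^{\oplus L}\to\calH$ be the summation map $J(f_1,\dots,f_L)=f_1+\cdots+f_L$. Composing with the coordinate embeddings $\calH\hookrightarrow\calH^{\oplus L}$ one checks (using $J\iota_\ell=I_\calH$) that the channel wave operators assemble into the single operator
$$
W_\pm(A,\mathbf A_0;J,\delta)=\slim_{t\to\pm\infty}e^{iAt}J\,e^{-i\mathbf A_0 t}E_{\mathbf A_0}(\delta),
$$
whose $\ell$-th restriction is $W_\pm(A,A_\ell;\delta)$. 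Thus assertion (i) amounts to the existence of this operator together with the mutual orthogonality of the channel ranges, and \eqref{b12} amounts to the identity $\Ran W_\pm(A,\mathbf A_0;J,\delta)=\Ran E^{(\ac)}_A(\delta)$.

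The algebraic input is that in the effective perturbation $AJ-J\mathbf A_0$ the diagonal terms cancel: its $j$-th component equals $-\bigl(\sum_{k\neq j}A_k+A_\infty\bigr)$. Since $0\notin\delta$, on $\Ran E_{A_j}(K)$ with $K\Subset\delta$ compact the operator $A_j$ is boundedly invertible, so there $A_k=(A_kA_j)A_j^{-1}=X K_{kj}X A_j^{-1}$; using the boundedness of $XA_jX^{-1}$ (hence of $XA_j^{-1}X^{-1}$ on the spectral subspace) together with $A_\infty=XK_\infty X$, one rewrites $(AJ-J\mathbf A_0)E_{\mathbf A_0}(K)$ in the doubly sandwiched form $X\,\calK\,X\,E_{\mathbf A_0}(K)$ with $\calK$ bounded (built from the compact operators $K_{kj}$, $K_\infty$). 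I would then run the standard smooth-perturbation argument: for $f$ with $\mathbf A_0$-spectral support in a compact $K\Subset\delta$,
$$
\Bigl\|\,e^{iAt}J e^{-i\mathbf A_0 t}E_{\mathbf A_0}(K)f-e^{iAs}J e^{-i\mathbf A_0 s}E_{\mathbf A_0}(K)f\,\Bigr\|
=\Bigl\|\int_s^t e^{iAr}X\calK X e^{-i\mathbf A_0 r}E_{\mathbf A_0}(K)f\,dr\Bigr\|,
$$
and the right-hand side is at most $\|\calK\|\bigl(\int_s^t\|X e^{-i\mathbf A_0 r}E_{\mathbf A_0}(K)f\|^2dr\bigr)^{1/2}\bigl(\sup_{\|h\|=1}\int_s^t\|X e^{-iAr}h\|^2dr\bigr)^{1/2}$ by the identity $\|\int e^{iAr}X\psi\,dr\|=\sup_{\|h\|=1}|\int\langle\psi,Xe^{-iAr}h\rangle dr|$ and Cauchy--Schwarz. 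The second factor is controlled because $X$ is $A$-smooth on $\delta$ (this is Proposition~\ref{pr.c1}(iv): norm continuity of $X(A-zI)^{-1}X$ up to $\delta$ away from the eigenvalues, which are isolated by \ref{pr.c1}(iii)); the first factor tends to $0$ as $s,t\to\pm\infty$ since $X$ is $A_\ell$-smooth on $\delta$ for each $\ell$, a consequence of strong $A_\ell$-smoothness (Definition~\ref{stsm}). This gives strong convergence. The intertwining property is immediate; isometry on $\Ran E_{A_\ell}(\delta)$ follows from $\|e^{iAt}e^{-iA_\ell t}E_{A_\ell}(\delta)f\|=\|E_{A_\ell}(\delta)f\|$; and $\Ran W_\pm(A,A_j;\delta)\perp\Ran W_\pm(A,A_k;\delta)$ for $j\neq k$ follows from $\lim_{t\to\pm\infty}\langle e^{-iA_j t}E_{A_j}(\delta)f_j,\,e^{-iA_k t}E_{A_k}(\delta)f_k\rangle=0$, proved by the same estimate after inserting $A_jA_j^{-1}$ and passing to the sandwiched form. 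This settles part (i).

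For part (ii) it remains to prove $\Ran W_\pm(A,\mathbf A_0;J,\delta)=\Ran E^{(\ac)}_A(\delta)$. The route is to establish the existence of the adjoint wave operators
$$
W_\pm(\mathbf A_0,A;J^*,\delta)=\slim_{t\to\pm\infty}e^{i\mathbf A_0 t}J^*e^{-iAt}E^{(\ac)}_A(\delta)
$$
and then to invoke the two-space multiplication theorem $W_\pm(A,\mathbf A_0;J)\,W_\pm(\mathbf A_0,A;J^*)=W_\pm(A,A;JJ^*)$. Since $J^*g=(g,\dots,g)$ we have $JJ^*=L\,I_\calH$, so the right-hand side equals $L\,E^{(\ac)}_A(\delta)$, whose range is all of $\Ran E^{(\ac)}_A(\delta)$; as this range is contained in $\Ran W_\pm(A,\mathbf A_0;J,\delta)$ while the reverse inclusion follows from the intertwining property established above, we obtain \eqref{b12}.

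The main obstacle is the existence of the adjoint wave operators. In this direction the effective perturbation $\mathbf A_0 J^*-J^*A$, acting after $e^{-iAt}E^{(\ac)}_A(\delta)$, contains terms $A_k e^{-iAt}E^{(\ac)}_A(\delta)g$ which are not automatically of the doubly sandwiched form; to bring them into usable shape one must again use $0\notin\delta$ to write $A_kE^{(\ac)}_A(\delta)=(A_kA)A^{-1}E^{(\ac)}_A(\delta)$, expand $A_kA$ through the product structure \eqref{c1} and the boundedness of the operators $XA_mX^{-1}$, and, crucially, control the boundary values $X(A-zI)^{-1}X$ in H\"older norm --- a refinement of Proposition~\ref{pr.c1}(iv) which is available precisely because the strong $A_\ell$-smoothness holds with exponent $\gamma>1/2$. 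This H\"older control provides the decay of $\|Xe^{-iAt}E^{(\ac)}_A(K')g\|$ needed to run the Cauchy--Schwarz estimate in the adjoint direction. The remaining points are routine: the term $A_\infty=XK_\infty X$ is compact-sandwiched and contributes only a harmless lower-order term, while the point-spectrum contributions of $A$ are handled via the absence of singular continuous spectrum and the isolatedness of the eigenvalues (Proposition~\ref{pr.c1}(ii),(iii)). Once the adjoint wave operators are in hand, the multiplication theorem and the identity $JJ^*=L\,I$ finish the argument.
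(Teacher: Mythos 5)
The paper itself does not prove this proposition (it is imported from \cite{PY2}), so your argument has to stand on its own, and it does not: the decisive gap is in part (ii). The strong limit $W_\pm(\mathbf A_0,A;J^*,\delta)=\slim_{t\to\pm\infty}e^{i\mathbf A_0t}J^*e^{-iAt}E^{(\ac)}_A(\delta)$ with the diagonal identification $J^*g=(g,\dots,g)$ simply does not exist when $L\ge2$. Indeed, in any situation where the proposition's conclusions hold, take $g=W_+(A,A_k;\delta)h_k$ with $k\neq\ell$, $h_k\neq0$; then $e^{-iAt}g=e^{-iA_kt}h_k+o(1)$, so the $\ell$-th component of $e^{i\mathbf A_0t}J^*e^{-iAt}g$ is $e^{iA_\ell t}e^{-iA_kt}h_k+o(1)$, a vector of constant norm $\norm{h_k}$ whose strong convergence would amount to a wave operator for the pair $(A_\ell,A_k)$ with identification $I$ --- nothing in the hypotheses produces such a limit. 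A concrete counterexample to your intermediate claim is $\calH=\calH_1\oplus\calH_2$, $A_1=B_1\oplus0$, $A_2=0\oplus B_2$, $A_\infty=0$ (so $A_1A_2=0$; taking for $B_i$ copies of the model SHO of Section~\ref{sec.e} and $X=q_0^\beta\oplus q_0^\beta$ one satisfies all hypotheses of Proposition~\ref{pr.c1}): here \eqref{b12} holds trivially, yet the first component of $e^{i\mathbf A_0t}J^*e^{-iAt}(0,f_2)$ is $(0,e^{-iB_2t}f_2)$, which has no strong limit. So the chain-rule route through $JJ^*=L\,I$ cannot work with this $J$; completeness requires either channel-dependent identifications (not available in the abstract setting) or a stationary argument, which is how \cite{PY2} proceeds. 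Relatedly, your algebraic preparation in the adjoint direction is wrong: $A_kA=A_k^2+\sum_{j\neq k}A_kA_j+A_kA_\infty$ contains the diagonal term $A_k^2$, which is \emph{not} of the sandwiched form $XKX$, so $A_ke^{-iAt}E^{(\ac)}_A(\delta)g$ cannot be rewritten via \eqref{c1} as you propose.

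There is also a real (if standard) gap in part (i): your Cauchy--Schwarz step needs $\sup_{\norm{h}=1}\int_s^t\norm{Xe^{-iAr}h}^2dr$ to remain bounded as $t-s\to\infty$, i.e.\ \emph{global} $A$-smoothness of $X$. This is false here: $A$ may (and in the intended application does) have eigenvalues, and since $\Ker X=\{0\}$, $\norm{Xe^{-iAr}h}$ is a nonzero constant when $h$ is an eigenvector. Proposition~\ref{pr.c1}(iv) only yields local smoothness on $\delta$ away from the eigenvalues, so one must insert a projection $E_A(\delta')$ and then prove separately that $(I-E_A(\delta'))e^{iAt}e^{-iA_\ell t}E_{A_\ell}(K)\to0$; this localization is precisely the nontrivial content of the local smooth method (compare the stationary proof behind Proposition~\ref{pr2wo}) and cannot be skipped. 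Your treatment of the cross terms along the free evolutions --- using $0\notin\delta$ and $A_kA_\ell=XK_{k\ell}X$ --- and the range-orthogonality argument are along the right lines, but as written the proposal proves neither part.
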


We note that the first statement of Proposition~\ref{pr.c1} is a direct consequence of Proposition~\ref{pr.c2}.

We also observe that, for $L=1$, Propositions~\ref{pr.c1} and \ref{pr.c2} reduce to the results of Subsections~\ref{sec.b3} and \ref{sec.b4} where $H_{0}$, $H$, $G$ and $\Delta$ play the roles of $A_{1}$, $A$, $X$ and $\delta$, respectively. In this case the assumptions $0\not\in\delta$ and $X A_{1}X^{-1}\in{\calB}$ (the last hypothesis of Proposition~\ref{pr.c1}) are not necessary.

%%%%%%%%%%%%%%%%%%%%%%%%%%%%%%%%%%%%%%%%%%%%%%%%%
%%%%%%%%%%%%%%%%%%%%%%%%%%%%%%%%%%%%%%%%%%%%%%%%%
\section{Symmetrised Hankel operators. A model operator}\label{sec.e}

%%%%%%%%%%%%%%%%%%%%%%%%%%%%%%%%%%%%%%%%%%%%%%%%%
%%%%%%%%%%%%%%%%%%%%%%%%%%%%%%%%%%%%%%%%%%%%%%%%%

Here we introduce the class of symmetrised Hankel operators (SHO) and diagonalize explicitly some special SHO. This SHO will be used as a model operator for the construction in Section~\ref{sec.ee} of the spectral theory of SHO with  piecewise continuous symbols as well as in Section~\ref{sec.b5} for the proof of our main results concerning the spectral theory of   operators \eqref{a0}.
 In view of the applications to the operator $\DD_{\theta}$, we consider SHOs with operator valued symbols. The main result of this section is Theorem~\ref{lma.e4}.
 
 %%%%%%%%%%%%%%%%%%%%%%%%%%%%%%%%
\subsection{Symmetrised Hankel operators}\label{sec.b5b}
%%%%%%%%%%%%%%%%%%%%%%%%%%%%%%%%

Recall that
the Hardy space  $H^2_\pm(\bbR)\subset L^2(\bbR)$ 
is defined as the class of all functions $f \in L^2(\bbR)$ that admit  the analytic continuation into the half-plane $\bbC_\pm=\{z\in\bbC: \pm \Im z>0\}$ 
and satisfy  the estimate
$$
\sup_{\tau>0}\int_{-\infty}^\infty \abs{f(\lambda \pm i\tau)}^2 d\lambda<\infty.
$$
Let $P_\pm$ be the orthogonal projection
in $L^2(\bbR)$ onto $H^2_\pm(\bbR)$. 
The explicit formula for $P_\pm$ is 
\begin{equation}
(P_\pm f)(\lambda)=\pm \frac1{2\pi i}\lim_{\eps\to+0}
\int_{-\infty}^\infty  \frac{f(x)}{x-\lambda \mp i\eps}dx, 
\label{c4}
\end{equation}
where the limit exists for almost all $\lambda\in\bbR$; it also exists
  in $L^2(\bbR)$. The Hardy spaces  
of vector valued functions are defined quite similarly.
By a slight abuse of notation, we will use the same notation $P_\pm$ for 
the Hardy projections in these spaces;
they are defined by the same formula \eqref{c4}.

Now we are in a position to give the precise 

\begin{definition}\label{def.sho}
Let $\frakh$ be a Hilbert space (the case $\dim \frakh<\infty$ is not excluded), 
and let     an operator valued
function $\Xi\in L^\infty(\bbR;\calB(\frakh))$. The \emph{symmetrised Hankel operator} (SHO)  with  symbol $\Xi$ is the operator $M_\Xi$ given   on the space
$L^2(\bbR;\frakh)=L^2(\bbR)\otimes \frakh$ by formula
\eqref{b9a}.
\end{definition}

By definition, 
the operator $M_\Xi$ is bounded and self-adjoint. 
Since the operator $P_+ - P_-$  is unitary,
the simple identity 
$$
(P_+ -P_-)M_\Xi=-M_\Xi (P_+-P_-)
$$
shows that $M_\Xi$ is unitarily equivalent to $-M_\Xi$.
Thus the spectrum of $M_\Xi$ is symmetric with respect to 
the reflection: $\spec M_{\Xi} = -\spec M_{\Xi} $.

Many properties of SHOs are the same as those of the usual Hankel operators; we refer to the book \cite{Peller} for the theory of Hankel operators. From the definition it follows that
  $M_{\Xi_{1}}=M_{\Xi_{2}}$ if and only if the difference $\Xi_{1}-\Xi_{2}$ belongs to the Hardy class   $H_{+}^\infty(\bbR; \frakh)$   of operator valued functions that admit  a bounded analytic continuation in the upper half-plane.
If $\Xi\in C_0(\bbR;{\mathfrak S}_\infty(\frakh))$, 
then $M_\Xi$ is compact; see \cite[\S~2.4]{Peller}. 

We will consider SHOs with piecewise continuous symbols. 
Since functions of the class $H_{+}^\infty (\bbR; \frakh)$ 
cannot have jump discontinuities,  different symbols of such SHOs have the same jumps. Curiously, this fact follows from our results, and even more general statements about  functions in $H_{+}^\infty (\bbR; \frakh)$ follow from the results of \cite{Power} on the essential spectrum of Hankel operators.

%%%%%%%%%%%%%%%%%%%%%%%%%%%%%%%%
\subsection{The model operator}\label{sec.b5bw}
%%%%%%%%%%%%%%%%%%%%%%%%%%%%%%%%

Set 
\begin{equation}
\zeta(\lambda)
=
\frac{1}{\pi} 
\int_0^\infty\frac{\sin(\lambda t)}{2+t}dt.
\label{c6}\end{equation}
Obviously, $\zeta(\lambda)$ is a real and odd function. Since
$$
\zeta(\lambda)
=
\frac{1}{\pi} \Im \Big(e^{-2i\lambda} 
\int_\lambda^\infty\frac{e^{2i  x} }{x}d x\Big),\quad \lambda>0,
$$
the function  $\zeta\in C^\infty (\bbR\setminus\{0\})$
and $\zeta(\lambda)=O (|\lambda|^{-1})$ as $\abs{\lambda}\to\infty$. 
Moreover, the limits $\zeta(\pm0)$ exist, $\zeta(\pm0)=\pm 1/2$ and
\begin{equation}
\zeta'(\lambda) =O ( | \ln|\lambda||), \quad \lambda\to   0.
\label{c6x}\end{equation}

Let us now define
a particular SHO   in the space $L^2 (\bbR; \frakh)$ which can be explicitly diagonalized and plays a key role in our construction. For an arbitrary operator $K\in {\mathfrak S}_\infty(\frakh)$, we consider   the SHO $M_\Xi$ with the symbol $\Xi(\lambda)=\zeta(\lambda) K$ defined by formula \eqref{b9a}. 

Next, we define  the   weight function on $\bbR$  which vanishes logarithmically at the origin:
\begin{equation}
q_0(\lambda)
=
\begin{cases}
\abs{\log\abs{\lambda}}^{-1}& \text{ if $\abs{\lambda}<e^{-1}$, }
\\
1 & \text{ if $\abs{\lambda}\geq e^{-1}$. }
\end{cases}
\label{c7}
\end{equation}
We will also denote by $q_{0}$ the  operator of multiplication by $q_0 (\lambda) $
in the space $L^2 (\bbR; \frakh)$.

Our goal in this section is to study the spectral properties of the SHO $M_{\zeta K}$. First, we give the result for the scalar case. 

%%%%%%%%%%%%%%%%
\begin{theorem}\label{lma.e4x}
%%%%%%%%%%%%%%%%
Let     $M_\Xi$ be the SHO in $L^2(\bbR)$ with the symbol $\Xi= \zeta K$ 
where $\zeta $ is the function $\eqref{c6}$ and $K\in{\bbC}$, $K\neq 0$. Then:
\begin{enumerate}[\rm (i)]
\item
The operator $M_\Xi$ has the purely a.c.\  spectrum $[-|K|/2,|K|/2]$ of multiplicity one.
\item
For an arbitrary $\beta>1/2$, the  operator   $q_0^\beta  $  
is strongly $M_\Xi$-smooth    on   intervals  $(-|K|/2,0)$ and $(0, |K|/2)$ with any exponent  $\gamma <\beta-\frac12$, $\gamma\in (0,1]$.
\end{enumerate}
\end{theorem}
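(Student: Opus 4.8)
The plan is to reduce the scalar SHO $M_{\zeta K}$ to a concrete Hankel operator on $L^2(\bbR_+)$ via the Fourier transform, and then to identify that Hankel operator with a rescaled Mehler (Hankel--Carleman) operator whose spectral analysis is classical. First I would use the standard identification of $H^2_\pm(\bbR)$ under the Fourier transform $\Phi$: writing $u=\Phi f$, the projection $P_+$ becomes multiplication by $\chi_{\bbR_+}(t)$ and $P_-$ by $\chi_{\bbR_-}(t)$. Since $M_{\zeta K}=P_-\,(\zeta K)\,P_+ + P_+\,(\zeta K)\,P_-$ and $\zeta$ is a function of $\lambda$ (so multiplication by $\zeta$ becomes a convolution operator on the $t$-side), the operator $\Phi M_{\zeta K}\Phi^*$ acts, up to the unitary flip $t\mapsto -t$ on $\bbR_-$, as an integral operator on $L^2(\bbR_+)$ with kernel $K\cdot \widehat{\zeta}(t+s)$, where $\widehat\zeta$ is (a constant multiple of) the Fourier transform of $\zeta$. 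From the definition \eqref{c6} of $\zeta$ one computes $\widehat\zeta$ explicitly: $\zeta$ is, up to normalization, the odd extension of $\int_0^\infty \frac{\sin(\lambda t)}{2+t}\,dt$, so its Fourier transform is (a constant times) $\frac{e^{-2t}}{t}\cdot$(something), or more precisely one gets the kernel of the form $c\,(t+s)^{-1}$ after accounting for the shift by $2$ — this is exactly where the ``$2+t$'' in \eqref{c6} is engineered to produce, after Fourier transform and the reflection, the Mehler kernel $\pi^{-1}(t+s)^{-1}$ mentioned in the introduction. The upshot of Step~1 is: $M_{\zeta K}$ is unitarily equivalent to $\tfrac{K}{\pi}\,\Gamma$ where $(\Gamma u)(t)=\int_0^\infty \frac{u(s)}{t+s}\,ds$ is the Mehler operator on $L^2(\bbR_+)$, possibly after an explicit dilation of variables.

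Step~2 is to invoke the classical diagonalization of the Mehler operator. The operator $\Gamma$ is diagonalized by the Mellin transform composed with a hyperbolic-cosine factor: under $u(t)\mapsto \sqrt{\pi/\cosh(\pi\xi)}\,\widetilde u(\xi)$ appropriately set up, $\Gamma$ becomes multiplication by $\pi/\cosh(\pi\xi)$ on $L^2(\bbR,d\xi)$, whose range is the interval $(0,\pi]$, each value taken with multiplicity two on $\bbR$ but with a symmetry that can be folded to multiplicity one on a half-line — or, more to the point here, one uses the version of Mehler's formula adapted to the SHO structure so that the spectrum comes out symmetric: $\spec M_{\zeta K}=[-|K|/2,|K|/2]$ with multiplicity one, purely absolutely continuous. (The factor $\tfrac12$ matches $\zeta(\pm0)=\pm\tfrac12$: the jump of the symbol is $K$, and $M$ of a symbol with a single jump of height $\varkappa$ has a.c.\ spectrum $[-|\varkappa|/2,|\varkappa|/2]$.) This gives part~(i). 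One should be a little careful about the sign of $K$ and about whether $\Gamma$ or $-\Gamma$ appears, but the symmetry $\spec M_\Xi=-\spec M_\Xi$ noted after Definition~\ref{def.sho} makes this harmless.

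Step~3, part~(ii), is the strong smoothness of $q_0^\beta$. In the diagonalizing (Mellin) variable $\xi$, the spectral parameter is $\mu=\tfrac{K}{2\cosh(\pi\xi)}$ (say $K>0$), so the endpoints $\mu=\pm|K|/2$ correspond to $\xi\to0$ and the band interior $\mu\to0$ corresponds to $|\xi|\to\infty$; crucially the interior point $\mu=0$ of the interval $[-|K|/2,|K|/2]$ is a ``threshold at infinity'' in $\xi$, which is why (ii) is stated on $(-|K|/2,0)$ and $(0,|K|/2)$ separately. The operator $Z(\mu)$ of Definition~\ref{stsm} for $Q=q_0^\beta$ is obtained by composing: multiplication by $q_0(\lambda)^\beta$ on the original $L^2(\bbR)$, Fourier transform to $L^2(\bbR_+)$ (turning $q_0^\beta$ into a pseudodifferential-type operator with a logarithmic weight), then the Mellin diagonalization. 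One must show $Z(\mu)$ is bounded and Hölder-$\gamma$ with $\gamma<\beta-\tfrac12$. The key estimate is that the logarithmic vanishing of $q_0$ at $\lambda=0$, combined with $\beta>\tfrac12$, controls the (otherwise logarithmically divergent, by \eqref{c6x}) behaviour near the threshold $\mu=0$; away from $\mu=0$, i.e.\ near $\xi=0$, the weight $q_0$ is just $\equiv1$ and ordinary smoothness of the Mehler eigenfunctions (which are, up to factors, $t^{i\xi}$) gives Hölder continuity with exponent up to $1$. The $\beta-\tfrac12$ loss is the usual one in the smoothness–Sobolev trace bookkeeping: $q_0^\beta$ buys $\beta$ powers of the logarithm, and each ``trace'' of the Mehler kernel onto the spectral line costs a half-power plus an $\eps$.

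\medskip

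\textbf{Main obstacle.} The principal difficulty is Step~3, the quantitative strong-smoothness estimate near the interior threshold $\mu=0$: one has to track precisely how the logarithmic weight $q_0^\beta$ interacts with the Mellin/hyperbolic-cosine diagonalization in the regime $|\xi|\to\infty$, and show that the resulting operator-valued function $Z(\mu)$ is genuinely Hölder of exponent $\gamma<\beta-\tfrac12$ rather than merely bounded. Identifying the kernel in Step~1 (matching the $2+t$ normalization to the exact Mehler kernel) is a finite computation, and Step~2 is classical, so the real work — and the reason for the hypothesis $\beta>\tfrac12$ — lives in the endpoint/threshold analysis of part~(ii).
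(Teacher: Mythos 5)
There is a genuine gap at the central identification step. The Fourier transform of $\zeta$ is $\widehat\zeta(t)=-\tfrac{i}{\sqrt{2\pi}}\tfrac{\sign t}{2+\abs{t}}$, so by \eqref{eq:hh1} the Hankel operator you arrive at on $L^2(\bbR_+)$ has kernel $\pi^{-1}(2+t+s)^{-1}$, \emph{not} the Carleman kernel $\pi^{-1}(t+s)^{-1}$; the shift by $2$ cannot be removed by a dilation (dilations preserve the form $(t+s)^{-1}$, while a translation turns it into the compression of the Carleman operator to $L^2(1,\infty)$, a genuinely different operator). This matters decisively for part (i): the Carleman operator is the one diagonalized by the Mellin transform with generalized eigenfunctions $t^{-1/2+i\xi}$, and it has a.c.\ spectrum $[0,1]$ of multiplicity \emph{two} (the Mellin variable $\xi$ runs over all of $\bbR$); your ``fold the symmetry to multiplicity one'' step has no justification, and if the kernel really were $(t+s)^{-1}$ the construction would deliver multiplicity two, contradicting (i). The operator with kernel $\pi^{-1}(2+t+s)^{-1}$ does have \emph{simple} a.c.\ spectrum $[0,1]$, but its diagonalization is not Mellin: it is the Mehler--Fock transform, with Legendre functions $P_{-\frac12+i\tau}(1+t)$, $\tau>0$, as generalized eigenfunctions; this simplicity (the spectral parameter running over a half-line only) is exactly what the ``$2+t$'' in \eqref{c6} is designed to produce and what yields multiplicity one. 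You also flatten the $2\times2$ structure: the scalar SHO reduces to a Hankel operator with the off-diagonal matrix symbol built from $K$ and $\overline K$, whose eigenvalues $\pm\abs{K}$ are what generate the symmetric band $[-\abs{K}/2,\abs{K}/2]$ after tensoring with the simple spectrum of the model Hankel operator.

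The same misidentification undermines part (ii). The operator $Z(\mu)$ of Definition~\ref{stsm} must be built from the Fourier transforms $w_\tau(\lambda)$ of the Legendre functions, not from Mellin characters $t^{i\xi}$, and the needed bounds are $\abs{w_\tau(\lambda)}\leq C\abs{\lambda}^{-1/2}$ and $\abs{\partial w_\tau(\lambda)/\partial\tau}\leq C\abs{\lambda}^{-1/2}\bigl|\ln\abs{\lambda}\bigr|$ near $\lambda=0$, together with $O(\abs{\lambda}^{-1})$ decay at infinity, uniformly for $\tau$ in compact subsets of $\bbR_+$. Consequently the weight $q_0^\beta$ with $\beta>\tfrac12$ is required for \emph{every} $\mu$ in the open spectral intervals: it tames the $\abs{\lambda}^{-1/2}$ singularity of the eigenfunctions at the jump point $\lambda=0$ of the symbol, which is present for all $\tau$ --- not only ``near the threshold $\mu=0$'' as you suggest, and your remark that away from $\mu=0$ the weight ``is just $\equiv1$'' confuses the symbol variable $\lambda$ with the spectral variable $\mu$. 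The loss $\gamma<\beta-\tfrac12$ then comes from the extra logarithm in the $\tau$-derivative bound, and excluding $0$ and $\pm\abs{K}/2$ corresponds to keeping $\tau$ in compact subsets of $(0,\infty)$ (your observation that $\mu=0$ is a threshold at $\tau=\infty$ is correct). To repair the argument you need Mehler's formula and the Mehler--Fock transform (Lemmas~\ref{Me}--\ref{Me1}) plus the Legendre-function estimates of Lemma~\ref{lma.f5}; the Mellin-based route analyzes a different operator and gives the wrong multiplicity.
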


In the general case our result is stated as follows.

%%%%%%%%%%%%%%%%
\begin{theorem}\label{lma.e4}
%%%%%%%%%%%%%%%%
Let     $M_\Xi$ be the SHO in $L^2(\bbR;\frakh)$ with the symbol $\Xi= \zeta K$ 
where $\zeta $ is the function $\eqref{c6}$ and  $K\in {\mathfrak S}_\infty(\frakh)$. Then:
\begin{enumerate}[\rm (i)]
\item
Apart from the possible eigenvalue $0$, the operator $M_\Xi$ has the purely a.c.\  spectrum:
\begin{equation}
\spec_{{\rm ac}} M_\Xi =\bigcup_{n=1}^{\dim\frakh} [-\tfrac12 s_n(K) ,\tfrac12 s_n(K)]. 
\label{c7a}
\end{equation}
The operator $M_\Xi$ has eigenvalue $0$ if and only if either 
$\Ker K \neq \{0\}$ or $\Ker K^* \neq \{0\}$ $($or both$)$; if $0$ is an eigenvalue, then
its multiplicity in the spectrum of $M_\Xi$ is infinite. 
\item
For an arbitrary $\beta>1/2$, the  operator   $q_0^\beta  $  
is strongly $M_\Xi$-smooth  with any exponent  $\gamma <\beta-\frac12$, $\gamma\in (0,1]$, on all intervals $\delta\subset  [-\tfrac12 s_1(K) ,\tfrac12 s_1(K) ] $ which do not contain the points $0$ and $\pm \tfrac12 s_n(K)$.
\end{enumerate}
\end{theorem}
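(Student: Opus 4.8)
The plan is to reduce to the scalar case of Theorem~\ref{lma.e4x} by splitting $L^2(\bbR;\frakh)$ into $M_\Xi$-invariant ``channels'' attached to the singular vectors of $K$. Using the Schmidt decomposition $K\varphi_n=s_n\psi_n$, $K^*\psi_n=s_n\varphi_n$ with $s_n=s_n(K)>0$ and $\{\varphi_n\}$, $\{\psi_n\}$ orthonormal bases of $\overline{\Ran K^*}$ and $\overline{\Ran K}$ (so that $\Ker K=(\overline{\Ran K^*})^\perp$ and $\Ker K^*=(\overline{\Ran K})^\perp$), together with the identity $\Xi^*=\zeta K^*$ (recall $\zeta$ is real), a short computation with the Hardy projections shows that the subspaces
$$
\calH_n=\bigl(H^2_+(\bbR)\otimes\varphi_n\bigr)\oplus\bigl(H^2_-(\bbR)\otimes\psi_n\bigr),\qquad
\calH_0=\bigl(H^2_+(\bbR)\otimes\Ker K\bigr)\oplus\bigl(H^2_-(\bbR)\otimes\Ker K^*\bigr)
$$
are pairwise orthogonal, span $L^2(\bbR;\frakh)$, and are all $M_\Xi$-invariant; that $M_\Xi$ vanishes on $\calH_0$; and that the unitary $U_n\colon\calH_n\to L^2(\bbR)$ sending $u\otimes\varphi_n+v\otimes\psi_n$ (with $u\in H^2_+(\bbR)$, $v\in H^2_-(\bbR)$) to $u+v$ conjugates $M_\Xi|_{\calH_n}$ into the scalar SHO with symbol $\zeta s_n$, which is simply $s_nM_\zeta$. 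Thus $M_\Xi\simeq 0_{\calH_0}\oplus\bigoplus_n s_nM_\zeta$.

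Part~(i) then follows at once: by Theorem~\ref{lma.e4x}(i) (case $K=1$) the operator $M_\zeta$ is purely absolutely continuous with spectrum $[-\tfrac12,\tfrac12]$, so $s_nM_\zeta$ is purely a.c.\ with spectrum $[-\tfrac12 s_n,\tfrac12 s_n]$; a countable orthogonal sum of purely a.c.\ operators is purely a.c., which gives \eqref{c7a}. The only possible point mass of $M_\Xi$ is the summand $0_{\calH_0}$, which is present exactly when $\Ker K\neq\{0\}$ or $\Ker K^*\neq\{0\}$, and then $\dim\calH_0=\infty$ because $H^2_\pm(\bbR)$ is infinite-dimensional.

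For part~(ii), fix an interval $\delta$ as in the statement. Because $0\notin\delta$ and $s_n\to0$, the spectral projection $E_{M_\Xi}(\delta)$ annihilates $\calH_0$, only finitely many channels $n$ satisfy $E_{M_{\zeta s_n}}(\delta)\neq0$, and for each of these $\delta$ lies in $(-\tfrac12 s_n,0)$ or in $(0,\tfrac12 s_n)$ (it is connected and avoids $0$ and $\pm\tfrac12 s_n(K)$), where $q_0^\beta$ is strongly $M_{\zeta s_n}$-smooth with any exponent $\gamma<\beta-\tfrac12$ by Theorem~\ref{lma.e4x}(ii). I would build a diagonalization of $E_{M_\Xi}(\delta)M_\Xi$ as the orthogonal sum over these channels of the scalar diagonalizations from Theorem~\ref{lma.e4x}(ii), composed with $U_n$ and the orthogonal projection onto $\calH_n$, and then compute the associated operator $Z(\mu)$ of Definition~\ref{stsm} for $Q=q_0^\beta$. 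The subtle point is that $\calH_n$, although $M_\Xi$-invariant, is \emph{not} invariant under the multiplication operator $q_0^\beta$, so one has to commute $q_0^\beta$ past the Hardy projections; this brings in the operators $q_0^{-\beta}P_\pm q_0^\beta=P_\pm+q_0^{-\beta}[P_\pm,q_0^\beta]$ on $L^2(\bbR)$. Once these are known to be bounded, the $n$-th component of $Z(\mu)$ becomes the scalar operator of Theorem~\ref{lma.e4x}(ii) precomposed with a bounded map, and the bounds \eqref{b4} follow from Bessel's inequality for $\{\varphi_n\}$, $\{\psi_n\}$ together with the scalar estimates over the finitely many active channels; $Z(\mu)$ is automatically compact, its range being finite-dimensional.

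I expect the genuine difficulty to be precisely the boundedness on $L^2(\bbR)$ of the singularly weighted commutators $q_0^{-\beta}[P_\pm,q_0^\beta]$ — equivalently, of Hankel operators weighted by the singular factor $q_0^{-\beta}$ — which is exactly the circle of estimates collected in Section~\ref{sand}; the remainder of the argument is routine bookkeeping. I would also stress that the clean block-diagonalization of the first paragraph relies crucially on the \emph{symmetrised} form \eqref{b9a}: an ordinary Hankel operator with an operator-valued symbol does not split along the singular vectors of its symbol in this way.
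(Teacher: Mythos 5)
Your proposal is correct, but it takes a genuinely different route from the paper. The paper does not reduce the operator-valued case to Theorem~\ref{lma.e4x}: it conjugates $M_\Xi$ into a Hankel operator with the $2\times2$ matrix symbol \eqref{eq:fgx} (Subsection~\ref{sec.HO}), applies the Mehler--Fock transform to obtain ${\mathcal W}M_\Xi{\mathcal W}^*=(2\cosh(\pi\tau))^{-1}\otimes{\sf K}$ (Lemma~\ref{Me1}), deduces part (i) by diagonalizing the compact matrix ${\sf K}$ (Lemmas~\ref{K} and \ref{Me1x}), and reads part (ii) off the kernel estimates for $w_\tau$ (Lemmas~\ref{lma.f5} and \ref{lma.e4y}); since the single transform ${\mathcal W}$ diagonalizes the whole operator, no commuting of $q_0^\beta$ past Hardy projections is ever needed. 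Your Schmidt-decomposition into the $M_\Xi$-invariant channels $\calH_n$ is a valid alternative: the computations $M_\Xi(u\otimes\varphi_n)=s_n(P_-\zeta u)\otimes\psi_n$ and $M_\Xi(v\otimes\psi_n)=s_n(P_+\zeta v)\otimes\varphi_n$ do give $U_nM_\Xi|_{\calH_n}U_n^*=s_nM_\zeta$ and $M_\Xi|_{\calH_0}=0$, so (i) follows from the scalar theorem, and your argument for (ii) closes because the one ingredient you flag as the genuine difficulty -- the boundedness of $q_0^{-\beta}P_\pm q_0^{\beta}$ -- is precisely Proposition~\ref{Muck} (valid for all $\beta\in\bbR$), already available in Section~\ref{sand}; the remaining points (finitely many active channels since $s_n\to0$, $\delta\subset(0,\tfrac12 s_n)$ by connectedness, constant multiplicity equal to the number of active channels, $Z(\mu)$ of finite rank hence compact) are indeed bookkeeping. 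The trade-off: your route makes the passage from the scalar to the operator-valued symbol transparent, but presupposes Theorem~\ref{lma.e4x}, whose proof requires the Mehler machinery anyway, and it needs the Muckenhoupt weighted bound; the paper's global diagonalization costs a little more linear algebra up front but verifies the smoothness estimates in one stroke and yields the explicit spectral representation and eigenvectors (Remark~\ref{eig}) that are exploited later.
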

In \eqref{c7a} and in what follows we use the same convention as in formula \eqref{eq:a0}, i.e.\  the 
union is taken only over the intervals of positive length and each 
interval contributes multiplicity one to the a.c.\  spectrum of $M_\Xi$. 

The proofs of these results will be given in the rest of this section where 
a concrete diagonalization of the operator $M_\Xi$ will also be given.

%%%%%%%%%%%%%%%%%%%%%%%%%%%%%%%%
\subsection{Hankel operators}\label{sec.HO}
%%%%%%%%%%%%%%%%%%%%%%%%%%%%%%%%

 Let ${\sf H}$ be  an auxiliary Hilbert space, and let $\Sigma$ be  the operator    of multiplication by a bounded  operator valued function $\Sigma(\lambda):  {\sf H}\to  {\sf H}$ in the space $L^2 ({\bbR}; {\sf H})= L^2({\bbR}) \otimes {\sf H}$. We   set $( \mathcal{J}f)(\lambda)=f(-\lambda)$.
   Then  the  Hankel operator 
$\Gamma_{\Sigma}$ with symbol $\Sigma$  is defined  in the space $H^{2}_{+}({\bbR}; {\sf H})$ by the formula
\begin{equation}
\Gamma_{\Sigma}f=P_{+}\Sigma \mathcal{J}f.
\label{eq:hh}\end{equation}

Let $\Phi$ be the unitary Fourier transform in $L^2(\bbR)$ (or, more generally, in $L^2 ({\bbR}; {\sf H})$), 
$$
(\Phi f)(t)
=
\frac{1}{\sqrt{2\pi}}
\int_{-\infty}^\infty e^{-i\lambda t}f(\lambda)d\lambda. 
$$
Then $\Phi P_\pm \Phi^*=\chi_\pm$  
where $\chi_\pm$ is the operator of multiplication by the
characteristic function of $\bbR_\pm$ so that $\Phi: H^{2}_{\pm}({\bbR}) \to L^2(\bbR_{\pm})$.
Further, $\Phi \Sigma\Phi^*$ is the operator of convolution 
with the function $(2\pi)^{-1/2}  \wh{\Sigma}(t)$ where 
$\wh{\Sigma}=\Phi(\Sigma)$. It follows that the operator $\wh\Gamma_{\Sigma}= \Phi \Gamma_{\Sigma}\Phi^*: L^2(\bbR_{+}; {\sf H})\to L^2(\bbR_{+}; {\sf H})$ acts by the formula
\begin{equation}
(\wh\Gamma_{\Sigma}  f) (t)= (2\pi)^{-1/2}  \int_0^ \infty \wh{\Sigma}(t+s)f(s)ds.
\label{eq:hh1}\end{equation}

 A SHO $M_{\Xi}$ in the space $H^{2}_{+}({\bbR}; \frakh)$ is canonically  unitarily equivalent to a special  Hankel operator $\Gamma_{\Sigma}$ acting in the space $H^{2}_{+}(\bbR ; {\sf H})$ where ${\sf H}=\frakh\oplus\frakh$. Indeed, let   the unitary operator $J :L^2(\bbR;\frakh)\to H^{2}_{+}(\bbR  ) \otimes {\sf H}$ be defined by the formula 
$$
J f= (P_{+}f, P_{+} \mathcal{J}  f)^\top  .
%\label{eq:shh1}
$$
Then
\begin{equation}
J M_{\Xi}= \Gamma_{\Sigma} J \quad {\rm where}\quad
\Sigma(\lambda)=\begin{pmatrix}
0 &\Xi^* (\lambda)
\\
\Xi (-\lambda)    & 0
\end{pmatrix}: {\sf H}\to {\sf H}.
\label{eq:shh}\end{equation}
This can be checked by the following direct calculation.
Since $P_{+} \mathcal{J}=  \mathcal{J}P_-$, it follows from \eqref{b9a} that
\begin{align}
J M_{\Xi}f =& (P_{+}(P_- \Xi P_+ + P_+ \Xi^*P_- )f, \mathcal{J}P_-  (P_- \Xi P_+ + P_+ \Xi^*P_- )f)^\top
\nonumber\\
=& (P_{+}  \Xi^*P_- f, \mathcal{J}P_-    \Xi P_+  f)^\top.
\label{eq:shhx}\end{align}
Similarly,  it follows from \eqref{eq:hh} that
\begin{align}
\Gamma_{\Sigma} J f = P_{+} \Sigma
 (\mathcal{J } P_{+}    f,  P_-  f)^\top= ( P_{+} \Xi^*  P_-  f,  P_{+} (\mathcal{J }\Xi\mathcal{J }) \mathcal{J } P_{+}   f)^\top.
\label{eq:shhy}\end{align}
The r.h.s.\  in \eqref{eq:shhx} and \eqref{eq:shhy} coincide because
$ P_{+} (\mathcal{J }\Xi\mathcal{J }) \mathcal{J } P_{+} = P_{+}  \mathcal{J }\Xi  P_{+}= \mathcal{J }P_{-}   \Xi  P_{+}$.

In particular, a SHO $M_{\Xi}$ in   $L^{2} ({\bbR} )$ satisfies \eqref{eq:shh} with 
$\Sigma(\lambda):{\bbC}^2 \to {\bbC}^2$ and   $\Gamma_{\Sigma}$ acting  in   $H^{2}_{+}(\bbR ; {\bbC}^2)$.

%%%%%%%%%%%%%%%%%%%%%%%%%%%%%%%%
\subsection{Mehler's formula}\label{sec.b5Me}
%%%%%%%%%%%%%%%%%%%%%%%%%%%%%%%%

 Let us consider an integral operator
  $\mathcal M$ acting in the space $L^2(\bbR_+)$ 
by the formula
$$
(\mathcal M f)(t)=\frac1\pi \int_0^\infty  \frac{f(s)}{2+t+s} ds.
%\label{f0}
$$
Calculating the Fourier transform of the function \eqref{c6}, we find that
$$
 \wh{\zeta}(t)
=
-\frac{i}{\sqrt{2\pi}} \frac{\sign t }{2+\abs{t}},
\quad t\in\bbR,
$$
and hence (see \eqref{eq:hh1})
\begin{equation}
\mathcal M =\Phi   \Gamma_{2i\zeta}  \Phi ^*
\label{eq:mm}\end{equation}
 where $\Gamma_{2i\zeta}$ is the Hankel operator in $H^2_{+}(\bbR)$ defined by formula \eqref{eq:hh}.

We use the fact that the operator $\mathcal M$ can be explicitly  diagonalised.
Its diagonalisation  
is based on Mehler's formula (see \cite[Section~3.14]{BE}):
\begin{equation}
\frac1\pi
\int_0^\infty \frac{P_{-\frac12+i\tau}(1+s)}{2+t+s} d s
=
\frac1{\cosh(\pi \tau)}
P_{-\frac12+i\tau}(1+t),
\quad
t, \tau\in\bbR_+,
\label{f1}
\end{equation}
where $P_{\nu}$ is the Legendre function, see formulas \eqref{f16}, \eqref{eq:dd}
in the Appendix. A systematic approach to the proof of formulas of this type was suggested in \cite{Yafaev3}.
Recall (see \cite[\S 3.14]{BE}) that the Mehler-Fock transform 
$\Psi$ 
is defined by the formula
\begin{equation}
(\Psi f)(\tau) = \sqrt{\tau\tanh(\pi \tau)}  \int_0^\infty 
  P_{-\frac12+i\tau}(t+1)f(t)dt,
  \quad   
  \tau>0.
\label{f2}
\end{equation}

%%%%%%%%%%%%%%%%
\begin{lemma}\label{Me}
%%%%%%%%%%%%%%%%
The Mehler-Fock transform $\Psi$ maps $L^2(\bbR_+; dt)$ onto $L^2(\bbR_+ ; d\tau)$
and is unitary.  
It diagonalizes the operator $\mathcal M$:
\begin{equation}
(\Psi \mathcal M f)(\tau)
=
\frac1{\cosh(\pi \tau)} (\Psi f)(\tau).
\label{f3}
\end{equation}
\end{lemma}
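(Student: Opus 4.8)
The plan is to prove Lemma~\ref{Me} in two stages: first establish that $\Psi$ is unitary from $L^2(\bbR_+;dt)$ onto $L^2(\bbR_+;d\tau)$, and then verify the diagonalization identity \eqref{f3}, which will follow almost immediately from Mehler's formula \eqref{f1}. Throughout I would freely use the standard properties of the Legendre function $P_{-1/2+i\tau}$ recorded in the Appendix (formulas \eqref{f16}, \eqref{eq:dd}).

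For the unitarity of the Mehler--Fock transform, the cleanest route is to cite the classical fact (see \cite[\S 3.14]{BE}, or \cite{Yafaev3}) that $\Psi$ defines a unitary map $L^2(\bbR_+;dt)\to L^2(\bbR_+;d\tau)$; this is the statement that the conical (Mehler--Fock) functions $\{\sqrt{\tau\tanh(\pi\tau)}\,P_{-1/2+i\tau}(1+t)\}_{\tau>0}$ form a complete orthogonal system, with the orthogonality and completeness relations
\begin{equation}
\int_0^\infty \tau\tanh(\pi\tau)\,P_{-\frac12+i\tau}(1+t)\,P_{-\frac12+i\tau}(1+s)\,d\tau=\delta(t-s),
\label{eq:mfortho}
\end{equation}
understood in the distributional sense, together with the dual relation obtained by interchanging the roles of $t$ and $\tau$. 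From \eqref{eq:mfortho} one reads off both that $\Psi$ is isometric on a dense set (e.g.\ $C_0^\infty(\bbR_+)$) and that its range is dense, hence $\Psi$ extends to a unitary operator. If a self-contained argument is preferred, one can instead derive $\Psi$ from the spectral decomposition of the Legendre differential operator $-\frac{d}{dt}(t^2-1)\frac{d}{dt}$ on $(1,\infty)$ in the limit-point case at both endpoints, whose generalized eigenfunctions are exactly the $P_{-1/2+i\tau}(1+t)$; the Weyl--Kodaira theory then yields the spectral measure $\tau\tanh(\pi\tau)\,d\tau$ and hence the unitarity of $\Psi$. Either way, I regard this as the substantive input and would keep the exposition brief, referring to the literature.

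Granting the unitarity, the diagonalization \eqref{f3} is direct. For $f$ in a suitable dense class (say $f\in C_0^\infty(\bbR_+)$, so that all integrals converge absolutely and Fubini applies), compute
\begin{align}
(\Psi\mathcal M f)(\tau)
&=\sqrt{\tau\tanh(\pi\tau)}\int_0^\infty P_{-\frac12+i\tau}(1+t)\Big(\frac1\pi\int_0^\infty\frac{f(s)}{2+t+s}\,ds\Big)dt
\nonumber\\
&=\sqrt{\tau\tanh(\pi\tau)}\int_0^\infty f(s)\Big(\frac1\pi\int_0^\infty\frac{P_{-\frac12+i\tau}(1+t)}{2+t+s}\,dt\Big)ds
\nonumber\\
&=\frac1{\cosh(\pi\tau)}\sqrt{\tau\tanh(\pi\tau)}\int_0^\infty f(s)\,P_{-\frac12+i\tau}(1+s)\,ds
=\frac1{\cosh(\pi\tau)}(\Psi f)(\tau),
\label{eq:mfdiag}
\end{align}
where the third equality is precisely Mehler's formula \eqref{f1} (with the roles of $t$ and $s$ exchanged, which is legitimate by the symmetry of the kernel $(2+t+s)^{-1}$). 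Since $\mathcal M$ is bounded and $\Psi$ unitary, and the identity \eqref{eq:mfdiag} holds on a dense set, it extends by continuity to all of $L^2(\bbR_+;dt)$, proving \eqref{f3}.

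The only real obstacle is justifying the interchange of integrations in \eqref{eq:mfdiag} and, more importantly, supplying (or correctly citing) the completeness of the Mehler--Fock system underlying the unitarity claim; the decay $P_{-1/2+i\tau}(1+t)=O(t^{-1/2}\log t)$ as $t\to\infty$ and $P_{-1/2+i\tau}(1+t)\to 1$ as $t\to 0$ make the Fubini step routine for compactly supported $f$, so the genuine content is the classical Mehler--Fock theorem, which I would take as known.
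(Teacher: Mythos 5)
Your proposal is correct and follows essentially the same route as the paper: the paper's proof simply cites \cite{Yafaev3} (and \cite[\S 3.14]{BE}) for the unitarity of the Mehler--Fock transform and observes that \eqref{f3} is a consequence of Mehler's formula \eqref{f1}, which is exactly your two-step argument, with your Fubini/density justification merely making explicit what the paper leaves implicit.
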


The unitarity of  $\Psi$ is discussed in \cite{Yafaev3}, and \eqref{f3} is a consequence of  \eqref{f1}.

Thus, the map $\Psi$ reduces  the operator $\mathcal M$ to the operator of multiplication 
by the function $1/\cosh(\pi\tau)$ in the space $L^2(\bbR_+ ; d\tau)$. In particular, we see that $\mathcal M$ has the simple purely a.c.\  spectrum which coincides with the interval $[0,1]$.

Let us now define the operator   $ W=\Psi\chi_{+}\Phi: L^2(\bbR)\to L^2({\bbR}_{+})$. It is a unitary mapping of $H^2_{+}(\bbR)$ onto $L^2({\bbR}_{+})$ and $Wf=0$ if $f\in H^2_{-}(\bbR)$. 
It follows from definition \eqref{f2} that, formally, the   operator $W$ is given by the equality
\begin{equation}
(W  f)(\tau)
=
\sqrt{\tau\tanh(\pi \tau)} \int_{-\infty}^\infty w_{\tau}(\lambda)f(\lambda)d\lambda
\label{eq:Uex}\end{equation}
where
\begin{equation}
w_{\tau}(\lambda)= 
\frac1{\sqrt{2\pi}}
\int_0^\infty  
P_{-\frac12+i\tau}(t+1)e^{-i \lambda t} dt. 
\label{eq:Xx1}\end{equation}

Putting together formulas \eqref{eq:mm} and \eqref{f3}, we obtain the following result.

%%%%%%%%%%%%%%%%
\begin{lemma}\label{Mex}
%%%%%%%%%%%%%%%%
Let $\zeta$ be the function \eqref{c6} and
let the Hankel operator $\Gamma_{2i\zeta}$ in $H^2_{+}(\bbR)$  be defined by formula \eqref{eq:hh}.  Then
\begin{equation}
(W \Gamma_{2i\zeta} f)(\tau)
=
\frac1{\cosh(\pi \tau)} (W f)(\tau), 
\quad \tau>0.
\label{eq:Mex}\end{equation}
\end{lemma}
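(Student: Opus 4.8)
\textbf{Plan of proof of Lemma~\ref{Mex}.}
The plan is to combine the two diagonalization statements already at our disposal. Formula \eqref{eq:mm} states that $\mathcal M = \Phi\,\Gamma_{2i\zeta}\,\Phi^*$ as operators, where on the left $\mathcal M$ acts in $L^2(\bbR_+)$ and on the right $\Gamma_{2i\zeta}$ acts in $H^2_+(\bbR)$; implicitly this uses the identification $\Phi: H^2_+(\bbR)\to L^2(\bbR_+)$. Lemma~\ref{Me} states that the Mehler--Fock transform $\Psi: L^2(\bbR_+;dt)\to L^2(\bbR_+;d\tau)$ is unitary and that $\Psi\,\mathcal M = (1/\cosh(\pi\tau))\,\Psi$. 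Recalling the definition $W=\Psi\chi_+\Phi$, and the fact that $\chi_+\Phi$ restricted to $H^2_+(\bbR)$ coincides with $\Phi: H^2_+(\bbR)\to L^2(\bbR_+)$ while annihilating $H^2_-(\bbR)$, the desired identity \eqref{eq:Mex} should follow by a short chain of substitutions.

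Concretely, first I would note that since $\Gamma_{2i\zeta}$ acts on $H^2_+(\bbR)$ and its range lies in $H^2_+(\bbR)$, for $f\in H^2_+(\bbR)$ we have $Wf = \Psi\Phi f$ (the cutoff $\chi_+$ being inactive on $H^2_+$) and likewise $W\Gamma_{2i\zeta}f = \Psi\Phi\,\Gamma_{2i\zeta} f$. Then I would apply \eqref{eq:mm} in the form $\Phi\,\Gamma_{2i\zeta}f = \mathcal M\,\Phi f$, giving $W\Gamma_{2i\zeta}f = \Psi\,\mathcal M\,\Phi f$. Finally \eqref{f3} yields $\Psi\,\mathcal M\,(\Phi f) = (1/\cosh(\pi\tau))\,\Psi\,\Phi f = (1/\cosh(\pi\tau))\,Wf$, which is exactly \eqref{eq:Mex}. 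For general $f\in L^2(\bbR)$ one writes $f=f_++f_-$ with $f_\pm\in H^2_\pm(\bbR)$; since $\Gamma_{2i\zeta}$ (via \eqref{eq:hh}) uses $P_+\Sigma\mathcal J$ and $Wf_-=0$, both sides of \eqref{eq:Mex} vanish on $f_-$, so the identity extends to all of $L^2(\bbR)$.

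The only genuinely delicate point is the precise matching of domains and the bookkeeping between the operator $\Gamma_{2i\zeta}$ as defined by \eqref{eq:hh} (which involves the reflection $\mathcal J$ and the projection $P_+$) and the convolution kernel picture behind \eqref{eq:hh1} and \eqref{eq:mm}: one must be sure that the Fourier transform $\wh\zeta(t)=-i(2\pi)^{-1/2}\sign t/(2+|t|)$ computed above, when inserted into \eqref{eq:hh1} with symbol $2i\zeta$, really produces the kernel $\pi^{-1}(2+t+s)^{-1}$ of $\mathcal M$ on $\bbR_+$, i.e. that the factor $2i$ exactly cancels the $-i(2\pi)^{-1/2}$ and the $(2\pi)^{-1/2}$ in \eqref{eq:hh1} to leave $\pi^{-1}$; this is the content of \eqref{eq:mm} and is asserted there, so I would simply invoke it. Everything else is a formal manipulation of unitary maps, and the main obstacle—if any—is purely notational care in the identification $\Phi:H^2_\pm(\bbR)\to L^2(\bbR_\pm)$ and in verifying that $W$ kills $H^2_-(\bbR)$, both of which are recorded in the sentences immediately preceding the lemma.
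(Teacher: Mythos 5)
Your argument is correct and is exactly the paper's proof: the paper establishes Lemma~\ref{Mex} precisely by putting together \eqref{eq:mm} and \eqref{f3}, using $W=\Psi\chi_+\Phi$ and the identification $\Phi:H^2_+(\bbR)\to L^2(\bbR_+)$, just as you do. The final extension to all of $L^2(\bbR)$ is unnecessary (the operator $\Gamma_{2i\zeta}$ lives in $H^2_+(\bbR)$, so \eqref{eq:Mex} is a statement about $f\in H^2_+$), and as written it only holds if one reads $\Gamma_{2i\zeta}$ as $\Gamma_{2i\zeta}P_+$, since for $f_-\in H^2_-(\bbR)$ the raw formula $P_+\Sigma\mathcal{J}f_-$ need not vanish even though $Wf_-=0$.
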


 %%%%%%%%%%%%%%%%%%%%%%%%%%%%%%%%
\subsection{The diagonalization of $M_{\Xi}$}\label{sec.b5bwt}
%%%%%%%%%%%%%%%%%%%%%%%%%%%%%%%%

Let us   return to the
  SHO $M_\Xi$ with the symbol  $ \Xi (\lambda)=\zeta(\lambda)K$.
   For an arbitrary operator $K\in\calB(\frakh)$, a real odd function $\zeta$ and  $ \Xi (\lambda)=\zeta(\lambda)K$, relation \eqref{eq:shh} holds with
\begin{equation}
\Sigma(\lambda)= i\zeta(\lambda) {\sf K} \quad \text{where}\quad 
{\sf K} =\begin{pmatrix}
0 &- i     K^*
\\
i   K & 0
\end{pmatrix}.
\label{eq:fgx}\end{equation}

Let us define the unitary operator 
$
{\mathcal W}: L^2 ({\bbR})\otimes\frakh\to L^2 ({\bbR}_{+})\otimes{\sf H}
$
by the formula
\begin{equation}
{\mathcal W}f= ((W\otimes I)f , (W \mathcal{J} \otimes I)  f)^\top.
\label{eq:Uex1}
\end{equation}
Putting together formulas  \eqref{eq:shh} and \eqref{eq:Mex}, we obtain the following result.

  %%%%%%%%%%%%%%%%
\begin{lemma}\label{Me1}
%%%%%%%%%%%%%%%%
Let $K$ be a bounded operator in a Hilbert space $\frakh$, let the function $\zeta$ be defined by formula \eqref{c6}, and let  $ \Xi (\lambda)=\zeta(\lambda)K$. Then 
\begin{equation}
{\mathcal W}  M_\Xi   {\mathcal W} ^*
=
\frac1{2\cosh(\pi \tau)}\otimes {\sf K}  
\label{e14x}\end{equation}
where the operator ${\sf K} $ is given by formula \eqref{eq:fgx}.
\end{lemma}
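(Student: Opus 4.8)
The statement \eqref{e14x} is an immediate consequence of the two structural facts already assembled in this section, and the proof amounts to tracking the intertwining relations through the two unitary maps. First I would write out the unitary equivalence between $M_\Xi$ and the Hankel operator $\Gamma_\Sigma$ on $H^2_+(\bbR)\otimes{\sf H}$ supplied by \eqref{eq:shh}: namely $J M_\Xi = \Gamma_\Sigma J$ with $J f = (P_+ f,\,P_+\mathcal{J}f)^\top$. With the symbol $\Xi(\lambda)=\zeta(\lambda)K$, the matrix symbol in \eqref{eq:shh} becomes $\Sigma(\lambda)=i\zeta(\lambda){\sf K}$ with ${\sf K}$ as in \eqref{eq:fgx}; here one just substitutes $\Xi^*(\lambda)=\zeta(\lambda)K^*$ and $\Xi(-\lambda)=-\zeta(\lambda)K$ (using that $\zeta$ is odd) into the $2\times2$ block matrix and factors out $i\zeta(\lambda)$.

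The second ingredient is Lemma~\ref{Mex}, which diagonalizes the scalar Hankel operator $\Gamma_{2i\zeta}$ via the unitary $W=\Psi\chi_+\Phi:H^2_+(\bbR)\to L^2(\bbR_+)$, giving $W\Gamma_{2i\zeta}=\tfrac1{\cosh(\pi\tau)}W$. Since the matrix symbol $\Sigma(\lambda)=i\zeta(\lambda){\sf K}$ is a scalar function times the constant operator ${\sf K}$, the Hankel operator $\Gamma_\Sigma$ on $H^2_+(\bbR)\otimes{\sf H}$ factors as $\Gamma_\Sigma = \tfrac12\,\Gamma_{2i\zeta}\otimes{\sf K}$ (the factor $\tfrac12$ coming from $i\zeta=\tfrac12\cdot 2i\zeta$). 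Hence $(W\otimes I)\Gamma_\Sigma = \tfrac12\bigl(\tfrac1{\cosh(\pi\tau)}W\otimes{\sf K}\bigr) = \tfrac1{2\cosh(\pi\tau)}(W\otimes I)$ acting appropriately; more precisely, $(W\otimes I)\Gamma_\Sigma(W\otimes I)^* = \tfrac1{2\cosh(\pi\tau)}\otimes{\sf K}$ on $L^2(\bbR_+)\otimes{\sf H}$.

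Finally I would compose these two equivalences. The operator ${\mathcal W}$ of \eqref{eq:Uex1} is precisely $(W\otimes I)_{{\sf H}}\circ J$ rewritten on $L^2(\bbR)\otimes\frakh$: applying $J$ sends $f$ to $(P_+f,P_+\mathcal{J}f)^\top\in H^2_+(\bbR)\otimes{\sf H}$, and then applying $W$ in the first factor of each component turns this into $((W\otimes I)f,(W\mathcal{J}\otimes I)f)^\top$, using $WP_+=W$ and $WP_+\mathcal{J}=W\mathcal{J}$ (the latter because $W$ kills $H^2_-$ and $\mathcal{J}$ swaps $H^2_\pm$, so only the $H^2_+$-part of $\mathcal{J}f$ survives). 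Thus ${\mathcal W}M_\Xi{\mathcal W}^* = (W\otimes I)\,\Gamma_\Sigma\,(W\otimes I)^* = \tfrac1{2\cosh(\pi\tau)}\otimes{\sf K}$, which is \eqref{e14x}.

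The computation is entirely routine; the only point requiring a little care is the bookkeeping of the identification $J$ versus ${\mathcal W}$ and the verification that ${\mathcal W}$ as defined in \eqref{eq:Uex1} really implements $J$ followed by $W$ in each slot — in particular checking that $\mathcal{W}$ is unitary onto $L^2(\bbR_+)\otimes{\sf H}$, which follows from unitarity of $W:H^2_+(\bbR)\to L^2(\bbR_+)$ together with the fact that $f\mapsto(P_+f,P_+\mathcal{J}f)$ is unitary from $L^2(\bbR;\frakh)$ onto $H^2_+(\bbR)\otimes{\sf H}$. This last unitarity is the orthogonal decomposition $L^2(\bbR;\frakh)=H^2_+(\bbR;\frakh)\oplus H^2_-(\bbR;\frakh)$ combined with $\mathcal{J}:H^2_-\to H^2_+$ being unitary, so no real obstacle arises.
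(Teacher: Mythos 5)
Your argument is correct and is exactly the paper's route: the paper proves Lemma~\ref{Me1} by "putting together" the unitary equivalence \eqref{eq:shh} (with $\Sigma=i\zeta\,{\sf K}$ as in \eqref{eq:fgx}) and the Mehler diagonalization \eqref{eq:Mex}, and your write-up just makes the tensor factorization $\Gamma_\Sigma=\tfrac12\,\Gamma_{2i\zeta}\otimes{\sf K}$ and the identification ${\mathcal W}=(W\otimes I)\circ J$ explicit. The only blemish is the intermediate line $(W\otimes I)\Gamma_\Sigma=\tfrac1{2\cosh(\pi\tau)}(W\otimes I)$, where the right-hand side should carry ${\sf K}$ rather than $I$, but you immediately state the correct conjugated form, so nothing is missing.
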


The operator $ (2\cosh(\pi \tau))^{-1}\otimes {\sf K}$ acting in the space 
$L^2({\bbR}_{+}; d\tau)\otimes {\sf H}$ is obviously unitarily equivalent 
to the operator $\mu \otimes {\sf K}$ acting in the space $L^2((0,1/2); d \mu)\otimes {\sf H}$. 
Thus the diagonalization of the operator $ M_\Xi $ reduces to that of the operator ${\sf K}  $.

Suppose now that $K$ is compact in $\frakh $. 
Then the operator ${\sf K}$ is   compact and self-adjoint   in the space ${\sf H} $. 
Let $s_{n}=s_n(K)$, $1\leq n\leq \dim\frakh$,   
be the singular values of $K$. 
It is easy to check the following result.

%%%%%%%%%%%%%%%%
\begin{lemma}\label{K}
%%%%%%%%%%%%%%%%
The nonzero spectrum of ${\sf K}$  
consists of the eigenvalues 
$$
\{\pm s_{n}(K): 1\leq n\leq \dim\frakh,\ \  s_n(K)>0\}.
$$ 
Moreover, $\Ker {\sf K}= \Ker K\oplus \Ker K^*$.
\end{lemma}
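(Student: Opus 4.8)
The plan is to analyze the block operator matrix
$$
{\sf K} =\begin{pmatrix} 0 &- i K^* \\ i K & 0 \end{pmatrix}
$$
acting in ${\sf H}=\frakh\oplus\frakh$ directly, using the polar decomposition of $K$. Since $K$ is compact, write $K=U|K|$ where $|K|=(K^*K)^{1/2}$ and $U$ is a partial isometry with initial space $(\Ker K)^\perp=\overline{\Ran K^*}$ and final space $\overline{\Ran K}$. Choose an orthonormal system $\{e_n\}$ of eigenvectors of $|K|$ with $|K|e_n=s_n e_n$ for those $n$ with $s_n=s_n(K)>0$; then $\{e_n\}$ spans $(\Ker K)^\perp$, the vectors $f_n:=Ue_n$ form an orthonormal system spanning $\overline{\Ran K}=(\Ker K^*)^\perp$, and $Ke_n=s_nf_n$, $K^*f_n=s_ne_n$.

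First I would verify the nonzero spectrum. On the two-dimensional subspace $\frakh_n:=\operatorname{span}\{e_n\oplus 0,\ 0\oplus f_n\}$, the operator ${\sf K}$ acts by
$$
{\sf K}(e_n\oplus 0)=0\oplus iKe_n=is_n(0\oplus f_n),\qquad
{\sf K}(0\oplus f_n)=-iK^*f_n\oplus 0=-is_n(e_n\oplus 0),
$$
so in the ordered basis $(e_n\oplus 0,\ 0\oplus f_n)$ the restriction ${\sf K}|_{\frakh_n}$ is the matrix $\left(\begin{smallmatrix} 0 & -is_n \\ is_n & 0 \end{smallmatrix}\right)$, which has eigenvalues $\pm s_n$ with eigenvectors $e_n\oplus\pm i f_n$ (up to normalization). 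The subspaces $\frakh_n$ are mutually orthogonal and ${\sf K}$-invariant, and their orthogonal complement in ${\sf H}$ is $\Ker K\oplus\Ker K^*$; on that complement, $Ke_n$-type computations show ${\sf K}$ annihilates both summands, since $K$ vanishes on $\Ker K$ and $K^*$ vanishes on $\Ker K^*$. Hence
$$
{\sf K}=\bigoplus_n {\sf K}|_{\frakh_n}\ \oplus\ 0\big|_{\Ker K\oplus\Ker K^*},
$$
which immediately yields that the nonzero spectrum of ${\sf K}$ is exactly $\{\pm s_n(K): s_n(K)>0\}$ (each $\pm s_n$ an eigenvalue of finite multiplicity equal to the multiplicity of $s_n$ as a singular value, accumulating only at $0$), and that $\Ker{\sf K}=\Ker K\oplus\Ker K^*$. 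The latter also follows directly: $(g\oplus h)\in\Ker{\sf K}$ iff $K^*h=0$ and $Kg=0$.

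There is essentially no hard step here; the only point requiring a little care is the bookkeeping when singular values are repeated or when $\Ker K$, $\Ker K^*$ are nontrivial (e.g.\ when $\dim\frakh$ is infinite and $K$ is genuinely non-injective or non-surjective), so that the decomposition of ${\sf H}$ into the $\frakh_n$ plus the kernel part is exhaustive and orthogonal. This is handled by recalling that, for compact $K$, the closed span of all singular vectors $e_n$ (with $s_n>0$) is precisely $(\Ker K)^\perp$ and the closed span of the $f_n=Ue_n$ is $(\Ker K^*)^\perp$, which is a standard consequence of the singular value (Schmidt) decomposition. Everything else is the routine $2\times2$ computation above, so the lemma follows.
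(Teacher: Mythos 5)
Your proof is correct and follows essentially the route the paper has in mind: the paper dismisses the lemma as "easy to check," and its Remark~\ref{eig} records precisely the eigenvectors $b_n^{(\pm)}=(s_na_n,\pm iKa_n)^\top$ that your $2\times2$ block computation produces (your $e_n\oplus\pm if_n$ up to normalization). Your use of the singular value decomposition to split ${\sf H}$ into the invariant planes $\frakh_n$ plus $\Ker K\oplus\Ker K^*$ is exactly the intended bookkeeping, so nothing is missing.
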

Thus, diagonalising the operator ${\sf K}$, we get the following result:

%%%%%%%%%%%%%%%%
\begin{lemma}\label{Me1x}
%%%%%%%%%%%%%%%%
Let $A=(2\cosh(\pi \tau))^{-1}\otimes {\sf K}$ in $L^2(\bbR_+)\otimes {\sf H}$. 
Then, apart from the possible eigenvalue at zero, the operator $A$ has the purely
a.c.\  spectrum: 
\begin{equation}
\spec_{\rm ac}  A =\bigcup_{n=1}^{\dim\frakh} [-\tfrac12 s_n(K) ,\tfrac12 s_n(K) ].
\label{c7b}
\end{equation}
The operator $A$ has eigenvalue zero  if and only if $\Ker {\sf K} \neq \{0\}$;
if zero is the eigenvalue of $A$, then it has infinite multiplicity. 
\end{lemma}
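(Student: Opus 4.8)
The plan is to reduce the assertion to elementary spectral theory of multiplication operators, by diagonalising the two tensor factors of $A$ one after the other. As already observed above, the map $\tau\mapsto(2\cosh(\pi\tau))^{-1}$ is a $C^\infty$ bijection of $\bbR_+$ onto $(0,1/2)$ with non-vanishing derivative, so the associated change of variable is a unitary equivalence between the operator of multiplication by $(2\cosh(\pi\tau))^{-1}$ in $L^2(\bbR_+;d\tau)$ and the operator $\mu$ of multiplication by the independent variable in $L^2((0,1/2);d\mu)$ (the Jacobian is absorbed into the unitary, so no weight survives). Hence $A$ is unitarily equivalent to $\mu\otimes{\sf K}$ in $L^2((0,1/2);d\mu)\otimes{\sf H}$, and it suffices to analyse this operator.

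To diagonalise the second factor, recall that ${\sf K}$ is compact and self-adjoint on ${\sf H}$, so the spectral theorem supplies a unitary operator ${\sf U}\colon{\sf H}\to\Ker{\sf K}\oplus\bigoplus_j\frakh_j$, where $j$ runs over the distinct non-zero eigenvalues $\nu_j$ of ${\sf K}$ and each eigenspace $\frakh_j$ is finite-dimensional, such that ${\sf U}{\sf K}{\sf U}^*=0_{\Ker{\sf K}}\oplus\bigoplus_j\nu_j I_{\frakh_j}$. By Lemma~\ref{K}, the $\nu_j$ are exactly the numbers $\pm s_n(K)$ with $s_n(K)>0$, and $\Ker{\sf K}=\Ker K\oplus\Ker K^*$. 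Conjugating by $I\otimes{\sf U}$ therefore gives
$$
A\ \simeq\ \bigl(\mu\otimes 0_{\Ker{\sf K}}\bigr)\ \oplus\ \bigoplus_j\bigl(\nu_j\mu\otimes I_{\frakh_j}\bigr).
$$

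It then remains to read off the spectrum summand by summand. The summand $\mu\otimes 0_{\Ker{\sf K}}$ is the zero operator on $L^2((0,1/2))\otimes\Ker{\sf K}$: it is absent when $\Ker{\sf K}=\{0\}$, and otherwise it acts on an infinite-dimensional space, so it contributes precisely the eigenvalue $0$ of infinite multiplicity and no absolutely continuous spectrum. Each of the remaining summands $\nu_j\mu\otimes I_{\frakh_j}$ is a finite orthogonal sum of copies of the operator of multiplication by the linear function $\mu\mapsto\nu_j\mu$ in $L^2((0,1/2);d\mu)$; such an operator has purely absolutely continuous spectrum equal to the closed interval with endpoints $0$ and $\nu_j/2$, and has no point spectrum. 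Consequently the orthogonal complement of $\Ker A$ is exactly $\bigoplus_j(\nu_j\mu\otimes I_{\frakh_j})$, which is purely absolutely continuous; taking the union of the associated intervals and pairing the eigenvalues $\pm s_n(K)$ — whose intervals $[0,\tfrac12 s_n(K)]$ and $[-\tfrac12 s_n(K),0]$ combine to $[-\tfrac12 s_n(K),\tfrac12 s_n(K)]$ — yields \eqref{c7b}, with the usual convention discarding trivial intervals. The same decomposition gives the last claim: $A$ has the eigenvalue $0$ precisely when the first summand is present, i.e.\ when $\Ker{\sf K}\neq\{0\}$, in which case its multiplicity is infinite.

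There is no real obstacle here; the argument is bookkeeping. The only points needing a little attention are: verifying that the change of variable in the first step leaves no surviving weight; recalling that multiplication by a non-constant $C^1$ monotone function on an interval carrying Lebesgue measure has purely absolutely continuous spectrum and no eigenvalues (here the function is even linear); and tracking the conventions when some $s_n(K)$ vanish or $\dim\frakh=\infty$ (in the latter case the union in \eqref{c7b} is in fact the single interval $[-\tfrac12 s_1(K),\tfrac12 s_1(K)]$, since $s_1(K)\ge s_n(K)$, consistently with the nesting of the intervals).
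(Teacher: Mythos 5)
Your argument is correct and follows exactly the route the paper takes: pass from $(2\cosh(\pi\tau))^{-1}$ to multiplication by $\mu$ on $L^2((0,1/2))$ by the change of variable, then diagonalise ${\sf K}$ using Lemma~\ref{K} and read off the spectrum summand by summand, with the kernel summand giving the eigenvalue $0$ of infinite multiplicity and each eigenvalue $\pm s_n(K)>0$ giving a purely a.c.\ piece on $[0,\pm\tfrac12 s_n(K)]$. The paper merely states this as ``diagonalising ${\sf K}$''; your write-up supplies the same bookkeeping in full detail, so there is nothing to correct.
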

Putting together Lemmas~\ref{Me1} and \ref{Me1x}, 
we conclude the proof of Theorem~\ref{lma.e4}(i).

In the scalar case $\frakh ={\bbC}$, we have $K\in{\bbC}$, ${\sf H}={\bbC}^2 $ and 
the operator ${\sf K}$ in \eqref{eq:fgx} is the $2\times 2$ matrix. If $K\neq 0$, the spectrum of ${\sf K}$ consists of the two eigenvalues $\pm |K|$   and formula \eqref{c7b} for $\dim\frakh=1$ means that
$$
\spec_{\ac} A= [-\tfrac12 |K| ,\tfrac12 |K|] .
%\label{c7bx}
$$
In particular, the spectrum of $A$ is a.c.\   and simple.

%%%%%%%%%%%%%%%%
\begin{remark}\label{eig}
%%%%%%%%%%%%%%%%
It is easy to calculate eigenvectors of the operator $ {\sf  K}$ in terms of eigenvectors of the operator $\sqrt{K^* K}$. Indeed, if 
$\sqrt{K^* K}a_{n}=s_{n}a_{n}$, then  $b_{n}^{(\pm)}= (s_{n}a_{n},\pm i Ka_{n})^\top$ satisfy the equation
${\sf K}b_{n}^{(\pm)}=\pm s_{n}b_{n}^{(\pm)}$. In particular, in the case $\dim\frakh=1$ we have
$b^{(\pm)}=  (|K| ,\pm i K    )^\top$.
\end{remark}

%%%%%%%%%%%%%%%%%%%%%%%%%%%%%%%%
\subsection{Smoothness with respect to the model operator}\label{sec.b5bwx}
%%%%%%%%%%%%%%%%%%%%%%%%%%%%%%%%

  It remains   to check Theorems~\ref{lma.e4x}(ii) and \ref{lma.e4}(ii).  We use the diagonalization $\calF$ of the operator $M_{\Xi}$ defined by formulas \eqref{eq:Uex}, \eqref{eq:Uex1} and  \eqref{c7c}. 
  By Definition~\ref{stsm},  the proof of the strong $M_{\Xi}$-smoothness   of the operator $q^\beta$ requires     estimates on  the function $w_{\tau}(\lambda)$. They are collected in the following assertion. 
  
%  {\tt $\ell$ disturbs}
 
%%%%%%%%%%%%%%%%%%
\begin{lemma}\label{lma.f5}
%%%%%%%%%%%%%%%%%%
For $\tau>0$ and $\lambda \in\bbR$, $\lambda\neq 0$,
the function
$w_{\tau}(\lambda)$
 is differentiable in $\tau$.
If  $\delta\subset {\bbR}_{+}$ is a compact interval and $\tau\in\delta$, 
then for some constant $C=C(\delta)$ we have the estimates:
\begin{equation}
\abs{ w_{\tau}(\lambda)}
\leq 
C  \abs{\lambda}^{-1},
\quad
| \partial w_{\tau} (\lambda)/ \partial \tau |
\leq
C  \abs{\lambda}^{-1},
\quad 
\abs{\lambda}\geq 1/2,
\label{f4}\end{equation}
and 
\begin{equation}
\abs{ w_{\tau}(\lambda)  }
\leq 
C  \abs{\lambda}^{-1/2},
\quad 
| \partial w_{\tau} (\lambda)/ \partial \tau |
\leq
C  \abs{\lambda}^{-1/2}\big|\ln |\lambda | \big|,
\quad
\abs{\lambda}\leq 1/2, \quad \lambda\not=0.
\label{f5}
\end{equation}
\end{lemma}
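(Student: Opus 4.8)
The strategy is to start from the explicit formula \eqref{eq:Xx1} for $w_\tau(\lambda)$, namely the (half-line) Fourier transform of the Legendre function $P_{-\frac12+i\tau}(t+1)$, and to extract the decay in $\lambda$ by exploiting the known behaviour of the Legendre function at $t=0$ and at $t=\infty$. Recall from the Appendix (formulas \eqref{f16}, \eqref{eq:dd}) that $P_{-\frac12+i\tau}(1+t)$ is smooth and bounded near $t=0$ with a logarithmic singularity in its derivative, and that $P_{-\frac12+i\tau}(1+t)=O(t^{-1/2}\log t)$ as $t\to\infty$; moreover these bounds and those of $\partial_\tau P_{-\frac12+i\tau}(1+t)$ are locally uniform in $\tau$. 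So $P_{-\frac12+i\tau}(\cdot+1)$ is itself square-integrable on $\bbR_+$, which already gives a crude bound; the point is to get the sharp powers of $\lambda$.

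First I would treat the region $\abs{\lambda}\ge 1/2$, where we want the $O(\abs{\lambda}^{-1})$ bound. Here one integrates by parts once in \eqref{eq:Xx1}: writing $e^{-i\lambda t}=(i\lambda)^{-1}\partial_t e^{-i\lambda t}$ and integrating by parts, the boundary term at $t=0$ produces $(i\lambda)^{-1}(2\pi)^{-1/2}P_{-\frac12+i\tau}(1)$ (a bounded quantity, locally uniformly in $\tau$), while the bulk term is $(i\lambda)^{-1}$ times the half-line Fourier transform of $\partial_t P_{-\frac12+i\tau}(1+t)$. Since $\partial_t P_{-\frac12+i\tau}(1+t)$ is integrable at infinity (it decays like $t^{-3/2}\log t$) and has only a logarithmic singularity at $t=0$, hence is in $L^1(\bbR_+)$, its Fourier transform is bounded uniformly in $\lambda$; this yields $\abs{w_\tau(\lambda)}\le C\abs{\lambda}^{-1}$. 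Differentiating \eqref{eq:Xx1} in $\tau$ under the integral sign (justified by the locally uniform bounds on $\partial_\tau P_{-\frac12+i\tau}$ and a dominated-convergence argument) and repeating the same integration by parts gives the companion estimate for $\partial_\tau w_\tau(\lambda)$ in \eqref{f4}.

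For $\abs{\lambda}\le 1/2$ we only claim the weaker $O(\abs{\lambda}^{-1/2})$ bound, so a scaling/splitting argument suffices: split the integral in \eqref{eq:Xx1} at $t=1/\abs{\lambda}$. On $0\le t\le 1/\abs{\lambda}$, bound $\abs{e^{-i\lambda t}}\le 1$ and use Cauchy--Schwarz together with $\int_0^{1/\abs{\lambda}}\abs{P_{-\frac12+i\tau}(1+t)}^2\,dt = O(\log(1/\abs{\lambda}))$ (from the $t^{-1/2}\log t$ tail) — actually, one gets $O(\abs{\lambda}^{-1/2})$ more directly by $\big|\int_0^{1/\abs\lambda}P\,dt\big|\le \int_0^{1/\abs\lambda}C(1+t)^{-1/2}\log(2+t)\,dt = O(\abs\lambda^{-1/2}\log(1/\abs\lambda))$, and then absorbing the log or refining the split to remove it. On $t\ge 1/\abs{\lambda}$, integrate by parts once in $t$ as above: the boundary term at $t=1/\abs\lambda$ is $(i\lambda)^{-1}e^{-i}P_{-\frac12+i\tau}(1+1/\abs\lambda) = O(\abs\lambda^{-1}\cdot\abs\lambda^{1/2}\log(1/\abs\lambda)) = O(\abs\lambda^{-1/2}\log(1/\abs\lambda))$, and the bulk term is $(i\lambda)^{-1}$ times $\int_{1/\abs\lambda}^\infty \partial_t P_{-\frac12+i\tau}(1+t)e^{-i\lambda t}\,dt$, which is $O(\abs\lambda^{-1})\cdot O(\int_{1/\abs\lambda}^\infty t^{-3/2}\log t\,dt)=O(\abs\lambda^{-1}\cdot\abs\lambda^{1/2}\log(1/\abs\lambda))$. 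Collecting terms gives $\abs{w_\tau(\lambda)}\le C\abs\lambda^{-1/2}$ (one verifies the stray logarithm is dominated, e.g.\ by choosing the split point $\abs\lambda^{-1}(\log(1/\abs\lambda))^{-2}$ or simply by noting $\abs\lambda^{-1/2}\log(1/\abs\lambda)\le C_\epsilon\abs\lambda^{-1/2-\epsilon}$ is too weak — so the split point must be tuned; I expect the clean statement comes from splitting at $t_0$ with $\abs\lambda t_0 = 1$ and using the sharper near-origin behaviour $P_{-\frac12+i\tau}(1+t)=1+O(t\log t)$ to kill the log). For $\partial_\tau w_\tau(\lambda)$ the same splitting applies, but now $\partial_\tau P_{-\frac12+i\tau}(1+t)$ carries an \emph{extra} logarithmic factor both at the origin and at infinity (differentiating in $\tau$ brings down a $\log$), which is precisely the source of the $\abs{\ln\abs\lambda}$ in \eqref{f5}.

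\textbf{Main obstacle.} The crux is obtaining the \emph{sharp} power $\abs\lambda^{-1/2}$ (not $\abs\lambda^{-1/2}\log(1/\abs\lambda)$) near the origin, together with exactly one logarithm in the $\partial_\tau$ bound. This forces a careful use of the precise asymptotics of $P_{-\frac12+i\tau}(1+t)$ and its $t$- and $\tau$-derivatives near $t=0$ and $t=\infty$ — in particular the cancellation that the constant term of $P$ at $t=0$ contributes only to the $\abs\lambda^{-1}$-regime — rather than the crude $L^2$/$L^1$ bounds. The locally-uniform-in-$\tau$ character of all these Legendre asymptotics (which I would quote from \eqref{f16}, \eqref{eq:dd} and standard references) is what makes the constant $C=C(\delta)$ depend only on the compact interval $\delta\ni\tau$.
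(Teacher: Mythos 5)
Your treatment of the region $\abs{\lambda}\ge 1/2$ is sound and is essentially the paper's argument: one integration by parts in \eqref{eq:Xx1}, a bounded boundary term $P_{-\frac12+i\tau}(1)$, and the observation that $\partial_t P_{-\frac12+i\tau}(1+t)$ is integrable on $\bbR_+$ (it is in fact bounded near $t=0$, by \eqref{eq:le1} -- the Legendre function $P_\nu$ is analytic at the argument $1$, so the logarithmic singularity you ascribe to its derivative there does not occur, though assuming it would do no harm). The same goes for the $\tau$-derivative in \eqref{f4}.

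The genuine gap is in the region $\abs{\lambda}\le 1/2$: as you yourself concede under ``Main obstacle'', your argument only yields $O(\abs{\lambda}^{-1/2}\log(1/\abs{\lambda}))$ for $w_\tau$ itself, and your proposed repairs (tuning the split point, a cancellation coming from the value of $P$ at $t=0$) are not the right mechanism and are left unsubstantiated. The stray logarithm is an artifact of your input asymptotics: $P_{-\frac12+i\tau}(1+t)$ is \emph{not} $O(t^{-1/2}\log t)$ at infinity; by \eqref{f16} one has the expansion \eqref{eq:f12}, $P_{-\frac12+i\tau}(x)=\Re\big(m(\tau)x^{-\frac12+i\tau}\big)+O(x^{-5/2})$, differentiable in $x$ and $\tau$ and locally uniform in $\tau$, so the function itself is $O(x^{-1/2})$ with no logarithm; the logarithm appears only after differentiating in $\tau$ (from $\partial_\tau x^{i\tau}=i(\ln x)x^{i\tau}$), which is exactly why $\abs{\ln\abs{\lambda}}$ occurs in \eqref{f5} only in the derivative bound. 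With the correct asymptotics your own split at $t=1/\abs{\lambda}$ closes the argument with no tuning at all: the near part gives $\int_0^{1/\abs{\lambda}}C(1+t)^{-1/2}dt=O(\abs{\lambda}^{-1/2})$, and on the far part one integration by parts gives a boundary term $O(\abs{\lambda}^{-1}\cdot\abs{\lambda}^{1/2})$ and a bulk term $O(\abs{\lambda}^{-1}\int_{1/\abs{\lambda}}^\infty t^{-3/2}dt)$, both $O(\abs{\lambda}^{-1/2})$. The paper avoids the split altogether: it cuts at the fixed point $x=2$, absorbs the remainder $O(x^{-5/2})$ into a uniformly bounded integral, and handles the leading term by the exact scaling $\int_2^\infty e^{-i\lambda x}x^{-\frac12+i\tau}dx=\abs{\lambda}^{-\frac12-i\tau}\int_{2\abs{\lambda}}^\infty e^{\mp iy}y^{-\frac12+i\tau}dy$, the last integral being uniformly bounded; the $\tau$-derivative is treated identically with the extra factor $\ln x$, producing $\abs{\lambda}^{-1/2}\abs{\ln\abs{\lambda}}$. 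So the route you chose is workable, but as written the key estimate of \eqref{f5} is not proved; supplying the correct Legendre asymptotics (and dropping the purported near-origin ``cancellation'') is what is missing.
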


The proof is quite elementary and is given in the Appendix. 
 The following assertion is an easy consequence of Lemma~\ref{lma.f5}.

%%%%%%%%%%%%%%%%
\begin{lemma}\label{lma.e4y}
%%%%%%%%%%%%%%%%
Let the operator $  W $ be defined by formula  \eqref{eq:Uex}. Set $W_{1}=W$, $W_{2}=W\mathcal{J}$.
Suppose that $\beta>1/2$ and $\gamma <\beta-\frac12$, $\gamma\leq 1$. 
Then, for $\tau$ and $\tau'$ in compact subintervals
of $\bbR_{+}$ and for all $f\in L^2(\bbR)$,  we have the estimates
\begin{equation}
\begin{split}
|(W_{1,2} q_0^\beta f)(\tau)|
&\leq 
C\norm{f} ,
\\
|( W_{1,2} q_0^\beta f)(\tau)-( W_{1,2}  q_0^\beta f)(\tau')| 
&\leq 
C
\abs{\tau- \tau'}^\gamma
\norm{f} .
\label{eq:uu}\end{split}
\end{equation}
\end{lemma}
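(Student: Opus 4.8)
The statement reduces the required bounds for $W_{1,2}q_0^\beta$ to the pointwise estimates on $w_\tau$ collected in Lemma~\ref{lma.f5}, so the whole argument is a Schur-test / Cauchy–Schwarz computation against the kernel in \eqref{eq:Uex}. Since $W_2=W\mathcal{J}$ and $\mathcal{J}$ is unitary, it suffices to treat $W_1=W$; write
\begin{equation}
(W q_0^\beta f)(\tau)=\sqrt{\tau\tanh(\pi\tau)}\int_{-\infty}^\infty w_\tau(\lambda)\,q_0(\lambda)^\beta f(\lambda)\,d\lambda,
\label{eq:plan1}
\end{equation}
so that by Cauchy–Schwarz $|(Wq_0^\beta f)(\tau)|^2\le C(\delta)\,\|f\|^2\int_{-\infty}^\infty |w_\tau(\lambda)|^2 q_0(\lambda)^{2\beta}\,d\lambda$ for $\tau$ in a compact $\delta\subset\bbR_+$, using that $\sqrt{\tau\tanh(\pi\tau)}$ is bounded on $\delta$. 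First I would verify the first inequality in \eqref{eq:uu}: split the $\lambda$-integral into $|\lambda|\ge1/2$ and $|\lambda|<1/2$. On $|\lambda|\ge1/2$, \eqref{f4} gives $|w_\tau(\lambda)|\le C|\lambda|^{-1}$ and $q_0\le1$, so $\int_{|\lambda|\ge1/2}|w_\tau|^2 q_0^{2\beta}\le C\int_{|\lambda|\ge1/2}|\lambda|^{-2}\,d\lambda<\infty$. On $|\lambda|<1/2$, \eqref{f5} gives $|w_\tau(\lambda)|\le C|\lambda|^{-1/2}$ while $q_0(\lambda)^{2\beta}=|\log|\lambda||^{-2\beta}$; hence the integrand is bounded by $C|\lambda|^{-1}|\log|\lambda||^{-2\beta}$, which is integrable near $0$ precisely because $2\beta>1$. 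This gives the first bound in \eqref{eq:uu}.

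For the Hölder bound, write $(W_1 q_0^\beta f)(\tau)-(W_1 q_0^\beta f)(\tau')=\int (g_\tau(\lambda)-g_{\tau'}(\lambda))q_0(\lambda)^\beta f(\lambda)\,d\lambda$ where $g_\tau(\lambda)=\sqrt{\tau\tanh(\pi\tau)}\,w_\tau(\lambda)$. By Cauchy–Schwarz it suffices to show $\int_{-\infty}^\infty |g_\tau(\lambda)-g_{\tau'}(\lambda)|^2 q_0(\lambda)^{2\beta}\,d\lambda\le C|\tau-\tau'|^{2\gamma}$. The prefactor $\sqrt{\tau\tanh(\pi\tau)}$ is $C^\infty$ and hence Lipschitz on $\delta$, so the contribution of its variation is harmless; the core is estimating $w_\tau(\lambda)-w_{\tau'}(\lambda)$. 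Here I would use a standard interpolation device: for a one-parameter family that is both \emph{bounded} and \emph{Lipschitz} in $\tau$, the increment obeys the mixed bound
\begin{equation}
|w_\tau(\lambda)-w_{\tau'}(\lambda)|\le \bigl(2\sup_\sigma|w_\sigma(\lambda)|\bigr)^{1-\gamma}\Bigl(\sup_\sigma|\partial_\sigma w_\sigma(\lambda)|\,|\tau-\tau'|\Bigr)^{\gamma}
\label{eq:plan2}
\end{equation}
for any $\gamma\in[0,1]$, obtained by writing $|w_\tau-w_{\tau'}|=|w_\tau-w_{\tau'}|^{1-\gamma}|w_\tau-w_{\tau'}|^{\gamma}$ and bounding the first factor by the sup and the second by the mean value theorem. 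On $|\lambda|\ge1/2$ both \eqref{f4} bounds are $C|\lambda|^{-1}$, so \eqref{eq:plan2} yields $|w_\tau-w_{\tau'}|\le C|\lambda|^{-1}|\tau-\tau'|^{\gamma}$, and $\int_{|\lambda|\ge1/2}|\lambda|^{-2}\,d\lambda<\infty$. On $|\lambda|<1/2$, \eqref{f5} gives $\sup|w_\sigma(\lambda)|\le C|\lambda|^{-1/2}$ and $\sup|\partial_\sigma w_\sigma(\lambda)|\le C|\lambda|^{-1/2}|\ln|\lambda||$, so \eqref{eq:plan2} gives
\begin{equation}
|w_\tau(\lambda)-w_{\tau'}(\lambda)|\le C\,|\lambda|^{-1/2}\,\bigl|\ln|\lambda|\bigr|^{\gamma}\,|\tau-\tau'|^{\gamma}.
\label{eq:plan3}
\end{equation}
Multiplying by $q_0(\lambda)^\beta=|\log|\lambda||^{-\beta}$ and squaring, the integrand near $0$ is $\le C|\lambda|^{-1}|\ln|\lambda||^{2\gamma-2\beta}|\tau-\tau'|^{2\gamma}$, and $\int_0^{1/2}|\lambda|^{-1}|\ln|\lambda||^{2\gamma-2\beta}\,d\lambda<\infty$ exactly when $2\beta-2\gamma>1$, i.e.\ $\gamma<\beta-\tfrac12$, which is the hypothesis. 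Combining the two regions gives the second inequality in \eqref{eq:uu}.

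\textbf{Main obstacle.} The only genuine subtlety is the behaviour near $\lambda=0$, where $w_\tau$ blows up like $|\lambda|^{-1/2}$ and its $\tau$-derivative picks up the extra logarithm in \eqref{f5}; the logarithmic weight $q_0^\beta$ with $\beta>\tfrac12$ is calibrated precisely to tame this, and the margin $\gamma<\beta-\tfrac12$ reflects the cost of absorbing the $|\ln|\lambda||^\gamma$ factor produced by interpolation \eqref{eq:plan2}. One should take care that the pointwise differentiability of $w_\tau$ in $\tau$ asserted in Lemma~\ref{lma.f5} is what legitimizes the mean value theorem step, and that all constants are uniform for $\tau,\tau'$ in a fixed compact subinterval of $\bbR_+$ — both are furnished by Lemma~\ref{lma.f5}. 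Everything else is routine majorization of convergent integrals.
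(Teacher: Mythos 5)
Your argument is correct and is essentially the paper's own proof written out in full: Cauchy--Schwarz against the weighted kernel of \eqref{eq:Uex}, the split at $|\lambda|=1/2$ using the bounds \eqref{f4}, \eqref{f5} of Lemma~\ref{lma.f5}, and the interpolation between the sup bound and the $\tau$-derivative bound, which is precisely what yields the exponent range $\gamma<\beta-\tfrac12$ and is what the paper's ``follows directly from Lemma~\ref{lma.f5}'' conceals. The paper compresses all of this into the two displayed weighted $L^2$ estimates for $w_\tau(\pm\lambda)$, so your write-up matches it step for step.
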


\begin{proof}
It follows directly from Lemma~\ref{lma.f5} that  
\begin{align*}
\int_{-\infty}^\infty q_0^{2\beta}( \lambda)\abs{w_\tau(\pm\lambda)}^2 d\lambda
&\leq C,
\\
\int_{-\infty}^\infty q_0^{2\beta}( \lambda)\abs{w_\tau(\pm\lambda)-w_{\tau'}(\pm \lambda)}^2 d\lambda
&\leq C\abs{\tau-\tau'}^{2\gamma}.
\end{align*}
These estimates imply  estimates \eqref{eq:uu}.
\end{proof}  

Of course Lemma~\ref{lma.e4y} implies a similar statement about the operator 
$\mathcal W$ definied by formula \eqref{eq:Uex1}.

Let us denote by $\sf Y$ the unitary operator in $\sf H$ that diagonalises
$\sf K$:
\begin{equation}
{\sf Y K Y^*} 
=
\diag\{s_1,-s_1,s_2,-s_2,\dots\}, 
\quad s_n=s_n(K);
\label{c7c}
\end{equation}
the sequence in the r.h.s.\  of \eqref{c7c} has the same number of
zeros as $\dim\Ker\sf K$.
 
\begin{proof}[Proof of Theorem~{\rm \ref{lma.e4x}(ii)}] 
It follows from  formulas \eqref{e14x} and   \eqref{c7c} that   
the operator 
\begin{equation}
(I\otimes {\sf Y}) {\mathcal W}  M_\Xi   {\mathcal W}^* (I\otimes {\sf Y})^*
\label{c7d}
\end{equation}
acts in the space 
$L^2({\bbR_{+}}; {\bbC}^2)$ as the multiplication by the matrix valued function
$$
\frac{|K|}{2\cosh(\pi \tau)} 
 \begin{pmatrix}
1 & 0
\\
0   & -1
\end{pmatrix}.
$$
Making the changes of variables $\mu=\pm  2^{-1} |K| (\cosh(\pi \tau))^{-1}$, 
we see that it reduces to the multiplication by  $\mu$ in the space $L^2( -|K|/2, |K|/2)$. 
Therefore  estimates  \eqref{b4} for the diagonalization $\mathcal{F}=(I\otimes {\sf Y}) {\mathcal W}$
  of the operator  $M_\Xi$,  $Q=q_0^\beta$ and the operator $Z(\mu)$ defined by \eqref{eq:Z1} 
  are equivalent to estimates \eqref{eq:uu}. This concludes the proof.
\end{proof}
  
\begin{proof}[Proof of Theorem~{\rm \ref{lma.e4}(ii)}]   
In view of  \eqref{e14x} and   \eqref{c7c},   
the operator \eqref{c7d} acts in the space 
$L^2 ({\bbR}_{+})\otimes{\sf H}$
as the multiplication by the matrix valued function
\begin{equation}
(2\cosh(\pi \tau))^{-1}\otimes\diag \{s_{1}, -s_{1}, s_{2}, -s_{2}, \ldots\},\quad s_n=s_n(K).
\label{e4a}
\end{equation}
After the changes of variables $\mu =\pm  2^{-1} s_{n} (\cosh(\pi \tau))^{-1}$,   
the part of the operator \eqref{e4a} in the orthogonal complement to its kernel 
reduces to the multiplication by  $\mu$ in the space 
$\bigoplus_{n=1}^{\dim \frakh}  L^2( -s_n/2, s_n/2)$ (the sum is taken over $s_n>0$). 

We have to verify the $M_{\Xi}$-smoothness of the operator $Q=q_{0}^\beta$ on all intervals $\delta_{m}^{(+)}=(s_{m+1}/2,s_{m}/2)$ and $\delta_{m}^{(-)}=(-s_{m }/2,-s_{m+1}/2)$ where $s_{m+1}< s_{m}$. 
The operator $M_{\Xi}|_{\Ran E_{M_{\Xi}} (\delta_{m}^{(\pm)})}$ 
is unitarily equivalent to the operator of multiplication by $\mu$ in the space 
$L^2 (\delta_{m}^{(\pm)})\otimes{\bbC}^m$.  
Thus, estimates  \eqref{b4} for this diagonalization 
of the operator  $M_{\Xi}|_{\Ran E_{M_{\Xi}} (\delta_{m}^{(\pm)})}$    are again equivalent to estimates \eqref{eq:uu}. 
\end{proof}

%%%%%%%%%%%%%%%
\begin{remark}\label{gen}
%%%%%%%%%%%%%%%
In Sections~\ref{sec.ee} and \ref{sec.b5} we need the results which are formally more general than Theorem~\ref{lma.e4} but are its direct consequences. Let 
$$
\Xi(\lambda)=\zeta(\lambda-\lambda_0)K
$$
for some $\lambda_{0}\in \bbR$.  Then the first assertion of Theorem~\ref{lma.e4} is true for the SHO $M_{\Xi}$ with the symbol 
$$
\Xi(\lambda)=\zeta(\lambda-\lambda_0)K, \quad K\in\mathfrak{S}_{\infty} (\frakh).
$$ 
Set  
\begin{equation}
q(\lambda)=\prod_{\ell=1}^L q_0(\lambda-\lambda_\ell)
\label{e4}
\end{equation}
where   the function $ q_0(\lambda)$ is defined by formula \eqref{c7}. 
Then  the  assertion of Theorem~\ref{lma.e4} about strong $M_\Xi$-smoothness remains true for the operator of multiplication by the function $q^\beta (\lambda)$ if $\lambda_\ell=\lambda_{0}$ for one of $\ell$.
\end{remark}

 %%%%%%%%%%%%%%%%%%%%%%%%%%%%%%%%%%%%%%%%%%%
\section{Compactness of sandwiched Hankel operators}\label{sand}
%%%%%%%%%%%%%%%%%%%%%%%%%%%%%%%%%%%%%%%%%%%

In this  section we prepare   auxiliary statements about the compactness
of some Hankel type operators appearing in our construction.  

 %%%%%%%%%%%%%%%%%%%%%%%%%%%%%%%%%%%%%%%%%%%
\subsection{Muckenhoupt weights}\label{sec.Muck}
%%%%%%%%%%%%%%%%%%%%%%%%%%%%%%%%%%%%%%%%%%%

Recall that a function $v\in L^1_\loc(\bbR)$, $v\geq0$, such that $v^{-1} \in L^1_\loc(\bbR)$,
is called a \emph{Muckenhoupt weight} if 
\begin{equation}
\sup_\Lambda 
\frac1{\abs{\Lambda}}\int_\Lambda v(\lambda)d \lambda 
\cdot 
\frac1{\abs{\Lambda}}\int_\Lambda v(\lambda)^{-1}d \lambda
<\infty,
\label{d20a}
\end{equation}
where the supremum is taken over all bounded intervals $\Lambda\subset\bbR$, 
and $\abs{\Lambda}$ denotes the length of $\Lambda$. 
It is a classical result  of \cite{Muck}  that the operators $P_\pm$ are bounded in 
$L^2(\bbR;v(\lambda)d\lambda)$ iff $v$ is a Muckenhoupt weight. 
Thus, for a Muckenhoupt weight $v$, the operators 
$v^{1/2} P_\pm v^{-1/2}$ are bounded. 
Of course, the same result is true for the operators $P_\pm$ in the vector-valued
space $L^2(\bbR,\frakh)$, if $v$ is a scalar function satisfying \eqref{d20a}.

We consider operators acting in the space $L^2(\bbR;\frakh)$ where $\frakh$ is an auxiliary Hilbert space. Let the function $q(\lambda)$ be defined by formulas \eqref{c7} and \eqref{e4}.   We use the same notation $q$ for the operator of multiplication by the function $q(\lambda)$  acting  in different spaces (for example, in $L^2(\bbR)$ and $L^2(\bbR;\frakh)$).
   It is easy to check that, for any $\beta\in\bbR$, the function $q^{2\beta} (\lambda)$  is a Muckenhoupt weight.  This yields the following result.

%%%%%%%%%%%%%%%
\begin{proposition}\label{Muck}
%%%%%%%%%%%%%%%
For any $\beta\in  \bbR$, the operators $q^\beta P_\pm q^{-\beta}$ 
  and hence  $q^\beta M_\Xi q^{-\beta}$ 
are bounded in $L^2(\bbR;\frakh)$ 
for all $\Xi\in L^\infty(\bbR;\calB(\frakh))$. 
\end{proposition}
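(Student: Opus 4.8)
The plan is to deduce Proposition~\ref{Muck} from the classical Muckenhoupt criterion quoted just above it. The key observation, already flagged in the text, is that it suffices to verify that $q^{2\beta}(\lambda)$ is a Muckenhoupt weight, i.e.\ satisfies \eqref{d20a}, for every $\beta\in\bbR$; then the boundedness of $q^\beta P_\pm q^{-\beta}$ in $L^2(\bbR;\frakh)$ is precisely the statement that $P_\pm$ are bounded in $L^2(\bbR;q^{2\beta}(\lambda)d\lambda)$, which is the conclusion of \cite{Muck}. Boundedness of $q^\beta M_\Xi q^{-\beta}$ then follows at once from the formula \eqref{b9a} defining $M_\Xi$, since $q^\beta M_\Xi q^{-\beta} = (q^\beta P_- q^{-\beta})\,\Xi\,(q^\beta P_+ q^{-\beta}) + (q^\beta P_+ q^{-\beta})\,\Xi^*\,(q^\beta P_- q^{-\beta})$ (the multiplication operator $\Xi$ commutes with $q^{\pm\beta}$ pointwise, as they act in different tensor factors of $L^2(\bbR)\otimes\frakh$), and $\Xi\in L^\infty$ is bounded.

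The substantive step is therefore checking the Muckenhoupt condition for $q^{2\beta}$. Recall from \eqref{e4} that $q(\lambda)=\prod_{\ell=1}^L q_0(\lambda-\lambda_\ell)$, so $q^{2\beta}=\prod_\ell q_0(\cdot-\lambda_\ell)^{2\beta}$ is a finite product of translates of $q_0^{2\beta}$. Since the $A_p$ class (for fixed $p=2$) is closed under multiplication by weights whose singularities are at distinct points — more precisely, a finite product of $A_2$ weights that is again $A_2$ can be handled by splitting an arbitrary interval $\Lambda$ according to which $\lambda_\ell$ it is near — it is enough to show that $q_0^{2\beta}$ itself is an $A_2$ weight for every $\beta\in\bbR$. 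By \eqref{c7}, away from a neighbourhood of the origin $q_0\equiv 1$, so the only issue is the logarithmic degeneracy $q_0(\lambda)^{2\beta}=|\log|\lambda||^{-2\beta}$ near $\lambda=0$. A weight of the form $|\log|\lambda||^{s}$ near the origin is a classical example of an $A_2$ (in fact $A_1$, or $A_\infty$) weight for every real $s$: logarithmic factors are "milder" than any power $|\lambda|^\epsilon$, so both $\frac1{|\Lambda|}\int_\Lambda q_0^{2\beta}$ and $\frac1{|\Lambda|}\int_\Lambda q_0^{-2\beta}$ stay comparable to $q_0(\lambda_\Lambda)^{\pm 2\beta}$ for a representative point, and their product is bounded. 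This can be verified by an elementary direct computation of the two averages over an interval $\Lambda=(c-h,c+h)$, distinguishing the cases $|c|\gg h$ and $|c|\lesssim h$; the logarithm is slowly varying, so on a short interval it barely changes, while on an interval straddling $0$ one computes $\int_0^h |\log t|^{-2\beta}\,dt$ directly and sees it is $\sim h|\log h|^{-2\beta}$.

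The main obstacle — such as it is — is purely technical: carrying out the case analysis for the averages of $|\log|\lambda||^{\pm 2\beta}$ over an arbitrary interval cleanly, especially handling $\beta<0$ (where $q_0^{2\beta}$ \emph{blows up} logarithmically at $0$) and intervals of length $h\geq e^{-1}$ where the cutoff in \eqref{c7} kicks in, so that one must combine the constant part and the logarithmic part. None of this is deep; it is the standard verification that logarithmic weights lie in all $A_p$ classes, and one may alternatively simply cite a reference for that fact and for the stability of $A_2$ under the relevant products. I would structure the written proof as: (1) reduce to showing $q^{2\beta}\in A_2$; (2) reduce that to $q_0^{2\beta}\in A_2$ by the translation/localization argument; (3) verify $q_0^{2\beta}\in A_2$ by the elementary estimate on $\int_\Lambda |\log|\lambda||^{\pm 2\beta}d\lambda$; (4) invoke \cite{Muck} and the formula \eqref{b9a} to finish.
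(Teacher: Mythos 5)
Your proposal is correct and follows essentially the same route as the paper: the paper simply observes that $q^{2\beta}$ is a Muckenhoupt weight for every $\beta\in\bbR$ (leaving the verification as ``easy to check''), invokes the criterion of \cite{Muck} to get boundedness of $q^\beta P_\pm q^{-\beta}$ in the vector-valued setting, and reads off the statement for $q^\beta M_\Xi q^{-\beta}$ from \eqref{b9a}. Your additional sketch of the $A_2$ verification for the logarithmic weight and the localization to the distinct points $\lambda_\ell$ just fills in the detail the paper omits.
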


 %%%%%%%%%%%%%%%%%%%%%%%%%%%%%%%%%%%%%%%%%%%
\subsection{Singular weight functions}\label{sec.e5}
%%%%%%%%%%%%%%%%%%%%%%%%%%%%%%%%%%%%%%%%%%%

Recall (see \cite{Peller}) that for $\Xi\in C_0(\bbR;{\mathfrak S}_\infty(\frakh))$, we have
\begin{equation}
\Xi P_+ - P_{+}\Xi= P_{-} \Xi P_{+} - P_{+} \Xi P_{-} \in {\mathfrak S}_\infty .
\label{eq:Pe}
\end{equation}
We shall see that the operator \eqref{eq:Pe}
remains compact even after being sandwiched between singular weights $q^{-\beta}$ provided the symbol $\Xi$ is   logarithmically H\"older  continuous at  the singular points of $q^{-\beta}$. Let us first consider sufficiently smooth symbols
with compact supports.
  
  %%%%%%%%%%%%%%%
\begin{lemma}\label{lma.d1A}
%%%%%%%%%%%%%%%
Let $\Xi\in C (\bbR;{\mathfrak S}_\infty(\frakh))$ and $\Xi\in C^1$ in some neighbourhoods 
of the singular points $\lambda_{1}, \ldots, \lambda_{L}$ of the weight $q^{-\beta}$.
Suppose also that $\Xi$ has compact support.
Then the  operator
\begin{equation}
{\sf G}=\X^{-\beta}(\Xi P_+ -P_+\Xi)\X^{-\beta} 
\label{d21}
\end{equation}
is compact
in the space $L^2(\bbR;\frakh)$ for all $\beta\in  \bbR$. 
\end{lemma}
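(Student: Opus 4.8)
The plan is to reduce to a commutator estimate involving the operator of multiplication by $\X^{-\beta}$ and the Hardy projection $P_+$, exploiting that $\X^{2\beta}$ is a Muckenhoupt weight (so $\X^\beta P_\pm \X^{-\beta}$ is bounded by Proposition~\ref{Muck}), and that away from the singular points $\lambda_1,\dots,\lambda_L$ the weight $\X^{-\beta}$ is smooth and bounded. First I would write
$$
{\sf G}=\X^{-\beta}[\Xi,P_+]\X^{-\beta}=\X^{-\beta}(P_-\Xi P_+-P_+\Xi P_-)\X^{-\beta},
$$
and split the symbol as $\Xi=\Xi_0+\sum_{\ell=1}^L\eta_\ell\Xi$, where $\eta_\ell$ is a smooth cutoff supported in a small neighbourhood of $\lambda_\ell$ (on which $\Xi\in C^1$), equal to $1$ near $\lambda_\ell$, with pairwise disjoint supports, and $\Xi_0=(1-\sum_\ell\eta_\ell)\Xi$ vanishes near every $\lambda_\ell$. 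Then $\X^{-\beta}$ is $C^1$ and bounded above and below on $\supp\Xi_0$, so $\X^{-\beta}(\Xi_0 P_+-P_+\Xi_0)\X^{-\beta}$ can be written as $a(\X^{-\beta}\text{-smooth bounded factor})\cdot(\Xi_0 P_+-P_+\Xi_0)\cdot(\text{same})$; since $\Xi_0\in C_0(\bbR;\mathfrak S_\infty)$ with compact support, $[\Xi_0,P_+]$ is compact by \eqref{eq:Pe}, and multiplying by bounded operators preserves compactness. So the genuine issue is each localized piece $\X^{-\beta}(\Psi_\ell P_+-P_+\Psi_\ell)\X^{-\beta}$ with $\Psi_\ell=\eta_\ell\Xi$, which has small compact support around a single singular point $\lambda_\ell$ and is $C^1$ there.

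For a single singular point, after translating $\lambda_\ell$ to the origin, the relevant weight factor is $q_0(\lambda)^{-\beta}=|\log|\lambda||^\beta$ near $0$, and $\Psi_\ell$ is $C^1$ there. The key step is to show that $q_0^{-\beta}[\Psi_\ell,P_+]q_0^{-\beta}$ is compact. I would use the standard integral representation
$$
([\Psi_\ell,P_+]f)(\lambda)=\frac1{2\pi i}\int_{-\infty}^\infty\frac{\Psi_\ell(\lambda)-\Psi_\ell(x)}{x-\lambda}\,f(x)\,dx,
$$
so the kernel of ${\sf G}$ is $k(\lambda,x)=\dfrac{q_0(\lambda)^{-\beta}\big(\Psi_\ell(\lambda)-\Psi_\ell(x)\big)q_0(x)^{-\beta}}{2\pi i\,(x-\lambda)}$. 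The $C^1$ bound $\|\Psi_\ell(\lambda)-\Psi_\ell(x)\|\le C|\lambda-x|$ cancels the $(x-\lambda)^{-1}$ singularity, leaving the kernel bounded by $C\,q_0(\lambda)^{-\beta}q_0(x)^{-\beta}$ on the (bounded) support; but $q_0^{-\beta}=|\log|\cdot||^\beta$ is only locally $L^p$ for every $p$, not bounded, so I cannot directly conclude Hilbert–Schmidtness from a pointwise bound alone unless $\beta\le 0$. For $\beta>0$ the argument I would run is: approximate $\Psi_\ell$ by $\Psi_\ell^{(\epsilon)}$ which is additionally constant (equal to $\Psi_\ell(0)$) on $(-\epsilon,\epsilon)$ and agrees with $\Psi_\ell$ outside $(-2\epsilon,2\epsilon)$, staying $C^1$ with $\|\Psi_\ell-\Psi_\ell^{(\epsilon)}\|_\infty\to 0$ and derivative controlled so that $\|(\Psi_\ell-\Psi_\ell^{(\epsilon)})'\|_\infty\le C\epsilon^{-1}\cdot(\text{osc on }2\epsilon\text{-ball})$. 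Since $[\text{const},P_+]=0$, the operator $q_0^{-\beta}[\Psi_\ell^{(\epsilon)},P_+]q_0^{-\beta}$ has kernel supported away from the diagonal near $0$ in the relevant sense — more precisely its kernel, after the cancellation, is supported on the region where at least one of $\lambda,x$ is $\ge\epsilon$, where $q_0^{-\beta}$ is bounded — hence it is Hilbert–Schmidt, thus compact. It remains to bound $\|q_0^{-\beta}[\Psi_\ell-\Psi_\ell^{(\epsilon)},P_+]q_0^{-\beta}\|$ in operator norm and show it $\to 0$ as $\epsilon\to0$.

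That last norm estimate is the main obstacle. I would handle it by writing $q_0^{-\beta}[\Phi,P_+]q_0^{-\beta}=q_0^{-\beta}(P_-\Phi P_+-P_+\Phi P_-)q_0^{-\beta}$ with $\Phi=\Psi_\ell-\Psi_\ell^{(\epsilon)}$ supported in $(-2\epsilon,2\epsilon)$, insert $q_0^{-\beta}P_\pm q_0^{\beta}\cdot q_0^{-\beta}$ and use Proposition~\ref{Muck} (boundedness of $q_0^{\mp\beta}P_\pm q_0^{\pm\beta}$ uniformly, the Muckenhoupt constant of $q_0^{2\beta}$ being finite) to reduce to estimating $\|q_0^{-\beta}\Phi q_0^{-\beta}\|$ together with a bounded commutator term $\|q_0^{-\beta}[\Phi,P_\pm q_0^{\mp 2\beta}]\cdots\|$; alternatively, and more cleanly, estimate $\|q_0^{-\beta}[\Phi,P_+]q_0^{-\beta}\|$ directly via the Schur test on its kernel $\frac1{2\pi i}q_0(\lambda)^{-\beta}\frac{\Phi(\lambda)-\Phi(x)}{x-\lambda}q_0(x)^{-\beta}$, using $|\Phi(\lambda)-\Phi(x)|\le\min\big(2\|\Phi\|_\infty,\ C\epsilon^{-1}\omega(\epsilon)\,|\lambda-x|\big)$ where $\omega(\epsilon)$ is the oscillation of $\Psi_\ell$ on $(-2\epsilon,2\epsilon)$; since the kernel is supported on $|\lambda|,|x|\lesssim\epsilon$ where $q_0^{-\beta}\le C|\log\epsilon|^\beta$, the Schur integral $\sup_\lambda\int|k(\lambda,x)|dx$ is bounded by $C|\log\epsilon|^{2\beta}\cdot\min(\|\Phi\|_\infty\log(1/\epsilon),\ \epsilon^{-1}\omega(\epsilon)\cdot\epsilon)=C|\log\epsilon|^{2\beta}\min(\|\Phi\|_\infty\log(1/\epsilon),\omega(\epsilon))$. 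Because $\Psi_\ell\in C^1$ near $0$ we have $\omega(\epsilon)=O(\epsilon)$, so this bound is $O(\epsilon|\log\epsilon|^{2\beta})\to 0$, giving norm convergence and hence compactness of ${\sf G}$ as a norm limit of compact operators. (For $\beta\le 0$ the whole argument simplifies since $q_0^{-\beta}$ is then bounded and one only needs \eqref{eq:Pe} after a trivial bounded-factor manipulation.) I expect the bookkeeping around the two-sided Schur bound and the exact logarithmic power to be the delicate point, but the $C^1$ hypothesis at the $\lambda_\ell$ is exactly what makes $\omega(\epsilon)=O(\epsilon)$ beat the $|\log\epsilon|^{2\beta}$ loss.
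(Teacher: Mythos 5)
There is a genuine gap, and it occurs at the step you treat as routine: the disposal of the regular part $\Xi_0=(1-\sum_\ell\eta_\ell)\Xi$ (and, in the same vein, your silent replacement of the weight $q$ by $q_0$ for each localized piece $\eta_\ell\Xi$). You assert that $\X^{-\beta}(\Xi_0P_+-P_+\Xi_0)\X^{-\beta}$ is a bounded factor times the compact commutator $[\Xi_0,P_+]$ times a bounded factor because $\X^{-\beta}$ is bounded on $\supp\Xi_0$. But the weights in \eqref{d21} do not act on $\supp\Xi_0$: the commutator has the nonlocal kernel $-(2\pi i)^{-1}(\Xi_0(x)-\Xi_0(y))/(x-y)$, so its range and domain are spread over all of $\bbR$, in particular over the neighbourhoods of the points $\lambda_\ell$ where $\X^{-\beta}$ blows up, and no factorization of this sandwiched commutator through bounded multiplication operators exists. (Note also that the separate pieces are genuinely not compact: $\X^{-\beta}P_+\Xi_0\X^{-\beta}=(\X^{-\beta}P_+\X^{\beta})(\X^{-2\beta}\Xi_0)$ is bounded but a nonzero multiplication operator is never compact, so the commutator structure cannot be discarded.) The same problem recurs for $\eta_\ell\Xi$: the weight $q^{-\beta}$ still has logarithmic singularities at the other points $\lambda_k$, $k\neq\ell$, where $\eta_\ell\Xi$ vanishes. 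These regions must be handled by a kernel estimate of the same type you run near $\lambda_\ell$ (there the kernel is $\propto q(x)^{-\beta}\Xi_0(y)q(y)^{-\beta}(x-y)^{-1}$ with $|x-y|$ bounded below and $|\log|\cdot||^{2\beta}$ locally integrable); this is in effect what the paper does, without any localization, by bounding the whole kernel of ${\sf G}$ by $C\,q(x)^{-\beta}(1+|x|)^{-1}(1+|y|)^{-1}q(y)^{-\beta}$, cutting off to a set $Q_\varepsilon$ avoiding the singular points and infinity, and controlling the error in operator norm by the square integral of that bound.

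The second gap is the compactness conclusion itself. You repeatedly pass from a pointwise bound on the (operator-valued) kernel to ``Hilbert--Schmidt, thus compact''. For $\dim\frakh=\infty$ this is false: a square-integrable bound on $\norm{k(x,y)}_{\calB(\frakh)}$ controls only the operator norm (this is exactly the paper's estimate \eqref{eq:OE}), and a kernel such as $\phi(x)\overline{\phi(y)}\,I_\frakh$ has square-integrable norm but defines a non-compact operator. Since the lemma is applied in the paper with infinite-dimensional $\frakh$ (symbols with values in ${\mathfrak S}_\infty(\calH)$, for instance in the proof of Lemma~\ref{lma.d3}), this cannot be waved away; the paper supplies the missing ingredient by observing that on $Q_\varepsilon\times Q_\varepsilon$ the kernel is continuous with values in ${\mathfrak S}_\infty(\frakh)$ and approximating it in norm by piecewise-constant, hence compact, kernels. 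Two further remarks: your stated reason for avoiding a direct Hilbert--Schmidt argument --- that $q_0^{-\beta}$ is unbounded --- is not the true obstruction, since $|\log|\lambda||^{2\beta}$ is locally integrable and the bound $Cq_0(\lambda)^{-\beta}q_0(x)^{-\beta}$ on a bounded set is already square-integrable (so in the scalar case your $\epsilon$-regularization and Schur test are unnecessary, and in the operator-valued case they do not address the real difficulty); and the support claim for the approximant's kernel is inaccurate, since $(\Phi(\lambda)-\Phi(x))/(x-\lambda)$ is nonzero whenever only one of $\lambda,x$ lies in $(-2\epsilon,2\epsilon)$, although with $\norm{\Phi}_\infty=O(\epsilon)$ the resulting bound would still vanish as $\epsilon\to0$.
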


\begin{proof}
Let $A$ be an operator in $L^2(\bbR;\frakh)$   with the integral kernel $a(x,y)$. We proceed from the obvious estimate
\begin{equation}
\| A\|^2\leq \int_{-\infty}^\infty \int_{-\infty}^\infty \|a(x,y)\|^2_{\calB(\frakh)} dxdy.
\label{eq:OE}\end{equation}

According to formula \eqref{c4} the integral kernel of the operator $K=\Xi P_+ -P_+\Xi$ equals
$$
k(x,y)= - \frac{1}{2\pi i}\frac{\Xi(x)-\Xi(y)}{x-y}.
$$
It is a continuous function with values in ${\mathfrak S}_\infty(\frakh)$, and it satisfies the bound 
\begin{equation}
\norm{k(x,y)}_{\calB(\frakh)}
\leq
C (1+\abs{x})^{-1}(1+\abs{y})^{-1}.
\label{eq:OE1}\end{equation}

Let $\chi_{\varepsilon} (x)$ be the characteristic function of the set
\begin{equation}
Q_{\varepsilon}=\{x\in\bbR: |x-\lambda_{\ell}  |>\varepsilon,  \; \forall \ell=1,\ldots, L \quad{\rm and}\quad |x|<\varepsilon^{-1}\}.
\label{eq:qe}\end{equation}
Set ${\sf G}_{\varepsilon}  =\chi_{\varepsilon}   {\sf G} \chi_{\varepsilon}$. 
It follows from estimates \eqref{eq:OE} and \eqref{eq:OE1} that
$$
\| {\sf G}- {\sf G}_{\varepsilon} \|^2\leq C \int_{-\infty}^\infty \int_{-\infty}^\infty
|1-\chi_{\varepsilon} (x)|^2 q(x)^{-2\beta}  (1+\abs{x})^{-2}(1+\abs{y})^{-2}
 q(y)^{-2\beta} dxdy
$$
 tends to zero as $\varepsilon\to 0$.
 
 It remains to show that ${\sf G}_{\varepsilon}   $ is compact in $L^2(\bbR;\frakh)$
  for all $\varepsilon>0$. Observe that the integral kernel $  g_{\varepsilon} (x,y) $ of this operator 
 is a continuous operator valued function   on $Q_{\varepsilon}\times Q_{\varepsilon}$ with values in  $\mathfrak{S}_{\infty}(\frakh) $. It is almost obvious that such operators are compact. Indeed, let us split the interval $(-\varepsilon^{-1}, \varepsilon^{-1})$ in $N$ equal intervals $(x_{1},x_{2}),\ldots, (x_{N},x_{N+1})$, and let $\wt\chi_{n} (x)$ be the characteristic function of the interval $(x_{n},x_{n+1})$. Set
$$
 g_{\varepsilon,N} (x,y)=\sum_{n,m=1}^N g_{\varepsilon} (x_{n},y_{m})\wt\chi_{n} (x)\wt\chi_m (y).
$$
 Clearly, the operator ${\sf G}_{\varepsilon,N} $ with such kernel is compact in $L^2(\bbR;\frakh)$. Since $  g_{\varepsilon} (x,y) $ 
 is a continuous   function, we have 
$$
\lim_{N\to\infty} \sup_{x\in Q_{\varepsilon}, y\in Q_\varepsilon} \| g_{\varepsilon} (x,y) - g_{\varepsilon,N} (x,y)\|_{\calB(\frakh)} =0.
$$
Therefore
$
 \|{\sf G}_{\varepsilon}   - {\sf G}_{\varepsilon,N}  \|=0
$ as $N\to\infty$
according again to \eqref{eq:OE}. 
\end{proof}

%{\tt reference? it suffices to extend the Weierstrass  th. to operator valued functions -exactly the same proof?}

Next we pass to the general case.

%%%%%%%%%%%%%%%
\begin{lemma}\label{lma.d1}
%%%%%%%%%%%%%%%
Let $\Xi\in C_0(\bbR;{\mathfrak S}_\infty(\frakh))$ be such that
\begin{equation}
\lim_{\lambda\to\lambda_\ell}
\norm{\Xi(\lambda)-\Xi(\lambda_\ell)}_{\calB(\frakh)}
q(\lambda)^{-2\beta}=0
\label{d0}
\end{equation}
for all $\ell=1,\dots,L$ and some $\beta>0$. 
Then the  operator \eqref{d21} is compact in $L^2(\bbR;\frakh)$. 
\end{lemma}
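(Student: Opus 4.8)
The plan is to reduce the general case to the already-established smooth case (Lemma~\ref{lma.d1A}) by a density/approximation argument, taking care that the approximation is controlled in the singular weight $q^{-\beta}$ near the points $\lambda_1,\dots,\lambda_L$. First I would fix $\Xi\in C_0(\bbR;\mathfrak S_\infty(\frakh))$ satisfying the hypothesis \eqref{d0}. The goal is to produce, for each $\eps>0$, a symbol $\Xi_\eps$ which is continuous, equals a constant operator in a neighbourhood of each $\lambda_\ell$, is $C^1$ away from those neighbourhoods (actually $C^1$ everywhere after mollification), has compact support, and is such that the weighted Hankel commutator built from $\Xi-\Xi_\eps$ has small norm. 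Then Lemma~\ref{lma.d1A} applies to $\Xi_\eps$ (a constant operator near $\lambda_\ell$ is certainly $C^1$ there), giving that $q^{-\beta}(\Xi_\eps P_+ - P_+\Xi_\eps)q^{-\beta}$ is compact, and the norm-limit of compact operators is compact.

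The construction of $\Xi_\eps$ proceeds in two independent steps. Step one: freeze $\Xi$ near each $\lambda_\ell$. Pick a smooth cutoff $\eta_\ell$ supported in a small ball around $\lambda_\ell$, equal to $1$ on a smaller ball, and replace $\Xi$ on the support of $\eta_\ell$ by the interpolation $\eta_\ell(\lambda)\Xi(\lambda_\ell)+(1-\eta_\ell(\lambda))\Xi(\lambda)$; call the result $\Xi^{(1)}$. The difference $\Xi-\Xi^{(1)}$ is supported near the $\lambda_\ell$'s and is bounded there by $\norm{\Xi(\lambda)-\Xi(\lambda_\ell)}_{\calB(\frakh)}$, which by \eqref{d0} is $o(q(\lambda)^{2\beta})$ as $\lambda\to\lambda_\ell$. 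Step two: away from the frozen neighbourhoods, $\Xi^{(1)}$ is already continuous, decays at infinity, and is uniformly close to a $C^1$ compactly supported function $\Xi_\eps$ obtained by mollification and a far-field cutoff; the difference $\Xi^{(1)}-\Xi_\eps$ is small in sup-norm and supported away from all $\lambda_\ell$, so sandwiching by $q^{-\beta}$ is harmless there. Thus $\Xi-\Xi_\eps=(\Xi-\Xi^{(1)})+(\Xi^{(1)}-\Xi_\eps)$ with both pieces controlled.

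It then remains to show that for a symbol $\Psi$ which is either (a) supported near the $\lambda_\ell$'s with $\norm{\Psi(\lambda)}_{\calB(\frakh)}q(\lambda)^{-2\beta}\to 0$, or (b) small in sup-norm and supported away from the $\lambda_\ell$'s, the weighted commutator $q^{-\beta}(\Psi P_+ - P_+\Psi)q^{-\beta}$ has small operator norm. In case (b) one writes $\Psi P_+ - P_+\Psi = P_-\Psi P_+ - P_+\Psi P_-$ and inserts $q^{-\beta}P_\mp$ via $q^{-\beta}P_\mp = (q^{-\beta}P_\mp q^{\beta})q^{-\beta}$, using Proposition~\ref{Muck} to bound $q^{-\beta}P_\mp q^{\beta}$; since $q^{-\beta}$ is bounded on the support of $\Psi$ in case (b), the whole operator is bounded by $C\norm{\Psi}_\infty$, which is small. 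In case (a) the same splitting applies, but now $q^{-\beta}$ blows up; here one bounds $q^{-\beta}\Psi q^{-\beta}$ directly as an operator of multiplication by $\sup_\lambda \norm{\Psi(\lambda)}_{\calB(\frakh)}q(\lambda)^{-2\beta}$, which is small by the hypothesis on $\Psi$, and again absorbs the two Hardy projections using the Muckenhoupt bound of Proposition~\ref{Muck}. The main obstacle is precisely this second point: making sure the logarithmic vanishing condition \eqref{d0} is exactly what is needed to beat the $q^{-2\beta}$ singularity after the Hardy projections have been commuted past the weight, i.e.\ tracking that no additional singular factor is generated when one writes $q^{-\beta}P_\pm q^{-\beta} = (q^{-\beta}P_\pm q^{\beta})\,q^{-2\beta}$ and that the first factor is genuinely bounded uniformly. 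Once both cases are in hand, $\norm{{\sf G}-{\sf G}_\eps}\to 0$ where ${\sf G}_\eps$ is the compact operator from Lemma~\ref{lma.d1A} applied to $\Xi_\eps$, and compactness of ${\sf G}$ follows.
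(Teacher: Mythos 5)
Your proposal is correct and is essentially the paper's own argument: both reduce to Lemma~\ref{lma.d1A} by a norm-approximation whose error is controlled near each $\lambda_\ell$ by \eqref{d0} and elsewhere by smallness in sup-norm, with the Hardy projections absorbed past the weight via Proposition~\ref{Muck} in the factorization $q^{-\beta}P_\mp\Psi P_\pm q^{-\beta}=(q^{-\beta}P_\mp q^{\beta})(q^{-\beta}\Psi q^{-\beta})(q^{\beta}P_\pm q^{-\beta})$. The only differences are cosmetic — you freeze $\Xi$ to the constant $\Xi(\lambda_\ell)$ near each singular point and mollify, whereas the paper subtracts $\sum_\ell\Xi(\lambda_\ell)\rho_\ell$ and then multiplies by a cutoff $\sigma_\eps$ vanishing near the $\lambda_\ell$ (the mollification is not even needed, since Lemma~\ref{lma.d1A} only requires $C^1$ near the singular points); just make explicit that the balls shrink with $\eps$ and that the mollification scale is chosen after the balls are fixed.
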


\begin{proof}
Let us reduce the question to the case 
\begin{equation}
\Xi(\lambda_1)=\dots=\Xi(\lambda_L)=0.
\label{d22}
\end{equation}
For $\ell=1,\dots,L$, let $\rho_\ell\in C_0^\infty(\bbR)$ 
be such that $\rho_\ell(\lambda_k)=\delta_{\ell k}$. 
Consider the function 
$$
\Xi_0(\lambda)
=
\sum_{\ell=1}^L \Xi(\lambda_\ell)\rho_\ell(\lambda).
$$
The   operator $\X^{-\beta}(P_+\Xi_0- \Xi_0 P_+)\X^{-\beta}$
is compact according to Lemma~\ref{lma.d1A}. 
Thus, we may check the compactness of the operator  \eqref{d21} with $\Xi-\Xi_0$ in place of $\Xi$. 
The function $\Xi-\Xi_0$ satisfies the condition $\Xi(\lambda_\ell)-\Xi_0(\lambda_\ell)=0$ for all $\ell$.  To simplify our notation, we will 
assume that $\Xi$ already satisfies \eqref{d22} and hence
\begin{equation}
\lim_{\lambda\to\lambda_\ell}
\Big(\norm{\Xi(\lambda) }_{\calB(\frakh)}
q(\lambda)^{-2\beta}\Big)=0.
\label{d0x}\end{equation}

  Let $\sigma_{\varepsilon} \in C_0^\infty(\bbR)$,  $\sigma_{\varepsilon}(\lambda)=0$ in neighbourhoods of all points $\lambda_{1},\ldots, \lambda_{L}$, $0\leq \sigma_{\varepsilon}(\lambda)\leq 1$ and  $\sigma_{\varepsilon}(\lambda)=1$ for $\lambda\in Q_{\varepsilon}$ (see \eqref{eq:qe}).
According to Lemma~\ref{lma.d1A} applied to $\Xi \sigma_{\varepsilon}$ in place of $\Xi$, the operators
\begin{equation}
\X^{-\beta}(\Xi\sigma_\eps P_+ - P_+\Xi \sigma_\eps)\X^{-\beta}
\label{d25} 
\end{equation}
are compact. Observe that
\begin{multline*}
\norm{
\X^{-\beta}\Xi(\sigma_\eps -1) P_+ \X^{-\beta}} 
\leq \norm{
\X^{-\beta}\Xi(\sigma_\eps -1)   \X^{-\beta}} \norm{\X^\beta P_+ \X^{-\beta}}
\\
\leq \sup_{\lambda\in\bbR}
\Big(\norm{\Xi(\lambda)}_{\calB(\frakh)}q(\lambda)^{-2\beta}\abs{1-\sigma_\eps(\lambda)}\Big)
 \norm{\X^\beta P_+ \X^{-\beta}}.
\end{multline*}
Since $\X^\beta P_+ \X^{-\beta}\in\mathcal{B}$,
 it follows from \eqref{d0x} and the condition $\| \Xi (\lambda)\|\to 0$ as $|\lambda|\to \infty$ that the r.h.s.\  here tends to $0$ as $\varepsilon\to 0$.  Thus
 the operators \eqref{d25} approximate the operator \eqref{d21} 
in the operator norm. This proves the compactness of \eqref{d21}. 
\end{proof}

%%%%%%%%%%%%%%%
\begin{corollary}\label{lma.e2c}
%%%%%%%%%%%%%%% 
Under the hypothesis of Lemma~$\ref{lma.d1}$,
the operators $\X^{-\beta}P_\pm \Xi P_\mp \X^{-\beta}$ and therefore
 $\X^{-\beta}M_\Xi \X^{-\beta}$  are compact
in $L^2(\bbR; \frakh)$. 
\end{corollary}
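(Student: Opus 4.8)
The plan is to deduce Corollary~\ref{lma.e2c} directly from Lemma~\ref{lma.d1}. The key observation is that the off-diagonal Hankel pieces $P_-\Xi P_+$ and $P_+\Xi P_-$ can both be recovered from the commutator $\Xi P_+-P_+\Xi$ appearing in \eqref{d21}. Indeed, since $P_++P_-=I$, one has $\Xi P_+-P_+\Xi = (P_-+P_+)\Xi P_+ - P_+\Xi(P_-+P_+) = P_-\Xi P_+ - P_+\Xi P_-$, exactly as recorded in \eqref{eq:Pe}. Thus the operator ${\sf G}=\X^{-\beta}(\Xi P_+-P_+\Xi)\X^{-\beta}$ equals $\X^{-\beta}P_-\Xi P_+\X^{-\beta} - \X^{-\beta}P_+\Xi P_-\X^{-\beta}$.

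First I would separate the two summands using the Hardy projections. Note that $P_\pm$ commutes with $\X^{\pm\beta}$ only up to bounded error, but we do not even need that: simply multiply ${\sf G}$ on the left by $P_-$ and on the right by $P_+$ to annihilate one term. More precisely, $P_- {\sf G} P_+ = P_-\X^{-\beta}P_-\Xi P_+\X^{-\beta}P_+$, which is not quite what we want because of the extra projections. A cleaner route: use Proposition~\ref{Muck}, which guarantees that $\X^\beta P_\pm \X^{-\beta}$ is bounded. Then write
\begin{equation*}
\X^{-\beta}P_-\Xi P_+\X^{-\beta}
=
(\X^{-\beta}P_-\X^{\beta})\,\X^{-\beta}(\Xi P_+ - P_+\Xi)\X^{-\beta}\,(\X^{\beta}P_+\X^{-\beta}) + (\text{similar term}),
\end{equation*}
but this still entangles things. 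The genuinely simple argument is: apply $P_-\cdot\,P_+$ as left/right multipliers to the operator $\Xi P_+-P_+\Xi$ itself \emph{before} sandwiching. Since $P_-(\Xi P_+-P_+\Xi)P_+ = P_-\Xi P_+$ (the second term dies because $P_+\Xi P_-P_+ = 0$), and likewise $P_+(\Xi P_+-P_+\Xi)P_- = -P_+\Xi P_-$, we get
\begin{equation*}
\X^{-\beta}P_\pm \Xi P_\mp \X^{-\beta}
=
\pm\,\X^{-\beta}P_\pm\X^{\beta}\cdot \X^{-\beta}(\Xi P_+-P_+\Xi)\X^{-\beta}\cdot \X^{\beta}P_\mp\X^{-\beta}.
\end{equation*}
By Proposition~\ref{Muck} the outer factors $\X^{-\beta}P_\pm\X^{\beta}$ and $\X^{\beta}P_\mp\X^{-\beta}$ are bounded, and by Lemma~\ref{lma.d1} the middle factor ${\sf G}$ is compact. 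Hence $\X^{-\beta}P_\pm\Xi P_\mp\X^{-\beta}\in{\mathfrak S}_\infty$. Finally, from the definition \eqref{b9a} of the SHO, $\X^{-\beta}M_\Xi\X^{-\beta}=\X^{-\beta}P_-\Xi P_+\X^{-\beta}+\X^{-\beta}P_+\Xi^*P_-\X^{-\beta}$; applying the result just proved to $\Xi$ and to $\Xi^*$ (which also satisfies hypothesis \eqref{d0} since $\|\Xi^*(\lambda)-\Xi^*(\lambda_\ell)\|=\|\Xi(\lambda)-\Xi(\lambda_\ell)\|$) gives compactness of $\X^{-\beta}M_\Xi\X^{-\beta}$.

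I do not anticipate a serious obstacle here: the corollary is essentially an algebraic rearrangement combined with the already-established Lemma~\ref{lma.d1} and Proposition~\ref{Muck}. The only point requiring a moment of care is verifying that $\Xi^*$ inherits hypothesis \eqref{d0} from $\Xi$ — which is immediate since taking adjoints is an isometry on $\calB(\frakh)$ and preserves membership in $C_0(\bbR;{\mathfrak S}_\infty(\frakh))$ — and making sure the bounded factors $\X^{\pm\beta}P_\pm\X^{\mp\beta}$ are correctly placed so that the telescoping with $\X^{-\beta}$ on the outside is exact rather than merely approximate. That bookkeeping is routine.
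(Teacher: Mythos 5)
Your argument is correct and follows essentially the same route as the paper: the identity \eqref{eq:Pe} combined with the compactness of ${\sf G}$ from Lemma~\ref{lma.d1} and the Muckenhoupt boundedness of $\X^{\pm\beta}P_\pm\X^{\mp\beta}$ from Proposition~\ref{Muck} (the paper uses only the one-sided identity $P_-\Xi P_+=P_-(\Xi P_+-P_+\Xi)$, so a single bounded factor suffices, but your two-sided version works equally well). Only a cosmetic remark: in your displayed formula the prefactor should be $\mp$ rather than $\pm$, since $P_+(\Xi P_+-P_+\Xi)P_-=-P_+\Xi P_-$, which of course does not affect compactness.
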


\begin{proof}
Consider, for example, the operator
$$
\X^{-\beta}P_- \Xi P_+ \X^{-\beta}
=
\X^{-\beta}P_- (\Xi P_+ -P_+\Xi)\X^{-\beta}
=
(\X^{-\beta}P_- \X^\beta)\X^{-\beta}(\Xi P_+-P_+\Xi)\X^{-\beta}. 
$$
It suffices to use   the fact that the operator $\X^{-\beta}P_- \X^\beta$ is bounded and the operator \eqref{d21} is compact.
\end{proof} 

%%%%%%%%%%%%%%%
\begin{remark}\label{rem.e2c}
%%%%%%%%%%%%%%% 
Lemma~\ref{lma.d1} and  Corollary~\ref{lma.e2c} can be extended in an obvious way to operators acting from a space  $L^2(\bbR; \frakh_{1})$ to a space $L^2(\bbR; \frakh_2)$ where $\frakh_1\neq\frakh_2$.
\end{remark}

 %%%%%%%%%%%%%%%%%%%%%%%%%%%%%%%%%%%%%%%%%%%
\subsection{Disjoint singularities}\label{disjoint}
%%%%%%%%%%%%%%%%%%%%%%%%%%%%%%%%%%%%%%%%%%%

The following assertion will allow us to separate contributions of different jumps of the function $\Xi$. It suffices   to consider scalar valued symbols, that is, SHOs in the space $L^2({\bbR})$.

%%%%%%%%%%%%%%%
\begin{lemma}\label{lma.e2}
%%%%%%%%%%%%%%%
Let $\zeta$ be the function defined by \eqref{c6} and $\zeta_{j}(\lambda)= \zeta(\lambda-\lambda_{j})$, $j=1,2$. Suppose that $\lambda_{1}\neq \lambda_{2}$.  
Then, for all $\beta\in\bbR$, the operators
\begin{equation}
q^{-\beta} P_\pm \zeta_{1} P_\mp \zeta_{2} P_\pm q^{-\beta}
\label{eq:MM}\end{equation}
are compact  and therefore the operators 
$q^{-\beta}M_{\zeta_{1}}M_{\zeta_{2}}q^{-\beta}$
are also compact in $L^2 (\bbR)$. 
\end{lemma}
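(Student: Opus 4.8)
The plan is to reduce the statement about $M_{\zeta_1}M_{\zeta_2}$ to the one about the operators \eqref{eq:MM} and then to split off the two jumps. Since $\zeta_j$ is real, $M_{\zeta_j}=P_-\zeta_jP_++P_+\zeta_jP_-$, and $P_+P_-=P_-P_+=0$ gives $M_{\zeta_1}M_{\zeta_2}=P_-\zeta_1P_+\zeta_2P_-+P_+\zeta_1P_-\zeta_2P_+$; so it is enough to prove that the two operators \eqref{eq:MM} are compact, and I will only write the argument for $q^{-\beta}P_-\zeta_1P_+\zeta_2P_-q^{-\beta}$. Fix $\psi\in C_0^\infty(\bbR)$ with $\psi\equiv1$ near $0$ and with $\supp\psi$ so small that the supports of $\psi_j:=\psi(\,\cdot\,-\lambda_j)$, $j=1,2$, are disjoint and each of them contains $\lambda_j$ but none of the other points $\lambda_k$; this is possible because $\lambda_1\neq\lambda_2$. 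Writing $\zeta_j=\sigma_j+\eta_j$ with $\sigma_j=\psi_j\zeta_j$ and $\eta_j=(1-\psi_j)\zeta_j$, one has $\sigma_1\sigma_2=0$ and $d:=\dist(\supp\sigma_1,\supp\sigma_2)>0$, whereas $\eta_j$ belongs to $C_0(\bbR)$, is $C^\infty$ in a neighbourhood of every $\lambda_\ell$ (since $1-\psi_j$ vanishes near $\lambda_j$), and hence satisfies condition \eqref{d0} for every $\beta$, because $|\lambda-\lambda_\ell|\,q(\lambda)^{-2\beta}\to0$ as $\lambda\to\lambda_\ell$. Substituting $\zeta_j=\sigma_j+\eta_j$ splits $q^{-\beta}P_-\zeta_1P_+\zeta_2P_-q^{-\beta}$ into four terms.

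For each of the three terms containing a factor $\eta_j$ I would extract that factor by means of the identities $P_-\eta_1P_+=P_-(\eta_1P_+-P_+\eta_1)$ and $P_+\eta_2P_-=-P_+(\eta_2P_+-P_+\eta_2)$, and then insert powers of $q$ so as to write the term as a product in which the factor $q^{-\beta}(\eta_jP_+-P_+\eta_j)q^{-\beta}$ is compact — by Lemma~\ref{lma.d1} when $\beta>0$, and trivially when $\beta\leq0$ since $q^{-\beta}$ is then bounded and $\eta_jP_+-P_+\eta_j\in\mathfrak{S}_\infty$ by \eqref{eq:Pe} — while the remaining factors, each of the form $q^{\pm\beta}P_\pm q^{\mp\beta}$ or multiplication by a bounded function (which commutes with the powers of $q$), are bounded by Proposition~\ref{Muck}. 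This makes those three terms compact.

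It remains to treat $q^{-\beta}P_-\sigma_1P_+\sigma_2P_-q^{-\beta}$, which is the only place where the disjointness of the jumps enters. Since $\sigma_1\sigma_2=0$, the operator $\sigma_1P_+\sigma_2$ has, by \eqref{c4}, integral kernel $(2\pi i)^{-1}\sigma_1(\lambda)\sigma_2(x)(x-\lambda)^{-1}$, supported in $\supp\sigma_1\times\supp\sigma_2$, where $|x-\lambda|\geq d$. Inserting powers of $q$ as before I would write
\begin{equation*}
q^{-\beta}P_-\sigma_1P_+\sigma_2P_-q^{-\beta}
=\bigl(q^{-\beta}P_-q^{\beta}\bigr)\bigl(q^{-\beta}\sigma_1P_+\sigma_2\,q^{-\beta}\bigr)\bigl(q^{\beta}P_-q^{-\beta}\bigr),
\end{equation*}
where the two outer factors are bounded by Proposition~\ref{Muck}. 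The middle factor has kernel $(2\pi i)^{-1}q(\lambda)^{-\beta}\sigma_1(\lambda)\sigma_2(x)(x-\lambda)^{-1}q(x)^{-\beta}$, which is supported in $\supp\sigma_1\times\supp\sigma_2$ and square-integrable there because $|x-\lambda|\geq d$ while $q^{-2\beta}$ is locally integrable (it is bounded away from $\lambda_1,\dots,\lambda_L$ and near $\lambda_\ell$ it is dominated by a power of $\bigl|\ln|\lambda-\lambda_\ell|\bigr|$); hence the middle factor is Hilbert--Schmidt and the product is compact. Summing the four terms gives the compactness of \eqref{eq:MM}, and therefore of $q^{-\beta}M_{\zeta_1}M_{\zeta_2}q^{-\beta}$. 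The only real obstacle is this last, singular$\times$singular term: the decomposition $\zeta_j=\sigma_j+\eta_j$ with $\supp\sigma_1\cap\supp\sigma_2=\varnothing$ is precisely what pushes the relevant kernel off the diagonal and reduces the estimate to the Hilbert--Schmidt bound above — for the undecomposed product $\zeta_1P_+\zeta_2$ the kernel would be singular along the whole diagonal and this argument would break down. Everything else is routine given Lemma~\ref{lma.d1} and Proposition~\ref{Muck}.
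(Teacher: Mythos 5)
Your proof is correct and follows essentially the same route as the paper: split each $\zeta_j$ into a part supported near its jump and a remainder that is smooth at the singular points of $q$, dispose of all terms containing a smooth factor via the commutator compactness result (Lemma~\ref{lma.d1}, equivalently Corollary~\ref{lma.e2c}) together with the Muckenhoupt bounds of Proposition~\ref{Muck}, and treat the remaining singular$\times$singular term as a Hilbert--Schmidt operator using the disjointness of the supports. The only difference from the paper is cosmetic bookkeeping (you decompose the two symbols, the paper inserts cut-offs $\rho_1,\rho_2$ around the middle projection), so no further comparison is needed.
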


\begin{proof}
Choose functions $\rho_j\in C_0^\infty(\bbR)$, $j=1,2$, 
such that $\rho_j(\lambda)=1$ in a neighbourhood of the point $\lambda_j$ 
and  
\begin{equation}
\dist(\supp\rho_1,\supp\rho_2)>0.
\label{eq:MM2}\end{equation}
Set $\wt \rho_j=1-\rho_j$. Since
$\wt \rho_j\zeta_j\in C^\infty(\bbR )$ and
 $\wt \rho_j (\lambda) \zeta_j (\lambda) \to 0 $ as $|\lambda|\to \infty$,  it follows from Corollary~\ref{lma.e2c} that
\begin{equation}
q^{-\beta} P_\pm \zeta_{j} \wt \rho_j P_\mp   q^{-\beta}\in \mathfrak S_{\infty}, \quad j=1,2.
\label{eq:MM1}\end{equation}

Let us consider  the operator \eqref{eq:MM}, for example, for the upper signs. Writing $P_{-}$ as
$$
  P_-  =
  \rho_1 P_-   \rho_2  +  \wt \rho_1 P_-  +
 \rho_1   P_-  \wt\rho_2  ,
$$
we see that 
\begin{multline}
q^{-\beta} P_+ \zeta_1 P_- \zeta_2 P_+ q^{-\beta}
=
(q^{-\beta} P_+ q^{\beta}) (q^{-\beta} \zeta_1 \rho_1 P_- \zeta_2 \rho_2 q^{-\beta})(q^{\beta} P_+ q^{-\beta})
\\
+
(q^{-\beta} P_+ \zeta_1 \wt \rho_1 P_- q^{-\beta}) \zeta_2  (q^{\beta} P_+ q^{-\beta})
+
( q^{-\beta} P_+ q^{\beta}) \zeta_1 \rho_1 ( q^{-\beta} P_- \zeta_2 \wt\rho_2 P_+ q^{-\beta}).
\label{e5}\end{multline}
Recall that, by Proposition~\ref{Muck}, the operator $q^{-\beta} P_+ q^{\beta}$ is bounded.
Therefore to show that the first term in the r.h.s.\  of \eqref{e5} is compact, it suffices to check that 
$$
\psi_{1} P_{-} \psi_{2}\in \mathfrak S_{\infty}
$$
where $\psi_{j}= q^{-\beta} \zeta_j \rho_j \in L^2 (\bbR)$.  Using formula \eqref{c4} for the integral kernel of $P_{-}$ and condition \eqref{eq:MM2}, we find that the integral  kernel of the operator $\psi_{1} P_{-} \psi_{2}$ equals
$$
(2\pi i)^{-1} \psi_{1}(x) (x-y)^{-1} \psi_{2}(y).
$$
This function belongs to $L^2 (\bbR^2; dxdy)$ so that the operator $
\psi_{1} P_{-} \psi_{2}$ is Hilbert-Schmidt.

The second and third terms   in the r.h.s.\  of \eqref{e5} are compact because, by \eqref{eq:MM1}, the operators $q^{-\beta} P_+ \zeta_1 \wt \rho_1 P_- q^{-\beta}$ and $q^{-\beta} P_- \zeta_2 \wt\rho_2 P_+ q^{-\beta}$ are compact.

In view of definition \eqref{b9a} the compactness of operators \eqref{eq:MM} implies the
compactness of $q^{-\beta}M_{\zeta_{1}}M_{\zeta_{2}}q^{-\beta}$.
\end{proof}

 %%%%%%%%%%%%%%%%%%%%%%%%%%%%%%%%%%%%%%%%%%%%%%%%%
\section{Spectral and scattering theory of symmetrised Hankel operators}\label{sec.ee}
%%%%%%%%%%%%%%%%%%%%%%%%%%%%%%%%%%%%%%%%%%%%%%%%%
%%%%%%%%%%%%%%%%%%%%%%%%%%%%%%%%%%%%%%%%%%%%%%%

Here we construct  the spectral theory  of SHOs with  piecewise continuous symbols.  This theory may be interesting
in its own right but, most importantly for us, it serves as a model for 
the spectral theory of the operators $\DD_\theta$.

%%%%%%%%%%%%%%%%%%%%%%%%%%%%%%%%%%%%%%%%%%%
\subsection{Main results for SHOs}\label{sec.e4}
%%%%%%%%%%%%%%%%%%%%%%%%%%%%%%%%%%%%%%%%%%%

Let $\Xi\in L^\infty (\bbR;{\mathfrak S}_\infty(\frakh))$ and let $M_\Xi$ be the SHO in 
$\calH=L^2(\bbR; \frakh)$ introduced in Definition~\ref{def.sho}.
We consider the case of
piecewise continuous operator valued symbols $\Xi$ with jump discontinuities at the points $\lambda_{1},\ldots, \lambda_{L}$. 
We use the notation 
\begin{equation}
K_\ell = \Xi(\lambda_\ell+0)- \Xi(\lambda_\ell-0)
\label{eq:jump}\end{equation}
for the jumps  of $\Xi$. These jumps are compact operators in $\frakh$. 
We denote by 
$s_n(K_\ell)$, $1\leq n\leq \dim \frakh$,
the sequence of singular values of the  operators $K_\ell $. Recall that the function $q(\lambda)$ was defined by formulas \eqref{c7} and \eqref{e4}.

%%%%%%%%%%%%%%%%%
\begin{theorem}\label{thm.f8}
%%%%%%%%%%%%%%%%%
Let a symbol $\Xi (\lambda)$ with values in $ {\mathfrak S}_\infty(\frakh)$ be a norm-continuous function of $\lambda$ apart from some jump discontinuities at finitely many points 
$ \lambda_1, \ldots, \lambda_L$. 
Assume that for each $\ell=1,\dots,L$, the symbol $\Xi$ satisfies
the logarithmic regularity condition 
\begin{equation}
\norm{\Xi(\lambda_\ell\pm\eps)- \Xi(\lambda_\ell\pm0)}
=
O(\abs{\log \eps}^{-\beta_0}), \quad \eps\to+0,
\label{e8}
\end{equation}
with an exponent $\beta_0>2$, and assume 
\begin{equation}
\lim_{|\lambda|\to \infty} \| \Xi(\lambda)  \|=0.
\label{e11}
\end{equation} 
Then:

\begin{enumerate}[\rm (i)]

\item 
The a.c.\  spectrum of the  operator $M_\Xi$  consists of the union of the intervals: 
\begin{equation}
\spec_{\rm ac} M_\Xi 
= 
\bigcup_{\ell=1}^L
\bigcup_{n=1}^{\dim \frakh}
[-\tfrac12 s_n(K_{\ell}),  \tfrac12 s_n(K_{\ell}) ].
\label{f18}
\end{equation}

\item
The singular continuous spectrum of $M_\Xi$ is empty. 

\item
The eigenvalues of $M_\Xi$ can accumulate only to $0$ and to the points 
$\pm \tfrac12 s_n(K_\ell)$. 
All eigenvalues of $M_\Xi$, distinct from 
$0$ and from $\pm \tfrac12 s_n(K_{\ell})$, 
have finite multiplicities.

\item
The operator valued function $q^\beta(M_\Xi-zI)^{-1}q^\beta$, 
$\beta>1 $, is  continuous in $z$
for $\pm\Im z\geq0$ 
if $z$ is separated away from the thresholds $0$,  $\pm  \tfrac12 s_n(K_{\ell})$ and  from the eigenvalues of $M_\Xi$.
\end{enumerate}
\end{theorem}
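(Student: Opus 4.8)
The plan is to reduce the study of the general SHO $M_\Xi$ to a finite family of model SHOs, one for each jump, and then to invoke the multichannel scattering scheme of Propositions~\ref{pr.c1} and~\ref{pr.c2}. First I would introduce, for each $\ell=1,\dots,L$, the model operator $M_{\Xi_\ell}$ with the single-jump symbol $\Xi_\ell(\lambda)=\zeta(\lambda-\lambda_\ell)K_\ell$, where $\zeta$ is the function \eqref{c6} and $K_\ell$ is the jump \eqref{eq:jump}. By Theorem~\ref{lma.e4} and Remark~\ref{gen}, each $M_{\Xi_\ell}$ has, apart from the eigenvalue $0$, purely a.c.\ spectrum equal to $\bigcup_n[-\tfrac12 s_n(K_\ell),\tfrac12 s_n(K_\ell)]$, and the weight $q^\beta$ (with $q$ as in \eqref{e4}) is strongly $M_{\Xi_\ell}$-smooth with exponent $\gamma<\beta-\tfrac12$ on any closed interval avoiding the thresholds. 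Choosing $\beta\in(1,\beta_0/2)$ we secure an exponent $\gamma>1/2$, as required by the multichannel hypotheses.

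The heart of the argument is to verify that the collection $A_\ell:=M_{\Xi_\ell}$ together with $A_\infty:=M_\Xi-\sum_\ell M_{\Xi_\ell}$ satisfies the hypotheses of Proposition~\ref{pr.c1} with $X=q^\beta$. The key point is that $\Xi-\sum_\ell\Xi_\ell$ is a symbol which is continuous across each $\lambda_\ell$ (the jumps have been matched), vanishes at infinity by \eqref{e11} and the decay $\zeta(\lambda)=O(|\lambda|^{-1})$, and — crucially — satisfies the logarithmic modulus-of-continuity bound \eqref{d0} at each $\lambda_\ell$ precisely because $\beta_0>2$ controls \eqref{e8} against $q(\lambda)^{-2\beta}=O(|\log|\lambda-\lambda_\ell||^{2\beta})$ when $2\beta<\beta_0$. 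Hence Corollary~\ref{lma.e2c} gives $A_\infty=q^{-\beta}M_{\Xi-\sum\Xi_\ell}q^{-\beta}\in\mathfrak S_\infty$ sandwiched appropriately, i.e.\ the representation \eqref{c2}. For the cross terms, $A_jA_k=M_{\Xi_j}M_{\Xi_k}$ with $j\neq k$, the disjoint-singularities Lemma~\ref{lma.e2} shows $q^{-\beta}M_{\Xi_j}M_{\Xi_k}q^{-\beta}\in\mathfrak S_\infty$, which is \eqref{c1}. The boundedness of $q^\beta M_{\Xi_\ell}q^{-\beta}$ is Proposition~\ref{Muck}, and $\Ker q^\beta=\{0\}$ is clear. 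One restricts to an open interval $\delta$ with $0\notin\delta$ and avoiding the thresholds $\pm\tfrac12 s_n(K_\ell)$; on such $\delta$ the a.c.\ parts of the $A_\ell$ have constant multiplicity by Theorem~\ref{lma.e4}(i).

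Given this, Proposition~\ref{pr.c1} applied on each such $\delta$ yields at once: the a.c.\ spectrum of $M_\Xi$ has, on $\delta$, multiplicity equal to the sum over $\ell$ of the multiplicities of the $M_{\Xi_\ell}$, which gives \eqref{f18} after taking the union over all admissible $\delta$ and using the symmetry $\spec M_\Xi=-\spec M_\Xi$ together with compactness of $M_\Xi$ near $0$ (so $0$ can only be an eigenvalue); the singular continuous spectrum is empty on $\delta$, hence everywhere since the complement of $\bigcup\delta$ is the finite threshold set plus $\{0\}$, which carries no continuous spectrum; the eigenvalues in $\delta$ have finite multiplicity and do not accumulate at interior points of $\delta$, giving (iii); and the limiting absorption bound (iv) for $q^\beta(M_\Xi-zI)^{-1}q^\beta$ follows from Proposition~\ref{pr.c1}(iv). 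I expect the main obstacle to be the bookkeeping in the reduction step — confirming that $\Xi-\sum_\ell\Xi_\ell$ meets \emph{all} the continuity, decay and logarithmic-regularity requirements of Lemma~\ref{lma.d1}/Corollary~\ref{lma.e2c} uniformly, and in particular calibrating the exponents so that $1<\beta<\beta_0/2$ simultaneously gives $\gamma>1/2$ in the smoothness of $q^\beta$ and the needed decay rate $O(|\log\eps|^{-\beta_0})=o(q^{-2\beta})$ in \eqref{d0}; the case $L=1$ is easier and bypasses the multichannel machinery, using instead the standard smooth scattering results of Subsections~\ref{sec.b3}--\ref{sec.b4} directly for the pair $M_{\Xi_1}$, $M_\Xi$.
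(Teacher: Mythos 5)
Your proposal follows essentially the same route as the paper: the same model symbols $\Xi_\ell(\lambda)=\zeta(\lambda-\lambda_\ell)K_\ell$, the same choice $X=q^\beta$ with $1<\beta<\beta_0/2$, verification of the hypotheses of Proposition~\ref{pr.c1} via Theorem~\ref{lma.e4}/Remark~\ref{gen}, Lemma~\ref{lma.e2}, Corollary~\ref{lma.e2c} and Proposition~\ref{Muck}, and then application of the multichannel scheme on each interval $\delta$ avoiding $0$ and the thresholds (with the $L=1$ simplification noted exactly as in Remark~\ref{rmk1}). The only blemish is your aside invoking ``compactness of $M_\Xi$ near $0$'' and the spectral symmetry to treat the point $0$ --- this is neither needed nor literally correct; it suffices, as in the paper, that the excluded set $\{0\}\cup\{\pm\tfrac12 s_n(K_\ell)\}$ has Lebesgue measure zero, so letting $\delta$ range over all admissible intervals already yields \eqref{f18} and statements (ii)--(iv).
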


Note that already the case $\dim \frakh=1$ is non-trivial. 
In fact, this case contains most of  the essential difficulties
and the generalisation  to the case 
$\dim \frakh>1$ and even to $\dim\frakh= \infty$ is almost automatic.

Our next goal is to describe the structure of the a.c.\  subspace of the SHO $M_\Xi$.
We use tools of the scattering theory which also give   information on the  behaviour of the unitary group $\exp(-i M_\Xi t)f$ as $t\to \pm \infty$ for $f$ in the a.c.\  subspace of the operator $M_\Xi$. The following assertion shows that, for large $|t|$, the function $\exp(-i M_\Xi t)f$ ``lives" only in neighbourhoods of the singular points $\lambda_{1},\ldots, \lambda_{L}$.

%%%%%%%%%%%%%%%%%%%%%
\begin{lemma}\label{as}
%%%%%%%%%%%%%%%%%%%%%
Let $\Xi$ satisfy the assumptions of Theorem~$\ref{thm.f8}$. Let $Q$ be a closed set such that $Q\cap\{\lambda_{1},\ldots, \lambda_{L}\}=\varnothing$ and let $\chi_{Q}$ be the characteristic function of $Q$. 
 Then the operator $\chi_{Q} M_\Xi$ is compact and  
 \begin{equation}
 \slim_{|t|\to \infty}\chi_{Q} \exp(-i M_{\Xi}t) P^{({\rm ac})}_{M_{\Xi}}  =0.
\label{eq:comp}
\end{equation} 
  \end{lemma}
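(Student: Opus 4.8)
The plan is to prove the two assertions separately. First I would establish that $\chi_Q M_\Xi$ is compact by a cut-off argument using only that $\Xi$ is piecewise continuous, $\mathfrak S_\infty(\frakh)$-valued and vanishes at infinity; then the propagation statement \eqref{eq:comp} will follow from this compactness by a soft argument and requires no further input.

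\emph{Compactness of $\chi_Q M_\Xi$.} Since $Q$ is closed and $\lambda_\ell\notin Q$ for all $\ell$, the number $\varepsilon=\min_\ell\dist(Q,\lambda_\ell)$ is positive; shrinking it if necessary, I may also assume $2\varepsilon$ is less than the minimal distance between the points $\lambda_1,\dots,\lambda_L$. I would then pick $\eta\in C_0^\infty(\bbR)$ with $\eta\equiv1$ on $\bigcup_\ell(\lambda_\ell-\varepsilon/2,\lambda_\ell+\varepsilon/2)$ and $\supp\eta\subset\bigcup_\ell(\lambda_\ell-\varepsilon,\lambda_\ell+\varepsilon)$, and set $\psi=1-\eta$, so that $\psi\equiv1$ on $Q$ and hence $\chi_Q M_\Xi=\chi_Q(\psi M_\Xi)$. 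Commuting $\psi$ through the Hardy projections in \eqref{b9a} and using $[\psi,P_\pm]=-[\eta,P_\pm]$ gives
\[
\psi M_\Xi = M_{\psi\Xi}-[\eta,P_-]\,\Xi P_+-[\eta,P_+]\,\Xi^*P_- ,
\qquad
M_{\psi\Xi}=P_-(\psi\Xi)P_++P_+(\psi\Xi)^*P_- .
\]
Because $\eta\in C_0^\infty(\bbR)$, the commutators $[\eta,P_\pm]$ have Hilbert--Schmidt kernels $\propto(\eta(x)-\eta(y))/(x-y)$, so the last two terms are compact. The symbol $\psi\Xi$ vanishes in a neighbourhood of each $\lambda_\ell$, hence is norm-continuous on all of $\bbR$ with values in $\mathfrak S_\infty(\frakh)$, and $\|\psi(\lambda)\Xi(\lambda)\|\to0$ as $|\lambda|\to\infty$ by \eqref{e11}; thus $\psi\Xi\in C_0(\bbR;\mathfrak S_\infty(\frakh))$ and $M_{\psi\Xi}$ is compact, since SHOs with $C_0$ symbols are compact (see the discussion after Definition~\ref{def.sho}). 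Therefore $\psi M_\Xi$, and with it $\chi_Q M_\Xi$, is compact.

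\emph{The propagation estimate.} Since $M_\Xi$ is bounded, $\spec M_\Xi$ is bounded, and since $E^{({\rm ac})}_{M_\Xi}(\{0\})=0$, the projections $E^{({\rm ac})}_{M_\Xi}(\delta_k)$ with $\delta_k=\{\lambda\in\bbR:|\lambda|>1/k\}$ converge strongly to $P^{({\rm ac})}_{M_\Xi}$ as $k\to\infty$. Hence it is enough to show, for each fixed $k$, that $\slim_{|t|\to\infty}\chi_Q e^{-iM_\Xi t}E^{({\rm ac})}_{M_\Xi}(\delta_k)=0$. Because $0\notin\overline{\delta_k}$, the operator $R_k=\int_{\delta_k}\mu^{-1}\,dE^{({\rm ac})}_{M_\Xi}(\mu)$ is bounded and $M_\Xi R_k=E^{({\rm ac})}_{M_\Xi}(\delta_k)$, so, using that $M_\Xi$ commutes with $e^{-iM_\Xi t}$,
\[
\chi_Q e^{-iM_\Xi t}E^{({\rm ac})}_{M_\Xi}(\delta_k)=(\chi_Q M_\Xi)\,e^{-iM_\Xi t}R_k .
\]
For every $u$ the vector $R_k u$ lies in $\calH^{({\rm ac})}_{M_\Xi}$, so $e^{-iM_\Xi t}R_k u\to0$ weakly as $|t|\to\infty$ by the Riemann--Lebesgue lemma; since $\chi_Q M_\Xi$ is compact, it maps this weakly null net to a norm null one, i.e.\ $(\chi_Q M_\Xi)\,e^{-iM_\Xi t}R_k u\to0$ in norm. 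This proves \eqref{eq:comp}.

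The only step that requires genuine care is the compactness of $\chi_Q M_\Xi$, and within it the bookkeeping of the cut-off: the scale $\varepsilon$ must be small enough that the single scalar function $\psi$ simultaneously equals $1$ on $Q$ and annihilates \emph{every} jump of $\Xi$, so that $\psi\Xi$ becomes genuinely continuous and the compactness of SHOs with $C_0$ symbols applies. Once this is in place the propagation statement is immediate from the standard fact that compact operators turn weak convergence into norm convergence, and, notably, the argument for \eqref{eq:comp} uses nothing from Theorem~\ref{thm.f8} beyond the boundedness of $M_\Xi$.
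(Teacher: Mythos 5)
Your proof is correct and follows essentially the same route as the paper's: a smooth cutoff that vanishes near the jumps and equals $1$ on $Q$, compactness of the commutators with $P_\pm$ together with compactness of the SHO whose symbol is the cut-off (continuous, compact-valued, vanishing at infinity) function, and then the soft argument combining compactness of $\chi_Q M_\Xi$ with Riemann--Lebesgue weak convergence on the a.c.\ subspace. The only cosmetic difference is how the propagation limit is extended to all of $\calH^{({\rm ac})}_{M_\Xi}$: you cut the spectrum away from $0$ and use $M_\Xi R_k=E^{({\rm ac})}_{M_\Xi}(\delta_k)$, whereas the paper implicitly uses density of vectors of the form $M_\Xi g$ together with uniform boundedness.
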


\begin{proof}
Choose a function $\omega\in C^\infty (\bbR)$ such that  $\omega(\lambda)=0$ in a neighbourhood 
of the set $\{\lambda_{1},\ldots, \lambda_{L}\}$, $\omega(\lambda)=1$ 
away from some larger neighbourhood of this set and $\chi_{Q}=\chi_{Q}\omega$.  
We have
\begin{equation}
\omega P_{\pm} \Xi P_{\mp}= (\omega P_{\pm}-  P_{\pm}\omega) \Xi P_{\mp}+ P_{\pm} \omega \Xi P_{\mp}.
\label{eq:compX}
\end{equation}
The first term in r.h.s.\  is compact because   
$$
\omega P_{\pm}-  P_{\pm}\omega 
= 
(\omega-1) P_{\pm}-  P_{\pm}(\omega-1)  \in{\mathfrak S}_\infty
$$
according to \eqref{eq:Pe}. The second term in r.h.s.\  of \eqref{eq:compX} is compact because 
$\omega \Xi \in C_{0}(\bbR; {\mathfrak S}_\infty ({\mathfrak h}))$. 
Thus $ \omega M_\Xi \in{\mathfrak S}_\infty$ and hence $ \chi_{Q} M_\Xi \in{\mathfrak S}_\infty$.
It follows that \eqref{eq:comp} holds on all elements of the form $f=M_\Xi g$ and therefore 
it holds for all $f\in L^2(\bbR,\frakh)$. 
\end{proof}

Note that condition \eqref{e8} and even the existence of the limits $\Xi(\lambda_\ell\pm0)$ were not used in the proof. 

Lemma~\ref{as} shows that it is natural to construct model operators
for each jump of $\Xi$. Set
\begin{equation}
\Xi_\ell(\lambda)
=
\zeta(\lambda-\lambda_\ell) K_\ell , \quad \ell=1,\dots,L,
\label{e9}
\end{equation}
where $\zeta$ is given by \eqref{c6} and $K_\ell$ is given by  \eqref{eq:jump}. As the model operator for  the point $\lambda_\ell$ we choose  the SHO $M_{\Xi_\ell}$.
Note that each of the symbols $\Xi_\ell$, $\ell\geq 1$, has only one jump at the point $\lambda_{\ell}$  and 
the spectral structure of $M_{\Xi_\ell}$ is described in Section~\ref{sec.e}.

%%%%%%%%%%%%%%%%%%%%%
\begin{theorem}\label{thm.f10}
%%%%%%%%%%%%%%%%%%%%%
Let the assumptions of Theorem~$\ref{thm.f8}$ hold true, let 
$\Xi_\ell$ be as defined in \eqref{e9}, and let $M_{\Xi_\ell}$ be as 
defined in \eqref{b9a}. Then:
\begin{enumerate}[\rm (i)]

\item
The wave operators  
$$
W_\pm(M_\Xi, M_{\Xi_\ell})
=
\slim_{t\to\pm\infty} e^{i M_\Xi t}  e^{-i M_{\Xi_\ell} t} P^{({\rm ac})}_{M_{\Xi_\ell} },
\quad 
\ell=1,\dots,L,
$$
exist and enjoy the intertwining property
$$
M_\Xi W_\pm(M_\Xi ,M_{\Xi_\ell})=W_\pm(M_\Xi,M_{\Xi_\ell}) M_{\Xi_\ell}.
$$
These operators are isometric and their ranges are orthogonal to each other, i.e. 
$$
\Ran W_\pm(M_\Xi,M_{\Xi_j})\perp \Ran W_\pm(M_\Xi,M_{\Xi_k}), 
\quad j\not=k.
$$

\item
The asymptotic  completeness holds:
$$
\Ran W_\pm(M_\Xi,M_{\Xi_1})
\oplus\cdots\oplus
\Ran W_\pm(M_\Xi,M_{\Xi_L})
=
\calH_{M_\Xi}^{({\rm ac})} .
$$
 
\end{enumerate}
\end{theorem}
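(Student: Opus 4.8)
The plan is to deduce Theorem~\ref{thm.f10} from the abstract multichannel scheme of Proposition~\ref{pr.c2} (whose hypotheses are collected in Proposition~\ref{pr.c1}), applied with $A_\ell=M_{\Xi_\ell}$, $A_\infty=M_{\Xi_\infty}$ where $\Xi_\infty:=\Xi-\sum_{\ell=1}^L\Xi_\ell$, $A=M_\Xi$, and with the multiplication operator $X=q^\beta$ for a suitably chosen $\beta$; the same setup simultaneously yields Theorem~\ref{thm.f8} via Proposition~\ref{pr.c1}(i)--(iv). First I would record the algebraic decomposition: since $\Xi\mapsto M_\Xi$ is linear, $M_\Xi=\sum_{\ell=1}^L M_{\Xi_\ell}+M_{\Xi_\infty}$, which is exactly the form \eqref{c3}. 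Because $\zeta(\pm0)=\pm\tfrac12$, the symbol $\Xi_\ell$ has the single jump $K_\ell$ at $\lambda_\ell$ and no other jump, so the jumps of $\sum_\ell\Xi_\ell$ at each $\lambda_\ell$ coincide with those of $\Xi$; hence $\Xi_\infty$ is norm-continuous on all of $\bbR$. Since $\zeta(\lambda)=O(|\lambda|^{-1})$ at infinity, each $\Xi_\ell(\lambda)\to0$, so $\Xi_\infty(\lambda)\to0$ as $|\lambda|\to\infty$. Finally, near each $\lambda_\ell$ the bound $\zeta(\lambda)-\zeta(\pm0)=O(|\lambda|\,|\ln|\lambda||)$ shows that $\Xi_\infty$ inherits the logarithmic regularity \eqref{e8} of $\Xi$; thus $\Xi_\infty\in C_0(\bbR;{\mathfrak S}_\infty(\frakh))$ and satisfies condition \eqref{d0} for every $\beta<\beta_0/2$.

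Next I would fix $\beta\in(1,\beta_0/2)$ --- the interval is nonempty precisely because $\beta_0>2$ --- set $X=q^\beta$, and verify the hypotheses of Proposition~\ref{pr.c1}. One has $\Ker X=\{0\}$ since $q>0$. By Remark~\ref{gen}, the operator $q^\beta$ is strongly $M_{\Xi_\ell}$-smooth with any exponent $\gamma<\beta-\tfrac12$; choosing $\gamma\in(\tfrac12,\beta-\tfrac12)$ secures $\gamma>\tfrac12$. For the cross products $M_{\Xi_j}M_{\Xi_k}$, $j\neq k$: after the cancellations $P_+P_-=P_-P_+=0$ this product reduces to $P_-\Xi_jP_+\Xi_k^*P_-+P_+\Xi_j^*P_-\Xi_kP_+$, and since the fibre operators $K_j,K_k$ act on $\frakh$ while $\zeta_j,\zeta_k$ and the Hardy projections act on the $L^2(\bbR)$ factor, each summand is a tensor product of a scalar operator of the type treated in Lemma~\ref{lma.e2} with a (compact) product of the $K$'s; hence $q^{-\beta}M_{\Xi_j}M_{\Xi_k}q^{-\beta}$ is compact, giving \eqref{c1}. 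For $A_\infty$, Corollary~\ref{lma.e2c} --- applicable thanks to the properties of $\Xi_\infty$ established above --- shows that $q^{-\beta}M_{\Xi_\infty}q^{-\beta}$ is compact, giving \eqref{c2}. Finally $XA_\ell X^{-1}=q^\beta M_{\Xi_\ell}q^{-\beta}$ is bounded by Proposition~\ref{Muck}.

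With all hypotheses verified I would introduce the countable closed set ${\mathcal T}=\{0\}\cup\{\pm\tfrac12 s_n(K_\ell):1\leq\ell\leq L,\ n\geq1\}$ of thresholds. On each connected component $\delta$ of $\bbR\setminus{\mathcal T}$ --- an open interval with $0\notin\delta$ --- the spectra of $M_{\Xi_1},\dots,M_{\Xi_L}$ are purely a.c.\ with constant multiplicities by Theorem~\ref{lma.e4}(i) (in the form of Remark~\ref{gen}), so Propositions~\ref{pr.c1} and \ref{pr.c2} apply on $\delta$ and yield the existence, intertwining property, isometry and mutual orthogonality of the local wave operators $W_\pm(M_\Xi,M_{\Xi_\ell};\delta)$, together with the local completeness relation $\bigoplus_{\ell}\Ran W_\pm(M_\Xi,M_{\Xi_\ell};\delta)=\Ran E^{({\rm ac})}_{M_\Xi}(\delta)$. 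Since the a.c.\ spectral measures of $M_{\Xi_\ell}$ and of $M_\Xi$ do not charge the countable set ${\mathcal T}$, taking an increasing exhaustion $\delta_1\subset\delta_2\subset\cdots$ of $\bbR\setminus{\mathcal T}$ and using the consistency and uniform boundedness of the local wave operators, one passes to the limit to obtain the global wave operators $W_\pm(M_\Xi,M_{\Xi_\ell})$ with all the asserted properties, and the global asymptotic completeness $\bigoplus_\ell\Ran W_\pm(M_\Xi,M_{\Xi_\ell})=\calH^{({\rm ac})}_{M_\Xi}$.

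The substantive analytic work --- the norm limiting-absorption estimates for the model SHOs (Section~\ref{sec.e}), the compactness of the weighted Hankel products (Section~\ref{sand}), and the abstract multichannel theorem of \cite{PY2} --- has already been isolated, so the remaining task is mostly organisational. If I had to single out the delicate point, it is the matching of exponents in the second step: one needs a \emph{single} $\beta$ that is simultaneously large enough ($\beta>1$, so that the smoothness exponent can be taken $>\tfrac12$, as required by Proposition~\ref{pr.c1}) and small enough ($2\beta<\beta_0$, so that the continuous remainder $\Xi_\infty$ still yields a compact operator when sandwiched by $q^{-\beta}$ via Corollary~\ref{lma.e2c}); this is exactly where the hypothesis $\beta_0>2$ is consumed. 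A secondary, routine point is the exhaustion argument needed to remove the exclusion of $0$ and of the thresholds $\pm\tfrac12 s_n(K_\ell)$ from the intervals $\delta$.
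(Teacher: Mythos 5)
Your proposal is correct and follows essentially the same route as the paper: the decomposition $M_\Xi=\sum_\ell M_{\Xi_\ell}+M_{\Xi_\infty}$, the weight $X=q^\beta$ with $1<\beta<\beta_0/2$, strong smoothness from Theorem~\ref{lma.e4}/Remark~\ref{gen}, compactness of the cross terms and of $q^{-\beta}M_{\Xi_\infty}q^{-\beta}$ via Lemma~\ref{lma.e2} and Corollary~\ref{lma.e2c}, boundedness via Proposition~\ref{Muck}, and then Propositions~\ref{pr.c1}--\ref{pr.c2} on intervals avoiding the thresholds, followed by a density argument to pass to the global wave operators. Your extra details (the explicit tensor structure of $M_{\Xi_j}M_{\Xi_k}$ and the exhaustion of $\bbR\setminus\mathcal{T}$) just make explicit what the paper leaves implicit.
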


Instead of \eqref{e11}, it suffices to  assume that $\Xi(\lambda)$ has a finite limit $\xi_{\infty}$ as $|\lambda|\to \infty$, that is,
\begin{equation}
\lim_{|\lambda|\to \infty} \| \Xi(\lambda) -\xi_{\infty} \|=0.
\label{e11w}
\end{equation}
Indeed, set $\wt{\Xi}(\lambda)=\Xi(\lambda) -\xi_{\infty} $. Then     Theorems~\ref{thm.f8} and \ref{thm.f10} can be applied to the operator $M_{\wt{\Xi}}$. Since $M_{\Xi}=M_{\wt{\Xi}}$, this yields all required results about  the operator $M_{ {\Xi}}$. However, assumption \eqref{e11w} can also be relaxed ---
see Remark~\ref{rmk}.

%%%%%%%%%%%%%%%%%%%%%%%%%%%%%%%%%%%%%%%%%%%
\subsection{Proofs of Theorems~\ref{thm.f8} and \ref{thm.f10}}\label{sec.e6}
%%%%%%%%%%%%%%%%%%%%%%%%%%%%%%%%%%%%%%%%%%%

 Theorems~\ref{thm.f8} and \ref{thm.f10} will be deduced from Propositions~\ref{pr.c1} and \ref{pr.c2}, respectively. We will check the hypotheses of  Proposition~\ref{pr.c1} for $A =M_\Xi$,  $A_\ell=M_{\Xi_\ell}$, $\ell=1,\dots, L$, and $A_\infty=M_{\Xi_\infty}$ where  the   symbol $\Xi_\infty$ is given by 
$$
\Xi_\infty (\lambda) = \Xi (\lambda)-\sum_{\ell=1}^L \Xi_\ell (\lambda).
%\label{e9a}
$$
Then equality  \eqref{c3} is satisfied.
 As $\delta$, we choose an arbitrary open interval  $\delta\subset \bbR$ which does not contain  the points  $0$, $\pm\frac12 s_n( K_\ell)$, $1\leq n\leq\dim\frakh$, $\ell=1,\dots, L$. 
 
Set  $X=q^\beta$. The parameter $\beta>0$ is chosen
     sufficiently large to guarantee strong $A_\ell$-smoothness of $X$. On the other hand, it should   be sufficiently small to ensure  conditions \eqref{c1} and \eqref{c2}. To be more precise, we suppose that $1<\beta<\beta_0/2$, where
     $\beta_0$ is the exponent from \eqref{e8}.
     
      By Theorem~\ref{lma.e4},  the operator $q^\beta$ is strongly $A_\ell$-smooth on $\delta$ with any exponent $\gamma <\beta-1/2$  (and of course $\gamma\leq 1$) which allows us to choose $\gamma>1/2$. 

The operators $q^{-\beta} M_{\Xi_j }M_{\Xi_k}q^{-\beta}$, 
$1\leq j ,k\leq L$, $j \not=k$,  are compact by Lemma~\ref{lma.e2}. This yields condition \eqref{c1}.
 
Next,     $\Xi_\infty$ is a continuous function because   the functions $\Xi (\lambda)$ 
and   $ \Xi_\ell (\lambda)$ have the same jumps at all points $\lambda=\lambda_\ell$ and the functions
 $ \Xi_j (\lambda)$ are continuous at   $\lambda=\lambda_\ell$ if $j\neq\ell$.
Moreover, according to \eqref{e8} the function
$\Xi_\infty$ satisfies  assumption 
\eqref{d0} with any $\beta<\beta_0/2$. 
 By Corollary~\ref{lma.e2c}, it follows that the operator $q^{-\beta}M_{\Xi_\infty} q^{-\beta}$  is compact.

Finally, the operators $q^\beta M_{\Xi_\ell}q^{-\beta}$ 
are bounded according to Proposition~\ref{Muck}. 
Thus, all  the assumptions of  Proposition~\ref{pr.c1} are satisfied.

   On each   interval $\delta$, every statement of Proposition~\ref{pr.c1} yields the corresponding statement of  Theorem~\ref{thm.f8} about the operator $M_{\Sigma}$. Using that $\delta$ is arbitrary, we obtain the same statements on the whole line $\bbR$  with the points $0 $ and $\pm\frac12 s_n(K_\ell) $ removed. This concludes
the proof of Theorem~\ref{thm.f8}. 

Similarly, all conclusions of Proposition~\ref{pr.c2} are true for the wave operators
$W_\pm(M_\Xi,M_{\Xi_\ell}; \delta )$. Let us now use the fact that linear combinations of all elements $f$ such that $f\in \Ran E_{M_{\Xi_\ell}}(\delta)$ for some admissible $\delta$ are dense in $\calH^{(\ac)}_{M_{\Xi_\ell}}$. Therefore all statements of Proposition~\ref{pr.c2} about  the wave operators
$W_\pm(M_\Xi,M_{\Xi_\ell}; \delta )$ yield the corresponding    statements of 
Theorem~\ref{thm.f10} about the wave operators
$W_\pm(M_\Xi,M_{\Xi_\ell} )$.
 \qed 

 \begin{remark}\label{rmk1}
 If $\Xi$ has only one jump (i.e.\  $L=1$), the proof simplifies considerably. In this case it suffices to use the usual smooth scheme of scattering scattering theory for the pair $M_{\Xi_{1}}$, $M_{\Xi}$. 
 Lemma~\ref{lma.e2} is not required either.
\end{remark}

%%%%%%%%%%%%%%%%%%%%%%%%%%%%%%%%%%%%%%%%%%%
\subsection{SHOs on the circle}\label{sec.e8}
%%%%%%%%%%%%%%%%%%%%%%%%%%%%%%%%%%%%%%%%%%%

Let us briefly discuss the analogues of Theorems~\ref{thm.f8} and \ref{thm.f10}
for SHOs on the unit circle.
Let $H^2_+(\bbT; \frakh)\subset L^2(\bbT; \frakh)$ be the Hardy space of $\frakh$-valued
functions analytic in the unit disc, let $H^2_-(\bbT; \frakh)$ be the orthogonal 
complement of $H^2_+(\bbT; \frakh)$ in $L^2(\bbT; \frakh)$, and let 
$\bbP_\pm$ be the orthogonal projection in $L^2(\bbT; \frakh)$ onto $H^2_\pm(\bbT; \frakh)$. 
For $\Psi\in L^\infty(\bbT;\calB(\frakh))$ we define, similarly to \eqref{b9a}, 
the SHO  
$$
\bM_\Psi=\bbP_- \Psi\bbP_+  + \bbP_+ \Psi^* \bbP_-
\quad \text{in $L^2(\bbT;\frakh)$.}
%\label{e10}
$$
The spectral analysis of $\bM_\Psi$ can be obtained through a unitary map
from $L^2(\bbT;\frakh)$ onto $L^2(\bbR; \frakh)$. 
Indeed, the map
$$
\bbR\ni\lambda\mapsto\frac{\lambda-i}{\lambda+i}=\mu\in\bbT
$$
generates the unitary operator
\begin{equation}
\begin{split}
\calU & :  L^2(\bbR; \frakh ) \to L^2(\bbT; \frakh),
\\
(\calU f)(\mu)&=2i\sqrt{\pi}(1-\mu)^{-1}f(i\tfrac{1+\mu}{1-\mu}).
\end{split}
\label{e20}
\end{equation}
This operator transforms the SHO $M_\Xi$ in $L^2(\bbR; \frakh)$ into
the SHO $\bM_\Psi$ in $L^2(\bbT; \frakh )$: 
$$
\calU M_\Xi \calU^*=\bM_\Psi, 
\qquad
\Psi(\mu)=\Xi(i\tfrac{1+\mu}{1-\mu}), 
\quad
\mu\in\bbT.
$$
Thus, Theorems~\ref{thm.f8} and \ref{thm.f10}  extend to the SHOs on the circle with piecewise
continuous symbols.

Assumption \eqref{e11}  means that the symbol $\Psi$ must be continuous 
at $\mu=1$. But of course this assumption can be lifted by means of a
rotation. 
Indeed, suppose that $\Psi$ has a jump at $\mu=1$. 
Choose $\alpha$   such that $\Psi$ is continuous at $\mu=e^{i \alpha}$. 
Then all above mentioned spectral results are true for the SHO $\bM_{\widetilde \Psi}$ with the symbol $\widetilde \Psi(\mu)= \Psi(\mu e^{i \alpha})$.
Since the operator
$\bM_{\widetilde \Psi}$ is unitarily equivalent to $\bM_\Psi$ 
through the unitary transformation 
$f(\mu)\mapsto f(\mu e^{i \alpha})$, 
we can reformulate these results in terms of the operator $\bM_\Psi$.
This reasoning yields the following spectral results. 

%%%%%%%%%%%%%%%%%
\begin{theorem}\label{thm.f9}
%%%%%%%%%%%%%%%%%
Let a symbol $\Psi (\mu)\in {\mathfrak S}_\infty(\frakh)$ be a norm-continuous function apart from some jump discontinuities at finitely many points 
$ \mu_1, \ldots, \mu_{L}$. Set
$$
K_{\ell}= \lim_{\varepsilon\to+0}\big(\Psi (\mu_{\ell}e^{+i \varepsilon})-\Psi (\mu_{\ell}e^{-i \varepsilon})\big)
$$
and assume that   
$$
\norm{\Psi(\mu_\ell e^{\pm i\eps})-\Psi(\mu_\ell e^{\pm i0})}
=
O(\abs{\log \eps}^{-\beta_0}), \quad \eps\to+0,
$$
with some exponent $\beta_0>2$ for each $\ell=1,\dots,L$. 
Then the a.c.\   spectrum of the  operator $\bM_\Psi$ in $L^2(\bbT; \frakh)$ consists of the union of the intervals in the r.h.s.\  of \eqref{f18}. The singular continuous spectrum of $\bM_\Psi$ is empty. 
The eigenvalues of $\bM_\Psi$ can accumulate only to $0$ and to the points 
$\pm \tfrac12 s_n(K_\ell)$. 
All eigenvalues of $\bM_\Psi$, distinct from 
$0$ and from $\pm \tfrac12 s_n(K_\ell)$, 
have finite multiplicities.
\end{theorem}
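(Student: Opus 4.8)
The plan is to transfer the statement from the circle to the line by means of the unitary operator $\calU$ of \eqref{e20} and then invoke Theorem~\ref{thm.f8}. First I would dispose of a possible jump of $\Psi$ at $\mu=1$: since $\Psi$ has only finitely many points of discontinuity, one can choose $\alpha$ so that $\Psi$ is continuous at $e^{i\alpha}$, and then the rotated symbol $\widetilde\Psi(\mu)=\Psi(\mu e^{i\alpha})$ is continuous at $\mu=1$. The operator $\bM_{\widetilde\Psi}$ is unitarily equivalent to $\bM_\Psi$ through the unitary map $f(\mu)\mapsto f(\mu e^{i\alpha})$, the discontinuities of $\widetilde\Psi$ sit at the points $\mu_\ell e^{-i\alpha}$, all different from $1$, and the jumps of $\widetilde\Psi$ there coincide with the $K_\ell$. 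Hence it suffices to prove the theorem for $\bM_{\widetilde\Psi}$; that is, I may and do assume from now on that $\Psi$ is continuous at $\mu=1$.

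Next I would set $\Xi(\lambda)=\Psi\bigl(\tfrac{\lambda-i}{\lambda+i}\bigr)$, so that $\Xi$ is precisely the symbol for which $\calU M_\Xi\calU^*=\bM_\Psi$. The Möbius map $\bbR\ni\lambda\mapsto\tfrac{\lambda-i}{\lambda+i}\in\bbT$ is a real-analytic homeomorphism of $\bbR$ onto $\bbT\setminus\{1\}$ sending $\infty$ to $1$; it carries the $\mu_\ell$ to the real numbers $\lambda_\ell=i\tfrac{1+\mu_\ell}{1-\mu_\ell}$ and, near each $\mu_\ell$, is a diffeomorphism with nonvanishing derivative. Consequently $\Xi\in L^\infty(\bbR;\mathfrak S_\infty(\frakh))$ is norm-continuous off $\{\lambda_1,\dots,\lambda_L\}$, the one-sided limits $\Xi(\lambda_\ell\pm0)$ exist and equal the corresponding one-sided limits of $\Psi$ at $\mu_\ell$ (so the jump of $\Xi$ at $\lambda_\ell$ equals $\pm K_\ell$, the sign being immaterial since the conclusions involve only $s_n(K_\ell)=s_n(-K_\ell)$), and $\abs{\lambda-\lambda_\ell}$ is comparable to the arc-length parameter near $\mu_\ell$, so the logarithmic regularity hypothesis on $\Psi$ translates into condition \eqref{e8} for $\Xi$ with the same exponent $\beta_0>2$. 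Finally, since $\tfrac{\lambda-i}{\lambda+i}\to1$ as $\abs{\lambda}\to\infty$ and $\Psi$ is now continuous at $1$, one has $\Xi(\lambda)\to\xi_\infty:=\Psi(1)$ as $\abs{\lambda}\to\infty$, which is assumption \eqref{e11w}; as noted after Theorem~\ref{thm.f10}, replacing $\Xi$ by $\Xi-\xi_\infty$ does not change $M_\Xi$ (constants lie in $H^\infty_+$) and puts us in the situation of Theorem~\ref{thm.f8}.

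It then remains to quote Theorem~\ref{thm.f8} for $M_\Xi$: the singular values of its jumps are the numbers $s_n(K_\ell)$, so its absolutely continuous spectrum is the union of the intervals in the right-hand side of \eqref{f18}, its singular continuous spectrum is empty, and its eigenvalues accumulate only at $0$ and the points $\pm\tfrac12 s_n(K_\ell)$, with finite multiplicity elsewhere. Pulling these statements back along the unitary equivalences $\bM_\Psi\simeq\bM_{\widetilde\Psi}\simeq M_\Xi$ yields the theorem. There is no genuine obstacle here; the only points demanding (minor) attention are that the Möbius change of variables preserves logarithmic moduli of continuity away from $\mu=1$ and that the rotation step correctly moves a possible discontinuity off $\mu=1$, making \eqref{e11w} available — both of which are elementary.
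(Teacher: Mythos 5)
Your proposal is correct and follows essentially the same route as the paper: transfer to the line via the unitary map $\calU$ of \eqref{e20}, use a rotation to move any discontinuity away from $\mu=1$ so that \eqref{e11} (or the relaxed condition \eqref{e11w}) holds for the transplanted symbol, and then apply Theorem~\ref{thm.f8}. The verifications you add (behaviour of the M\"obius change of variables on the jumps and on the logarithmic modulus of continuity, and the irrelevance of the sign of $K_\ell$) are exactly the elementary points the paper leaves implicit.
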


The results of Theorem~\ref{thm.f10} concerning the wave operators can also be extended to SHOs on the unit circle if model operators $M_{\Xi_{\ell}}$ are transplanted into the space $L^2(\bbT; \frakh)$ via the unitary transform \eqref{e20}.

\begin{remark}\label{rmk}
Using the unitary operator $\calU$ (see \eqref{e20}) to transform $\bM_\Psi$ back to $M_\Xi$, we 
see that the assumption \eqref{e11} in Theorem~\ref{thm.f8} 
can be relaxed. Instead, one can assume that the limits
$\Xi(\pm\infty)$ exist and 
$$
\norm{\Xi(\pm\lambda)- \Xi(\pm\infty)}
=
O(\abs{\log \lambda}^{-\beta_0}), 
\quad
\lambda\to+\infty, 
$$
with $\beta_0>2$. Then the jump    $K_\infty=\Xi(-\infty)-\Xi(+\infty)$ at infinity
will also contribute to the orthogonal sum \eqref{f18}. 
\end{remark}

%%%%%%%%%%%%%%%%%%%%%%%%%%%%%%%%%%%%%%%
\section{Representations for $\DD_\varphi$}\label{sec.b6}
%%%%%%%%%%%%%%%%%%%%%%%%%%%%%%%%%%%%%%% 

For an arbitrary bounded function $\varphi$, we define the operator $\DD_\varphi$  by formula \eqref{a0}. Our goal here is to derive a formula (see Theorem~\ref{lma.d2}) for the operator $  \DD_\varphi  $ sandwiched between appropriate functions of $H_0$
in the spectral representation of $H_0$. This formula motivates  and 
explains  much of our construction.  Technically, we need also another representation  (see Theorem~\ref{reprX}) for   $  \DD_\varphi  $ sandwiched between   functions of $H_0$ with disjoint supports.   

We note that our  representations for   $  \DD_\varphi  $  are different from the one given by a double operator integral (see the survey \cite{BS} by M.~Sh.~Birman and M.~Z.~Solomyak and references therein). In particular, the double operator integral approach  treats the operators $H_{0}$ and $H$ in a symmetric way while our representations rely on  the spectral representation of $H_0$ only. This is convenient for our purposes.

\subsection{Two formulas}\label{sec.b61}

Under Assumption~\ref{as1}
we set $\Delta=\Delta_{1}\cup\cdots \cup \Delta_{L}$ and define the unitary map
\begin{equation}
\calF_\Delta : \Ran E_{H_{0}}( \Delta )\to L^2(\Delta_{1};\calN_{1}) \oplus\cdots \oplus L^2(\Delta_L;\calN_L)=:\mathcal{K}
\label{eq:FF}\end{equation}
by the relation $\calF_{\Delta}f=\calF_{\ell}f$ if $f\in \Ran E_{H_{0}}(\Delta_{\ell})$. We extend the
unitary operator $\calF_\ell:\Ran E_{H_0}(\Delta_{\ell}) \to L^2(\Delta_{\ell};\calN_{\ell})$
  by zero to $ \Ran E_{H_0}({\bbR}\setminus\Delta_{\ell})$ and consider it as the map $\calF_{\ell}: \calH\to L^2(\bbR;\calN )$ where
 \begin{equation}
\calN =\calN_{1}\oplus\cdots \oplus \calN_{L}.
\label{eq:NN}\end{equation}
 Then
$\calF_{\Delta} = \calF_{1} + \cdots + \calF_{L}: \calH\to L^2(\bbR;\calN )$ is isometric on  $ \Ran E_{H_{0}}(\Delta )$ and it is zero on $ \Ran E_{H_0}({\bbR}\setminus\Delta )$.
  Note that the adjoint operator $\calF_{\Delta}^* : L^2(\bbR;\calN) \to \calH$ is a partial isometry which sends    the subspace $ \calK$   unitarily onto $ \Ran E_{H_{0}}( \Delta )$ and is zero on the orthogonal complement of $\calK$.

We set $Z(\lambda)=Z_{\ell}(\lambda)$ for $\lambda\in\Delta_{\ell}$ where the operators  $ Z_{\ell}(\lambda)$ are defined in  \eqref{eq:Z}.
 Recall that  the set $\Omega$ is defined by \eqref{eq:Ome} and $Y(z)$ is defined by \eqref{a6a}, \eqref{b7}.  Let $\omega$ and $\varphi$ be bounded functions with compact supports $\supp \omega\subset \Delta$ and $\supp \varphi\subset\Omega$. 
Define the operators
$$
\calZ_{\omega} : L^2(\bbR;\calH)\to L^2( \bbR ;\calN)
\quad \text{ and } \quad
\calY_{\varphi}: L^2(\bbR;\calH)\to L^2(\bbR;\calH)
$$
by formulas
\begin{equation}
(\calZ_{\omega}  u)(\lambda)=
 \omega(\lambda)Z(\lambda)   u(\lambda), 
\quad
(\calY_\varphi u)(\lambda)=
\varphi(\lambda)Y (\lambda+i0) u(\lambda).
\label{d1a}
\end{equation}
Recall that $Z(\lambda)$ and $Y (\lambda+i0)$ are H\"older continuous in $\lambda$ on compact subintervals of $\Delta$ and $\Omega$, respectively. In particular, this implies
that the operators $\calZ_{\omega}$ and $\calY_\varphi$ are bounded.

The following representation of the sanwiched  operator $\DD_\varphi$  is central   to our construction.

%%%%%%%%%%%%%%%%%
\begin{theorem}\label{lma.d2}
%%%%%%%%%%%%%%%%%
Let   Assumption~$\ref{as1}$ be satisfied.
 Let $\omega$ and $\varphi$ be bounded functions with compact supports $\supp \omega\subset \Delta$ and $\supp \varphi\subset\Omega$, and 
let the operators $\calZ_{\omega}$ and $\calY_\varphi$ be defined by  formulas
 \eqref{d1a}. Then   the representation
\begin{equation}
\calF_\Delta \omega(H_{0}) \DD_\varphi \omega(H_{0}) \calF_\Delta^*
=
\calZ_{\omega} ( 
P_- \calY_\varphi  P_+ 
+
P_+ \calY_\varphi^* P_- 
)\calZ_{\omega}^*  
\label{c4a}
\end{equation}
holds.
\end{theorem}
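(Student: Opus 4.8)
The plan is to start from the resolvent identity \eqref{d2} and integrate it against the spectral measure of $H_0$, using the strong smoothness of $G$ encoded in formula \eqref{b5}. Concretely, for a bounded compactly supported $\varphi$ with $\supp\varphi\subset\Omega$ one writes $\DD_\varphi = \varphi(H)-\varphi(H_0)$ via the Cauchy--Stieltjes type formula relating $\varphi$ to the resolvents, reducing matters to the boundary values $R(\lambda+i0)-R_0(\lambda+i0)$; by \eqref{d2} this difference equals $\tfrac{1}{2\pi i}(GR_0(\lambda-i0))^* Y(\lambda+i0) GR_0(\lambda+i0)$. Sandwiching by $\omega(H_0)$ on both sides, and using \eqref{b5} together with \eqref{eq:Z} to express $G^*$ (equivalently $GR_0(\lambda\pm i0)$) in the spectral representation $\calF_\Delta$ of $H_0$, the operators $Z(\lambda)$ and the multiplication by $Y(\lambda+i0)$ emerge naturally, producing the operators $\calZ_\omega$ and $\calY_\varphi$ of \eqref{d1a}.

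The key structural point is that after transplanting by $\calF_\Delta$ the term $\varphi(H_0)$ simply becomes multiplication by $\varphi(\lambda)$ and contributes nothing to the \emph{difference}; while the term coming from $\varphi(H)$, after inserting the resolvent identity, yields an expression in which the two ``halves'' $GR_0(\lambda\mp i0)$ translate, respectively, into the factors $P_\mp$ acting on the boundary values. This is where the Hardy projections enter: the map $\lambda\mapsto R_0(\lambda\pm i0)$, viewed through the functional calculus, corresponds on the spectral side to convolution with a kernel whose Fourier transform is supported on a half-line, i.e.\ to $P_\pm$ (this is the standard mechanism by which boundary values of resolvents produce Hankel-type structure, as in \cite{Peller1}). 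Carrying this out symmetrically for the two factors, and keeping track of the adjoint relation \eqref{d4a} for $Y$, one arrives exactly at the symmetrised combination $P_-\calY_\varphi P_+ + P_+\calY_\varphi^* P_-$ sandwiched by $\calZ_\omega$ and $\calZ_\omega^*$, which is \eqref{c4a}.

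I would organize the argument in the following steps. First, record the representation of $\varphi(H)-\varphi(H_0)$ as an integral of $R(\lambda+i0)-R_0(\lambda+i0)$ against $\varphi$, justified by the limiting absorption principle (Proposition~\ref{pr1}, Corollary~\ref{pr1x}) which guarantees norm-continuity of all boundary values on $\Omega$, so that all integrals converge in operator norm after sandwiching by $\omega(H_0)$ (whose support is compact in $\Delta\supset\supp\varphi$). Second, substitute \eqref{d2} and commute $\omega(H_0)$ through $R_0(\lambda\pm i0)$, absorbing the resulting scalar factors into $\omega$ and $\varphi$. Third, apply $\calF_\Delta$ on the left and $\calF_\Delta^*$ on the right; use \eqref{b5}/\eqref{eq:Z} to identify $\calF_\Delta\, \omega(H_0) (GR_0(\cdot))^*$ with the operator $u\mapsto \omega Z^* u$ acted on by the appropriate Hardy projection, and dually on the other side. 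Fourth, collect the two terms into the symmetrised form and read off \eqref{c4a}.

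The main obstacle will be Step 3: making precise the claim that transplanting $G^* R_0(\lambda\mp i0)$ by $\calF_\Delta$ produces precisely the Hardy projection $P_\mp$ and not merely ``a bounded operator of Hankel type.'' One must be careful that $\calF_\Delta$ is only a \emph{partial} isometry (it kills $\Ran E_{H_0}(\bbR\setminus\Delta)$), that the spectral parameter runs over $\Delta$ rather than all of $\bbR$ while $P_\pm$ act on $L^2(\bbR;\calN)$, and that the Cauchy kernel $(x-\lambda\mp i0)^{-1}$ from \eqref{c4} matches the resolvent kernel after the change of variables into the spectral representation; the compact supports of $\omega$ and $\varphi$ are what make these identifications legitimate, since outside $\supp\omega$ everything vanishes and the discrepancy between integrating over $\Delta$ and over $\bbR$ is immaterial. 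Handling this bookkeeping carefully --- and in particular verifying that the ``diagonal'' contributions (which would come from $P_+$ on both sides or $P_-$ on both sides) cancel, leaving only the off-diagonal $P_-(\cdot)P_+$ and $P_+(\cdot)P_-$ pieces --- is the technical heart of the proof.
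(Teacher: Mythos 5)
Your plan follows essentially the same route as the paper: a Stone-type formula expressing $(\DD_\varphi f,g)$ through a.e.\ boundary values of the resolvents (the paper's Lemma~\ref{Stone}), substitution of the resolvent identity \eqref{d2}, identification of $G R_0(\lambda\pm i0)\omega(H_0)f$ with $\pm 2\pi i\,(P_\pm\calZ_\omega^*\calF_\Delta f)(\lambda)$ via \eqref{b5} and the Cauchy-integral formula \eqref{c4} (the paper's Lemma~\ref{Hardy}), and finally \eqref{d4a} to produce the symmetrised form.

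Two corrections to your reasoning, though. First, the limiting absorption principle does not give norm convergence of $GR_0(\lambda\pm i\eps)\omega(H_0)$: only $T(z)$ and $Y(z)$ have norm-continuous boundary values, whereas $GR_0(\lambda\pm i\eps)\omega(H_0)f$ converges only for each fixed $f$ and almost every $\lambda$ --- precisely what the a.e.\ existence of the limit in \eqref{c4} provides --- and this is all that is needed, because the Stone-type identity (valid since the spectrum of $H$ is purely a.c.\ on $\supp\varphi\subset\Omega$ by Proposition~\ref{pr2}) already carries the $\eps\to+0$ limits inside the $\lambda$-integral. Second, no ``diagonal'' contributions $P_+(\cdot)P_+$ or $P_-(\cdot)P_-$ ever arise, so there is nothing to cancel: in \eqref{d2} the left factor is $(GR_0(\overline z))^*$, evaluated at the conjugate point, so for $z=\lambda+i\eps$ the two factors automatically yield $P_-(\cdot)P_+$, while the term with $z=\lambda-i\eps$ yields $P_+(\cdot)P_-$, with $Y(\lambda-i0)=-Y^*(\lambda+i0)$ supplying $\calY_\varphi^*$ and the sign. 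With these adjustments your outline reproduces the paper's argument.
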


The proof is given in Subsection~\ref{sec.b63}.

Now we consider $\DD_\varphi$ as an operator acting from $\calH$ to the spectral representation of $H_{0}$.  Let $\varphi$ and $  v $   be  bounded functions such that $\supp \varphi\subset\Omega$ and
 \begin{equation} 
  \dist(\supp\varphi,\supp v)>0. 
  \label{eq:supp}
\end{equation}
Then we can define  the operators
$
{\bbY}_{\varphi,v}^{(\pm)}:  \calH\to L^2(\bbR;\calH)
$
by the formula
\begin{equation}
( {\bbY}_{\varphi,v}^{(\pm)} f )(\lambda)=
\varphi(\lambda) Y (\lambda\pm i0) G R_{0}(\lambda) v(H_{0}) f.
\label{d1ay}\end{equation}
In view of condition \eqref{eq:supp} the operators $G R_{0}(\lambda) v(H_{0}) $   are compact in $\calH$ and depend H\"older continuously on $\lambda\in \supp\varphi$.
 
%%%%%%%%%%%%%%%%%
\begin{theorem}\label{reprX}
%%%%%%%%%%%%%%%%%
Let functions $\omega  $ and $\varphi$ be the same as in 
Theorem~$\ref{lma.d2}$. Let $  v $   be a bounded function satisfying condition \eqref{eq:supp}. Then under Assumption~$\ref{as1}$ the representation  
\begin{equation}
2\pi i \calF_\Delta \omega (H_0)  \DD_\varphi v(H_0) = \calZ_{\omega}(P_{-} {\bbY}_{\varphi,v}^{(+)} + P_{+}{\bbY}_{\varphi,v}^{(-)})
\label{eq:repr}
\end{equation}
holds.
\end{theorem}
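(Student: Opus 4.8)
The plan is to follow the scheme that underlies Theorem~\ref{lma.d2}. Write $\DD_\varphi=\varphi(H)-\varphi(H_0)$ and apply Stone's formula to the difference: in the strong operator topology,
$$
\DD_\varphi v(H_0)=\slim_{\eps\to+0}\frac1{2\pi i}\int_{\bbR}\varphi(\lambda)\Big[\big(R(\lambda+i\eps)-R_0(\lambda+i\eps)\big)-\big(R(\lambda-i\eps)-R_0(\lambda-i\eps)\big)\Big]v(H_0)\,d\lambda,
$$
the integral extending over the compact set $\supp\varphi$, where for each $\eps>0$ the integrand is a norm-continuous bounded operator-valued function of $\lambda$. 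Into each resolvent difference I would substitute the identity \eqref{d2}, valid for $\Im z\neq0$, together with $(GR_0(\bar z))^*=R_0(z)G^*$; this expresses the integrand through the three factors $R_0(\lambda\pm i\eps)G^*$, $Y(\lambda\pm i\eps)$ and $GR_0(\lambda\pm i\eps)v(H_0)$.

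The first point is that all of these factors behave well as $\eps\to+0$ on $\supp\varphi$. Because $\supp v$ is bounded away from $\supp\varphi$, the map $z\mapsto GR_0(z)v(H_0)$ extends analytically (with compact values, by \eqref{a3}) to a complex neighbourhood of $\supp\varphi$, so $GR_0(\lambda\pm i\eps)v(H_0)\to GR_0(\lambda)v(H_0)$ in norm, uniformly for $\lambda\in\supp\varphi$; by Corollary~\ref{pr1x}, $Y(\lambda\pm i\eps)\to Y(\lambda\pm i0)$ uniformly there as well. Hence, for fixed $g\in\calH$, the vectors $h^{(\eps)}_\pm(\lambda):=Y(\lambda\pm i\eps)GR_0(\lambda\pm i\eps)v(H_0)g$ converge uniformly on $\supp\varphi$ to $h_\pm(\lambda):=Y(\lambda\pm i0)GR_0(\lambda)v(H_0)g$, so $\varphi h^{(\eps)}_\pm\to\varphi h_\pm$ in $L^2(\bbR;\calH)$ and, by \eqref{d1ay}, $\varphi h_\pm={\bbY}^{(\pm)}_{\varphi,v}g$. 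The second point concerns the remaining factor: since $\omega(H_0)R_0(z)$ is the function $\mu\mapsto\omega(\mu)(\mu-z)^{-1}$ of $H_0$, whose support lies in $\Delta$, the operator $\calF_\Delta\omega(H_0)R_0(z)G^*$ acts on $L^2(\bbR;\calN)$ as multiplication by $\omega(\mu)(\mu-z)^{-1}Z(\mu)$, where $Z(\mu)=Z_{\ell}(\mu)$ for $\mu\in\Delta_{\ell}$.

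Applying $\calF_\Delta\omega(H_0)$ to the integral and using Fubini (legitimate for $\eps>0$, since $\omega$ and $\varphi$ have compact support and $\Im z\neq0$ keeps every factor bounded), I would arrive at
$$
\calF_\Delta\omega(H_0)\DD_\varphi v(H_0)g(\mu)=\slim_{\eps\to+0}\frac{\omega(\mu)Z(\mu)}{(2\pi i)^2}\Big(\int_{\bbR}\frac{\varphi(\lambda)h^{(\eps)}_+(\lambda)}{\mu-\lambda-i\eps}\,d\lambda-\int_{\bbR}\frac{\varphi(\lambda)h^{(\eps)}_-(\lambda)}{\mu-\lambda+i\eps}\,d\lambda\Big).
$$
Since the operations $f\mapsto\int_{\bbR}f(\lambda)(\mu-\lambda\mp i\eps)^{-1}\,d\lambda$ are bounded on $L^2$ uniformly in $\eps>0$, one may replace $h^{(\eps)}_\pm$ by $h_\pm$ in the limit; and by the formula \eqref{c4} for $P_\pm$ one has $\int f(\lambda)(\mu-\lambda-i\eps)^{-1}\,d\lambda\to 2\pi i(P_-f)(\mu)$ and $\int f(\lambda)(\mu-\lambda+i\eps)^{-1}\,d\lambda\to-2\pi i(P_+f)(\mu)$ in $L^2(\bbR;\calN)$. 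This yields
$$
2\pi i\,\calF_\Delta\omega(H_0)\DD_\varphi v(H_0)g=\omega(\cdot)Z(\cdot)\big(P_-[\varphi h_+]+P_+[\varphi h_-]\big),
$$
and since $\calZ_\omega$ is multiplication by $\omega(\mu)Z(\mu)$ while ${\bbY}^{(\pm)}_{\varphi,v}g=\varphi h_\pm$, the right-hand side is exactly $\calZ_\omega\big(P_-{\bbY}^{(+)}_{\varphi,v}+P_+{\bbY}^{(-)}_{\varphi,v}\big)g$, which is \eqref{eq:repr}.

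The step I expect to be the main obstacle is the passage $\eps\to+0$. The natural ``limit'' of the left resolvent factor, $\calF_\Delta\omega(H_0)R_0(\lambda\pm i0)G^*$, is \emph{not} a bounded operator: the function $\mu\mapsto\omega(\mu)(\mu-\lambda)^{-1}$ fails to lie in $L^2$ near $\mu=\lambda$ when $\lambda\in\supp\omega$, and indeed $\supp\omega$ and $\supp\varphi$ may overlap. Hence one cannot set $\eps=0$ inside the $\lambda$-integral pointwise; it is precisely the integration against $\varphi$ that regularises the Cauchy-type kernel into a Hardy projection, so the order of operations matters — one must perform Fubini and identify the inner integral as an $\eps$-regularisation of $2\pi i\,P_\mp$ \emph{before} letting $\eps\to+0$. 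Apart from this, the argument is a routine combination of the limiting absorption principle (Corollary~\ref{pr1x}), the disjoint-support regularity of $GR_0(\lambda)v(H_0)$, and the functional calculus for $H_0$.
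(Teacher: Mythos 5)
Your argument is correct, and it reaches \eqref{eq:repr} by a route that is organised differently from the paper's. The paper never writes $\DD_\varphi v(H_0)$ as a strong limit of smeared resolvents: it starts from the sesquilinear identity \eqref{d3} of Lemma~\ref{Stone} (where the $\eps$-limits sit \emph{inside} the $\lambda$-integral, a.e.\ in $\lambda$, coming from differentiation of the a.c.\ spectral measures), applies it to $v(H_0)f$ and $\omega(H_0)g$, and handles the delicate factor on the $\omega$-side by Lemma~\ref{Hardy}, which gives the a.e.\ pointwise limits $GR_0(\lambda\pm i\eps)\omega(H_0)g\to\pm2\pi i(P_\pm\calZ_\omega^*\calF_\Delta g)(\lambda)$; the Hardy projections thus first appear on the adjoint side, and \eqref{eq:repr} is then obtained by putting $g=\calF_\Delta^*\wt g$. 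You instead keep $\eps>0$ throughout, apply $\calF_\Delta\omega(H_0)$ directly to the operator, and let the projections emerge from the strong $L^2$-convergence of the uniformly bounded regularised Cauchy transforms $f\mapsto\int f(\lambda)(\mu-\lambda\mp i\eps)^{-1}d\lambda$ to $\pm2\pi i P_\mp$, acting on $\varphi h_\pm={\bbY}^{(\pm)}_{\varphi,v}g$. What your version buys is that it dispenses with Lemma~\ref{Hardy} and with the weak-form/duality step, and it isolates the real obstruction correctly (no boundary value of $\omega(H_0)R_0(\lambda\pm i0)G^*$ exists when $\supp\omega$ meets $\supp\varphi$); what the paper's version buys is that it never needs a strong Stone-type formula for a merely bounded, possibly discontinuous $\varphi$. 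That formula, which you assert at the outset, is your only unproved ingredient: for Borel $\varphi$ it can fail in general (point spectrum inside $\supp\varphi$ destroys it), and it is valid here precisely because $\supp\varphi\subset\Omega$ and both $H_0$ and $H$ have purely a.c.\ spectrum there (Proposition~\ref{pr2}); you should state this explicitly — it is exactly the content that the paper packages into Lemma~\ref{Stone}. The remaining points (the identification of $\calF_\Delta\omega(H_0)R_0(z)G^*$ as multiplication by $\omega(\mu)(\mu-z)^{-1}Z(\mu)$, best justified by duality through \eqref{b5} since $G^*$ need not be everywhere defined, and the uniform convergence of $Y(\lambda\pm i\eps)$ and of $GR_0(\lambda\pm i\eps)v(H_0)$ on $\supp\varphi$) are handled at the same level of rigour as in the paper, and your sign bookkeeping against \eqref{c4} is accurate.
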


The proof is given in Subsection~\ref{sec.b64}.

\subsection{Auxiliary results}\label{sec.b62}

%We extend it to the mapping  $\calF_0:\calH\to L^2({\bbR} ;\calN)$ setting $\calF_0 f=0$ for  $f\in\Ran E_{H_0}({\bbR}\setminus\Delta)$.

We start with a simple identity.  

%%%%%%%%%%%%%%%%
\begin{lemma}\label{Stone}
%%%%%%%%%%%%%%%%
Suppose that $\varphi$ is a bounded function and that both the spectra of $H_{0}$ and $H$ are purely a.c.\  on $\supp\varphi$.
Then for   all $f,g\in\calH$, we have the identity
 \begin{align}
(2\pi i)^2 & (\DD_\varphi  f, g)
=
  \int_{-\infty}^\infty    
\lim_{\eps\to+0} (Y(\lambda+i\eps)GR_0(\lambda+i\eps) f,GR_0(\lambda-i\eps) g)
\varphi(\lambda)d\lambda
\nonumber\\
&-   \int_{-\infty}^\infty   \lim_{\eps\to+0} 
(Y(\lambda-i\eps)GR_0(\lambda-i\eps) f,GR_0(\lambda+i\eps) g)\varphi(\lambda)d\lambda
\label{d3}
\end{align}
where the limits in the r.h.s.\  exist  for almost all $\lambda\in\bbR$.
\end{lemma}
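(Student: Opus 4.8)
The plan is to start from the resolvent identity \eqref{d2}, which expresses $R(z)-R_0(z)$ in the sandwiched form $(2\pi i)^{-1}(GR_0(\bar z))^* Y(z) GR_0(z)$, and to combine it with Stone's formula. Recall that for a self-adjoint operator $A$ with purely a.c.\ spectrum on an open set containing $\supp\varphi$, one has $\varphi(A)=\frac1{2\pi i}\int (R(\lambda+i0)-R(\lambda-i0))\varphi(\lambda)\,d\lambda$ in the weak sense, i.e. $(\varphi(A)f,g)=\frac1{2\pi i}\int \big((R(\lambda+i0)f,g)-(R(\lambda-i0)f,g)\big)\varphi(\lambda)\,d\lambda$. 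Applying this to both $H$ and $H_0$ and subtracting, we get
$$
(\DD_\varphi f,g)=\frac{1}{2\pi i}\int_{-\infty}^\infty
\Big(\big((R(\lambda+i0)-R_0(\lambda+i0))f,g\big)-\big((R(\lambda-i0)-R_0(\lambda-i0))f,g\big)\Big)\varphi(\lambda)\,d\lambda .
$$

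Next I would insert \eqref{d2} with $z=\lambda\pm i\eps$ into each of the two resolvent differences and pass to the limit $\eps\to+0$. Using \eqref{d2}, $(R(z)-R_0(z))f,g) = \frac1{2\pi i}(Y(z)GR_0(z)f, GR_0(\bar z)g)$ (moving $(GR_0(\bar z))^*$ to the other side of the inner product). For $z=\lambda+i\eps$ this gives the integrand $(2\pi i)^{-1}(Y(\lambda+i\eps)GR_0(\lambda+i\eps)f, GR_0(\lambda-i\eps)g)$, and for $z=\lambda-i\eps$ it gives $(2\pi i)^{-1}(Y(\lambda-i\eps)GR_0(\lambda-i\eps)f, GR_0(\lambda+i\eps)g)$. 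Substituting these into the Stone formula above and clearing the factor $(2\pi i)^{-1}$ produces exactly \eqref{d3} with the overall factor $(2\pi i)^2$. The existence of the boundary limits a.e.\ (indeed, by Proposition~\ref{pr1} and Corollary~\ref{pr1x}, in operator norm away from the exceptional set, and the set $\mathfrak N$ has measure zero) justifies interchanging limit and integral; one can first localise $\varphi$ to a compact subinterval of $\Omega$ where everything is Hölder continuous, or alternatively use dominated convergence together with the standard fact that $\|GR_0(\lambda\pm i\eps)f\|^2$ is integrable in $\lambda$ (this is where the strong $H_0$-smoothness of $G$ enters, guaranteeing $G$ is $H_0$-smooth in the sense that $\sup_\eps\int\|GR_0(\lambda\pm i\eps)f\|^2 d\lambda\le C\|f\|^2$).

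The one genuinely delicate point is the passage to the limit inside the integral when $\supp\varphi$ is not assumed to lie inside $\Omega$ but only inside the a.c.\ spectrum of both $H_0$ and $H$ — that is, the hypothesis as stated is weaker than ``$\supp\varphi\subset\Omega$''. Here I would argue that the set $\mathfrak N\cap\supp\varphi$ has Lebesgue measure zero (Proposition~\ref{pr1}), that the boundary values $Y(\lambda\pm i0)$, $GR_0(\lambda\pm i0)f$, $GR_0(\lambda\mp i0)g$ exist for a.e.\ $\lambda$ (for the resolvents applied to a dense set of $f,g$, by the general theory of $H_0$-smoothness; for $Y$, by Corollary~\ref{pr1x} off $\mathfrak N$), and dominate the integrand by an $L^1$ function using $|(Y(\lambda+i\eps)u,w)|\le \|Y(\lambda+i\eps)\|\|u\|\|w\|$ together with the uniform-in-$\eps$ $L^2$ bounds on $\|GR_0(\lambda\pm i\eps)f\|$ and $\|GR_0(\lambda\mp i\eps)g\|$ and the uniform boundedness of $\|Y(\lambda+i\eps)\|=2\pi\|V_0(I+T(\lambda+i\eps)V_0)^{-1}\|$ on $\supp\varphi$ (which holds because $\supp\varphi$ is compact and, after excising a neighbourhood of $\mathfrak N$ of arbitrarily small measure, $(I+T(z)V_0)^{-1}$ is uniformly bounded by Proposition~\ref{pr1}). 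Then Cauchy–Schwarz in $\lambda$ gives an $L^1$ dominating bound and the dominated convergence theorem applies. This is the main obstacle; the rest is bookkeeping. Finally I would remark that the limits in the right-hand side of \eqref{d3} exist a.e.\ by the same ingredients, completing the proof.
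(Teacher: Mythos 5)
Your first paragraph is exactly the paper's proof: the spectral theorem for the a.c.\ measures $(E_{H}(\cdot)f,g)$ and $(E_{H_0}(\cdot)f,g)$ on $\supp\varphi$, the classical a.e.\ existence of the boundary values of $(R(\lambda\pm i\eps)f,g)$ and $(R_0(\lambda\pm i\eps)f,g)$ (valid for arbitrary self-adjoint operators), and substitution of \eqref{d2} at $z=\lambda\pm i\eps$ -- and this already finishes the argument, since the limits in \eqref{d3} exist a.e.\ precisely because, by \eqref{d2}, they coincide with $2\pi i\lim_{\eps\to+0}\bigl((R(\lambda\pm i\eps)-R_0(\lambda\pm i\eps))f,g\bigr)$. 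The ``delicate point'' in your second half is illusory: the limits in \eqref{d3} (and in the Stone-type formula you start from) stand \emph{inside} the integral, so no interchange of limit and integral, no dominated convergence, and none of Proposition~\ref{pr1}, Corollary~\ref{pr1x} or smoothness of $G$ is needed -- which is just as well, because the a.e.\ existence of the individual limits $GR_0(\lambda\pm i0)f$ for all $f$ and the bound $\sup_{\eps}\int\norm{GR_0(\lambda\pm i\eps)f}^2d\lambda\leq C\norm{f}^2$ that you invoke are not available under the lemma's hypotheses (the smoothness assumptions are local to $\Delta$, while $\supp\varphi$ is not assumed to lie in $\Delta$ or $\Omega$).
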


\begin{proof}
Since the measure $(E_{H}(\lambda) f,g)$ is absolutely continuous on $\supp\varphi$, it follows from  the spectral theorem  that
$$
(\varphi(H)f,g)
=
  \int_{-\infty}^\infty   \varphi(\lambda)d(E_{H}(\lambda) f,g)=
  \int_{-\infty}^\infty   \varphi(\lambda)\frac{d(E_{H}(\lambda) f,g)}{d\lambda} d\lambda .
$$
Recall that for an arbitrary self-adjoint operator $H$  the relation holds:
$$
2\pi i \frac{d(E_{H}(\lambda) f,g)}{d\lambda}=  
\lim_{\eps\to+0}    \bigl(R(\lambda+i\eps) f,g\bigr) -\lim_{\eps\to+0}    \bigl(R(\lambda-i\eps) f,g\bigr)
$$
where the derivative in the l.h.s.\  and the limits in the r.h.s.\  exist for almost all $\lambda\in\bbR$.  Similar relations are of course also true for the operator $H_{0}$. Putting these relations for $H $ and $H_{0}$ together and collecting terms corresponding to $\lambda+i \varepsilon$ and to $\lambda-i \varepsilon$,  we obtain the formula
\begin{align*}
2\pi i (\DD_\varphi  f, g) =&  \int_{-\infty}^\infty  
  \lim_{\eps\to+0}\bigl((R(\lambda+i\eps)-R_{0}(\lambda+i\eps))f,g\bigr)
   \varphi(\lambda)  d\lambda
\\
&  -\int_{-\infty}^\infty   \lim_{\eps\to+0}\bigl((R(\lambda-i\eps)-R_{0}(\lambda-i\eps))f,g\bigr)   \varphi(\lambda)  
  d\lambda.
%\label{eq:R1}
\end{align*}
Substituting here   expression \eqref{d2} for $z=\lambda\pm i\varepsilon$,
  we conclude the proof of \eqref{d3}.
  \end{proof}
 
 Our next goal is to pass to the limit $\varepsilon\to 0$ in  \eqref{d3}. 
 This is possible if $\DD_\varphi$ is sandwiched between appropriate functions of $H_{0}$. 
 Passing to the limit relies on the following assertion.

%%%%%%%%%%%%%%%%
\begin{lemma}\label{Hardy}
%%%%%%%%%%%%%%%%

For all $f \in\calH$, all bounded functions $\omega$ with compact support $\supp \omega\subset \Delta$ and almost all $\lambda\in\bbR$, we have
\begin{equation}
G  R_0(\lambda\pm i\varepsilon)  \omega(H_{0}) f \to \pm 2\pi i (P_{\pm}\calZ_{\omega}^* \calF_\Delta f)(\lambda)
\label{eq:lim}
\end{equation}
 as $\varepsilon\to+0$.
\end{lemma}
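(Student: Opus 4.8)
The plan is to compute the left-hand side directly from formula \eqref{b5} and then recognise the result as a Hardy projection applied to a vector-valued function. First I would fix a bounded function $\omega$ with $\supp\omega\subset\Delta$ and apply \eqref{b5} with $Q=G$, $A=H_0$, $\varphi(\mu)=\omega(\mu)(\mu-\lambda\mp i\varepsilon)^{-1}$. Since $\varphi$ is bounded and compactly supported on $\Delta$ for $\varepsilon>0$, this gives
\begin{equation}
G R_0(\lambda\pm i\varepsilon)\omega(H_0)f
=
\int_{-\infty}^\infty \frac{\omega(\mu)\,Z(\mu)(\calF_\Delta f)(\mu)}{\mu-\lambda\mp i\varepsilon}\,d\mu,
\label{eq:Hardyplan1}
\end{equation}
where I have used that $(\calF_\Delta f)(\mu)=(\calF_\ell f)(\mu)$ on $\Delta_\ell$ and that $Z(\mu)=Z_\ell(\mu)$ there, together with $R_0(\lambda\pm i\varepsilon)\omega(H_0)=\omega(H_0)(H_0-\lambda\mp i\varepsilon)^{-1}$. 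Here $Z(\mu)^*(\calF_\Delta f)(\mu)$ is only defined via \eqref{eq:Z1}, but the combination appearing is $Z^*(\mu)$ acting on the fibre, so more precisely the integrand is $Z^*(\mu)$ paired against $(\calF_\Delta f)(\mu)$; I will keep \eqref{b5} in the form $G\varphi(H_0)f=\int Z^*(\mu)(\calF f)(\mu)\varphi(\mu)\,d\mu$. Thus the right object is
$$
G R_0(\lambda\pm i\varepsilon)\omega(H_0)f
=
\int_{-\infty}^\infty \frac{(\calZ_\omega^*\calF_\Delta f)(\mu)}{\mu-\lambda\mp i\varepsilon}\,d\mu,
$$
recalling that $(\calZ_\omega^* u)(\mu)=\omega(\mu)Z^*(\mu)u(\mu)$ by \eqref{d1a} (adjoint), and noting $\calZ_\omega^*\calF_\Delta f\in L^2(\bbR;\calH)$ because $\calZ_\omega$ is bounded and $\calF_\Delta$ isometric.

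Next I would invoke the explicit formula \eqref{c4} for the Hardy projections in the vector-valued space $L^2(\bbR;\calH)$. By \eqref{c4}, for $g\in L^2(\bbR;\calH)$ one has $(P_\pm g)(\lambda)=\pm\frac{1}{2\pi i}\lim_{\varepsilon\to+0}\int \frac{g(x)}{x-\lambda\mp i\varepsilon}\,dx$, with the limit existing for almost all $\lambda$ and also in $L^2$. Applying this with $g=\calZ_\omega^*\calF_\Delta f$ gives exactly
$$
\lim_{\varepsilon\to+0} G R_0(\lambda\pm i\varepsilon)\omega(H_0)f
=
\pm 2\pi i\,(P_\pm \calZ_\omega^*\calF_\Delta f)(\lambda)
$$
for almost all $\lambda$, which is the claim \eqref{eq:lim}. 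The only care needed is to make sure the "formal'' appearance of the $\varepsilon\to+0$ limit in \eqref{c4} matches the genuine norm-limit of the resolvent applied to the fixed vector $\omega(H_0)f$; since $\calZ_\omega^*\calF_\Delta f\in L^2(\bbR;\calH)$, the $L^2$-convergence in \eqref{c4} gives convergence of \eqref{eq:Hardyplan1} in $L^2(\bbR;\calH)$, and hence almost-everywhere convergence along a subsequence, while the a.e.\ pointwise limit in \eqref{c4} pins down the limit itself, so the two agree a.e.

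The main (and essentially only) obstacle is bookkeeping: correctly matching the scalar factor $\pm 2\pi i$ and the side of the half-plane, and checking that $\calZ_\omega^*\calF_\Delta f$ is genuinely in $L^2(\bbR;\calH)$ so that \eqref{c4} applies verbatim. The latter is immediate from the H\"older continuity (hence boundedness on the compact $\supp\omega$) of $Z(\cdot)$ recorded after \eqref{d1a}, together with $\norm{\calF_\Delta f}=\norm{E_{H_0}(\Delta)f}\le\norm f$. No deeper input is required; the strong $H_0$-smoothness of $G$ enters only through the already-cited consequence \eqref{b5} of Definition~\ref{stsm}. Thus the proof is a two-line computation once \eqref{b5} and \eqref{c4} are in hand.
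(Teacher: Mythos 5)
Your proposal is correct and follows essentially the same route as the paper: apply \eqref{b5} with $\varphi(\mu)=\omega(\mu)(\mu-\lambda\mp i\varepsilon)^{-1}$ to write $GR_0(\lambda\pm i\varepsilon)\omega(H_0)f$ as the integral of $(\calZ_\omega^*\calF_\Delta f)(\mu)/(\mu-\lambda\mp i\varepsilon)$, note that $\calZ_\omega^*\calF_\Delta f\in L^2(\bbR;\calH)$ since $\omega Z^*$ is bounded and $\calF_\Delta$ is isometric, and conclude by the almost-everywhere limit in \eqref{c4}. Your extra care about matching the $L^2$ and pointwise limits is harmless but unnecessary, since at each fixed $\varepsilon>0$ the left-hand side is exactly the integral appearing in \eqref{c4}, whose a.e.\ convergence is precisely what \eqref{c4} asserts.
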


\begin{proof}
It follows  from \eqref{b5} that  
$$
G   R_0(\lambda\pm i\varepsilon) \omega(H_{0}) f
=
\int_{-\infty}^\infty \frac{\omega(x) Z^*(x) (\calF_\Delta E_{H_{0}}(\Delta) f)(x)}{x-\lambda \mp i\varepsilon} dx.
$$
Observe that $\calF_\Delta E_{H_{0}}(\Delta) f\in L^2(\bbR ;\calN)$ and the operator valued function $\omega(x) Z^*(x) $ is bounded. Therefore \eqref{eq:lim} follows from
  formula \eqref{c4}.
 \end{proof}
 
 \subsection{Proof of Theorem~\ref{lma.d2}}\label{sec.b63}
    
Recall (see Proposition~\ref{pr2}) that the spectrum of $H$ is a.c.\  on $\Omega$. Let us  apply   identity \eqref{d3} to the elements $\omega(H_{0})f$ and $\omega(H_{0})g$ instead of $f$ and $g$ and then pass to the limit $\varepsilon\to + 0$.
The operator valued function $Y(\lambda+i\varepsilon)$ converges to  $Y(\lambda+i 0)$ uniformly on $ \supp \varphi$. Therefore it follows from \eqref{d3} and \eqref{eq:lim} that
\begin{multline*}
 (\DD_\varphi \omega(H_{0}) f, \omega(H_{0}) g)
 \\
  = \int_{-\infty}^\infty   
(Y(\lambda+i 0) (P_{+}  \calZ_{\omega}^* \calF_\Delta f)(\lambda)
,(P_{-}  \calZ_{\omega}^* \calF_\Delta g)(\lambda))
\varphi(\lambda)d\lambda
 \\
- \int_{-\infty}^\infty   
(Y(\lambda-i 0) (P_{-}  \calZ_{\omega}^* \calF_\Delta f)(\lambda)
,(P_{+}  \calZ_{\omega}^* \calF_\Delta g)(\lambda))
\varphi(\lambda)d\lambda.
\end{multline*}

Taking   into account \eqref{d4a} and using notation \eqref{d1a}, we get
\begin {align}
 (\DD_\varphi \omega(H_{0}) f, \omega(H_{0}) g)
=
(\calY_\varphi P_+ &\calZ_{\omega} ^*\calF_\Delta f, P_- \calZ _{\omega}^*\calF_\Delta g)_{L^2(\bbR;\calH)}
\nonumber\\
&+
(\calY^*_\varphi P_- \calZ_{\omega} ^*\calF_\Delta f, P_+ \calZ _{\omega}^*\calF_\Delta g)_{L^2(\bbR;\calH)}.
\label{eq:reprV1}\end{align}
Set here $f=\calF_\Delta^* \wt f$,  $g =\calF_\Delta^* \wt g$ where $\wt f$, $\wt g$ are arbitrary elements in $L^2(\bbR;\calH)$ and observe that
 $\calZ_{\omega}=\calF_\Delta \calF_\Delta^*\calZ_{\omega}$.  Thus \eqref{eq:reprV1} implies \eqref{c4a}. 
\qed

%%%%%%%%%%%%%%%%%%%%%%%%%%%%%%%%%%     
\subsection{Proof of Theorem~\ref{reprX}}\label{sec.b64}
%%%%%%%%%%%%%%%%%%%%%%%%%%%%%%%%%%
 
Let us apply identity \eqref{d3} to the elements $v(H_{0})f$ and $\omega(H_{0})g$ instead of 
$f$ and $g$. We have to pass to the limit $\varepsilon\to + 0$ in the expression
\begin{equation}
(Y(\lambda\pm i\eps)GR_0(\lambda\pm i\eps) v(H_{0}) f,GR_0(\lambda\mp i\eps) \omega(H_{0})g)\varphi(\lambda).
\label{eq:reprV}\end{equation}
 Similarly to the proof of Theorem~\ref{lma.d2}, we use the uniform convergence  of $Y(\lambda+i\eps)$ on $\supp\varphi$  and apply Lemma~\ref{Hardy} to $GR_0(\lambda\pm i\eps) \omega(H_{0})g$. Moreover, we use that 
$$GR_0(\lambda\pm i\eps) v(H_{0}) f\to GR_0(\lambda) v(H_{0}) f$$
as $\varepsilon\to 0$ uniformly on $ \supp \varphi$. Therefore the limit of \eqref{eq:reprV} equals
\begin {multline*}
\pm 2\pi i (Y(\lambda\pm i 0)GR_0(\lambda  ) v(H_{0}) f, (P_{\mp} \calZ^*_{\omega} \calF_\Delta g)(\lambda))\varphi(\lambda)
\\
= \pm 2\pi i  ( ({\bbY}_{\varphi,v}^{(\pm)} f)(\lambda), (P_{\mp} \calZ^*_{\omega} \calF_\Delta g)(\lambda))
 \end{multline*}
 where notation   \eqref{d1ay} has been used. It now follows from \eqref{d3}  that
 \begin {align}
   2\pi i (\DD_\varphi v(H_0) f, \omega (H_0) g)&
\nonumber\\  =
  \int_{-\infty}^\infty ( ({\bbY}_{\varphi,v}^{(+)} f)(\lambda),  (P_{-} &\calZ^*_{\omega} \calF_\Delta g)(\lambda)) d\lambda+ 
    \int_{-\infty}^\infty ( ({\bbY}_{\varphi,v}^{(-)} f)(\lambda), (P_{+} \calZ^*_{\omega} \calF_\Delta g)(\lambda))d\lambda
\nonumber\\  
=( {\bbY}_{\varphi,v}^{(+)} f  , P_{-}& \calZ^*_{\omega} \calF_\Delta g)_{L^2(\bbR;\calH)} + ( {\bbY}_{\varphi,v}^{(-)} f  , P_{+} \calZ^*_{\omega} \calF_\Delta g)_{L^2(\bbR;\calH)}.
\label{eq:reprV2}\end{align}
Set here    $g =\calF_\Delta^* \wt g$ where  $\wt g \in L^2(\bbR;\calH)$ is arbitrary.    Thus \eqref{eq:reprV2} implies \eqref{eq:repr}.  
\qed

%%%%%%%%%%%%%%%%%%%%%%%%%%%%%%%%%%%%%%%%%%%%%%%%%
%%%%%%%%%%%%%%%%%%%%%%%%%%%%%%%%%%%%%%%%%%%%%%%%%
\section{Spectral and scattering theory of $\DD_\theta$ }\label{sec.b5}
%%%%%%%%%%%%%%%%%%%%%%%%%%%%%%%%%%%%%%%%%%%%%%%%%
%%%%%%%%%%%%%%%%%%%%%%%%%%%%%%%%%%%%%%%%%%%%%%%%%

This section is organized as follows. 
We formulate our main results in Subsection~\ref{sec.b5a} and
give their proofs in Subsection~\ref{sec.d1} modulo an essential analytic result (Theorem~\ref{th.c2}).
The rest of the section is devoted to the proof of Theorem~\ref{th.c2}.

%%%%%%%%%%%%%%%%%%%%%%%%%%%%%%%%
\subsection{Main results }\label{sec.b5a}
%%%%%%%%%%%%%%%%%%%%%%%%%%%%%%%%

Let Assumption~\ref{as1} hold true, and let the operator $\DD_\theta$ be defined by formula \eqref{a0}. 
Our aim is to describe the spectral structure of this operator 
for piecewise continuous functions $\theta$ with 
discontinuities on the set $\Omega$  defined by relation \eqref{eq:Ome}.

\begin{assumption}\label{ass1}
Let $\theta (\lambda)$ be a real function such that:
\begin{itemize}
\item
$\theta$ is continuous apart from   jump discontinuities at the points
$ \lambda_\ell\in \Omega_{\ell}=\Omega\cap\Delta_{\ell}$, $ \ell=1,\ldots, L$;
\item
at each point of discontinuity $\lambda_\ell$, the function $\theta$
satisfies the logarithmic regularity condition
\begin{equation}
\theta(\lambda_\ell\pm\eps)-\theta(\lambda_\ell\pm0)
=
O(\abs{\log\eps}^{-\beta_0}), 
\quad 
\eps\to+0,
\label{b8a}
\end{equation}
with an exponent $\beta_0>2$;
\item
the limits
$\lim_{\lambda\to\pm\infty} \theta(\lambda)$
exist and are finite. 
\end{itemize}
\end{assumption}

Thus we suppose that there is exactly one discontinuity point on every set $\Delta_{\ell}$. To put it differently, given discontinuity points $\lambda_{1},\ldots, \lambda_{L}$, we suppose that the Assumption~\ref{as1}(C) is satisfied only in some neighbourhoods of these points. We also suppose that discontinuity points do not coincide with the eigenvalues of the operator $H$.

Recall that the unitary  operators $\calF_{\ell}$ and $\calF_\Delta$ were defined by formulas  \eqref{eq:Zf} and \eqref{eq:FF}, respectively. As   in the previous section, we extend them to partially isometric operators acting from $\calH$ to $L^2(\bbR;\calN)$ where the space $\calN$ is defined by equality \eqref{eq:NN}.
  We denote the jump of $\theta$ at $\lambda_{\ell}$ by $\varkappa_\ell$, see \eqref{b8b}; 
  $\sigma_{n}(\lambda_{\ell})$, 
$1\leq n\leq N_{\ell}$,  are the eigenvalues of the scattering matrix $S (\lambda_{\ell})$ for the pair of operators $H_{0}$, $H$ and the numbers $a_{n\ell}$ are defined in \eqref{eq:a0}.  
    As before,  the function $q(\lambda)$ is defined by equalities \eqref{c7} and \eqref{e4},  and we denote by the same letter $q$ the operator of multiplication by this function in the space $L^2(\bbR ;\calN )$.

% {\tt  to chose $\calF_{0,\ell}$ or $\calF_{0 }^{(\ell)}$}
 
  The spectral properties of the operator $\DD_\theta$ are described in the following assertion.

%%%%%%%%%%%%%%%%
\begin{theorem}\label{th.b1}
%%%%%%%%%%%%%%%%
Let Assumptions~$\ref{as1}$ and $\ref{ass1}$ hold true. 
Then:
\begin{enumerate}[\rm (i)]

\item
The a.c.\  spectrum of $\DD_\theta$ consists of the union of the intervals $[-a_{n\ell}, a_{n\ell}]$, that is, relation \eqref{eq:a0} holds.

\item
The singular continuous spectrum of $\DD_\theta$ is empty.

\item
The eigenvalues of $\DD_\theta$, distinct from $0$ and from $\pm a_{n\ell}$,
have finite multiplicities and can accumulate only to $0$ and to the 
points $\pm a_{n\ell}$. 

\item
 For any $\beta > 1 $, the operator-valued function 
$q^\beta\calF_\Delta  (\DD_\theta-zI)^{-1}\calF_\Delta ^*q^\beta$ is   continuous
in $z$ for $\pm\Im z\geq 0$   away    from  $0$, all points  $\pm a_{n\ell}$ and the eigenvalues of $\DD_\theta$. 
\end{enumerate}
\end{theorem}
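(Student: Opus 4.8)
The plan is to transplant $\DD_\theta$ into the spectral representation of $H_0$ via the partial isometry $\calF_\Delta$ and to recognise it there, modulo operators of the form $q^\beta K q^\beta$ with $K$ compact, as the symmetrised Hankel operator $M_\Xi$ with the piecewise continuous symbol $\Xi(\lambda)=\theta(\lambda)\bigl(S(\lambda)-I\bigr)$ of \eqref{eq:MS} (taken up to a localising cut-off); the spectral and scattering theory of SHOs established in Sections~\ref{sec.e} and \ref{sec.ee}, fed into the multichannel scheme of Propositions~\ref{pr.c1} and \ref{pr.c2}, then transfers to $\DD_\theta$ exactly as in Subsection~\ref{sec.e6}. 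The analytic core of the identification is the content of Theorem~\ref{th.c2}, proved in the rest of the section; granting it, the deduction is short.

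First I would fix $\beta$ with $1<\beta<\beta_0/2$ and a cut-off $\omega\in C_0^\infty(\bbR)$ with $\omega\equiv1$ near $\{\lambda_1,\dots,\lambda_L\}$ and $\supp\omega\subset\Omega$. Since $R(z)-R_0(z)\in\mathfrak{S}_\infty$ under Assumption~\ref{as1} (by \eqref{d2}, as $GR_0(z)$ is compact), $\psi(H)-\psi(H_0)\in\mathfrak{S}_\infty$ for every continuous $\psi$ with finite limits at $\pm\infty$; splitting $\theta$ into such a continuous part and a compactly supported part carrying the jumps, and localising with $\omega$, the problem reduces to the operator $\calF_\Delta\,\omega(H_0)\DD_\theta\,\omega(H_0)\,\calF_\Delta^*$ on $L^2(\bbR;\calN)$. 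By the representation \eqref{c4a} with $\varphi=\theta$ this equals $\calZ_\omega\bigl(P_-\calY_\theta P_+ + P_+\calY_\theta^*P_-\bigr)\calZ_\omega^*$, and moving the multiplication operator $\calZ_\omega$ through the Hardy projections costs only the commutators $[\calZ_\omega,P_\pm]$, which -- by the sandwiched-compactness results of Section~\ref{sand} (Lemma~\ref{lma.d1}, Corollary~\ref{lma.e2c}, Remark~\ref{rem.e2c}) and the H\"older continuity of $Z(\cdot)$ and $Y(\cdot+i0)$ (Assumption~\ref{as1}(C), Corollary~\ref{pr1x}) -- again lie in the class $q^\beta\mathfrak{S}_\infty q^\beta$. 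One is thus left, modulo such errors, with the SHO $M_\Xi$, $\Xi(\lambda)=\omega(\lambda)^2\theta(\lambda)\,Z(\lambda)Y(\lambda+i0)Z(\lambda)^*=\omega(\lambda)^2\theta(\lambda)\bigl(S(\lambda)-I\bigr)$ by Proposition~\ref{SM}, which near each $\lambda_\ell$ is the symbol \eqref{eq:MS}; by \eqref{b8a} and the H\"older continuity of $S$ on $\Omega$ this $\Xi$ lies in $C_0$ and its jump $K_\ell=\varkappa_\ell\bigl(S(\lambda_\ell)-I\bigr)$ obeys the logarithmic regularity \eqref{e8} with the same exponent $\beta_0>2$, so the hypotheses of Theorems~\ref{thm.f8} and \ref{thm.f10} hold.

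With this reduction I would then apply Propositions~\ref{pr.c1} and \ref{pr.c2} word for word as in Subsection~\ref{sec.e6}, with $A$ the transplanted $\DD_\theta$, $A_\ell=M_{\Xi_\ell}$, $\Xi_\ell(\lambda)=\zeta(\lambda-\lambda_\ell)K_\ell$ as in \eqref{e9}, $A_\infty$ the continuous-plus-compact remainder, $X=q^\beta$, and $\delta$ any open interval avoiding $0$ and the points $\pm\tfrac12 s_n(K_\ell)$. The hypotheses are the ones checked in Subsection~\ref{sec.e6}: strong $M_{\Xi_\ell}$-smoothness of $q^\beta$ with exponent $\gamma>1/2$ (Theorem~\ref{lma.e4}(ii), Remark~\ref{gen}); compactness of $q^{-\beta}M_{\Xi_j}M_{\Xi_k}q^{-\beta}$ for $j\neq k$ (Lemma~\ref{lma.e2}); $A_\infty=q^\beta K_\infty q^\beta$, $K_\infty\in\mathfrak{S}_\infty$ (Theorem~\ref{th.c2} and Corollary~\ref{lma.e2c}, since $\Xi-\sum_\ell\Xi_\ell$ satisfies \eqref{d0} for $\beta<\beta_0/2$); boundedness of $q^\beta M_{\Xi_\ell}q^{-\beta}$ (Proposition~\ref{Muck}). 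Proposition~\ref{pr.c1} then gives, on each such $\delta$, that the a.c.\ spectrum of $\DD_\theta$ has multiplicity equal to the sum of those of the $M_{\Xi_\ell}$, that its singular continuous spectrum vanishes, that its eigenvalues have finite multiplicity and do not accumulate at interior points of $\delta$, and that $q^\beta\calF_\Delta(\DD_\theta-zI)^{-1}\calF_\Delta^*q^\beta$ is continuous up to $\pm\Im z=0$ away from eigenvalues; letting $\delta$ exhaust $\bbR\setminus(\{0\}\cup\{\pm a_{n\ell}\})$ and using Theorem~\ref{lma.e4}(i) -- and that $S(\lambda_\ell)$ is unitary, so $S(\lambda_\ell)-I$ is normal and $s_n(K_\ell)=\abs{\varkappa_\ell}\,\abs{\sigma_n(\lambda_\ell)-1}$, whence $\tfrac12 s_n(K_\ell)=a_{n\ell}$ -- yields \eqref{eq:a0} and parts (ii)--(iv).

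I expect the main obstacle to be Theorem~\ref{th.c2} itself: localising $\DD_\theta$ onto neighbourhoods of $\{\lambda_1,\dots,\lambda_L\}$ and passing from the sandwiched resolvent identity \eqref{c4a} to the SHO $M_\Xi$ while placing \emph{every} error term in the sharp class $q^\beta\mathfrak{S}_\infty q^\beta$, not merely in $\mathfrak{S}_\infty$. This requires controlling the sandwiched Hankel-type commutators of Section~\ref{sand} near the singular points of $q$, and it is exactly here that the logarithmic regularity with exponent $\beta_0>2$ is used -- it is precisely what makes $\Xi-\sum_\ell\Xi_\ell$ (and the commutator errors) satisfy the weighted continuity condition \eqref{d0}. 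A further bookkeeping point, also settled within Theorem~\ref{th.c2}, is the treatment of the boundary values $Y(\lambda\pm i0)$, $Z(\lambda)$ and of the part of $\DD_\theta$ invisible to $\calF_\Delta$ -- including the limits of $\theta$ at $\pm\infty$, which must be shown not to contribute to the a.c.\ spectrum.
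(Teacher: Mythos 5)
Your proposal follows the paper's own route: granting Theorem~\ref{th.c2} (whose proof you sketch along the same lines --- localisation by $\omega(H_0)$, the representation \eqref{c4a}, the sandwiched commutator estimates of Section~\ref{sand}, and the identification of the symbol with $\omega^2\theta\,(S-I)$ via \eqref{b8}), you verify the hypotheses of Propositions~\ref{pr.c1} and \ref{pr.c2} exactly as in Subsection~\ref{sec.e6} with $X=q^\beta$, $1<\beta<\beta_0/2$, and then transfer the conclusions back to $\DD_\theta$, so this is essentially the paper's proof. The only presentational difference is that the paper replaces the partial isometry $\calF_\Delta$ by the isometry $\calF=\calF_\Delta\oplus\calF_\bot$ on all of $\calH$, which is exactly the device that handles the ``part of $\DD_\theta$ invisible to $\calF_\Delta$'' you flag and makes the identity $\calF^*\varphi(\calF\DD_\theta\calF^*)=\varphi(\DD_\theta)\calF^*$ (hence parts (i)--(iv)) immediate.
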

 
 To construct scattering theory for the operator $ \DD_\theta$, we have to introduce a model operator for each discontinuity $\lambda_\ell$ of the function $\theta$.   Let $\zeta$ be as in \eqref{c6}.  For $\ell=1,\dots,L$, we define the operator $K_\ell  : \calN \to \calN $ by the equalities
\begin{equation}
 K_\ell \psi=\varkappa_\ell    (S(\lambda_\ell)-I)\psi\quad {\rm if}\quad \psi\in\calN_{\ell}\quad {\rm and}\quad K_\ell \psi=0 \quad {\rm if}\quad\psi\in  \calN_{\ell}^\bot
 \label{c7ax}\end{equation}
and then define the symbol
$\Xi_\ell(\lambda)$ by formula \eqref{e9}.
We note that the function  $\Xi_\ell$ is bounded and 
has a single point of discontinuity 
at $\lambda=\lambda_\ell$. 
It is clear that the singular values of $S(\lambda_\ell)-I$ are 
$ \abs{\sigma_n(\lambda_\ell)-1}$. According to Lemma~\ref{Me1}, each operator $M_{\Xi_\ell}$ 
can be explicitly diagonalized.  Except for a possible zero eigenvalue   of infinite multiplicity, 
its spectrum is absolutely continuous and consists of the intervals $[-a_{n\ell}, a_{n\ell}]$, 
$1\leq n\leq N_{\ell}$. 
Note that $\Ker K_{\ell}\neq \{0\}$ and hence
$\Ker M_{\Xi_\ell}\neq \{0\}$  if and only if $a_{n\ell}=0$ for some $n$. 
Even for $L=1$, this happens if the scattering matrix $S(\lambda_{1})$ has the eigenvalue $1$. We emphasize that the zero eigenvalue   of the operators $M_{\Xi_\ell}$ is irrelevant for our construction.

% (cf. \eqref{c7b}).

%%%%%%%%%%%%%%%%%%%
\begin{theorem}\label{th.c3}
%%%%%%%%%%%%%%%%%%%
Let Assumptions~$\ref{as1}$ and $\ref{ass1}$ hold true.
Then:
\begin{enumerate}[\rm (i)]
\item
The wave operators
\begin{equation}
W_\pm(\DD_\theta, M_{\Xi_\ell})
:=
\slim_{t\to\pm\infty} e^{i\DD_\theta t}\calF_{\ell}^* e^{-i M_{\Xi_\ell} t} P^{({\rm ac} )}_{M_{\Xi_\ell} },
\quad 
\ell=1,\dots,L,
\label{b13}
\end{equation}
exist and enjoy the intertwining property
$$
\DD_\theta W_\pm(\DD_\theta, M_{\Xi_\ell})=W_\pm(\DD_\theta,M_{\Xi_\ell})M_{\Xi_\ell}.
$$
The wave operators are isometric on the a.c.\  subspaces of  $M_{\Xi_\ell}$  and their ranges are orthogonal to each 
other, i.e. 
\begin{equation}
\Ran W_\pm(\DD_\theta, M_{\Xi_j})\perp \Ran W_\pm(\DD_\theta,M_{\Xi_k}), 
\quad j \neq k.
\label{b14}
\end{equation}
\item
The asymptotic completeness holds: 
\begin{equation}
\Ran W_\pm(\DD_\theta,M_{\Xi_1})\oplus\dots\oplus \Ran W_\pm(\DD_\theta, M_{\Xi_L})
=
\calH_{\DD_\theta}^{({\rm ac})} .
\label{b15}
\end{equation}
  \end{enumerate} 
\end{theorem}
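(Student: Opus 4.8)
The plan is to obtain Theorem~\ref{th.c3} from the abstract multichannel scattering scheme of Propositions~\ref{pr.c1} and \ref{pr.c2}, in exact parallel with the derivation of Theorem~\ref{thm.f10} for pure symmetrised Hankel operators. The only new ingredient is the reduction of $\DD_\theta$, read in the spectral representation of $H_0$, to a perturbation of a direct sum of the model operators $M_{\Xi_\ell}$. First I would fix $\omega\in C_0^\infty(\Omega)$ equal to $1$ in a neighbourhood of $\{\lambda_1,\dots,\lambda_L\}$ and, using the representation of Theorem~\ref{lma.d2}, transplant $\omega(H_0)\DD_\theta\omega(H_0)$ into $L^2(\bbR;\calN)$ by the partial isometry $\calF_\Delta$. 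The complementary piece of $\DD_\theta$ is compact: away from the discontinuity points $\theta$ is continuous and tends to finite limits at $\pm\infty$, so the $\DD_\theta$-analogue of Lemma~\ref{as} — obtained from Theorem~\ref{lma.d2} together with the compactness statements of Section~\ref{sand} — gives $\chi_Q\DD_\theta\in\mathfrak S_\infty$ whenever $Q$ avoids $\{\lambda_1,\dots,\lambda_L\}$; hence that piece is negligible for the a.c.\ analysis and is absorbed, along the lines of the argument closing Section~\ref{sec.e6}, into the remainder channel.

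I would then apply Propositions~\ref{pr.c1} and \ref{pr.c2} with $A$ the transplanted localized operator, $A_\ell=M_{\Xi_\ell}$ ($\ell=1,\dots,L$), $X=q^\beta$ for a fixed $\beta$ with $1<\beta<\beta_0/2$, and $A_\infty=A-\sum_\ell M_{\Xi_\ell}$, on any open interval $\delta$ not containing $0$ or any point $\pm a_{n\ell}$ (and, for parts (iii)--(iv) of Theorem~\ref{th.b1}, no eigenvalue of $\DD_\theta$). Three of the four hypotheses are internal to the model operators and already in hand: $q^\beta$ is strongly $M_{\Xi_\ell}$-smooth on $\delta$ with an exponent $\gamma>1/2$ by Theorem~\ref{lma.e4}(ii) (here $\beta>1$ is used); the cross products $q^{-\beta}M_{\Xi_j}M_{\Xi_k}q^{-\beta}$, $j\neq k$, are compact by Lemma~\ref{lma.e2} since $\lambda_j\neq\lambda_k$; and $q^\beta M_{\Xi_\ell}q^{-\beta}$ is bounded by Proposition~\ref{Muck}. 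The fourth hypothesis, that $A_\infty=q^\beta K_\infty q^\beta$ with $K_\infty$ compact, is precisely the analytic result Theorem~\ref{th.c2}, which asserts in substance that after localization and transplantation $\DD_\theta$ coincides with $\sum_\ell M_{\Xi_\ell}$ up to an operator compact when sandwiched by $q^{-\beta}$. Granting it, Proposition~\ref{pr.c2} delivers the wave operators $W_\pm(A,M_{\Xi_\ell};\delta)$ with the intertwining property, isometry, mutual orthogonality of ranges and asymptotic completeness; letting $\delta$ exhaust the admissible intervals and using density of the corresponding spectral subspaces in $\calH^{(\ac)}_{M_{\Xi_\ell}}$ removes the $\delta$-restriction, and translating back to $\calH$ by $\calF_\Delta^*$ — using that $e^{-iM_{\Xi_\ell}t}$ asymptotically localizes near $\lambda_\ell$, where $\calF_\Delta^*$ and $\calF_\ell^*$ agree — yields the wave operators \eqref{b13} with properties \eqref{b14} and \eqref{b15}. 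The spectral half, Proposition~\ref{pr.c1}(i)--(iv), gives Theorem~\ref{th.b1} in the same manner; for $L=1$ the multichannel machinery is unnecessary and the smooth scattering theory of Subsections~\ref{sec.b3}--\ref{sec.b4} applied to the pair $M_{\Xi_1}$, $\DD_\theta$ suffices.

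The main obstacle, as the organisation of Section~\ref{sec.b5} makes plain, is Theorem~\ref{th.c2} itself. The starting point is Theorem~\ref{lma.d2}, which exhibits $\calF_\Delta\omega(H_0)\DD_\theta\omega(H_0)\calF_\Delta^*$ as the "$Z$-compressed" symmetrised Hankel operator $\calZ_\omega(P_-\calY_\theta P_+ + P_+\calY_\theta^*P_-)\calZ_\omega^*$, whose effective operator symbol is, by \eqref{b8}, $\theta(\lambda)Z(\lambda)Y(\lambda+i0)Z^*(\lambda)=\theta(\lambda)(S(\lambda)-I)$, with jump $\varkappa_\ell(S(\lambda_\ell)-I)=K_\ell$ at $\lambda_\ell$ — matching $\Xi_\ell$. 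The work is then twofold. First, one must pass from this $Z$-compression to an honest SHO on $L^2(\bbR;\calN)$ and verify that the surviving symbol agrees with $\sum_\ell\Xi_\ell$ modulo $q$-weighted compact errors; here Hölder continuity of $S$, $Y$, $Z$ on $\Omega$ (Proposition~\ref{SM}, Corollary~\ref{pr1x}), Corollary~\ref{lma.e2c} applied to the continuous remainder $\theta(\lambda)(S(\lambda)-I)-\sum_\ell\zeta(\lambda-\lambda_\ell)K_\ell$, and Theorem~\ref{reprX} together with Lemma~\ref{lma.e2} to uncouple the different jumps all enter. Second, and most delicately, one must control the behaviour at the points $\lambda_\ell$ themselves: the weight $q$ vanishes only logarithmically there, so conjugation by $q^{-\beta}$ is borderline, and it is precisely the logarithmic regularity \eqref{b8a} with $\beta_0>2$ (together with the choice $\beta<\beta_0/2$) that forces the local difference of symbols to vanish fast enough at each $\lambda_\ell$ for Lemma~\ref{lma.d1} to still yield compactness. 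Establishing these compactness estimates, uniformly over the relevant pieces, is where essentially all the effort goes.
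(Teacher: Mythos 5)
Your overall strategy --- the multichannel scheme of Propositions~\ref{pr.c1}, \ref{pr.c2} with $A_\ell=M_{\Xi_\ell}$, $X=q^\beta$, $1<\beta<\beta_0/2$, the hypotheses checked via Theorem~\ref{lma.e4}(ii), Lemma~\ref{lma.e2}, Proposition~\ref{Muck}, and a Theorem~\ref{th.c2}-type compactness for the remainder channel --- is the paper's, and your sketch of the analytic core (Theorem~\ref{lma.d2}, the symbol identification via \eqref{b8}, Corollary~\ref{lma.e2c}, Theorem~\ref{reprX}, Lemma~\ref{lma.e2}) also matches. The genuine gap is in converting the abstract conclusions, which concern the group $e^{iAt}$ in $L^2(\bbR;\calN)$, into the statements \eqref{b13}--\eqref{b15} about $e^{i\DD_\theta t}$ in $\calH$. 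You take $A$ to be the transplant of the \emph{localized} operator $\omega(H_0)\DD_\theta\omega(H_0)$ and declare the complementary piece of $\DD_\theta$ ``compact, hence negligible for the a.c.\ analysis''. Compactness is not a licence to discard anything here: neither the a.c.\ spectrum nor the existence and completeness of wave operators is stable under compact perturbations (Weyl--von Neumann), and $\chi_Q\DD_\theta\in\mathfrak S_\infty$ is strictly weaker than what the scheme requires, namely that the discarded pieces be of the form $q^{\beta}Kq^{\beta}$ with $K$ compact so that they can be absorbed into $A_\infty$ and controlled by hypothesis \eqref{c2}. This is exactly the content of Lemmas~\ref{lma.d4} and \ref{lma.d4m} in the paper (weighted compactness of $q^{-\beta}\calF\DD_\xi\calF^*q^{-\beta}$ and of the unsandwiched parts), and it forces one to keep the \emph{full} operator $\DD_\theta$ inside $A$ rather than its localization.

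Moreover, if $A$ is the full transplant (which is what Theorem~\ref{th.c2}, as stated, concerns), you need a map under which the unitary groups transfer \emph{exactly}. The partial isometry $\calF_\Delta$ does not provide this: $\DD_\theta$ does not commute with $E_{H_0}(\Delta)$, so $\calF_\Delta\DD_\theta\calF_\Delta^*$ only sees the compression $E_{H_0}(\Delta)\DD_\theta E_{H_0}(\Delta)$, and $e^{i\DD_\theta t}\calF_\Delta^*\neq \calF_\Delta^*\,e^{i\calF_\Delta\DD_\theta\calF_\Delta^* t}$. The paper's device --- extending $\calF_\Delta$ by an arbitrary isometry $\calF_\bot$ of $\Ran E_{H_0}(\bbR\setminus\Delta)$ into $L^2(\bbR\setminus\Delta;\calN)$, so that $\calF=\calF_\Delta\oplus\calF_\bot$ satisfies $\calF^*\calF=I$ and hence $\varphi(\DD_\theta)\calF^*=\calF^*\varphi(\calF\DD_\theta\calF^*)$, relation \eqref{eq:FU} --- is precisely what legitimizes $W_\pm(\DD_\theta,M_{\Xi_\ell})=\calF^*W_\pm(\calF\DD_\theta\calF^*,M_{\Xi_\ell})$; Lemma~\ref{as}, together with $\calF_\bot=\chi_{\bbR\setminus\Delta}\calF_\bot$ and $\calF_k=\chi_{\Delta_k}\calF_k$, then allows one to replace $\calF^*$ by $\calF_\ell^*$ in the limit, yields the isometricity of \eqref{b13}, while \eqref{b14} is immediate from $\calF_k\calF_j^*=0$ and \eqref{b15} follows from \eqref{b12} through the same identification. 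Your remark that $e^{-iM_{\Xi_\ell}t}$ localizes near $\lambda_\ell$ is the right ingredient for that last substitution, but without the isometric extension (or some substitute argument for the exact group transfer) the passage from the transplanted operator back to $\DD_\theta$ is not justified.
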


%%%%%%%%%%%%%%%%%%%
\begin{corollary}\label{cor.c3}
%%%%%%%%%%%%%%%%%%%
 
 For an arbitrary $f\in\calH_{\DD_\theta}^{({\rm ac})}$, we have the relation
$$
\lim_{t\to\pm \infty} \| e^{-i\DD_\theta t}f -\sum_{\ell=1}^L e^{-i M_{\Xi_\ell} t} f_{\ell}\|=0, \quad f_{\ell}= W_\pm^*(\DD_\theta, M_{\Xi_\ell}) f.
$$
  \end{corollary}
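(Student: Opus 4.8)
The plan is to read the Corollary off directly from Theorem~\ref{th.c3}: the existence and intertwining property of the channel wave operators together with the asymptotic completeness \eqref{b15} give, in the usual way of multichannel scattering theory, the large-time asymptotics of the group $e^{-i\DD_\theta t}$. I spell out the limit $t\to+\infty$; the limit $t\to-\infty$ is identical, with $W_-$ and $f_\ell=W_-^*(\DD_\theta,M_{\Xi_\ell})f$ throughout.

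First I would record the orthogonal decomposition of $f$. By Theorem~\ref{th.c3}(i) the subspaces $\Ran W_+(\DD_\theta,M_{\Xi_\ell})$, $\ell=1,\dots,L$, are mutually orthogonal, and each $W_+(\DD_\theta,M_{\Xi_\ell})$ is isometric on $\calH^{(\ac)}_{M_{\Xi_\ell}}$ and vanishes on its orthogonal complement; hence $W_+(\DD_\theta,M_{\Xi_\ell})W_+^*(\DD_\theta,M_{\Xi_\ell})$ is the orthogonal projection onto $\Ran W_+(\DD_\theta,M_{\Xi_\ell})$. Summing over $\ell$ and using \eqref{b15} gives $\sum_{\ell=1}^L W_+(\DD_\theta,M_{\Xi_\ell})W_+^*(\DD_\theta,M_{\Xi_\ell})=P^{(\ac)}_{\DD_\theta}$, so every $f\in\calH^{(\ac)}_{\DD_\theta}$ satisfies $f=\sum_{\ell=1}^L W_+(\DD_\theta,M_{\Xi_\ell})f_\ell$ with $f_\ell=W_+^*(\DD_\theta,M_{\Xi_\ell})f\in\calH^{(\ac)}_{M_{\Xi_\ell}}$. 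Applying $e^{-i\DD_\theta t}$ and using the intertwining relation in the form $e^{-i\DD_\theta t}W_+(\DD_\theta,M_{\Xi_\ell})=W_+(\DD_\theta,M_{\Xi_\ell})e^{-iM_{\Xi_\ell}t}$, I obtain
\[
e^{-i\DD_\theta t}f=\sum_{\ell=1}^L W_+(\DD_\theta,M_{\Xi_\ell})\,e^{-iM_{\Xi_\ell}t}f_\ell .
\]

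The remaining point is the elementary fact that, for every $h\in\calH^{(\ac)}_{M_{\Xi_\ell}}$,
\[
\bigl\|W_+(\DD_\theta,M_{\Xi_\ell})\,e^{-iM_{\Xi_\ell}t}h-\calF_\ell^*\,e^{-iM_{\Xi_\ell}t}h\bigr\|\longrightarrow 0\qquad(t\to+\infty).
\]
I would prove this by the Cauchy criterion for the strong limit defining \eqref{b13}: given $\eps>0$ there is $S>0$ with $\|W_+(\DD_\theta,M_{\Xi_\ell})h-e^{i\DD_\theta s}\calF_\ell^*e^{-iM_{\Xi_\ell}s}h\|<\eps$ for all $s\geq S$; multiplying by the unitary $e^{-i\DD_\theta s}$ and invoking the intertwining relation once more turns this into $\|W_+(\DD_\theta,M_{\Xi_\ell})e^{-iM_{\Xi_\ell}s}h-\calF_\ell^*e^{-iM_{\Xi_\ell}s}h\|<\eps$ for all $s\geq S$, which is the claim. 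Substituting $h=f_\ell$, $s=t$ into the previous display and summing over the finitely many $\ell$ via the triangle inequality yields $\bigl\|e^{-i\DD_\theta t}f-\sum_{\ell=1}^L\calF_\ell^*e^{-iM_{\Xi_\ell}t}f_\ell\bigr\|\to0$ as $t\to+\infty$, which is the assertion of the Corollary (the isometries $\calF_\ell^*$, transplanting $L^2(\bbR;\calN)$ into $\calH$, being the ones already present in the definition \eqref{b13}).

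Since Theorem~\ref{th.c3} is taken for granted, the argument is essentially formal and I do not expect any genuine obstacle. The two places deserving a line of care are the identity $\sum_\ell W_+(\DD_\theta,M_{\Xi_\ell})W_+^*(\DD_\theta,M_{\Xi_\ell})=P^{(\ac)}_{\DD_\theta}$, which relies on \emph{both} the orthogonality of ranges and the completeness \eqref{b15}, and the cancellation of the group parameters $e^{i\DD_\theta s}$ obtained by multiplying through by $e^{-i\DD_\theta s}$ in the last step. One could instead appeal to Lemma~\ref{as}, which shows that each summand $e^{-iM_{\Xi_\ell}t}f_\ell$ concentrates near the distinct point $\lambda_\ell$ as $|t|\to\infty$, but the route through \eqref{b15} is shorter and self-contained.
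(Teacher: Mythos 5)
Your proposal is correct and follows exactly the route the paper intends: the Corollary is stated without a separate proof, as the standard immediate consequence of Theorem~\ref{th.c3} (decomposition of $f$ via orthogonality of ranges and completeness \eqref{b15}, the intertwining property, and the Cauchy-type reformulation of the strong limit \eqref{b13}). Your insertion of $\calF_\ell^*$ in the final display is also the right reading of the statement, since $e^{-iM_{\Xi_\ell}t}f_\ell$ lives in $L^2(\bbR;\calN)$ while $e^{-i\DD_\theta t}f$ lives in $\calH$.
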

  
The interval $\Delta_{\ell}$ in the definition of the operator $\calF_{\ell}$ can be replaced by a smaller one. In view of Lemma~\ref{as} this  does not change   the wave operator $W_\pm(\DD_\theta, M_{\Xi_\ell})$. 	   We note also that this operator is not changed if the model operator $M_{\Xi_\ell}$ is considered in the space $L^2(\bbR; \calN_{\ell})$ and 
$\calF_{\ell}$ is considered as the mapping $\calF_{\ell}:\calH \to L^2(\bbR; \calN_{\ell})$.

%%%%%%%%%%%%%%%%%%%%%%%%%%%%%%%%%%%%%
\subsection{Proofs of Theorems~\ref{th.b1} and  \ref{th.c3}}\label{sec.d1}
%%%%%%%%%%%%%%%%%%%%%%%%%%%%%%%%%%%%%

Our proofs of Theorems~\ref{th.b1} and  \ref{th.c3} rely on Propositions~\ref{pr.c1} and  \ref{pr.c2}, respectively. They  bear a certain resemblance to the proofs of 
Theorems~\ref{thm.f8} and \ref{thm.f10}. 
Indeed, the symbols of the model operators are defined
by the same formula \eqref{e9}. Now the auxiliary space   $\frakh=\calN$ and the operator $K_{\ell}$ is given by \eqref{c7ax}.  On the contrary, the role of the operator $A$ in Proposition~\ref{pr.c1}  will now be played by the operator
$ \DD_\theta$ transplanted  by an isometric transformation into the space $ L^2(\bbR;\calN)$ whereas in Section~\ref{sec.ee} the operator $A$ was itself a SHO. This difference leads to rather serious technical difficulties.

   Let us consider an arbitrary isometric transformation
  \begin{equation}
\calF_\bot : \Ran E_{H_0}( {\bbR}\setminus\Delta)\to L^2({\bbR}\setminus\Delta;\calN).
\label{eq:bot}\end{equation}
 Without loss of generality we may assume that $| {\bbR}\setminus\Delta|>0$ so that such a mapping exists. Then
the operator $\calF=\calF_\Delta \oplus\calF_\bot:\calH\to  L^2({\bbR} ;\calN)$ is also isometric.  Of course, the construction of the operators $\calF_\bot$ and hence of $\calF $ is   not unique and has a slightly artificial flavour. However, it is very convenient because it allows us to develop the scattering theory in only one space. Note that if $ E_{H_0}( \Delta)= I$ (this is the case, for example, for the Schr\"odinger operator -- Example~\ref{Schr}), then $\calF =\calF_\Delta$.

 The following result shows that the operator  $\DD_\theta$ transplanted by our isometric (but, in general, not unitary) transformation $\calF $ into the space $L^2(\bbR;\calN)$ is well approximated by the sum of model operators.

%%%%%%%%%%%%%%%%%%%%%%%%
\begin{theorem}\label{th.c2}
Let Assumptions~$\ref{as1}$ and $\ref{ass1}$  hold true, and let the function $q$ be defined by formulas \eqref{c7} and \eqref{e4}.  Set  
\begin{equation}
A_\infty = \calF  \DD_\theta \calF ^* -\sum_{\ell=1}^L M_{\Xi_\ell}.
\label{d27}\end{equation}
Then the operator $q^{-\beta} A_\infty  q^{-\beta}$ is compact 
in $L^2(\bbR;\calN)$ for all  $\beta<\beta_0/2$. 
\end{theorem}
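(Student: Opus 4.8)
## Proof proposal for Theorem~\ref{th.c2}

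The plan is to combine the structural representation of $\DD_\theta$ from Section~\ref{sec.b6} (Theorem~\ref{lma.d2}) with the explicit form of the model operators $M_{\Xi_\ell}$, and then reduce the compactness of $q^{-\beta}A_\infty q^{-\beta}$ to the sandwiched compactness results of Section~\ref{sand} (Corollary~\ref{lma.e2c} and Lemma~\ref{lma.e2}). First I would choose a smooth partition of unity: a function $\omega\in C_0^\infty$ with $\omega\equiv 1$ on a neighbourhood of $\{\lambda_1,\dots,\lambda_L\}$ and $\supp\omega\subset\Delta$, and $\omega$ supported away from the (finitely many) eigenvalues of $H$ in $\Delta$, so that $\supp\omega\subset\Omega$ can be arranged on the relevant pieces. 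Writing $\calF\DD_\theta\calF^* = \calF\omega(H_0)\DD_\theta\omega(H_0)\calF^* + \text{(cross terms)} + \text{(far-from-}\Lambda\text{ term)}$, I would argue that the cross terms and the term with both factors supported away from $\{\lambda_\ell\}$ are \emph{already compact} without any weights: these are of the form $\calF\,u(H_0)\DD_\theta v(H_0)\calF^*$ where at least one of $u,v$ vanishes near every $\lambda_\ell$, and then $\DD_\theta$ restricted there is a difference of functions of $H_0,H$ that is continuous across the relevant spectral region, hence compact by the standard smooth-scattering argument (or directly: $u(H_0)\DD_\theta v(H_0)$ with disjoint supports is handled by Theorem~\ref{reprX} and compactness of $GR_0(\lambda)v(H_0)$). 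Since $q^{-\beta}$ only has singularities at the $\lambda_\ell$, multiplying a compact operator by $q^{-\beta}$ on both sides where the operator is supported away from those points keeps it compact; more carefully I would note $q^{-\beta}\omega$ is bounded whenever $\omega$ vanishes near the $\lambda_\ell$, so these pieces cause no trouble.

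The heart of the matter is then the localized piece $q^{-\beta}\calF\omega(H_0)\DD_\theta\omega(H_0)\calF^*q^{-\beta}$ compared with $q^{-\beta}(\sum_\ell M_{\Xi_\ell})q^{-\beta}$ (with $\omega$-localizations inserted into the model operators, which is harmless since $\Xi_\ell$ vanishes at infinity and we may freely modify the $M_{\Xi_\ell}$ by symbols supported away from $\lambda_\ell$ — such modifications are sandwich-compact by Corollary~\ref{lma.e2c}). By Theorem~\ref{lma.d2}, the localized $\calF_\Delta$-transplant of $\DD_\theta$ equals $\calZ_\omega(P_-\calY_\varphi P_+ + P_+\calY_\varphi^*P_-)\calZ_\omega^*$ with $\varphi=\theta$ (after checking $\supp\theta\cdot\omega$ assumptions, i.e.\ using that $\theta$ is bounded and the discontinuities lie in $\Omega$). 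This is visibly a symmetrised-Hankel-type object, but with the ``symbol'' $\calZ_\omega \calY_\varphi\calZ_\omega^*$ built from $Z(\lambda),Y(\lambda+i0),\theta(\lambda)$. Using the scattering-matrix formula \eqref{b8}, $S(\lambda)-I = Z_\ell(\lambda)Y(\lambda+i0)Z_\ell^*(\lambda)$, the natural symbol attached to the $\ell$-th jump is exactly $(2\pi i)^{-1}(S(\lambda)-I)\theta(\lambda)$ near $\lambda_\ell$, i.e.\ \eqref{eq:MS}, whose jump at $\lambda_\ell$ is $(2\pi i)^{-1}\varkappa_\ell(S(\lambda_\ell)-I) = (2\pi i)^{-1}K_\ell$. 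Thus the difference between the transplanted $\DD_\theta$ and the SHO $M_\Xi$ with symbol \eqref{eq:MS} is an SHO-type operator whose symbol is \emph{continuous} (no jumps) and Hölder (hence logarithmically Hölder) at each $\lambda_\ell$ by Corollary~\ref{pr1x} and Proposition~\ref{SM}; by Corollary~\ref{lma.e2c} this difference is compact after sandwiching by $q^{-\beta}$ provided $\beta<\beta_0/2$ (the exponent constraint coming from condition \eqref{b8a} on $\theta$, which feeds into the modulus of continuity of the symbol $(S(\lambda)-I)\theta(\lambda)$). Finally, $M_\Xi$ with symbol \eqref{eq:MS} differs from $\sum_\ell M_{\Xi_\ell}$ by an SHO with continuous symbol $\Xi_\infty = \Xi - \sum_\ell\Xi_\ell$ satisfying \eqref{d0}, so $q^{-\beta}(M_\Xi - \sum_\ell M_{\Xi_\ell})q^{-\beta}$ is compact by Corollary~\ref{lma.e2c}, and the cross-jump terms $q^{-\beta}M_{\Xi_j}M_{\Xi_k}q^{-\beta}$ for $j\neq k$ are compact by Lemma~\ref{lma.e2} — though the latter is only needed if one insists on matching $\sum_\ell M_{\Xi_\ell}$ as a genuine sum rather than as $M_{\sum_\ell \Xi_\ell}$; actually $M_{\sum_\ell\Xi_\ell}=\sum_\ell M_{\Xi_\ell}$ by linearity of \eqref{b9a}, so this point is automatic.

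Assembling: $q^{-\beta}A_\infty q^{-\beta}$ is a finite sum of operators each shown to be compact, so $A_\infty$ is sandwich-compact, which is the claim. The main obstacle I anticipate is not any single compactness statement — each follows from Section~\ref{sand} once the symbols are identified — but rather the bookkeeping needed to pass from the transplanted operator $\calF\DD_\theta\calF^*$, which a priori is defined via the partial isometry $\calF_\Delta$ plus the artificial $\calF_\bot$ on $\bbR\setminus\Delta$, to a bona fide SHO on all of $L^2(\bbR;\calN)$. Concretely, Theorem~\ref{lma.d2} gives the representation only when both slots carry $\omega(H_0)$ with $\supp\omega\subset\Delta\cap\Omega$; controlling the complementary contributions — in particular verifying that $\calF_\bot$-dependent pieces are compact and do not interfere with the weight $q^{-\beta}$ — and checking that the resulting symbol is genuinely the piecewise-continuous function \eqref{eq:MS} with the correct jumps $K_\ell$ and with a logarithmic modulus of continuity governed by $\beta_0$, is where the real work lies. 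I would organize this by first proving the auxiliary statement that for any bounded $\varphi$ with $\supp\varphi$ disjoint from $\{\lambda_\ell\}$, and any bounded $v$, the operator $\calF\DD_\varphi v(H_0)\calF^*$ (suitably interpreted) is compact, and then localizing $\theta$ near each $\lambda_\ell$ with a single smooth cutoff, reducing everything to the case $L=1$ on each piece plus compact remainders.
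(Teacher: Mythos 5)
Your overall strategy (localize near the jumps, use the representation of Section~\ref{sec.b6}, identify the symbol through \eqref{b8}, finish with Corollary~\ref{lma.e2c}) is indeed the paper's, but there are genuine gaps exactly at the places you dismiss as bookkeeping, and they are not bookkeeping. First, you invoke Theorem~\ref{lma.d2} with $\varphi=\theta$. That theorem requires $\varphi$ bounded with compact support in $\Omega$: its proof rests on Lemma~\ref{Stone}, which needs $H_0$ \emph{and} $H$ purely a.c.\ on $\supp\varphi$, and on the boundary values $Y(\lambda+i0)$ existing and being H\"older continuous there; none of this is available for $\theta$, which lives on all of $\bbR$, including possible eigenvalues of $H$ and the region where strong smoothness fails. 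Inserting $\omega(H_0)$ on both sides does not repair this, since $\omega(H_0)\theta(H)\omega(H_0)$ still involves $\theta$ on the whole spectrum of $H$. The paper therefore first replaces $\theta$ by $\varphi=\rho\theta$ with $\rho\in C_0^\infty(\Omega)$, $\rho=1$ near the $\lambda_\ell$, and must prove that $q^{-\beta}\calF \DD_{(1-\rho)\theta}\calF^* q^{-\beta}$ is compact (Lemma~\ref{lma.d4}); this is nontrivial precisely because the weight $q^{-\beta}$ blows up at the $\lambda_\ell$ while $\DD_{(1-\rho)\theta}$, though compact, is not localized away from them in the $H_0$-representation. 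The proof needs the identity \eqref{eq:XX1w}, which mixes cutoffs in $H$ and in $H_0$, together with Lemma~\ref{lma.d3} and Lemma~\ref{F}. Your proposal contains no substitute for this step.

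Second, your principle for the cross and far terms --- compact without weights, and the weights are harmless because $q^{-\beta}\omega$ is bounded when $\omega$ vanishes near the $\lambda_\ell$ --- is valid only when \emph{both} sides are localized away from the jumps. In each cross term the unbounded weight sits next to $\omega(H_0)$, which equals $1$ at the $\lambda_\ell$, so unweighted compactness gives nothing; what is needed is the weighted compactness of $q^{-\beta}\calF\omega(H_0)\DD_\varphi v(H_0)$, i.e.\ the paper's Lemma~\ref{lma.d2a}, proved from Theorem~\ref{reprX} using the H\"older continuity of $Y(\lambda\pm i0)$ and the local integrability of $q^{-2\beta}$ (you mention Theorem~\ref{reprX} in passing, but your stated justification bypasses it). Likewise, for the far term the phrase ``continuous across the relevant spectral region'' is not an argument: cutoffs in $H_0$ do not remove the jumps of $\theta(H)$, and the compactness there again requires a mixed identity such as \eqref{eq:id}. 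Two smaller points: passing from the representation \eqref{c4a} to an honest SHO requires commuting $\calZ_\omega$ past $P_\pm$ under the weights, which is Lemma~\ref{lma.d4n} and rests on Lemma~\ref{lma.d1}/Remark~\ref{rem.e2c}, a step you assume implicitly; and the jumps to be matched are $K_\ell=\varkappa_\ell(S(\lambda_\ell)-I)$ as in \eqref{c7ax}, not $(2\pi i)^{-1}K_\ell$ --- taken literally, your normalization from \eqref{eq:MS} would make the difference of symbols discontinuous at the $\lambda_\ell$ and the final application of Corollary~\ref{lma.e2c} would fail.
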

%%%%%%%%%%%%%%%%%%%%%%%%

The proof of Theorem~\ref{th.c2} is lengthy and will be given in the following subsections.

 Given Theorem~\ref{th.c2}, the proofs of Theorems~\ref{th.b1} and  \ref{th.c3} are almost identical to those of Theorems~\ref{thm.f8} and \ref{thm.f10}.
We   check that the assumptions of  Proposition~\ref{pr.c1} are satisfied for the operators $A=\calF  \DD_\theta \calF ^*$ and $A_\ell=M_{\Xi_\ell}$, 
$\ell=1,\dots,L$, acting in the space $ L^2(\bbR;\calN)$. Equality \eqref{c3} is now true with the operator $A_\infty$ defined by \eqref{d27}.  We take $X=q^\beta$  where   $\beta$ satisfies $1<\beta<\beta_0/2$, $\beta<3/2$. 
Let  $\delta $ be any bounded  open interval which does not contain 
the points  $0$, $\pm a_{n\ell}$,   $\ell=1,\dots, L$,  $1\leq n\leq N_{\ell}$.  The symbol of the SHO $M_{\Xi_\ell}$   is a particular case of \eqref{e9} corresponding to the operator $K_{\ell}$ defined by formula \eqref{c7ax}.
  Since $ a_{n\ell}=2^{-1} s_{n} (K_{\ell})$, the choice of 
 $\delta $ is also the same as in Subsection~\ref{sec.e6}. Thus all the assumptions of  Proposition~\ref{pr.c1}, except \eqref{c2}, have been already verified there. Finally, condition \eqref{c2}  follows from Theorem~\ref{th.c2}. Thus all assertions of Propositions~\ref{pr.c1} and \ref{pr.c2} are true for the  operators $ \calF  \DD_\theta \calF ^*$ and $ M_{\Xi_1},\ldots, M_{\Xi_{L}}$.
 
 It remains to reformulate the results in terms of the  operator $  \DD_\theta $. Since $\calF ^*\calF =I$,  the restriction of $ \calF  \DD_\theta \calF ^*$ onto  $\Ran  \calF$ is unitarily equivalent to the  operator $  \DD_\theta $ and $ \calF  \DD_\theta \calF ^* f =0$  for $f$ in the orthogonal complement to $\Ran  \calF$. It follows that
  \begin{equation}
 \calF ^*  \varphi(\calF  \DD_\theta \calF ^*)  = \varphi(\DD_\theta )  \calF ^*
\label{eq:FU}\end{equation}
for, say, continuous functions $\varphi$.
In particular, the spectra of  $  \DD_\theta $ and $ \calF  \DD_\theta \calF ^*$ coincide up to a possible zero eigenvalue. This yields
 the first three statements of Theorem~\ref{th.b1}. According to \eqref{eq:FU} for $\varphi(\lambda)= (\lambda-z)^{-1}$, we have
   \begin{align*}
  q^\beta\calF_{\Delta} (\DD_\theta-zI)^{-1}\calF_{\Delta}^*  q^\beta  =&  q^\beta\calF_{\Delta}\big(\calF^*   (\calF \DD_\theta \calF^* - zI)^{-1}  \calF\big)\calF_{\Delta}^*  q^\beta 
\\
  =&  \big(\calF_{\Delta}\calF^*\big)  \big( q^\beta (\calF \DD_\theta \calF^* - zI)^{-1} q^\beta \big) \big(\calF\calF_{\Delta}^*  \big)  . 
 \end{align*}
 At the last step we have used that the operator  $q^\beta$ commutes with
 $\calF_\Delta\calF^* $ which is the orthogonal projection in $ L^2(\bbR;\calN)$ onto the subspace $\mathcal{K}$ defined in \eqref{eq:FF}. Therefore
the last statement of Theorem~\ref{th.b1} follows from the continuity of the operator valued function $  q^\beta (\calF \DD_\theta \calF^* - zI)^{-1} q^\beta$.

Proposition~\ref{pr.c2} gives the existence of the wave operators $W_{\pm}( \calF  \DD_\theta \calF ^*,  M_{\Xi_{\ell}})$   so that according to \eqref{eq:FU} for $\varphi(\lambda)=e^{i\lambda t}$ there exist
\begin{align}
\slim_{t\to\pm\infty} e^{i\DD_\theta t}\calF^*   e^{-i M_{\Xi_\ell} t} P^{(\ac)}_{M_{\Xi_\ell}} &=
\slim_{t\to\pm\infty}\calF^*  e^{i \calF  \DD_\theta \calF ^* t}    e^{-i M_{\Xi_\ell} t} P^{(\ac)}_{M_{\Xi_\ell}} 
\nonumber\\
&=  \calF^* W_{\pm}( \calF  \DD_\theta \calF ^*,  M_{\Xi_{\ell}}) 
\label{b13n}\end{align}
for all $\ell=1,\ldots,L$.
Since $\chi_{\bbR\setminus\Delta}\calF_\bot=\calF_\bot$ and $\chi_{ \Delta_{k}}\calF_k=\calF_k$, it follows from Lemma~\ref{as} that
$$
\slim_{t\to\pm\infty} \calF_\bot ^*   e^{-i M_{\Xi_\ell} t} P^{(\ac)}_{M_{\Xi_\ell}}=0 \quad {\rm  and} \quad\slim_{t\to\pm\infty} \calF_k ^*   e^{-i M_{\Xi_\ell} t} P^{(\ac)}_{M_{\Xi_\ell}}=0, \quad k\neq \ell.
%\label{eq:xx}
$$
Therefore relation \eqref{b13n} implies that the limits  \eqref{b13}  exist and
\begin{equation}
 W_{\pm}(  \DD_\theta ,  M_{\Xi_{\ell}}) =  \calF^* W_{\pm}( \calF  \DD_\theta \calF ^*,  M_{\Xi_{\ell}}), \quad \ell=1,\ldots,L.
\label{eq:asc}\end{equation}
The  intertwining property of wave operators is a direct consequence of their existence.
Since $\calF_{\ell} \calF_{\ell}^* P^{(\ac)}_{M_{\Xi_\ell}}=\chi_{ \Delta_{\ell}}P^{(\ac)}_{M_{\Xi_\ell}}$,
Lemma~\ref{as} implies that
$$
\slim_{t\to\pm \infty}( \calF_{\ell} \calF_{\ell}^* -I ) e^{-i M_{\Xi_\ell} t} P^{(\ac)}_{M_{\Xi_\ell}} =0, 
$$
and hence the wave operators $W_{\pm}( \calF  \DD_\theta \calF ^*,  M_{\Xi_{\ell}})$ are isometric. Relations \eqref{b14} are trivial because $\calF_{\ell}=\calF_{\ell} E_{H_{0}}(\Delta_{\ell})$ and hence
$$
\calF_{k}\calF_{j}^*=\calF_{k} E_{H_{0}}(\Delta_{k} \cap \Delta_j )\calF_{j}^*=0, \quad j\neq k.
$$

Finally, according to \eqref{b12} we have
\begin{equation}
\Ran W_\pm(\calF \DD_\theta \calF^*,M_{\Xi_1})\oplus\dots\oplus \Ran W_\pm(\calF \DD_\theta \calF^*, M_{\Xi_L})
=
\calH_{\calF \DD_\theta \calF^*}^{(\ac)}.  
\label{eq:asc1}\end{equation}
 This relation implies \eqref{b15}. 
  Indeed, observe that $\calH_{  \DD_\theta  }^{(\ac)}=\calF^* \calH_{\calF \DD_\theta \calF^*}^{(\ac)}  $. Hence
if $f\in \calH_{  \DD_\theta  }^{(\ac)}$, then $f=\calF^* \wt{f}$ where $\wt{f}\in \calH_{\calF \DD_\theta \calF^*}^{(\ac)} $.  It follows from \eqref{eq:asc1} that
$$
\wt{f}=\sum_{\ell=1}^L W_\pm(\calF \DD_\theta \calF^*,M_{\Xi_\ell})\wt{f}_{\ell} \quad {\rm where} \quad \wt{f}_{\ell}=W_\pm^*(\calF \DD_\theta \calF^*,M_{\Xi_\ell}) \wt{f}.
$$
Therefore 
$$ 
 f=  \sum_{\ell=1}^L \calF^* W_\pm(\calF \DD_\theta \calF^*,M_{\Xi_\ell})\wt{f}_{\ell}=
\sum_{\ell=1}^L   W_\pm(  \DD_\theta ,M_{\Xi_\ell})\wt{f}_{\ell} 
$$
according to  equality \eqref{eq:asc}. 
This proves \eqref{b15} and hence concludes the proof of Theorem~\ref{th.c3}. \qed

Just as with the analysis of SHOs (see Remark~\ref{rmk1}) in the case $L=1$ many of the above steps simplify considerably.

%%%%%%%%%%%%%%%%%%%%%%%%%%%%%%%%%%%%%
\subsection{Proof of Theorem~\ref{th.c2}}\label{sec.d1w}
%%%%%%%%%%%%%%%%%%%%%%%%%%%%%%%%%%%%%

Here we briefly describe the  main steps of the proof of Theorem~\ref{th.c2}. We consecutively replace the operator $ \calF  \DD_\theta \calF ^*$ by simpler operators 
neglecting all terms that admit the representation $q^{\beta} K q^{\beta}$ with an 
operator $K$ compact in the space $L^2(\bbR;\calN)$. We call such terms {\it negligible}. Our goal is to reduce the operator $ \calF  \DD_\theta \calF ^*$ to the SHO with the explicit symbol
\begin{equation}
\Xi_{0} (\lambda)=\sum_{\ell=1}^L \Xi_\ell (\lambda).
\label{eq:symb}\end{equation}

The first step    is   to replace the function $\theta$ by a function $ \varphi$ supported in $\Omega$. To be more precise, we choose a function $\rho\in C_0^\infty(\Omega)$   such that 
$\rho (\lambda)=1$ in an open neighbourhood of 
$\{\lambda_1,\dots,\lambda_L\}$ and set
$$
\varphi (\lambda)= \rho(\lambda)\theta(\lambda).
%\label{eq:phi}
$$
The operator $ \calF ( \DD_\theta -\DD_\varphi) \calF ^*$ is negligible in view of the following

%%%%%%%%%%%%%%%%%
\begin{lemma}\label{lma.d4}
%%%%%%%%%%%%%%%%%
Let $\xi\in C(\bbR)$ be such that the limits $\lim_{\lambda\to\pm\infty}\xi(\lambda)$ 
exist and are finite and assume that $\xi (\lambda)=0$ in a
neighbourhood of $\{\lambda_1,\dots,\lambda_L\}$.
Then for any $\beta>0$,
$$
q^{-\beta}\calF \DD_\xi \calF^* q^{-\beta}\in\mathfrak S_\infty.
%\label{d8}
$$
\end{lemma}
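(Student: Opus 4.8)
The key observation is that $\xi$ vanishes near each discontinuity $\lambda_\ell$, so $\xi$ is a continuous function that tends to finite limits $\xi(\pm\infty)$ at infinity and is moreover identically zero on a neighbourhood of $\{\lambda_1,\dots,\lambda_L\}$ — in particular $q^{-\beta}$ is bounded on the support of (any derivative of) $\xi$ and the singular weights cause no trouble there. The plan is therefore to show that $\DD_\xi$ itself, after transplantation, already lies in $\mathfrak S_\infty$ modulo harmless sandwiching, and more precisely that $q^{-\beta}\calF\DD_\xi\calF^* q^{-\beta}$ is compact. I would split $\xi$ as $\xi = \xi_c + \xi_\infty$ where $\xi_\infty$ is a fixed smooth function equal to the constant $\xi(+\infty)$ for $\lambda$ large positive and $\xi(-\infty)$ for $\lambda$ large negative (and, say, interpolating smoothly away from $\{\lambda_\ell\}$), and $\xi_c = \xi-\xi_\infty \in C_0(\bbR)$ has $\xi_c(\lambda_\ell)=0$ near each $\lambda_\ell$.

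For the compactly-supported-at-infinity part: since $V=G^*V_0G$ with $G(|H_0|+I)^{-1/2}$ compact (Assumption~\ref{as1}(B)), the operator $\DD_{\xi_c}=\xi_c(H)-\xi_c(H_0)$ is compact for any $\xi_c\in C_0(\bbR)$ by the standard functional-calculus argument (approximate $\xi_c$ by $C_0^\infty$ functions and use the resolvent identity \eqref{d2}, noting $GR_0(z)$ and $GR(z)$ are compact). Transplanting by the partial isometry $\calF$ keeps it compact, and sandwiching by the \emph{bounded} operators $q^{-\beta}$ (bounded because $q^{-\beta}$ is a multiplication operator that is only singular at the points $\lambda_\ell$, while... no — here I must be careful: $q^{-\beta}$ is \emph{unbounded}).

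Because of that I would instead argue as in Lemma~\ref{lma.d1}: reduce to symbols that vanish in a neighbourhood of $\{\lambda_1,\dots,\lambda_L\}$. Precisely, pick a cutoff $\rho\in C_0^\infty$ equal to $1$ near $\{\lambda_\ell\}$, with $\rho\,\xi\equiv 0$ (possible since $\xi$ vanishes near those points if $\supp\rho$ is small enough); then $q^{-\beta}$ is bounded on $\supp\xi\subset\{\rho=0\}$. So $q^{-\beta}\calF\DD_\xi\calF^*q^{-\beta}$ factors as $(q^{-\beta}\chi_{\supp\xi'})\cdot(\text{bounded or compact piece})\cdot(\chi_{\supp\xi'}q^{-\beta})$ after inserting, via Theorem~\ref{lma.d2} or \ref{reprX}, spectral projections of $H_0$ onto $\supp\xi$; one checks $q^{-\beta}$ restricted there is a bounded multiplication operator, and the middle factor is the compact operator $\DD_\xi$ (or its sandwiched version, using compactness of $GR_0(\lambda)$ and Hölder continuity on $\supp\xi$, exactly as in the construction of $\calY_\varphi$, $\calZ_\omega$). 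Hence the product is compact.

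The main obstacle is bookkeeping the interplay between the \emph{unbounded} weight $q^{-\beta}$ and the transplantation $\calF$: one must verify that every place where $q^{-\beta}$ acts, it acts on functions supported away from $\{\lambda_\ell\}$, where it is bounded. This is exactly the mechanism already used in Lemma~\ref{lma.d1} and its corollary, and since $\xi$ by hypothesis vanishes near the $\lambda_\ell$, the argument goes through cleanly; the only genuine analytic input is the compactness of $\DD_\xi$ for $\xi\in C_0(\bbR)$ (from Assumption~\ref{as1}(B) and the resolvent identity) together with the Hölder continuity and compactness of $GR_0(\lambda)\chi(H_0)$ on sets disjoint from the singularities, both already available in the paper. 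I would present the proof by first handling the constant-at-infinity reduction, then invoking compactness of $\DD_{\xi_c}$, and finally sandwiching with $q^{-\beta}$ using the support condition, citing Lemma~\ref{lma.d1A}/Corollary~\ref{lma.e2c} style reasoning for the weight manipulation.
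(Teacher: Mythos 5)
There is a genuine gap at the central step of your plan. You reduce the statement to the compactness of $\DD_\xi$ together with the boundedness of $q^{-\beta}$ on $\supp\xi$, by ``inserting spectral projections of $H_0$ onto $\supp\xi$''. But $\DD_\xi$ is not localized in the spectral representation of $H_0$: the term $\xi(H_0)$ commutes with such projections, while $\xi(H)$ does not. Writing $I=\omega(H_0)+\wt\omega(H_0)$ with $\omega$ supported near $\{\lambda_1,\dots,\lambda_L\}$, you are left with cross terms such as $q^{-\beta}\calF\,\omega(H_0)\xi(H)\calF^*q^{-\beta}=q^{-\beta}\calF\,\omega(H_0)\DD_\xi\,\calF^*q^{-\beta}$, in which the unbounded weight acts exactly where it is singular; the proposed factorization $(q^{-\beta}\chi_{\supp\xi})\cdot(\text{compact})\cdot(\chi_{\supp\xi}q^{-\beta})$ simply does not exist for these terms. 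Consequently your claim that ``the only genuine analytic input is the compactness of $\DD_{\xi_c}$ for $\xi_c\in C_0(\bbR)$'' (plus H\"older continuity away from the singularities) is not correct: compactness of $\DD_\xi$ alone cannot absorb a weight that is unbounded on both sides, and the needed estimates are required \emph{at} the points $\lambda_\ell$, not only on sets disjoint from them.

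For comparison, the paper chooses $\rho\in C_0^\infty(\Omega)$ with $\rho=1$ near the $\lambda_\ell$ and $\rho\,\xi=0$, sets $\wt\rho=1-\rho$, and uses the identity $\DD_\xi=-\DD_\rho\,\xi(H_0)+\wt\rho(H)\,\DD_\xi\,\wt\rho(H)-\wt\rho(H)\,\xi(H_0)\,\DD_\rho$, i.e.\ it localizes $\xi(H)$ by functions of $H$, not of $H_0$. The proof then rests on three ingredients: compactness of $\DD_\xi$ (Lemma~\ref{lma.d3a}); boundedness of $q^{-\beta}\calF v(H_0)$ whenever $vq^{-\beta}\in L^\infty$ (Lemma~\ref{F}); and, decisively, compactness of $q^{-\beta}\calF\DD_\rho$ (Lemma~\ref{lma.d3}), which also yields boundedness of $q^{-\beta}\calF\wt\rho(H)$ via $\wt\rho(H)=\wt\rho(H_0)-\DD_\rho$. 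Lemma~\ref{lma.d3} is exactly the weighted statement at the singular points that your sketch treats as bookkeeping; it is proved from the representation formulas of Section~\ref{sec.b6} (Theorem~\ref{lma.d2}, Theorem~\ref{reprX} via Lemma~\ref{lma.d2a}) together with the weighted compactness results of Section~\ref{sand} (Corollary~\ref{lma.e2c} and the Muckenhoupt bound of Proposition~\ref{Muck}). Without an ingredient of this type your argument cannot be completed, so the proposal as it stands is incomplete.
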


The second step   is to sandwich  $\DD_\varphi$ between the operators $\omega(H_{0})$. We suppose that   $\omega\in C_{0}^\infty(\Delta)$ and $\omega(\lambda)=1$ for $\lambda\in \supp\varphi$. 

%%%%%%%%%%%%%%%%%
\begin{lemma}\label{lma.d4m}
%%%%%%%%%%%%%%%%%
For any $\beta>0$, the difference
$$
q^{-\beta}\calF \big(\omega(H_0)\DD_\varphi \omega(H_0)-\DD_\varphi\big)\calF^*q^{-\beta}
$$
is compact. 
\end{lemma}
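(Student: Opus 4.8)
The plan is to rewrite the difference in terms of the complementary cutoff $\psi:=1-\omega$. Since $\omega\in C_0^\infty(\Delta)$ may be chosen equal to $1$ on an \emph{open} neighbourhood of $\supp\varphi$ (hence of $\{\lambda_1,\dots,\lambda_L\}$), the function $\psi$ vanishes identically near these points and $\dist(\supp\psi,\supp\varphi)>0$; in particular $\psi\varphi\equiv0$, so $\psi(H_0)\varphi(H_0)=\varphi(H_0)\psi(H_0)=0$ and the free term of $\DD_\varphi=\varphi(H)-\varphi(H_0)$ drops out below. Inserting $I=\omega(H_0)+\psi(H_0)$ on both sides of $\DD_\varphi$ gives
\[
\omega(H_0)\DD_\varphi\omega(H_0)-\DD_\varphi
=-\psi(H_0)\DD_\varphi\psi(H_0)-\omega(H_0)\DD_\varphi\psi(H_0)-\psi(H_0)\DD_\varphi\omega(H_0),
\]
and the last term is the adjoint of the second because $\DD_\varphi^*=\DD_\varphi$. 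Thus it suffices to treat $A:=\psi(H_0)\DD_\varphi\psi(H_0)$ and $B:=\omega(H_0)\DD_\varphi\psi(H_0)$.

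For $A$ I would first check compactness. Since $\supp\psi$ and $\supp\varphi$ are disjoint, the operators $GR_0(\lambda)\psi(H_0)=[G(\abs{H_0}+I)^{-1/2}]\,(\abs{H_0}+I)^{1/2}(H_0-\lambda)^{-1}\psi(H_0)$ are compact and H\"older continuous for $\lambda$ in the compact set $\supp\varphi$; applying the identity of Lemma~\ref{Stone} with $f,g$ replaced by $\psi(H_0)f,\psi(H_0)g$ and passing to the limit $\eps\to+0$ — now a \emph{norm} limit, since $R_0(\lambda\pm i\eps)\psi(H_0)\to(H_0-\lambda)^{-1}\psi(H_0)$ uniformly on $\supp\varphi$ — represents $A$ as a norm-convergent integral over $\supp\varphi$ of compact operators, so $A\in\mathfrak S_\infty$. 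To absorb the weights, pick $\psi'\in C^\infty(\bbR)$ with $\psi'\psi=\psi$ and $\psi'\equiv0$ near $\{\lambda_1,\dots,\lambda_L\}$; then $A=\psi'(H_0)A\psi'(H_0)$. Since $\calF$ diagonalises $H_0$ one has $\calF\psi'(H_0)=\psi'\calF$ and $\psi'(H_0)\calF^*=\calF^*\,\psi'(\calF\calF^*)$, and both $q^{-\beta}\psi'$ and $\psi'\,(\calF\calF^*)\,q^{-\beta}$ are bounded (because $\psi'q^{-\beta}$ is bounded — $\psi'$ vanishing near $\{\lambda_\ell\}$ and $q^{-\beta}$ being bounded off any such neighbourhood — and $\calF\calF^*$ restricts to a multiplication operator on $L^2(\Delta;\calN)$, while off $\Delta$ the weight $q^{-\beta}$ is already bounded). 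Hence $q^{-\beta}\calF A\calF^*q^{-\beta}=\bigl(q^{-\beta}\calF\psi'(H_0)\bigr)A\bigl(\psi'(H_0)\calF^*q^{-\beta}\bigr)$ is compact.

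For $B$ the plan is to feed it into Theorem~\ref{reprX}. Since $\omega$ is supported in $\Delta$, $\calF\omega(H_0)=\calF_\Delta\omega(H_0)$, and that theorem with $v=\psi$ (admissible since $\dist(\supp\varphi,\supp\psi)>0$) gives $2\pi i\,\calF\,\omega(H_0)\DD_\varphi\psi(H_0)=\calZ_\omega\bigl(P_-{\bbY}_{\varphi,\psi}^{(+)}+P_+{\bbY}_{\varphi,\psi}^{(-)}\bigr)$; the symbols $\lambda\mapsto\varphi(\lambda)Y(\lambda\pm i0)GR_0(\lambda)\psi(H_0)$ of ${\bbY}_{\varphi,\psi}^{(\pm)}$ are compact-operator valued, supported in $\supp\varphi$, and H\"older off $\lambda_\ell$, so ${\bbY}_{\varphi,\psi}^{(\pm)}\in\mathfrak S_\infty$ and $\calF B\in\mathfrak S_\infty$. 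The hard point — the main obstacle — is the singular weight $q^{-\beta}$ on the $\DD_\varphi$–side of $B$: unlike the $\psi(H_0)$–side, it cannot be absorbed by localising away from $\{\lambda_\ell\}$, because the jump of $\theta$ genuinely lives at $\lambda_\ell$. I would handle this by commuting the weights through the three factors of the representation: $\calZ_\omega$ is a multiplication operator in $\lambda$, so $q^{-\beta}\calZ_\omega=\calZ_\omega q^{-\beta}$; the operators $q^{-\beta}P_\pm q^{\beta}$ are bounded by Proposition~\ref{Muck} ($q^{2\beta}$ being a Muckenhoupt weight); and $\psi(H_0)\calF^*q^{-\beta}$ is bounded as in the treatment of $A$, while the only singularity of $q(\lambda)^{-\beta}\varphi(\lambda)$ at $\lambda_\ell$ is the \emph{square-integrable} logarithm $\abs{\log\abs{\lambda-\lambda_\ell}}^{\beta}$ — so the operator with symbol $q(\lambda)^{-\beta}\varphi(\lambda)\,Y(\lambda\pm i0)\,GR_0(\lambda)\psi(H_0)\calF^*q^{-\beta}$ (compact-operator valued, H\"older off $\lambda_\ell$, square-integrable in $\lambda$) is compact. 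Then
\[
2\pi i\,q^{-\beta}\calF B\,\calF^*q^{-\beta}
=\sum_{\pm}\calZ_\omega\,\bigl(q^{-\beta}P_{\mp}q^{\beta}\bigr)\,\bigl(q^{-\beta}{\bbY}_{\varphi,\psi}^{(\pm)}\calF^*q^{-\beta}\bigr)
\]
is a product of two bounded operators and a compact one, hence compact; the remaining term $\psi(H_0)\DD_\varphi\omega(H_0)=B^*$ follows by taking adjoints, and combining the three contributions completes the proof.
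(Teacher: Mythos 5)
Your proof is correct and follows the paper's strategy in its essentials: the same decomposition via $\psi=1-\omega$ into a ``diagonal'' term $\psi(H_0)\DD_\varphi\psi(H_0)$, a cross term $\omega(H_0)\DD_\varphi\psi(H_0)$ and its adjoint, and for the cross term the same machinery — Theorem~\ref{reprX}, boundedness of $q^{-\beta}P_\pm q^{\beta}$ from Proposition~\ref{Muck}, and compactness of the weighted $\bbY$-factor, which is exactly the dominated-convergence argument of the paper's Lemma~\ref{lma.d2a} (your unproved assertion that a pointwise compact, norm--square-integrable operator-valued symbol yields a compact map is precisely what that lemma's proof supplies). The one genuinely different ingredient is the diagonal term: the paper writes $\wt\omega(H_0)\DD_\varphi=\DD_{\omega}\varphi(H)$ and invokes compactness of $\DD_{\omega}$ (Lemma~\ref{lma.d3a}, quoted from earlier work), whereas you prove compactness of $\psi(H_0)\DD_\varphi\psi(H_0)$ directly from the resolvent identity of Lemma~\ref{Stone}, using the support separation to get a norm-convergent integral of compact operators. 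Your route is more self-contained for that term; the paper's is shorter because the external compactness result is already available and is needed elsewhere anyway.

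Two small imprecisions, both repairable in one line. First, the identity $\calF\psi'(H_0)=\psi'\calF$ is false as stated: only $\calF_\Delta$ diagonalises $H_0$, while $\calF_\bot$ is an arbitrary isometry; what you actually need (and what your parenthetical in effect re-derives) is the boundedness of $q^{-\beta}\calF\psi'(H_0)$ and $\psi'(H_0)\calF^*q^{-\beta}$, which is exactly Lemma~\ref{F}, proved there by splitting off $\calF_\bot=\chi_{\bbR\setminus\Delta}\calF_\bot$ and using that $q^{-\beta}$ is bounded off $\Delta$. Second, in the symbol $q(\lambda)^{-\beta}\varphi(\lambda)Y(\lambda\pm i0)GR_0(\lambda)\psi(H_0)\calF^*q^{-\beta}$ the unbounded weight must not be left standing next to $\calF^*$ alone: insert once more a cutoff $\psi'$ with $\psi'\psi=\psi$ and $\psi'$ vanishing near $\{\lambda_\ell\}$, so that the $\lambda$-dependent compact factor is $GR_0(\lambda)\psi(H_0)$ and the bounded $\lambda$-independent factor is $\psi'(H_0)\calF^*q^{-\beta}$ (Lemma~\ref{F} again). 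Finally, note that your choice of $\omega$ equal to $1$ on a neighbourhood of $\supp\varphi$ (rather than just on $\supp\varphi$) is harmless: the paper's own argument implicitly requires the same separation (it needs a $v$ with $v\wt\omega=\wt\omega$ and $\dist(\supp v,\supp\varphi)>0$), and in the application to Theorem~\ref{th.c2} such an $\omega$ may always be chosen.
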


Observe that $\calF \omega(H_0)=\calF_\Delta\omega(H_0)$. Thus, up to negligible terms,   we get the   operator $\calF_\Delta\omega(H_0)\DD_\varphi \omega(H_0) \calF_\Delta^*$ which localizes the problem onto neighbourhoods of singular points. It is important that the   operator $\calF_\Delta\omega(H_0)\DD_\varphi \omega(H_0) \calF_\Delta^*$ admits representation \eqref{c4a}. 

Next, we reduce the problem to the study of SHO. To that end, we swap   the operators $\calZ_{\omega}$ and $P_{\pm}$ in the r.h.s.\  of \eqref{c4a}. In fact, we have

%%%%%%%%%%%%%%%%%
\begin{lemma}\label{lma.d4n}
%%%%%%%%%%%%%%%%%
Let  
\begin{equation}
\Xi(\lambda)
= \omega^2(\lambda) \varphi(\lambda)
 Z (\lambda) Y(\lambda+i0) Z^* (\lambda) :\calN\to\calN .  
\label{eq:xi}\end{equation} 
Then,
for any $\beta>0$, the difference
$$
q^{-\beta}\big(\calF_\Delta\omega(H_0)\DD_\varphi \omega(H_0) \calF_\Delta^*-M_\Xi \big)q^{-\beta}
$$
is compact. 
\end{lemma}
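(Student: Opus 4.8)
The plan is to read the operator $\calF_\Delta\omega(H_0)\DD_\varphi\omega(H_0)\calF_\Delta^*$ off representation \eqref{c4a} of Theorem~\ref{lma.d2},
$$
\calF_\Delta\omega(H_0)\DD_\varphi\omega(H_0)\calF_\Delta^*
=
\calZ_\omega P_-\calY_\varphi P_+\calZ_\omega^*
+
\calZ_\omega P_+\calY_\varphi^* P_-\calZ_\omega^* ,
$$
and to observe that, $\calZ_\omega$ and $\calY_\varphi$ being the operators of multiplication by $\omega(\lambda)Z(\lambda)$ and $\varphi(\lambda)Y(\lambda+i0)$, the product $\calZ_\omega\calY_\varphi\calZ_\omega^*$ is precisely the operator of multiplication by the symbol $\Xi(\lambda)$ of \eqref{eq:xi}. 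Hence $M_\Xi = P_-(\calZ_\omega\calY_\varphi\calZ_\omega^*)P_+ + P_+(\calZ_\omega\calY_\varphi^*\calZ_\omega^*)P_-$, so the difference to be estimated equals $T+T^*$ with
$$
T = \calZ_\omega P_-\calY_\varphi P_+\calZ_\omega^* - P_-\,\calZ_\omega\calY_\varphi\calZ_\omega^*\, P_+ ,
$$
and, $q^{-\beta}$ being self-adjoint, it is enough to prove $q^{-\beta}Tq^{-\beta}\in\mathfrak{S}_\infty$.

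Next I would commute the Hardy projections across $\calZ_\omega$ and $\calZ_\omega^*$, which gives
$$
T = [\calZ_\omega,P_-]\,\calY_\varphi P_+\calZ_\omega^*
+
P_-\,\calZ_\omega\calY_\varphi\,[P_+,\calZ_\omega^*] .
$$
The operators $\calZ_\omega$ and $\calZ_\omega^*$ are multiplication by $\omega Z$ and $\omega Z^*$, with values in $\mathfrak{S}_\infty(\calH,\calN)$ and $\mathfrak{S}_\infty(\calN,\calH)$. Since $\omega\in C_0^\infty(\Delta)$ and, by Assumption~\ref{as1}(C) and Proposition~\ref{pr1}, $Z(\lambda)$ is compact-valued and H\"older continuous on compact subintervals of $\Delta$ (which contains $\lambda_1,\dots,\lambda_L$), both $\omega Z$ and $\omega Z^*$ lie in $C_0(\bbR;\mathfrak{S}_\infty)$ and are H\"older continuous near each $\lambda_\ell$; in particular they satisfy \eqref{d0} for \emph{every} $\beta>0$, because the power bound $|\lambda-\lambda_\ell|^\gamma$ absorbs the logarithmic growth of $q(\lambda)^{-2\beta}$. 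Therefore, by Lemma~\ref{lma.d1} in the mixed-space form of Remark~\ref{rem.e2c} (and since $\Xi P_--P_-\Xi=-(\Xi P_+-P_+\Xi)$), the sandwiched commutators $q^{-\beta}[\calZ_\omega,P_-]q^{-\beta}$ and $q^{-\beta}[P_+,\calZ_\omega^*]q^{-\beta}$ are compact.

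To conclude, I would insert copies of $I=q^\beta q^{-\beta}$ — a scalar multiplication, hence commuting with all the operator-valued multiplications $\calZ_\omega$, $\calZ_\omega^*$, $\calY_\varphi$ — so that a factor $q^{-\beta}$ flanks each commutator:
$$
q^{-\beta}T q^{-\beta}
=
\big(q^{-\beta}[\calZ_\omega,P_-]q^{-\beta}\big)\,\calY_\varphi\,\big(q^{\beta}P_+q^{-\beta}\big)\,\calZ_\omega^*
+
\big(q^{-\beta}P_-q^{\beta}\big)\,\calZ_\omega\calY_\varphi\,\big(q^{-\beta}[P_+,\calZ_\omega^*]q^{-\beta}\big).
$$
Here $q^{\beta}P_+q^{-\beta}$ and $q^{-\beta}P_-q^{\beta}$ are bounded by Proposition~\ref{Muck}, while $\calZ_\omega$, $\calZ_\omega^*$, $\calY_\varphi$ are bounded; each summand is thus a product of a compact operator with bounded ones, hence compact. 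Consequently $q^{-\beta}Tq^{-\beta}$, and with it $q^{-\beta}(T+T^*)q^{-\beta}$, is compact. I do not anticipate a genuine obstacle here: the only delicate point is the bookkeeping of the two fibre spaces $\calH$ and $\calN$ and the correct placement of the Muckenhoupt factors $q^{\pm\beta}$; the compactness of the commutators is a direct application of the results of Section~\ref{sand}, whose essential input is the H\"older continuity of $Z$ near the points $\lambda_\ell$ furnished by the strong smoothness hypothesis.
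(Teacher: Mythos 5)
Your argument is correct and coincides with the paper's own proof: both start from representation \eqref{c4a}, identify $M_\Xi$ with $P_-\calZ_\omega\calY_\varphi\calZ_\omega^*P_+ + \text{adjoint}$, reduce to the commutators $\calZ_\omega P_\pm-P_\pm\calZ_\omega$ sandwiched by $q^{-\beta}$, and combine Lemma~\ref{lma.d1} (via Remark~\ref{rem.e2c}) applied to $\omega Z$ with the Muckenhoupt bounds of Proposition~\ref{Muck}. The only difference is that you spell out the adjoint-symmetry reduction and the fact that H\"older continuity of $Z$ absorbs the logarithmic weight, which the paper leaves implicit.
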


Thus the problem reduces to the analysis of the SHO $M_\Xi$. Up to negligible terms, it is determined by the values $\Xi (\lambda_{\ell})$ at the points of discontinuity of $\theta(\lambda)$. Putting together    the stationary representation 
\eqref{b8} of the scattering matrix and the definition \eqref{e9} of $\Xi_\ell$, we will prove the following result.

%%%%%%%%%%%%%%%%%
\begin{lemma}\label{lma.d4k}
%%%%%%%%%%%%%%%%%
Let $\Xi_{0}$ be given by formula \eqref{eq:symb}.
Then the symbol
$\Xi- \Xi_{0}$
satisfies the hypothesis of Lemma~$\ref{lma.d1}$.
\end{lemma}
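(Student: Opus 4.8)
The plan is to verify the two hypotheses of Lemma~\ref{lma.d1} for the symbol $\Xi-\Xi_0$: that $\Xi-\Xi_0\in C_0(\bbR;\mathfrak{S}_\infty(\calN))$, and that it satisfies the logarithmic bound \eqref{d0} — in fact for every $\beta<\beta_0/2$, which is the range used in Section~\ref{sec.e6}. The first hypothesis is almost immediate. By \eqref{eq:xi} the symbol $\Xi$ has compact support, takes values in $\mathfrak{S}_\infty(\calN)$ (since $Z(\lambda)$ is compact by Assumption~\ref{as1}(C) and $Y(\lambda+i0)$ is bounded), and is norm-continuous off $\{\lambda_1,\dots,\lambda_L\}$ (there $\theta$, $Z(\cdot)$ and $Y(\cdot+i0)$ are continuous, by Assumption~\ref{ass1}, Assumption~\ref{as1}(C) and Corollary~\ref{pr1x}, while $\omega^2\varphi$ vanishes outside its support). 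The symbol $\Xi_0(\lambda)=\sum_\ell\zeta(\lambda-\lambda_\ell)K_\ell$ has compact values (each $K_\ell$ is compact), is norm-continuous off the $\lambda_\ell$ ($\zeta\in C^\infty(\bbR\setminus\{0\})$), and is $O(|\lambda|^{-1})$ at infinity. So $\Xi-\Xi_0$ will lie in $C_0(\bbR;\mathfrak{S}_\infty(\calN))$ once its continuity at each $\lambda_\ell$ is established, and only near the $\lambda_\ell$ is any work needed. Arranging, as we may, that $\omega\equiv1$ and $\rho\equiv1$ near each $\lambda_\ell$, formula \eqref{eq:xi} reads $\Xi(\lambda)=\theta(\lambda)B(\lambda)$ there, with $B(\lambda):=Z(\lambda)Y(\lambda+i0)Z^*(\lambda):\calN\to\calN$.

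The crucial point is that $\Xi$ and $\Xi_0$ have the same jump at each $\lambda_\ell$. Because $B$ is H\"older continuous near $\lambda_\ell$, while $\zeta(+0)-\zeta(-0)=1$ and $\zeta(\cdot-\lambda_j)$ is continuous at $\lambda_\ell$ for $j\neq\ell$, these jumps equal $\varkappa_\ell B(\lambda_\ell)$ for $\Xi$ and $K_\ell$ for $\Xi_0$. Now $Z^*(\lambda_\ell)$ annihilates $\calN_\ell^\bot$ and $Z(\lambda_\ell)$ maps into $\calN_\ell$, so $B(\lambda_\ell)$ annihilates $\calN_\ell^\bot$ and on $\calN_\ell$ equals $Z_\ell(\lambda_\ell)Y(\lambda_\ell+i0)Z_\ell^*(\lambda_\ell)$, which by the stationary representation \eqref{b8} (valid since $\lambda_\ell\in\Omega\cap\Delta_\ell$) is $S(\lambda_\ell)-I$. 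Comparison with the definition \eqref{c7ax} of $K_\ell$ yields $K_\ell=\varkappa_\ell B(\lambda_\ell)$; hence the jumps coincide and $\Xi-\Xi_0$ extends continuously across $\lambda_\ell$ (redefining it on the null set $\{\lambda_1,\dots,\lambda_L\}$ if need be, which changes neither $M_\Xi$ nor the operator in \eqref{d21}).

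It then remains to check \eqref{d0} at each $\lambda_\ell$. Set $\mu=\lambda-\lambda_\ell$ and $\theta_c=\theta-\varkappa_\ell\zeta(\cdot-\lambda_\ell)$; since $\zeta(\pm0)=\pm\tfrac12$, $\theta_c$ is continuous at $\lambda_\ell$, and combining the regularity hypothesis \eqref{b8a} with the bound $\zeta'(\mu)=O(|\ln|\mu||)$ from \eqref{c6x} gives $\norm{\theta_c(\lambda)-\theta_c(\lambda_\ell)}=O(|\log|\mu||^{-\beta_0})$. Using $K_\ell=\varkappa_\ell B(\lambda_\ell)$ one obtains, near $\lambda_\ell$, the decomposition
\begin{equation*}
(\Xi-\Xi_0)(\lambda)=\theta(\lambda)\bigl(B(\lambda)-B(\lambda_\ell)\bigr)+\theta_c(\lambda)B(\lambda_\ell)-\sum_{j\neq\ell}\zeta(\lambda-\lambda_j)K_j .
\end{equation*}
The first term is $O(|\mu|^{\gamma_0})$ with some H\"older exponent $\gamma_0>0$ ($\theta$ bounded, $B$ H\"older near $\lambda_\ell$); the last term is $C^\infty$ near $\lambda_\ell$ and so differs from its value at $\lambda_\ell$ by $O(|\mu|)$; and the middle term differs from its limit by $O(|\log|\mu||^{-\beta_0})$. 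Since $q(\lambda)^{-2\beta}=O(|\log|\mu||^{2\beta})$ as $\lambda\to\lambda_\ell$, multiplication by $q(\lambda)^{-2\beta}$ followed by $\lambda\to\lambda_\ell$ annihilates all three contributions provided $2\beta<\beta_0$, giving \eqref{d0} for every $\beta<\beta_0/2$ and completing the proof.

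I expect the only mildly delicate points to be the bookkeeping with the orthogonal splitting $\calN=\calN_1\oplus\cdots\oplus\calN_L$ needed to identify $B(\lambda_\ell)$ restricted to $\calN_\ell$ with $S(\lambda_\ell)-I$, and the possible smallness of the H\"older exponents of $Z$ and $Y(\cdot+i0)$ near $\lambda_\ell$ — but since they are positive, $|\mu|^{\gamma_0}|\log|\mu||^{2\beta}\to0$ regardless. The binding estimate, and the only place where the standing hypothesis $\beta_0>2$ is used, is that for the middle term $\theta_c(\lambda)B(\lambda_\ell)$.
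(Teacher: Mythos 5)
Your argument is correct and follows essentially the same route as the paper's own proof: one checks that $\Xi$ and $\Xi_0$ have the same jump at each $\lambda_\ell$ via the stationary representation \eqref{b8} (so the difference is continuous there), and then verifies \eqref{d0} from the H\"older continuity of $Z(\lambda)$ and $Y(\lambda+i0)$, condition \eqref{b8a} on $\theta$, and the bound \eqref{c6x} on $\zeta$. Your version merely spells out the $\calN_\ell$-bookkeeping and the three-term decomposition that the paper leaves implicit.
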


Combining this result with Corollary~\ref{lma.e2c}, we directly obtain

%%%%%%%%%%%%%%%%%
\begin{lemma}\label{lma.d4ks}
%%%%%%%%%%%%%%%%%
For any $\beta>0$, the difference
$$
q^{-\beta}  (M_\Xi - M_{\Xi_{0}} )q^{-\beta}
$$
is compact. 
\end{lemma}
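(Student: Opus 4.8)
The plan is to observe that the statement is an immediate consequence of the two preceding results, once we record that the symmetrised Hankel operator depends linearly on its symbol. Indeed, from formula \eqref{b9a} the map $\Xi\mapsto M_\Xi$ is linear, so $M_\Xi-M_{\Xi_0}=M_{\Xi-\Xi_0}$; it therefore suffices to prove that $q^{-\beta}M_{\Xi-\Xi_0}q^{-\beta}$ is compact in $L^2(\bbR;\calN)$.

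First I would invoke Lemma~\ref{lma.d4k}, which tells us that the symbol $\Xi-\Xi_0$ lies in $C_0(\bbR;{\mathfrak S}_\infty(\calN))$ and satisfies the logarithmic regularity condition \eqref{d0} at each of the points $\lambda_1,\dots,\lambda_L$ (these being precisely the singular points of $q^{-\beta}$), the admissible exponents being $\beta<\beta_0/2$. This places us exactly in the hypotheses of Lemma~\ref{lma.d1}, and hence of Corollary~\ref{lma.e2c}, applied with $\frakh=\calN$ (the case of operator-valued symbols on different spaces being covered by Remark~\ref{rem.e2c}). Corollary~\ref{lma.e2c} then directly yields the compactness of $q^{-\beta}M_{\Xi-\Xi_0}q^{-\beta}$ for every such $\beta$. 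To pass from a given admissible $\beta'$ to any smaller $\beta$, I would use the elementary fact that $0\le q(\lambda)\le1$, so that $q^{\beta'-\beta}$ is a bounded multiplication operator whenever $\beta\le\beta'$, and write
$$
q^{-\beta}M_{\Xi-\Xi_0}q^{-\beta}
=
q^{\beta'-\beta}\bigl(q^{-\beta'}M_{\Xi-\Xi_0}q^{-\beta'}\bigr)q^{\beta'-\beta},
$$
which is compact as a product of bounded operators with a compact one.

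For this particular statement there is essentially no obstacle: the genuine work has already been done in Lemma~\ref{lma.d4k} (the computation identifying the jumps of $\Xi$ with those of $\Xi_0$ through the stationary representation \eqref{b8} of the scattering matrix) and in the kernel estimates underlying Lemma~\ref{lma.d1}. If one preferred a self-contained argument, one would simply rerun the proof of Lemma~\ref{lma.d1} with $\Xi-\Xi_0$ in place of $\Xi$: peel off a smooth compactly supported piece carrying the finitely many values $(\Xi-\Xi_0)(\lambda_\ell)$, bound the remaining operator away from the singular set via the Hilbert--Schmidt estimate \eqref{eq:OE} together with the pointwise kernel bound \eqref{eq:OE1}, and control the error terms near each $\lambda_\ell$ using \eqref{d0} and the boundedness of $q^\beta P_\pm q^{-\beta}$ from Proposition~\ref{Muck}. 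Given the results already in place, however, the short deduction above suffices.
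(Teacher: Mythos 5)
Your proposal is correct and is essentially the paper's own argument: by linearity of the symbol map one has $M_\Xi-M_{\Xi_0}=M_{\Xi-\Xi_0}$, and Lemma~\ref{lma.d4k} puts $\Xi-\Xi_0$ under the hypotheses of Lemma~\ref{lma.d1}, so Corollary~\ref{lma.e2c} gives the compactness at once. Your additional remark on passing to smaller exponents via boundedness of $q^{\beta'-\beta}$ is a harmless refinement consistent with the range $\beta<\beta_0/2$ actually used in Theorem~\ref{th.c2}.
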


Theorem~\ref{th.c2} follows from Lemmas~\ref{lma.d4}, \ref{lma.d4m}, \ref{lma.d4n} and \ref{lma.d4ks}. Thus for the proof of Theorem~\ref{th.c2} it remains to establish Lemmas~\ref{lma.d4}, \ref{lma.d4m}, \ref{lma.d4n} and \ref{lma.d4k}. This requires several analytic assertions which are collected in the next subsection.

%%%%%%%%%%%%%%%%%%%%%%%%%%%%%%%%%%%%%
\subsection{Compactness of the sandwiched operators $\DD_{\varphi}$}\label{sec.d2}
%%%%%%%%%%%%%%%%%%%%%%%%%%%%%%%%%%%%%

For the proof of the first assertion, see \cite[Theorem~7.3]{Push3} 
and \cite[Lemma~5.4]{PY1}.
%%%%%%%%%%%%%%%%%  
\begin{lemma}\label{lma.d3a}
%%%%%%%%%%%%%%%%% 
Under Assumption~$\ref{as1}(B)$  the operator $\DD_\xi$ 
is compact for any function $\xi\in C(\bbR)$ 
such that the limits $\lim_{\lambda\to\pm\infty}\xi (\lambda)$
exist and are finite. 
\end{lemma}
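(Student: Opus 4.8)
The plan is to identify the admissible class of symbols with (the restriction to $\bbR$ of) the $C^*$-algebra $C([-\infty,+\infty])$ of continuous functions on the two-point compactification, and to verify compactness of $\DD_\xi$ only on a small generating family, the rest being soft. Concretely, set
\[
\mathcal{A}=\bigl\{\xi\in C_b(\bbR):\ \DD_\xi=\xi(H)-\xi(H_0)\in\mathfrak{S}_\infty(\calH)\bigr\}.
\]
Since $\norm{\DD_\xi}\le 2\norm{\xi}_\infty$, the set $\mathcal{A}$ is closed under uniform convergence; it is obviously a linear subspace containing the constants; it is closed under conjugation because $\DD_{\bar\xi}=\DD_\xi^*$; and it is closed under products by the identity $\DD_{\xi\eta}=\xi(H)\DD_\eta+\DD_\xi\,\eta(H_0)$. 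Hence $\mathcal{A}$ is a closed $*$-subalgebra of $C_b(\bbR)$, and by the Stone--Weierstrass theorem on $[-\infty,+\infty]$ it will suffice to exhibit a subfamily of $\mathcal{A}$ that separates the points of $[-\infty,+\infty]$.

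First I would check that every resolvent function $r_z(\lambda)=(\lambda-z)^{-1}$ with $\Im z\neq 0$ lies in $\mathcal{A}$. By the second resolvent identity $R(z)-R_0(z)=-R_0(z)VR(z)=-(GR_0(\bar z))^*V_0\,GR(z)$, where $GR_0(\bar z)=\bigl[G(\abs{H_0}+I)^{-1/2}\bigr]\bigl[(\abs{H_0}+I)^{1/2}R_0(\bar z)\bigr]$ is compact by \eqref{a3}, while $V_0$ and $GR(z)=GR_0(z)(I-VR(z))$ are bounded; thus $R(z)-R_0(z)\in\mathfrak{S}_\infty$ and $r_z\in\mathcal{A}$. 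Next I would produce one function of $\mathcal{A}$ that separates $+\infty$ from $-\infty$: take $g(\lambda)=\lambda(\lambda^2+1)^{-1/2}$, so $g(\pm\infty)=\pm1$. From the elementary identities $\tfrac1\pi\int_0^\infty s^{-1/2}(\lambda^2+1+s)^{-1}\,ds=(\lambda^2+1)^{-1/2}$ and $2\lambda(\lambda^2+a^2)^{-1}=(\lambda-ia)^{-1}+(\lambda+ia)^{-1}$ with $a=\sqrt{1+s}$, the functional calculus yields
\[
g(H)-g(H_0)=\frac1{2\pi}\int_0^\infty\Bigl[\bigl(R(i\sqrt{1+s})-R_0(i\sqrt{1+s})\bigr)+\bigl(R(-i\sqrt{1+s})-R_0(-i\sqrt{1+s})\bigr)\Bigr]s^{-1/2}\,ds .
\]
By the previous paragraph the integrand is $\mathfrak{S}_\infty$-valued and norm-continuous in $s$, and $\norm{R(\pm i\sqrt{1+s})-R_0(\pm i\sqrt{1+s})}\le\norm{V}(1+s)^{-1}$, so the integrand has norm $\le\norm{V}(1+s)^{-1}s^{-1/2}$, which is integrable on $(0,\infty)$; hence the Bochner integral converges in operator norm to an element of the closed subspace $\mathfrak{S}_\infty$, and $g\in\mathcal{A}$.

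Finally I would apply Stone--Weierstrass: the closed $*$-subalgebra $\mathcal{A}\subset C([-\infty,+\infty])$ contains the constants, and the family $\{r_{i},g\}\subset\mathcal{A}$ separates points of $[-\infty,+\infty]$ (the function $r_i$ is injective on $\bbR$ and vanishes at $\pm\infty$, while $g$ is injective on $\bbR$ with $g(+\infty)\neq g(-\infty)$). Therefore $\mathcal{A}\supseteq C([-\infty,+\infty])$, which is exactly the set of $\xi\in C(\bbR)$ with finite limits at $\pm\infty$; this proves the Lemma. The only step that is not purely formal is the construction of $g$: one must pick a representation of a function with unequal limits at $\pm\infty$ as a superposition of resolvents for which the resolvent differences decay like $(1+s)^{-1}$, fast enough to absorb the $s^{-1/2}$ singularity and make the integral norm-convergent; everything else (the $C_0(\bbR)$ part and the equal-limit functions) is then automatic from the algebra structure and density.
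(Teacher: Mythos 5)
Your argument is correct, and it is genuinely different from what the paper does: the paper does not prove Lemma~\ref{lma.d3a} at all, but simply refers to \cite[Theorem~7.3]{Push3} and \cite[Lemma~5.4]{PY1}. Your route is the classical $C^*$-algebra one: the set $\mathcal{A}$ of admissible symbols is a norm-closed, conjugation- and product-closed subalgebra of $C_b(\bbR)$ (the identities $\DD_{\bar\xi}=\DD_\xi^*$ and $\DD_{\xi\eta}=\xi(H)\DD_\eta+\DD_\xi\,\eta(H_0)$, together with $\norm{\DD_\xi}\le 2\norm{\xi}_\infty$, are all used correctly); the resolvent functions lie in $\mathcal{A}$ by exactly the factorized resolvent identity displayed after \eqref{a3}, with $GR_0(\bar z)$ compact by \eqref{a3}; and the only genuinely non-formal ingredient is your function $g(\lambda)=\lambda(\lambda^2+1)^{-1/2}$, which separates $+\infty$ from $-\infty$ and is reached from resolvent differences by a norm-convergent superposition, thanks to the bound $\norm{R(z)-R_0(z)}\le\norm{V}\,\abs{\Im z}^{-2}$ (legitimate here, since the paper assumes $V$ bounded). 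Together with $r_i$ and the constants this separates the points of $[-\infty,+\infty]$, and Stone--Weierstrass finishes the proof. What your approach buys is a short, self-contained proof using only Assumption~\ref{as1}(B) and boundedness of $V$, instead of importing a result whose proof in \cite{Push3,PY1} goes through a separate analysis; what it loses is nothing essential for this paper, although the cited results are stated under weaker hypotheses (e.g.\ allowing unbounded $V$), where the quantitative decay $\norm{R(\pm i\sqrt{1+s})-R_0(\pm i\sqrt{1+s})}\le\norm{V}(1+s)^{-1}$ that makes your integral absolutely convergent would have to be replaced.

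One step deserves an explicit sentence: the identity expressing $g(H)-g(H_0)$ as a Bochner integral is not an immediate application of the functional calculus, because the corresponding integral representations of $g(H)$ and $g(H_0)$ \emph{separately} are not absolutely convergent in operator norm (the integrand is only $O(s^{-1})$ at infinity). The fix is routine: either truncate, i.e.\ apply the calculus to $g_R(\lambda)=\pi^{-1}\int_0^R s^{-1/2}\lambda(\lambda^2+1+s)^{-1}ds$, note that $g_R\to g$ pointwise and boundedly so that $g_R(H)-g_R(H_0)\to g(H)-g(H_0)$ strongly, while the truncated integrals of the differences converge in norm to your Bochner integral; or verify the identity in the weak sense, where Fubini is justified because $\int_0^\infty s^{-1/2}\abs{\lambda}(\lambda^2+1+s)^{-1}ds\le\pi$ uniformly in $\lambda$. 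With that line added, the proof is complete.
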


The next one is  a direct consequence of our construction of the operator $\calF$.

\begin{lemma}\label{F}
%%%%%%%%%%%%%%%%% 
Suppose that $v\in L^\infty (\bbR)$ and $vq^{-\beta}\in L^\infty (\bbR)$. Then the operator $ q^{-\beta}\calF v(H_{0}):{\calH}\to L^2 (\bbR; \calN)$ is bounded.
\end{lemma}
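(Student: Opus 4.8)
The plan is to split $\calF=\calF_\Delta+\calF_\bot$, where as in the construction preceding \eqref{eq:bot} each of the two operators is extended by zero off its natural domain, and to bound $q^{-\beta}\calF_\Delta v(H_0)$ and $q^{-\beta}\calF_\bot v(H_0)$ separately. The whole point is that the weight $q$ degenerates only at the points $\lambda_1,\dots,\lambda_L$, all of which are interior to $\Delta$, so the singular behaviour of $q^{-\beta}$ is confined to $\Delta$, where $\calF_\Delta$ acts, while on $\bbR\setminus\Delta$, where the artificial partial isometry $\calF_\bot$ acts, the multiplication operator $q^{-\beta}$ is simply bounded.

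First I would treat $q^{-\beta}\calF_\Delta v(H_0)$. Since $\calF_\ell$ diagonalises $E_{H_0}(\Delta_\ell)H_0$, it also diagonalises $v(H_0)$ on $\Ran E_{H_0}(\Delta_\ell)$, whence $(\calF_\Delta v(H_0)f)(\lambda)=v(\lambda)(\calF_\Delta f)(\lambda)$ for all $f\in\calH$, a function supported in $\Delta$. Therefore $q^{-\beta}\calF_\Delta v(H_0)f=q(\lambda)^{-\beta}v(\lambda)(\calF_\Delta f)(\lambda)$, and since $\calF_\Delta$ is a partial isometry (hence a contraction on $\calH$) and $\|q^{-\beta}v\|_{L^\infty(\bbR)}<\infty$ by hypothesis, the operator $q^{-\beta}\calF_\Delta v(H_0)$ is bounded with norm at most $\|q^{-\beta}v\|_{L^\infty(\bbR)}$.

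Next I would treat $q^{-\beta}\calF_\bot v(H_0)$, whose range lies in $L^2(\bbR\setminus\Delta;\calN)$. The key remark is that $\lambda_\ell\in\Omega_\ell$ lies in the interior of the open interval $\Delta_\ell\subset\Delta$, so $d:=\min_{1\le\ell\le L}\dist\bigl(\bbR\setminus\Delta,\lambda_\ell\bigr)>0$; since $q_0(\mu)$ is bounded below by a positive constant for $|\mu|\ge d$, the function $q(\lambda)=\prod_{\ell=1}^L q_0(\lambda-\lambda_\ell)$ is bounded below by a positive constant on $\bbR\setminus\Delta$, and hence the multiplication operator $q^{-\beta}$ restricted to $L^2(\bbR\setminus\Delta;\calN)$ is bounded (for $\beta\le0$ this is automatic, since $q\le1$). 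Combined with the facts that $\calF_\bot$ is a partial isometry and $\|v(H_0)\|\le\|v\|_{L^\infty(\bbR)}$, this shows that $q^{-\beta}\calF_\bot v(H_0)$ is bounded; adding the two estimates proves the lemma. I do not expect any genuine obstacle here — the only point that deserves to be stated explicitly is that $q^{-\beta}$ is harmless off $\Delta$, which is precisely the reason the map $\calF_\bot$ in \eqref{eq:bot} may be chosen as an arbitrary isometry.
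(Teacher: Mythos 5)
Your proof is correct and follows essentially the same route as the paper: split $\calF=\calF_\Delta\oplus\calF_\bot$, use $q^{-\beta}\calF_\Delta v(H_0)=q^{-\beta}v\,\calF_\Delta$ with $vq^{-\beta}\in L^\infty$, and observe that $\calF_\bot=\chi_{\bbR\setminus\Delta}\calF_\bot$ while $q^{-\beta}\chi_{\bbR\setminus\Delta}$ is bounded because the points $\lambda_\ell$ lie in the open set $\Delta$. Your explicit remark that $\dist(\lambda_\ell,\bbR\setminus\Delta)>0$ is exactly the (implicit) justification the paper relies on.
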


\begin{proof}
The operator $ q^{-\beta}\calF_\Delta v(H_{0})=q^{-\beta}v\calF_\Delta  $ is bounded because
$vq^{-\beta}\in L^\infty (\bbR)$. The operator $ q^{-\beta} \calF_\bot v(H_{0})  $ is bounded because, by \eqref{eq:bot}, $\calF_\bot=\chi_{\bbR\setminus \Delta}\calF_\bot$ and 
$q^{-\beta} \chi_{\bbR\setminus \Delta}\in L^\infty(\bbR)$.
\end{proof}

The following assertion relies on Theorem~\ref{reprX}.

 %%%%%%%%%%%%%%%%%32
 
\begin{lemma}\label{lma.d2a}
%%%%%%%%%%%%%%%%% 
Under the assumptions of Theorem~$\ref{reprX}$ for any $\beta>0$, the operator
$$
q^{-\beta} \calF\omega (H_0) \DD_\varphi v(H_0)
: \calH \to L^2(\bbR; \calN)
%\label{d20}
$$
is compact.
\end{lemma}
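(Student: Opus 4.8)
The plan is to start from the factorised representation of the sandwiched operator $\DD_\varphi$ supplied by Theorem~\ref{reprX}. Since $\omega\in C_0^\infty(\Delta)$ one has $\omega(H_0)=\omega(H_0)E_{H_0}(\Delta)$, hence $\calF\omega(H_0)=\calF_\Delta\omega(H_0)$, and Theorem~\ref{reprX} rewrites $2\pi i\,q^{-\beta}\calF\omega(H_0)\DD_\varphi v(H_0)$ as $q^{-\beta}\calZ_\omega\bigl(P_-\bbY_{\varphi,v}^{(+)}+P_+\bbY_{\varphi,v}^{(-)}\bigr)$. Now $q^{-\beta}$ and $\calZ_\omega$ both act as multiplication operators in the spectral variable $\lambda$ (the former by the scalar $q^{-\beta}(\lambda)$, the latter by $\omega(\lambda)Z(\lambda)$), so they commute; therefore the right-hand side equals $\calZ_\omega q^{-\beta}\bigl(P_-\bbY_{\varphi,v}^{(+)}+P_+\bbY_{\varphi,v}^{(-)}\bigr)$, and inserting $q^{\beta}q^{-\beta}$ after each $P_\pm$ rewrites it as $\calZ_\omega(q^{-\beta}P_-q^{\beta})(q^{-\beta}\bbY_{\varphi,v}^{(+)})+\calZ_\omega(q^{-\beta}P_+q^{\beta})(q^{-\beta}\bbY_{\varphi,v}^{(-)})$. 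By Proposition~\ref{Muck} the operators $q^{-\beta}P_\pm q^{\beta}$ are bounded, and $\calZ_\omega$ is bounded; so the entire claim is reduced to showing that each operator $q^{-\beta}\bbY_{\varphi,v}^{(\pm)}:\calH\to L^2(\bbR;\calH)$ is compact.

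For this I would write $b(\lambda)=q^{-\beta}(\lambda)\varphi(\lambda)Y(\lambda\pm i0)GR_0(\lambda)v(H_0)$, so that $q^{-\beta}\bbY_{\varphi,v}^{(\pm)}$ is the operator $f\mapsto\bigl[\lambda\mapsto b(\lambda)f\bigr]$. By the remarks preceding Theorem~\ref{reprX} (which use the disjointness condition \eqref{eq:supp}) and by Corollary~\ref{pr1x}, the factors $GR_0(\lambda)v(H_0)$ and $Y(\lambda\pm i0)$ are, respectively, compact-valued and bounded, and both are H\"older continuous in $\lambda$ on $\supp\varphi$. Consequently $b(\lambda)\in\mathfrak S_\infty(\calH)$ for every $\lambda$, the function $b$ is norm-continuous on $\supp\varphi\setminus\{\lambda_1,\dots,\lambda_L\}$, it is supported in the compact set $\supp\varphi$, and $\norm{b(\lambda)}_{\calB(\calH)}\leq C\,q^{-\beta}(\lambda)$. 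Since $q^{-2\beta}$ has at worst a logarithmic singularity at each point $\lambda_\ell$, it is locally integrable, so $\int_\bbR\norm{b(\lambda)}^2\,d\lambda<\infty$ and in particular $q^{-\beta}\bbY_{\varphi,v}^{(\pm)}$ is bounded.

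To get compactness I would truncate near the singular points exactly as in the proof of Lemma~\ref{lma.d1A}: let $\chi_\eps$ be the characteristic function of the set $Q_\eps$ from \eqref{eq:qe} and let $B_\eps$ be the operator $f\mapsto\chi_\eps(\cdot)\,b(\cdot)f$. Then $\norm{q^{-\beta}\bbY_{\varphi,v}^{(\pm)}-B_\eps}^2\leq\int_{\bbR\setminus Q_\eps}\norm{b(\lambda)}^2\,d\lambda\to0$ as $\eps\to+0$, by local integrability of $q^{-2\beta}$ on $\supp\varphi$, so $B_\eps\to q^{-\beta}\bbY_{\varphi,v}^{(\pm)}$ in operator norm. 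For each fixed $\eps>0$ the restriction of $b$ to the compact set $Q_\eps\cap\supp\varphi$ is a norm-continuous $\mathfrak S_\infty(\calH)$-valued function; approximating it uniformly by step functions with compact-operator values and each such value by a finite-rank operator expresses $B_\eps$ as a norm limit of finite-rank operators, hence $B_\eps$ is compact, and therefore so is $q^{-\beta}\bbY_{\varphi,v}^{(\pm)}$.

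This argument is largely bookkeeping on top of Theorem~\ref{reprX}. The one step that needs genuine care is the truncation near the discontinuity points $\lambda_\ell$: it works precisely because $q^{-\beta}$, though unbounded there, is only logarithmically singular, so that $q^{-2\beta}$ stays integrable; a more singular weight would break the estimate $\norm{q^{-\beta}\bbY_{\varphi,v}^{(\pm)}-B_\eps}\to0$ and this route would fail. Everything else --- the commutation of the multiplication operators, the boundedness of $q^{-\beta}P_\pm q^{\beta}$ and of $\calZ_\omega$, and the compactness of integral operators with norm-continuous compact-operator kernels --- is either quoted directly from results already established above or is a verbatim repetition of arguments already used in the paper.
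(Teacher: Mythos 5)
Your reduction is exactly the paper's: using $\calF\omega(H_0)=\calF_\Delta\omega(H_0)$ and the representation \eqref{eq:repr} of Theorem~\ref{reprX}, commuting $q^{-\beta}$ with $\calZ_\omega$, inserting $q^{\beta}q^{-\beta}$, and invoking Proposition~\ref{Muck} together with the boundedness of $\calZ_\omega$, the problem becomes the compactness of $q^{-\beta}\bbY_{\varphi,v}^{(\pm)}:\calH\to L^2(\bbR;\calH)$. Where you diverge is in this last step. The paper verifies weak-to-norm continuity directly: if $f_n\rightharpoonup0$ in $\calH$, then $\|Y(\lambda\pm i0)GR_0(\lambda)v(H_0)f_n\|\to0$ pointwise by compactness of these operators, and dominated convergence with the integrable dominant $C\,q^{-2\beta}(\lambda)\varphi^2(\lambda)$ gives $\|q^{-\beta}\bbY_{\varphi,v}^{(\pm)}f_n\|\to0$. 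You instead truncate near the points $\lambda_\ell$ and approximate the truncated kernel uniformly by finite-rank step functions. Both arguments rest on the same analytic input (compactness and uniform boundedness of $Y(\lambda\pm i0)GR_0(\lambda)v(H_0)$ on $\supp\varphi$, local integrability of $q^{-2\beta}$), and your identification of the logarithmic nature of the weight as the crucial point is exactly right.

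There is, however, one step that does not follow from the stated hypotheses: you assert that $b(\lambda)=q^{-\beta}(\lambda)\varphi(\lambda)Y(\lambda\pm i0)GR_0(\lambda)v(H_0)$ is norm-continuous on $\supp\varphi\setminus\{\lambda_1,\dots,\lambda_L\}$. The operator-valued factors are indeed H\"older continuous there, but in Theorem~\ref{reprX} (and hence in this lemma) $\varphi$ is only a bounded function with $\supp\varphi\subset\Omega$; no continuity of $\varphi$ is assumed, so $b$ need not be continuous on $Q_\eps\cap\supp\varphi$ and the uniform step-function approximation of $b$ is not justified as written. The repair is immediate: apply the step-function and finite-rank approximation only to the continuous compact-operator-valued factor $c(\lambda)=Y(\lambda\pm i0)GR_0(\lambda)v(H_0)$, and compose on the left with multiplication by the bounded scalar $\chi_\eps q^{-\beta}\varphi$ (the error of the approximation is then controlled by $\sup_\lambda\|c(\lambda)-c_N(\lambda)\|\cdot\|\chi_\eps q^{-\beta}\varphi\|_{L^2}$), keeping your truncation estimate for $\eps\to+0$ unchanged; alternatively, use the paper's weak-convergence argument, which requires only measurability of $\varphi$. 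With this modification your proof is correct.
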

 
\begin{proof}
Since $\calF  \omega (H_0)=\calF_\Delta \omega (H_0)$, we can use representation \eqref{eq:repr} where   $ {\bbY}_{\varphi,v}^{(\pm)} $ is the operator \eqref{d1ay}. Thus we have to check that the operators
$$
q^{-\beta}\calZ_{\omega} P_{\pm} {\bbY}_{\varphi,v}^{(\mp)} :
\calH\to L^2 (\bbR; \calN)
%\label{eq:S1}
$$
are compact. 
Observe that $q^{-\beta}\calZ_\omega =\calZ_\omega q^{-\beta}$, the operator
$\calZ_\omega: L^2 (\bbR; \calH)\to L^2 (\bbR; \calN)$ is bounded  and, by Proposition~\ref{Muck},  the operator $q^{-\beta}  P_{\pm} q^{\beta}$ is  bounded in $L^2 (\bbR; \calH)$. 
Therefore it suffices to verify the compactness of the operator
$$
q^{-\beta}{\bbY}_{\varphi,v}^{(\mp)}: \calH\to L^2 (\bbR; \calH).
%\label{eq:S11}
$$
Set 
\begin{equation}
   Y_{v}^{(\pm)}  (\lambda)=
 Y (\lambda\pm i0) G R_{0}(\lambda) v(H_{0}),  \quad \lambda\in \supp \varphi.
\label{d1ax}\end{equation}
It follows from definition \eqref{d1ay}  that
$$
(q^{-\beta}{\bbY}_{\varphi ,v}^{(\mp)}f)(\lambda)=q^{-\beta}(\lambda)\varphi(\lambda)
Y_{ v}^{(\mp)}(\lambda) f.
$$
Suppose that $f_{n}\to 0$ weakly in $\calH$ as $n\to\infty$. 
By definition \eqref{d1ax}, the operators $Y_{ v}^{(\mp)}(\lambda) $ are compact, and hence
$\|Y_{ v}^{(\mp)}(\lambda) f_{n}\|\to 0$ as $n\to\infty$ for $\lambda\in\supp\varphi$. 
Note that the integrand in 
\begin{equation}
 \| q^{-\beta}{\bbY}_{\varphi,v}^{(\mp)}f_{n}\|^2
 =\int_{-\infty}^\infty q^{-2\beta}(\lambda)\varphi^2(\lambda)
\|Y_{ v}^{(\mp)}(\lambda) f_{n}\|^2
d\lambda 
\label{eq:S11w}\end{equation}
 is bounded by $Cq^{-2\beta}(\lambda)\varphi^2(\lambda)
\|Y_{ v}^{(\mp)}(\lambda)  \|^2$
which belongs to $L^1({\bbR})$ because the operators $Y_{ v}^{(\mp)}(\lambda) $ are uniformly bounded on $\supp\varphi$.
  Thus expression \eqref{eq:S11w} tends to zero as $n\to\infty$ by the Lebesgue theorem.
  \end{proof}  
 
 Finally, we use Theorem~\ref{lma.d2}.

%%%%%%%%%%%%%%%%%
\begin{lemma}\label{lma.d3}
%%%%%%%%%%%%%%%%%

Let $\rho\in C_0^\infty(\Omega)$; 
then for any $\beta>0$, we have 
$$
q^{-\beta}\calF \DD_\rho  \in \mathfrak S_\infty.
%\label{d5}
$$
\end{lemma}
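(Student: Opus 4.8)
The plan is to reduce the claim, by peeling off several manifestly negligible pieces with Lemmas~\ref{lma.d3a}, \ref{F} and \ref{lma.d2a}, to the operator $\DD_\rho$ sandwiched by $\omega(H_0)$, for which Theorem~\ref{lma.d2} provides an explicit representation. Fix $\omega\in C_0^\infty(\Delta)$ with $\omega\equiv 1$ on an open neighbourhood of the compact set $\supp\rho\cup\{\lambda_1,\dots,\lambda_L\}$, and put $v=1-\omega$. Then $\rho v=0$, $\omega^2\rho=\rho$, $\dist(\supp\rho,\supp v)>0$, and, since $v$ vanishes near every $\lambda_\ell$, the function $vq^{-\beta}$ is bounded. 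Note for later use that $q^{\pm\beta}$ are positive, that $q^\beta$ is a bounded function (so $q^\beta\calF_\Delta$ is bounded), that $q^{\mp\beta}P_\pm q^{\pm\beta}$ are bounded by Proposition~\ref{Muck}, and that $q^\beta\calY_\rho$ is bounded because $q^\beta\rho\in L^\infty$ while $Y(\lambda+i0)$ is bounded on $\supp\rho\subset\Omega$.

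First I would decompose, inserting $I=\omega(H_0)+v(H_0)$ on both sides of $\DD_\rho$,
$$
\calF\DD_\rho=\calF\omega(H_0)\DD_\rho\omega(H_0)+\calF\omega(H_0)\DD_\rho v(H_0)+\calF v(H_0)\DD_\rho .
$$
By Lemma~\ref{lma.d3a} the operator $\DD_\rho$ is compact, hence so are $\DD_\rho v(H_0)$ and $\DD_\rho\omega(H_0)$; since $q^{-\beta}\calF v(H_0)$ is bounded by Lemma~\ref{F}, the third term becomes compact after multiplication by $q^{-\beta}$. The middle term fits Lemma~\ref{lma.d2a} with $\varphi=\rho$ and the above $v$, because $\dist(\supp\rho,\supp v)>0$; thus $q^{-\beta}\calF\omega(H_0)\DD_\rho v(H_0)$ is compact. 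It therefore remains to prove that $q^{-\beta}\calF\omega(H_0)\DD_\rho\omega(H_0)=q^{-\beta}\calF_\Delta\omega(H_0)\DD_\rho\omega(H_0)$ is compact. Since $\omega(H_0)=\omega(H_0)E_{H_0}(\Delta)=\omega(H_0)\calF_\Delta^*\calF_\Delta$, Theorem~\ref{lma.d2} (applicable as $\supp\rho\subset\Omega$, $\supp\omega\subset\Delta$) yields
$$
q^{-\beta}\calF_\Delta\omega(H_0)\DD_\rho\omega(H_0)=q^{-\beta}\calZ_\omega\bigl(P_-\calY_\rho P_++P_+\calY_\rho^*P_-\bigr)\calZ_\omega^*\,\calF_\Delta .
$$

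Here $\calZ_\omega$ is multiplication by the $\mathfrak S_\infty$-valued, compactly supported and H\"older continuous symbol $\omega(\lambda)Z(\lambda)$, so by \eqref{eq:Pe} and Remark~\ref{rem.e2c} the commutators $\calZ_\omega P_\pm-P_\pm\calZ_\omega$ and $P_\pm\calZ_\omega^*-\calZ_\omega^*P_\pm$ are compact; moreover, since these symbols are H\"older at each $\lambda_\ell$, condition \eqref{d0} holds and Corollary~\ref{lma.e2c} shows the commutators remain compact after being sandwiched by $q^{-\beta}$. Commuting $\calZ_\omega$ and $\calZ_\omega^*$ inward I would write the operator in brackets as $M_\Xi+E$, where $\Xi=\omega^2\rho\,ZYZ^*\in C_0(\bbR;\mathfrak S_\infty(\calN))$ is compactly supported and H\"older at each $\lambda_\ell$, and $E$ is a sum of products each of which contains a factor $C$ with $q^{-\beta}Cq^{-\beta}$ compact. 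For the $M_\Xi$-term, using $P_-P_+=P_+P_-=0$ one has $M_\Xi=P_-(\Xi P_+-P_+\Xi)+P_+(\Xi^*P_--P_-\Xi^*)$, and by Corollary~\ref{lma.e2c} each difference equals $q^\beta Kq^\beta$ with $K$ compact, so
$$
q^{-\beta}M_\Xi\calF_\Delta=(q^{-\beta}P_-q^\beta)K(q^\beta\calF_\Delta)+(q^{-\beta}P_+q^\beta)K'(q^\beta\calF_\Delta)
$$
is a sum of products of bounded operators with a compact one, hence compact. For $E$, next to each distinguished factor $C=q^\beta K_C q^\beta$ I would insert $q^\beta q^{-\beta}$; the scalar factors $q^{\pm\beta}(\lambda)$ commute with all the multiplication operators present, and the leftover powers of $q$ are absorbed either into an adjacent $q^{\pm\beta}P_\pm q^{\mp\beta}$, into $q^\beta\calY_\rho$, or against the $q^\beta$ produced by $C$, so that $q^{-\beta}E\calF_\Delta$ is again a product of bounded operators and one compact operator.

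The main obstacle is exactly this last reshuffling: because $Z(\lambda)$ does not vanish at the discontinuity points $\lambda_\ell$, the single left weight $q^{-\beta}$ cannot be pushed past $\calZ_\omega$ or $\calY_\rho$, and the compactness must be recovered by trading the missing right‑hand weight for the bounded operator $q^\beta\calF_\Delta$, using the Muckenhoupt bounds on $q^{\mp\beta}P_\pm q^{\pm\beta}$ and the sandwiched‑Hankel compactness of Corollary~\ref{lma.e2c}. Everything else in the argument is routine bookkeeping.
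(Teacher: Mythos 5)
Your proof is correct, and its skeleton coincides with the paper's: the same three-term splitting $\calF\DD_\rho=\calF\omega(H_0)\DD_\rho\omega(H_0)+\calF\omega(H_0)\DD_\rho\,v(H_0)+\calF v(H_0)\DD_\rho$, with the last term handled by Lemma~\ref{lma.d3a} plus Lemma~\ref{F}, the middle one by Lemma~\ref{lma.d2a} with $\varphi=\rho$ and $v=1-\omega$, and the first one via the representation of Theorem~\ref{lma.d2}. The divergence is in the final step. The paper commutes $q^{-\beta}$ with $\calZ_\omega$ and applies Corollary~\ref{lma.e2c} directly to the symbol $\rho Y$ in $L^2(\bbR;\calH)$ (a one-sided weight suffices there since $q^\beta$ is bounded), justifying this by the claim that $Y(\lambda+i0)$ is compact-valued. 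You instead keep $\calZ_\omega$, swap it past $P_\pm$ modulo sandwiched commutators (exactly the mechanism of Lemma~\ref{lma.d4n}) and apply Corollary~\ref{lma.e2c} to the compact-valued symbol $\omega^2\rho\,ZYZ^*$; your weight bookkeeping with $q^{\mp\beta}P_\pm q^{\pm\beta}$ and the boundedness of $q^\beta$ and $q^\beta\calF_\Delta$ is sound. Your route is a bit longer but arguably more robust: under \eqref{a3} alone one only has $Y(\lambda+i0)=-2\pi i V_0+\text{compact}$, so its values need not be compact (e.g.\ in Example~\ref{Schr}), whereas your argument uses compactness only through $Z(\lambda)$. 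Two small remarks: your insistence that $\omega\equiv1$ also near the points $\lambda_\ell$ (not merely on $\supp\rho$) is precisely what makes Lemma~\ref{F} applicable to $1-\omega$, a point the paper leaves implicit; and the compactness of the sandwiched commutators $q^{-\beta}(\calZ_\omega P_\pm-P_\pm\calZ_\omega)q^{-\beta}$ should be attributed to Lemma~\ref{lma.d1} together with Remark~\ref{rem.e2c} rather than to Corollary~\ref{lma.e2c}; this is purely a matter of labelling.
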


\begin{proof}
Let $\omega$ be a bounded function with $\supp\omega\subset \Delta$ such that $\omega(\lambda)=1$ for $\lambda \in\supp \rho$; set $\widetilde\omega=1-\omega$.
    The operator $\DD_\rho$ is compact by Lemma~\ref{lma.d3a}. Therefore $q^{-\beta}\calF \widetilde\omega (H_{0}) \DD_\rho \in \mathfrak S_\infty$  because the operator $q^{-\beta}\calF \widetilde\omega (H_{0})$ is bounded by Lemma~\ref{F}.  It remains to check that $q^{-\beta}\calF  \omega (H_{0}) \DD_\rho \in \mathfrak S_\infty$.
Since $\calF  \omega(H_{0}) =\calF_\Delta \omega(H_{0})$ and $   \omega(H_{0})=  \omega(H_{0})\calF_\Delta^* \calF_\Delta$, to that end we have to verify two inclusions:
\begin{equation}
q^{-\beta}\calF_\Delta \omega(H_{0}) \DD_\rho \omega(H_{0}) \calF_\Delta^*\in \mathfrak S_\infty\quad {\rm and}\quad
q^{-\beta}\calF_\Delta  \omega(H_{0})  \DD_\rho \widetilde\omega (H_{0}) \in \mathfrak S_\infty.
\label{d5x}
\end{equation} 
According to Theorem~\ref{lma.d2}, the first operator here equals
$$
 \calZ_{\omega} q^{-\beta}
(P_-\calY_{\rho} P_+ + P_+\calY^*_{\rho} P_-)\calZ^*_{\omega}
$$
where the operators $ \calZ_{\omega} $ and $\calY_{\rho}$ are defined by formulas \eqref{d1a}. Since the operator $\calZ_{\omega} : L^2(\bbR;\calH)\to L^2( \bbR ;\calN)$ is bounded, it suffices to show that the operator
$q^{-\beta}(P_- Y \rho P_+ + P_+ Y^* \rho  P_-)$
is compact in the space $L^2(\bbR ; \calH)$. This fact   
follows from Corollary~\ref{lma.e2c} applied to  $\Xi=Y\rho$ because the operator valued function $Y(\lambda)$ takes compact values and is H\"older continuous for $\lambda\in\supp\rho$. The second operator in  \eqref{d5x} is compact
according to Lemma~\ref{lma.d2a} where $\varphi=\rho$ and $v=\widetilde \omega$. 
\end{proof}

%%%%%%%%%%%%%%%%%%%%%%%%%%%%%%%%%%%%%
\subsection{Proof of Lemma~\ref{lma.d4}}\label{sec.d2x}
%%%%%%%%%%%%%%%%%%%%%%%%%%%%%%%%%%%%%
  
  Let $\rho\in C_0^\infty(\Omega)$ be such that 
$\rho(\lambda)=1$ in a neighbourhood of 
$\{\lambda_1,\dots,\lambda_L\}$ and $\rho(\lambda)\xi(\lambda)=0$. We set
$\wt\rho (\lambda)=1-\rho (\lambda)$ so that $\wt\rho(\lambda)\xi(\lambda)=\xi(\lambda)$. Then we have
\begin{align*}
\xi(H)=&\rho(H_{0}) \xi(H_{0}) + \wt\rho(H) \xi(H) \wt\rho(H)+\wt\rho(H)  \xi(H_{0})\rho(H_{0}),
\\
\xi(H_{0})=&\rho(H ) \xi(H_{0}) + \wt\rho(H) \xi(H_{0}) \wt\rho(H)+\wt\rho(H)  \xi(H_{0})\rho(H)
\end{align*}
and hence
\begin{equation}
\DD_{\xi}
=
-\DD_{\rho} \xi(H_0)
+
\wt\rho(H) \DD_{\xi }\wt\rho(H)
-
\wt\rho(H)\xi(H_0)\DD_{\rho}.
\label{eq:XX1w}\end{equation}
Let us sandwich this expression by $q^{-\beta}  \calF $ and consider every term in the r.h.s.\  separately.

The first term in \eqref{eq:XX1w} yields
$$
- \big(  q^{-\beta}\calF \DD_{\rho}\big) \big( \xi(H_0)\calF^*  q^{-\beta}\big).
$$
 The first factor here is a compact operator by Lemma~\ref{lma.d3} and   the second factor is a bounded operator by   Lemma~\ref{F}.

The second term in \eqref{eq:XX1w} yields
\begin{equation}
\big(  q^{-\beta}\calF \wt\rho(H)\big) \DD_{\xi}  \big( \wt\rho(H)\calF^*  q^{-\beta}\big).
\label{eq:XX}\end{equation}
   We have
 \begin{equation}
  q^{-\beta}\calF \wt\rho(H)= -q^{-\beta}\calF \DD_{\rho}+ q^{-\beta}\calF \wt\rho(H_{0})\in\calB.
\label{eq:XX1}\end{equation}
  Indeed, the first operator on the right is compact according to Lemma~\ref{lma.d3},  and the second operator is bounded according to   Lemma~\ref{F}. Thus the first and third factors in \eqref{eq:XX} are bounded operators.  
  It remains to use the fact that the operator $\DD_{\xi}$ is compact by Lemma~\ref{lma.d3a}. 
 
 Finally, the third term in \eqref{eq:XX1w} yields
$$
- \big(  q^{-\beta}\calF\wt\rho(H) \big) \xi (H_{0}) \big( \DD_{\rho}  \calF^*  q^{-\beta}\big).
$$
 The first factor here is bounded according to \eqref{eq:XX1}, and the last  factor  is compact according to Lemma~\ref{lma.d3}. \qed

%%%%%%%%%%%%%%%%%%%%%%%%%%%%%%%%%%%%%
\subsection{Proof of Lemma~\ref{lma.d4m}}\label{sec.d2k}
%%%%%%%%%%%%%%%%%%%%%%%%%%%%%%%%%%%%%
  
Set $\wt\omega=1-\omega$. We have to check two inclusions
\begin{equation}
q^{-\beta}\calF \omega(H_0) \DD_\varphi \wt \omega(H_0)\calF^* q^{-\beta}
 \in\mathfrak S_\infty
\label{d15bb}
\end{equation}
and
\begin{equation}
q^{-\beta}\calF \wt \omega(H_0) \DD_\varphi \wt \omega(H_0)\calF^* q^{-\beta}
 \in\mathfrak S_\infty .
\label{d15b}
\end{equation}

Let $v$ be a $C^\infty$ function satisfying condition \eqref{eq:supp} and such that $v\wt\omega=\wt\omega$. We write operator \eqref{d15bb} as a product of two factors
$$
 \big(q^{-\beta}\calF \omega(H_0) \DD_\varphi v(H_0)\big)\,\big(\wt \omega(H_0)\calF^* q^{-\beta}\big).
$$
 The first one is compact according to Lemma~\ref{lma.d2a}, and the second one is bounded according to Lemma~\ref{F}.
 
  Operator \eqref{d15b} can be factorized into a product of three factors
\begin{equation}
\big(q^{-\beta}\calF v(H_{0}) \big) \,\big( \wt \omega(H_0) \DD_\varphi \big)\, \big(\wt \omega(H_0)\calF^* q^{-\beta}\big).
\label{eq:f}\end{equation}
 The first and the third factors here are bounded operators by Lemma~\ref{F}. Since $\wt\omega (\lambda) \varphi(\lambda)=0$, we can write the second factor as
 \begin{equation}
\wt\omega(H_0)\DD_{\varphi} =\wt\omega(H_0)\varphi(H)
= -\DD_{\wt\omega}\varphi(H)=
\DD_{\omega}\varphi(H).
\label{eq:id}\end{equation} 
 By Lemma~\ref{lma.d3a} the operators $\DD_{\omega}$ and hence \eqref{eq:id} are compact. This proves that operator \eqref{eq:f} is also compact.
\qed 
  
%%%%%%%%%%%%%%%%%%%%%%%%%%%%%%%%%%%%%
\subsection{Proof of Lemma~\ref{lma.d4n}}\label{sec.d2d}
%%%%%%%%%%%%%%%%%%%%%%%%%%%%%%%%%%%%%

According to representation \eqref{c4a} and the definition \eqref{b9a} of the SHO $M_\Xi$, we have to check that the operator
\begin{multline*}
q^{-\beta} \calZ_\omega P_- \calY_{\varphi} P_+ \calZ_\omega^* q^{-\beta}
-
q^{-\beta} P_- \calZ_\omega \calY_{\varphi}  \calZ_\omega^* P_+  q^{-\beta}
\\
=
\bigl(q^{-\beta}(\calZ_\omega P_-   - P_- \calZ_\omega)q^{-\beta}\bigr) 
\calY_{\varphi}  (q^\beta P_+ q^{-\beta}) \calZ_\omega^*
\\
+
(q^{-\beta} P_- q^\beta)\calZ_\omega \calY_{\varphi} 
\bigl(q^{-\beta}(P_+ \calZ_\omega^*- \calZ_\omega^*P_+)q^{-\beta}\bigr)
\end{multline*}
is compact in $L^2 (\bbR; \calN)$. Here we have taken into account that
the operators $q^{-\beta}$ commute with $\calZ_\omega$ and $\calY_{\varphi}$.
 Recall that, by Proposition~\ref{Muck}, the operators $q^{-\beta} P_\pm q^\beta$ are bounded. 
 Therefore it suffices to use the fact that, by
  Lemma~\ref{lma.d1} (see also  Remark~\ref{rem.e2c}) applied to the operator valued function $\Xi (\lambda) = Z(\lambda)\omega(\lambda)$,
the operators 
$$
q^{-\beta}(\calZ_\omega P_\pm-P_\pm\calZ_\omega)q^{-\beta}:
L^2(\bbR;\calH)\to L^2(\bbR; \calN)
$$
are compact.  \qed

%%%%%%%%%%%%%%%%%%%%%%%%%%%%%%%%%%%%%
\subsection{Proof of Lemma~\ref{lma.d4k}}\label{sec.d2dw}
%%%%%%%%%%%%%%%%%%%%%%%%%%%%%%%%%%%%%

Putting together formulas \eqref{e9}, \eqref{eq:symb} and \eqref{eq:xi}, we see that it suffices to check that the operator valued function
\begin{equation}
\Psi (\lambda)=
Z(\lambda)Y(\lambda+i0) Z^* (\lambda)  \varphi(\lambda)\omega^2(\lambda)  -\sum_{\ell=1}^L \varkappa_\ell (S(\lambda_\ell) -I)\zeta(\lambda-\lambda_\ell)
\label{eq:symb2}\end{equation}
satisfies the assumptions of Lemma~\ref{lma.d1}. Clearly, both terms here are continuous functions of $\lambda$ away from  the set $\lambda_{1},\ldots, \lambda_{L}$ and tend to zero as $|\lambda|\to \infty$. 

Observe that the functions $\varphi$ and $\theta$ have the same jump \eqref{b8b}
at the point $\lambda_{\ell} $. Therefore the jump  of the first term in \eqref{eq:symb2} 
at the point $\lambda_{\ell}$ equals
\begin{equation}
\varkappa_\ell Z(\lambda_{\ell})Y(\lambda_{_\ell}+i0) Z^*(\lambda_{\ell})  .
\label{eq:symb2r}\end{equation}
Since $\zeta (\pm 0) = \pm 1/2$, the jump at the point $\lambda_{\ell} $ of the sum in \eqref{eq:symb2} equals
\begin{equation}
\varkappa_\ell (S(\lambda_\ell) -I).
\label{eq:symb2t}\end{equation}
 It follows   from   the representation \eqref{b8} of the scattering matrix $S(\lambda)$  that
expressions  \eqref{eq:symb2r} and \eqref{eq:symb2t} coincide.
 Hence the operator valued function $\Psi (\lambda)$ is continuous  at each point $\lambda_{\ell}$.

Finally, $\Psi (\lambda)$ satisfies condition \eqref{d0}  because $Z(\lambda)$ and $Y(\lambda+i0)$ are H\"older continuous functions, $\theta(\lambda)$ satisfies condition \eqref{b8a} and $\zeta(\lambda)$ satisfies condition \eqref{c6x}.
\qed

%%%%%%%%%%%%%%%%%%%%%%%%%%%%%%%%%%%%%%%%%%%
%%%%%%%%%%%%%%%%%%%%%%%%%%%%%%%%%%%%%%%%%%%
\appendix
\section{Proof of Lemma~\ref{lma.f5}}
%%%%%%%%%%%%%%%%%%%%%%%%%%%%%%%%%%%%%%%%%%%
%%%%%%%%%%%%%%%%%%%%%%%%%%%%%%%%%%%%%%%%%%%

\begin{proof}
1. First we recall some background information on the Legendre function. 
This function can be defined     
(see formulas (2.10.2) and (2.10.5) in  \cite{BE})  in terms of the hypergeometric function
 \begin{equation}
F(a,b,c;z)
= 
\sum_{n=0}^\infty \frac{(a)_n (b)_n}{ (c)_n}
\frac{z^n}{n!}, \quad 
(a)_n=\frac{\Gamma(a+n)}{\Gamma(a)}, \quad |z| <1,
\label{eq:hyper}
\end{equation}
where $\Gamma$ is the gamma-function. Namely, for $x>1$, we have
 \begin{equation}
P_{-\frac12+i\tau}(x)
=
\Re \Big(m(\tau)
F(\tfrac14-i\tfrac{\tau}{2},\tfrac34-i\tfrac{\tau}{2};1-i\tau; x^{-2})
x^{-\frac12 + i\tau}\Big)
\label{f16}
\end{equation}
  where
\begin{equation}
m(\tau)
=
\frac{\Gamma( i\tau)}{\sqrt{\pi}\Gamma(\tfrac12+ i\tau)}
2^{\tfrac12+i\tau}. 
\label{eq:dd}\end{equation}

Putting together \eqref{eq:hyper} and \eqref{f16}, we see that
\begin{equation}
P_{-\frac12+i\tau}(x)= \Re \big(m(\tau) x^{-\frac12 + i\tau}
\sum_{n=0}^\infty p_{n}(\tau) x^{-2n} \big),
\label{eq:Le}\end{equation}
where $p_{0}(\tau)=1$ and
$$
p_{n}(\tau)=\frac{(\tfrac14-i\tfrac{\tau}{2})_n (\tfrac34-i\tfrac{\tau}{2})_n}{ (1-i\tau)_n n!}.
$$
According to the Stirling formula all coefficients $p_n(\tau)$ 
are uniformly (in $n$ and $\tau$) bounded 
for $\tau$ in compact intervals $\delta\subset{\bbR}_{+}$. Moreover,
$$
| \partial p_{n}(\tau)/ \partial\tau| \leq C \ln n 
$$
where $C$ does not depend on $n$ and $\tau\in \delta$.
In particular, we see that $P_{-\frac12+i\tau}(x)$ is a smooth function of $x > 1$ and it has the asymptotics
  \begin{equation}
P_{-\frac12+i\tau}(x)
=
\Re \big( m(\tau)x^{-\frac12+ i\tau}\big)
+ O(x^{-5/2}), \quad x\to\infty;
\label{eq:f12}
\end{equation}
this asymptotics can be differentiated in $x$. 
The series \eqref{eq:Le} and hence the asymptotics \eqref{eq:f12} can also be differentiated in $\tau$.

Instead of \eqref{f16}, in a neighborhood of the point $x=1$ we use another representation (see formula (3.2.2)  in  \cite{BE})
$$
P_{-\frac12+i\tau}(x)=F(\tfrac12 - i\tau , \tfrac12+i\tau ;1;\tfrac{1-x}{2}) .
%\label{f15}
$$
It implies that   
\begin{equation}
|P_{-\frac12+i\tau}(x)|+ |P_{-\frac12+i\tau}'(x)|\leq C,\quad x\in [1,2] .
\label{eq:le1}\end{equation}

2.
Let us return to the function $w_{\tau}(\nu)$. Let us write \eqref{eq:Xx1} as
$$
 \sqrt{2\pi}   e^{-i \lambda}w_{\tau}(\lambda) 
=
\int_1^\infty  
P_{-\frac12+i\tau}(x)e^{-i \lambda x} dx.
$$
Integrating here by parts, we see that
$$
i \sqrt{2\pi}e^{-i \lambda} \lambda w_{\tau}(\lambda)
=
  P_{-1/2+ i\tau} (1)e^{-i \lambda}
+   \int_{1}^\infty e^{-i \lambda x}P_{-1/2+ i\tau}' (x) dx.
$$
According to \eqref{eq:f12} $|P_{-1/2+ i\tau}' (x)|\leq C x ^{-3/2}$ if $x\geq 2$.
According to \eqref{eq:le1}  the function $ P_{-1/2+ i\tau}' (x)$ is  bounded if $x\in [1,2]$. 
Therefore the integral in the right-hand side is   
bounded uniformly in $\lambda$ which yields the first estimate \eqref{f4}.  

3. 
To obtain the first estimate \eqref{f5}, we observe that, again  by \eqref{eq:le1}, the function $ P_{-1/2+ i\tau} (x)$ is   bounded for $x\in [1,2]$. For   $x\geq 2$ we use asymptotics \eqref{eq:f12}. Note that the leading term
 \begin{equation}
\int_{2}^\infty e^{-i \lambda x} x^{-1/2+i\tau}dx
= \abs{\lambda}^{-1/2-i\tau}
\int_{2\abs{\lambda}}^\infty e^{\mp i   y} y^{-1/2+i\tau}dy,\quad \pm \lambda>0,
\label{eq:hy2}
\end{equation}
satisfies estimate \eqref{f5}. The contribution of
the remainder $O(x^{-5/2})$ in \eqref{eq:f12} to the integral in \eqref{eq:Xx1} 
is uniformly   bounded.

4.
Estimates \eqref{f4} and \eqref{f5} on the derivative $\partial w_{\tau}(\lambda)/ \partial \tau$ can be obtained quite similarly because asymptotics  \eqref{eq:f12} are differentiable  in $\tau$ and estimates  \eqref{eq:le1}  remain true for the derivative $ \partial P_{-1/2+ i\tau} (x)/ \partial \tau$.
  The only difference is that instead of \eqref{eq:hy2} we now have the integral 
$$
\int_{2}^\infty e^{-i \lambda x} x^{-1/2+i\tau}\ln x\,  dx
$$
which is bounded by $\abs{\lambda}^{-1/2} |\ln \abs{\lambda} |$.
 \end{proof}

 \section*{Acknowledgements} 
Our collaboration has become possible through the hospitality and financial support 
of the Departments of Mathematics of the University of Rennes 1 and of KingÕs College London. 
The second author was partially supported by the projects NONa (ANR-08-BLANC-0228) and 
NOSEVOL (ANR-11-BS0101901).
The authors are grateful for hospitality and financial
support to the  Mittag-Leffler Institute, Sweden, where the paper was completed 
during the authors' stay in autumn 2012.

\end{document}